\renewcommand{\email}[2][]{%
  \ifx\emails\@empty\relax\else{\g@addto@macro\emails{,\space}}\fi%
  \@ifnotempty{#1}{\g@addto@macro\emails{\textrm{(#1)}\space}}%
  \g@addto@macro\emails{#2}%
}
\def\restriction#1#2{\mathchoice
              {\setbox1\hbox{${\displaystyle #1}_{\scriptstyle #2}$}
              \restrictionaux{#1}{#2}}
              {\setbox1\hbox{${\textstyle #1}_{\scriptstyle #2}$}
              \restrictionaux{#1}{#2}}
              {\setbox1\hbox{${\scriptstyle #1}_{\scriptscriptstyle #2}$}
              \restrictionaux{#1}{#2}}
              {\setbox1\hbox{${\scriptscriptstyle #1}_{\scriptscriptstyle #2}$}
              \restrictionaux{#1}{#2}}}
\def\restrictionaux#1#2{{#1\,\smash{\vrule height .8\ht1 depth .85\dp1}}_{\,#2}}
\newcommand{\quotient}[2]{{\raisebox{.2em}{$#1$}\left/\raisebox{-.2em}{$#2$}\right.}}
\newcommand{\sslash}{\mathbin{/\mkern-6mu/}}
\newcommand{\Hom}{\operatorname{Hom}}
\newcommand{\tr}{\operatorname{tr}}
\newcommand{\Tr}{\operatorname{Tr}}
\newcommand{\qtr}{\operatorname{qtr}}
\newcommand{\ptr}{\operatorname{ptr}}
\newcommand{\SL}{\operatorname{SL}}
\newcommand{\id}{id}
\newcommand{\Span}{\operatorname{Span}}
\newcommand{\End}{\operatorname{End}}
\newcommand{\PGL}{\operatorname{PGL}}
\newcommand{\Specm}{\operatorname{MaxSpec}}
\newcommand{\Mod}{\operatorname{Mod}}
\newcommand{\Mat}{\operatorname{Mat}}
\newcommand{\Aut}{\operatorname{Aut}}
\newcommand{\Arc}{\operatorname{Arc}}
\newcommand{\Uq}{D_qB}
\newcommand{\Ad}{Ad}
\newcommand{\Arcs}{\operatorname{Arcs}}
\newcommand{\Alg}{\operatorname{Alg}}
\newcommand{\Proj}{\operatorname{Proj}}
\newcommand{\Map}{\operatorname{Map}}
\newcommand{\PVect}{\operatorname{PVect}}
\newcommand{\Irrep}{\operatorname{Irrep}}
\newcommand{\heightexch}[3]{
	\begin{tikzpicture}[baseline=-0.4ex,scale=0.5, >=stealth]
	\draw [fill=gray!60,gray!45] (-.7,-.75)  rectangle (.4,.75)   ;
	\draw[#1] (0.4,-0.75) to (.4,.75);
	\draw[line width=1.2] (0.4,-0.3) to (-.7,-.3);
	\draw[line width=1.2] (0.4,0.3) to (-.7,.3);
	\draw (0.65,0.3) node {\scriptsize{$#2$}}; 
	\draw (0.65,-0.3) node {\scriptsize{$#3$}}; 
	\end{tikzpicture}
}
\newcommand{\heightcurve}{
\begin{tikzpicture}[baseline=-0.4ex,scale=0.5]
\draw [fill=gray!20,gray!45] (-.7,-.75)  rectangle (.4,.75)   ;
\draw[-] (0.4,-0.75) to (.4,.75);
\draw[line width=1.2] (-.7,-0.3) to (-.4,-.3);
\draw[line width=1.2] (-.7,0.3) to (-.4,.3);
\draw[line width=1.15] (-.4,0) ++(-90:.3) arc (-90:90:.3);
\end{tikzpicture}
}
\author{Julien Korinman}
\address{ Institut Montpelli\'erain Alexander Grothendieck - UMR 5149 Universit\'e de Montpellier. Place Eug\'ene Bataillon, 34090 Montpellier France}
\email{julien.korinman@gmail.com}
\subjclass{$57$R$56$, $57$N$10$, $57$M$25$.}
\keywords{Quantum groups, Quantum Teichm\"uller spaces, Stated skein algebras.}
\begin{document}

\theoremstyle{plain}
\newtheorem{theorem}{Theorem}[section]
\newtheorem{main_theorem}[theorem]{Main Theorem}
\newtheorem{proposition}[theorem]{Proposition}
\newtheorem{corollary}[theorem]{Corollary}
\newtheorem{corollaire}[theorem]{Corollaire}
\newtheorem{lemma}[theorem]{Lemma}
\theoremstyle{definition}
\newtheorem{notations}[theorem]{Notations}
\newtheorem*{notations*}{Notations}
\newtheorem{convention}[theorem]{Convention}
\newtheorem{definition}[theorem]{Definition}
\newtheorem{problem}[theorem]{Problem}
\newtheorem{question}[theorem]{Question}
\newtheorem{Theorem-Definition}[theorem]{Theorem-Definition}
\theoremstyle{remark}
\newtheorem{remark}[theorem]{Remark}
\newtheorem{conjecture}[theorem]{Conjecture}
\newtheorem{example}[theorem]{Example}

%\sloppy

\title[Q.  holonomic link invariants from S.Skein]{Quantum holonomic link invariants  derived  from stated skein algebras}
%
%
%\author{Julien Korinman}
%

\date{}
\maketitle

%%%%%%%%%%%%%%%%%%%%%%%%%%%%%%%%%%%%%%%%%%%%%%%%%%%%%%%%%%%%%%%%%%%%%%%%%%%%%%%%

\begin{abstract} 
We define invariants for a framed link equipped with a $\SL_2$ local system in its complement and additional combinatorial data based on the theory of representations of stated skein algebras at roots of unity of punctured bigons and the geometric interpretation of their centers. The gauge invariance of the link invariant is derived from De Concini-Kac quantum coadjoint action lifted at the level of stated skein algebras.  A key feature is the fact that the Drinfeld double of the quantum Borel algebra admits a natural interpretation as the reduced stated skein algebra of a once-punctured bigon from which we deduce a relation between our link invariants and quantum group constructions of  Blanchet-Geer-Patureau Mirand-Reshetikhin. Using Bonahon-Wong's quantum trace, we also relate our construction to quantum hyperbolic geometry, hence to Kashaev and  Baseilhac-Benedetti constructions. We deduce from this relation explicit formulas for the R-matrices,   which permit to compute the link invariants explicitly.  In particular, we derive an alternative conceptual proof of the Murakami-Murakami relation between the Kashaev invariant and the colored Jones polynomials.  
\end{abstract}

\tableofcontents

%%%%%%%%%%%%%%%%%%%%%%%%%%%%%%%%%%%%%%%%%%%%%%%%%%%%%%%%%%%%%%%%%%%%%%%%%%%%%%%

\section{Introduction}

\paragraph{\textbf{Background}}

Jones discovery of a new polynomial link invariant \cite{Jones85} and its interpretation by Witten in the context of Topological Quantum Field Theory \cite{Wi2}, initiated fruitful interactions between Von Neumann algebras, quantum groups and low dimensional topology. Though initially stated in the context of sub-factors, the construction of Jones polynomials was soon reformulated in the context of small representations of the quantum group $U_q\mathfrak{sl}_2$ \cite{Turaev_YB}. Drinfel'd $R$-matrix \cite{DrinfeldqRMatrix} permits to associate to any small representation a solution of the Yang-Baxter equation which, in turn, induces representations of the braid groups. Using an appropriate trace, these representations induce invariants of framed links (see \cite{OhtsukiBook, ChariPressley, Kassel} for surveys). In \cite{GeerPatureauTuraev_Trace, GeerPatureau_LinksInv, CGPInvariants}, Costantino,  Geer,  Patureau-Mirand and Turaev generalized this construction by defining framed link invariants from some non-small, projective indecomposable highest and lowest modules of  $U_q\mathfrak{sl}_2$ by modifying the trace and introducing a refined version of $U_q\mathfrak{sl}_2$, namely the unrolled quantum group. These invariants contain the Akutsu-Deguchi-Ohtsuki invariants of \cite{ADO_inv}. Concerning the family of $U_q\mathfrak{sl}_2$- indecomposable modules which are not highest and lowest weight, namely the cyclic and semi-cyclic modules, an approach to define link invariants was started by Kashaev and Reshetikhin in \cite{KashaevReshetikhin04,KashaevReshetikhin05} and continued in \cite{GeerPatureau_GLinks} using semi-cyclic modules and further developed in  \cite{BGPR_Biquandle} to include cyclic modules as well.

\vspace{2mm}
\par In parallel to these quantum groups constructions, Kashaev initiated in \cite{Kashaev_6jSymbol} an original way to produce solutions of the Yang-Baxter equation from the cyclic quantum dilogarithms and the Pentagon identity they satisfy \cite{BazhanovBaxter_StarTriangle, FadeevKashaevQDilog, BazhanovReshetikhin_QDilog, KashaevVolkov}. This leads to a knot invariant \cite{KashaevLinkInv} which was conjectured to be related to the hyperbolic volume of the exterior of the knot \cite{Kashaev97}, when the latter is hyperbolic (see also \cite{Zagier_QModForms} for a generalization). Murakami and Murakami proved in \cite{MM01} that Kashaev's $R$-matrix is gauge equivalent to the Drinfel'd $R$-matrix giving rise to the $N$-th Jones polynomial at $N$-th root of unity, hence that these two framed link invariants are related. Kashaev's techniques were refined by Baseilhac and Benedetti to produce new families of link and $3$-manifold invariants \cite{BaseilhacBenedettiInvariant, BaseilhacBenedetti05, BaseilhacBenedettiLinkInv, BaseilhacBenedetti15, BaseilhacBenedetti_NonAmbiguousStructures}. 

\vspace{2mm}
\par Kashaev \cite{Kashaev98} and Chekhov and Fock \cite{ChekhovFock} independently defined a family of algebras, the quantum Teichm\"uller spaces, associated to a triangulated punctured surface. The quantum dilogarithms and the Pentagon identity naturally appear in quantum Teichm\"uller theory by means of change of coordinates. Bonahon and Wong defined in \cite{BonahonWongqTrace} a refinement of the quantum Teichm\"uller spaces, namely the balanced Chekhov-Fock algebras, and defined an algebra morphism, the quantum trace, between the Kauffman-bracket skein algebra of a closed punctured surface and the balanced Chekhov-Fock algebra. A marked surface $\mathbf{\Sigma}=(\Sigma, \mathcal{A})$ is a compact oriented surface $\Sigma$ with a finite collection $\mathcal{A}$ of arcs in its boundary. 
Generalizations $\mathcal{S}_{A}(\mathbf{\Sigma})$ and $\overline{\mathcal{S}}_A(\mathbf{\Sigma})$ of the Kauffman-bracket skein algebras (and of the quantum trace) for marked surfaces $\mathbf{\Sigma}$  were defined by L\^e in \cite{LeStatedSkein}, following \cite{BonahonWongqTrace}, under the names \textbf{stated skein algebras} and \textbf{reduced stated skein algebras} respectively. The representation theory of these algebras, when the deforming parameter $A$ is a root of unity of odd order, were studied in \cite{KojuQuesneyClassicalShadows, KojuAzumayaSkein, KojuKaruo_RepRSSkein}.

\vspace{2mm}
\paragraph{\textbf{Main results}}
The purpose of this paper is to construct link invariants derived from the representation theory of reduced stated skein algebras and to relate them to both quantum group constructions \cite{GeerPatureau_LinksInv, BGPR_Biquandle} and quantum hyperbolic geometric constructions \cite{KashaevLinkInv, BaseilhacBenedetti05}. Let $\zeta$ be root of unity of odd order $N\geq 3$. The $n$-th punctured bigon $\mathbb{D}_n= \adjustbox{valign=c}{\includegraphics[width=0.7cm]{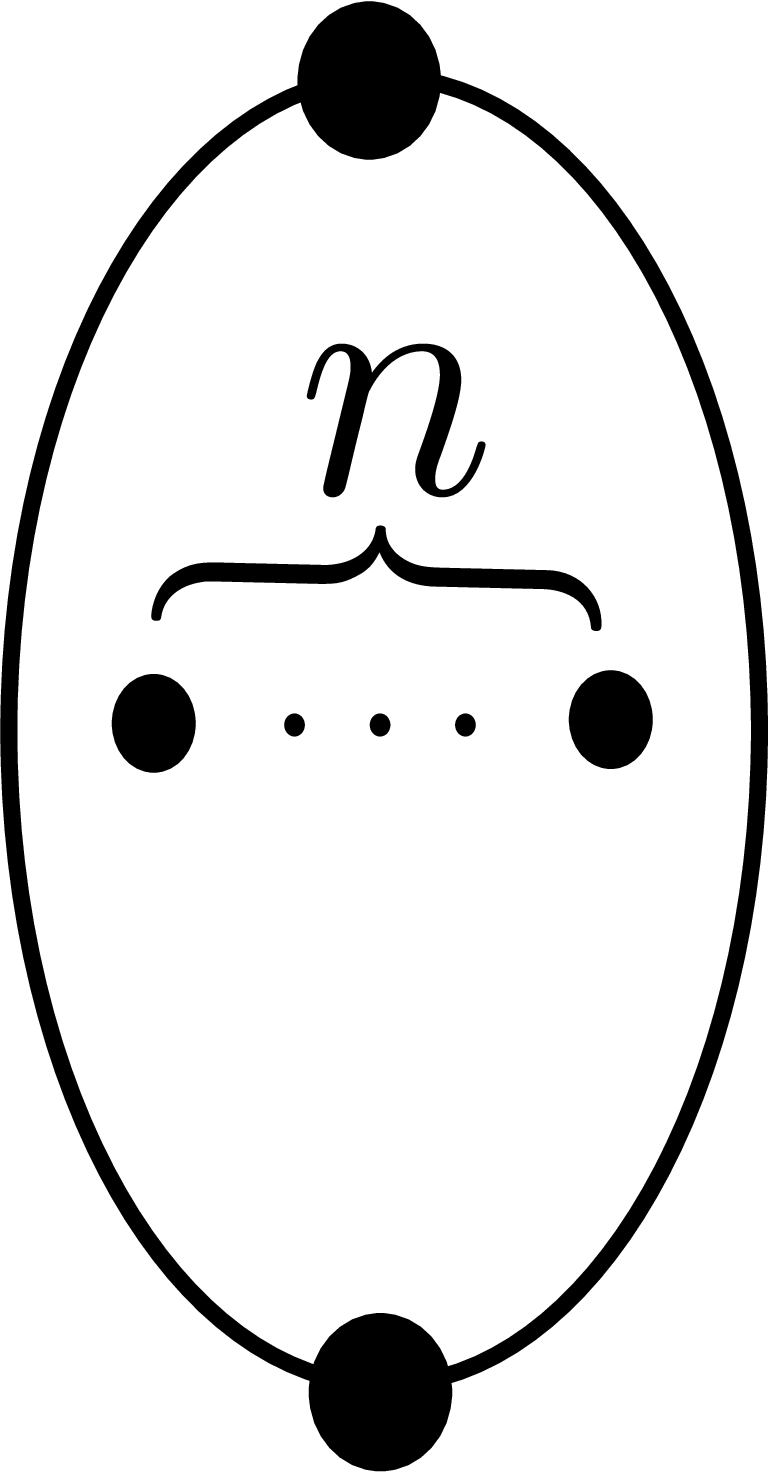}} $ is a disc with two boundary edges and $n$ open subdiscs removed from its interior. Our construction is based on the representation theory of the reduced stated skein algebra $\overline{\mathcal{S}}_{\zeta}(\mathbb{D}_n)$  described in \cite{KojuKaruo_RepRSSkein}; we sketch it briefly here and refer to Section \ref{sec_RepSSkein} for details. Let $Z_{\mathbb{D}_n}$ denote the center of $\overline{\mathcal{S}}_{\zeta}(\mathbb{D}_n)$ and write $\widehat{X}(\mathbb{D}_n):=\Specm(Z_{\mathbb{D}_n})$. By \cite{KojuQuesneyClassicalShadows, KojuAzumayaSkein}, as a set, $\widehat{X}(\mathbb{D}_n)$ is in natural bijection with the set of tuples $\widehat{x}= (\rho, h_{p_1}, \ldots, h_{p_n}, h_{\partial})$ where $\rho: \pi_1(\mathbb{D}_n, \mathbb{V})\to \SL_2$ is a representation from the fundamental groupoid of $\mathbb{D}_n$ with two base points to $\SL_2$ (satisfying some conditions), $h_{p_i}\in \mathbb{C}$ is such that if $\delta^{(i)}$ is a closed loop encircling the $i$-th puncture, then $T_N(h_{p_i})=-\tr(\rho(\delta^{(i)}))$, where $T_N(X)$ is the Chebyshev polynomial of the first kind, and $h_{\partial}\in \mathbb{C}^*$ is such that $h_{\partial}^N$ is determined by $\rho$. Consider the character map 
$$ \chi: \Irrep\left(\overline{\mathcal{S}}_{\zeta}(\mathbb{D}_n)\right) \to \widehat{X}(\mathbb{D}_n)$$
sending an isomorphism class of irreducible representation $r: \overline{\mathcal{S}}_{\zeta}(\mathbb{D}_n) \to \End(V)$ to the kernel of the induced central character. It is proved in \cite{KojuAzumayaSkein} that $\chi$ is "almost" a bijection in the sense that there exists an open dense subset $\mathcal{AL}(\mathbb{D}_n) \subset \widehat{X}(\mathbb{D}_n)$, the \textit{Azumaya locus}, such that the restriction $\chi: \chi^{-1}(\mathcal{AL}(\mathbb{D}_n)) \to \mathcal{AL}(\mathbb{D}_n)$ is a bijection. Moreover every representation in $\chi^{-1}(\mathcal{AL}(\mathbb{D}_n)) $ has dimension $N^n$. The set $\mathcal{AL}(\mathbb{D}_n)$ was computed in \cite{KojuKaruo_RepRSSkein}: it is the subset of these $\widehat{x}=(\rho, h_{p_1}, \ldots, h_{p_n}, h_{\partial})$ such that for each $1\leq i \leq n$, then either $\rho(\delta^{(i)})\neq \pm \mathds{1}_2$ or $h_{p_i}=\pm 2$. The braid group $B_n$ naturally (right) acts on $\overline{\mathcal{S}}_{\zeta}(\mathbb{D}_n)$ by preserving the center so induces a left action of $\widehat{X}(\mathbb{D}_n)$. Let $\beta \in B_n$ be a braid, $\widehat{x} \in \mathcal{AL}(\mathbb{D}_n)$ be such that $\beta\cdot \widehat{x}=\widehat{x}$ and let $r_{\widehat{x}}: \overline{\mathcal{S}}_{\zeta}(\mathbb{D}_n) \to \End(V_{\widehat{x}})$ be an irreducible representation with character $\widehat{x}$ (unique up to isomorphism). By unicity, the representation $\beta\cdot r_{\widehat{x}}: X \mapsto r_{\widehat{x}}(\beta^*X)$ is isomorphic to $r_{\widehat{x}}$ so there exists an intertwiner $L_{V_{\widehat{x}}}(\beta) \in \PGL(V_{\widehat{x}})$, unique up to multiplication by a  scalar, such that 
$$ r_{\widehat{x}}(\beta^* X) = L_{V_{\widehat{x}}}(\beta) r_{\widehat{x}}(X) L_{V_{\widehat{x}}}(\beta)^{-1}, \quad \mbox{ for all }X\in \overline{\mathcal{S}}_{\zeta}(\mathbb{D}_n).$$
By adding twist operators, we will also obtain intertwiners $L_{V_{\widehat{x}}}(\beta) \in \End(V_{\widehat{x}})$ for framed braids $\beta\in fB_n$ and using a suitable normalization, we will be able to reduce the projective ambiguity in the definition of $L_{V_{\widehat{x}}}(\beta) $ to an ambiguity up to multiplication by a $N^2$-th root of unity only. When $\rho$ is such that there exists $i$ such that either $\tr(\rho(\delta^{(i)}))\neq \pm 2$ or $h_{p_i}\neq \pm 2$,  using Geer-Patureau Mirand \textit{renormalized trace} $\tau$ from \cite{GeerPatureau_TraceQG}, we can consider the trace $\tau(L_{V_{\widehat{x}}}(\beta))\in \mathbb{C}$ well defined up to multiplication by a $N^2$-th root of unity. Let $L\subset S^3$ be the framed link which is the Markov closure of the framed braid $\beta$ and let $M_L:= S^3\setminus \mathring{N}(L)$ denote the link exterior. The representation $\rho$ induces a representation $\underline{\rho}: \pi_1(M_L)\to \SL_2$; we denote by $[\rho]$ its conjugacy class. Each component $L_i$ of $L=L_1\sqcup \ldots \sqcup L_k$ corresponds to a puncture $p_i$ of $\mathbb{D}_n$. We set $h_{L_i}:=h_{p_i}$ so $T_N(h_{L_i})=-\tr(\underline{\rho}(\mu_i))$ where $\mu_i$ is a meridian of $L_i$. Write $\mathbf{h}=(h_{L_1}, \ldots, h_{L_k})$ and consider the triple $\mathbb{L}=(L, [\underline{\rho}], \mathbf{h})$. Note that this triple satisfies: 
\begin{enumerate}
\item[(i)] for all $1\leq i \leq k$, either $\tr(\underline{\rho}(\mu_i))\neq \pm \mathds{1}_2$ or $h_{L_i}=\pm 2$ (since $\widehat{x}\in \mathcal{AL}(\mathbb{D}_n)$) and 
\item[(ii)] there exists $1\leq i \leq k$ such that either $\tr(\rho(\mu_i)) \neq \pm 2$ or $h_{L_i}=\pm 2$.
\end{enumerate}
A triple  $\mathbb{L}=(L, [\underline{\rho}], \mathbf{h})$ satisfying $(i)$ and $(ii)$ will be called an \textbf{admissible decorated link} and their set will be denoted by $\mathcal{DL}$. 

\begin{theorem}\label{theorem_intro_LinkInv}(Quantum Holonomic Invariants) The class $\tau(L_{V_{\widehat{x}}}(\beta))\in \quotient{\mathbb{C}}{\mu_{N^2}}$ only depends on $\mathbb{L}$ so we obtain a link invariant
$$ \left< \cdot \right> : \mathcal{DL} \to \quotient{\mathbb{C}}{\mu_{N^2}}, \quad \left< \mathbb{L}\right>:= \tau(L_{V_{\widehat{x}}}(\beta)).$$
\end{theorem}

Note that, unlike quantum groups constructions based on the R-matrix (so on the canonical element of Drinfeld doubles) or quantum hyperbolic constructions based on cyclic quantum dilogarithms, our definition does not take any abstract algebraic formulas as input but replace them by intertwiners associated to braids and  defined in a natural (topological) way. The drawback of this approach is that, at this stage, we have no explicit formulas for the intertwiner $L_{V_{\widehat{x}}}(\beta)$ so it is still not clear how to compute the link invariants explicitly.  
\vspace{2mm}\par 
The key to relate this link invariant to quantum groups constructions is the following. There is a natural embedding of marked surfaces $\iota: \mathbb{D}_1 \hookrightarrow \mathbb{D}_n$ which induces an algebra morphism $\iota_*: \overline{\mathcal{S}}_{A}(\mathbb{D}_1) \to \overline{\mathcal{S}}_{A}(\mathbb{D}_n)$; in particular every modules $V_{\widehat{x}}$ have a structure of $\overline{\mathcal{S}}_{A}(\mathbb{D}_1) $ as well. Also by gluing $n$ copies of $\mathbb{D}_1$ together we obtain $\mathbb{D}_n$ so we have a \textit{splitting morphism} $\theta: \overline{\mathcal{S}}_{A}(\mathbb{D}_n) \hookrightarrow (\overline{\mathcal{S}}_{A}(\mathbb{D}_1))^{\otimes n}$. The composition $\Delta:= \theta \circ \iota_*: \overline{\mathcal{S}}_{A}(\mathbb{D}_1) \to \overline{\mathcal{S}}_{A}(\mathbb{D}_1)^{\otimes 2}$ defines a coproduct which endows $\overline{\mathcal{S}}_{A}(\mathbb{D}_1)$ with a structure of Hopf algebra. Let $\Uq$ denote the Drinfeld double of the quantum Borel algebra $B_q$ where $q=A^2$. It has generators $K^{\pm 1/2}, L^{\pm 1/2}, E, F$ which satisfies $EF-FE=\frac{K -L}{q-q^{-1}}$. In particular the element $K^{1/2}L^{1/2}$ is central and group-like and the quotient $\quotient{\Uq}{(K^{1/2}L^{1/2}-1)}$ is isomorphic to the simply connected $\widetilde{U}_q\mathfrak{sl}_2$. 
 The following theorem is the heart of this paper and is  inspired from Bigelow's work in  \cite{BigelowQGroups}.

\begin{theorem}\label{theorem_intro_QG}(Skein construction of Quantum Groups) 
\\ There is an isomorphism of Hopf algebras $\Psi: \Uq \xrightarrow{\cong} \overline{\mathcal{S}}_A(\mathbb{D}_1)$ defined by:
\begin{align*}
{}&\Psi(K^{1/2}) = \adjustbox{valign=c}{\includegraphics[width=1.5cm]{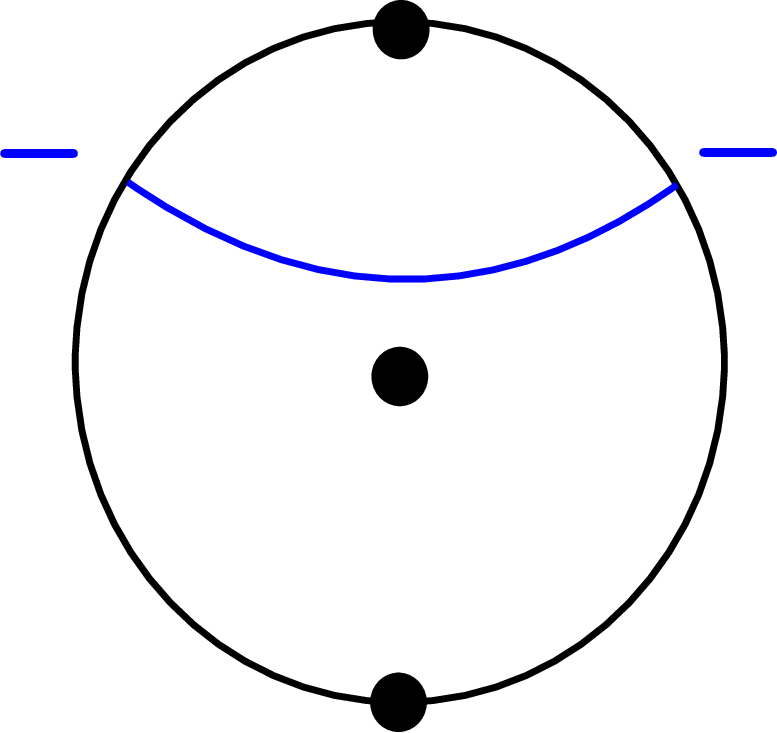}}, \quad \Psi(L^{1/2})=  \adjustbox{valign=c}{\includegraphics[width=1.5cm]{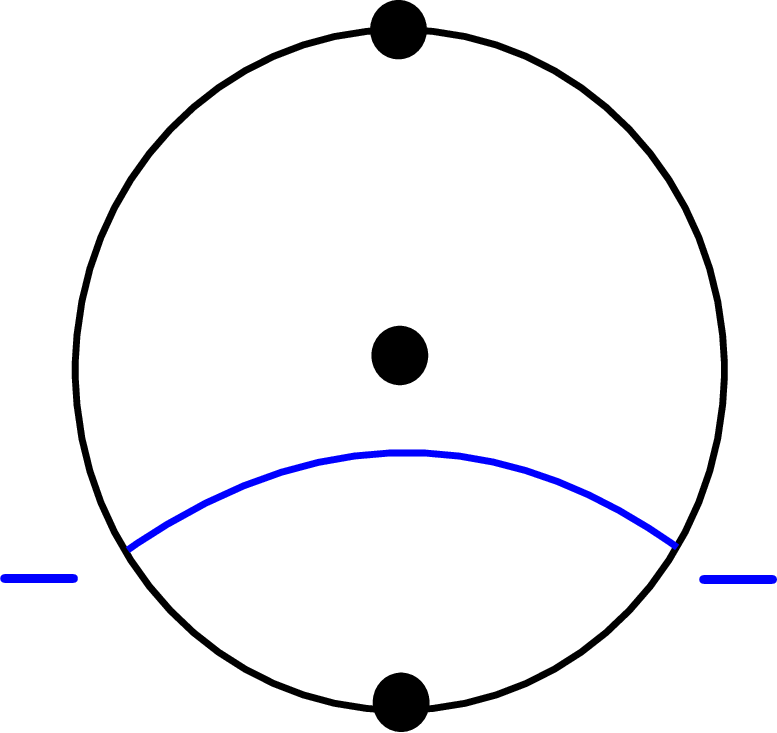}}, \quad \Psi(E)=  -\frac{A}{q-q^{-1}} \adjustbox{valign=c}{\includegraphics[width=1.5cm]{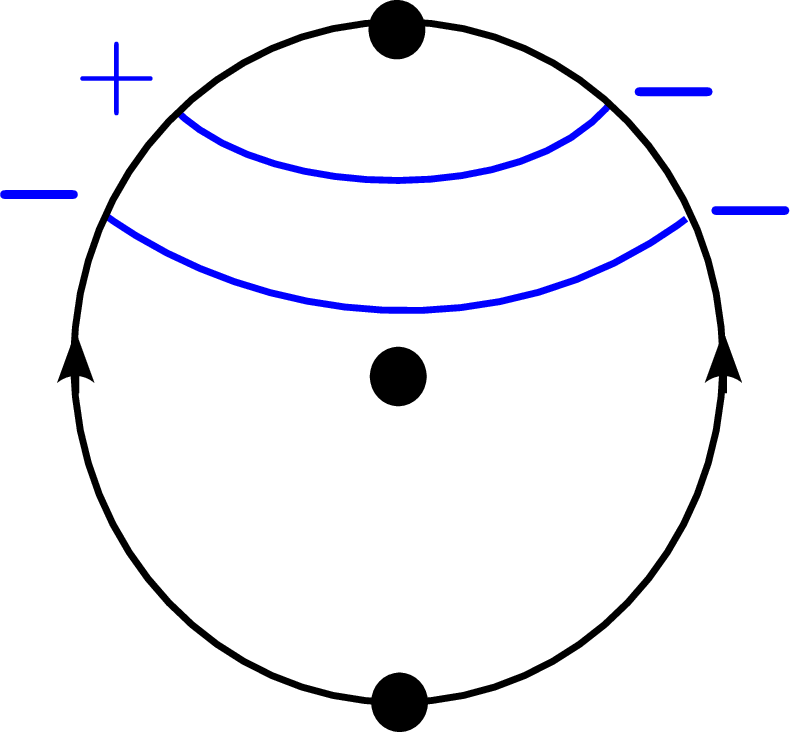}}, \quad \Psi(F) =  \frac{A^{-1}}{q-q^{-1}}\adjustbox{valign=c}{\includegraphics[width=1.5cm]{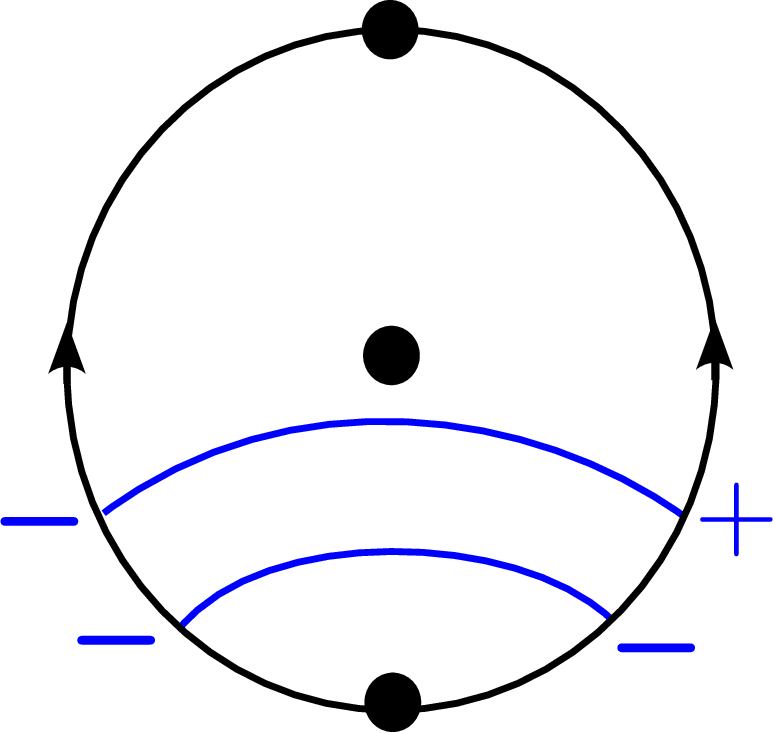}}. 
\end{align*}
\end{theorem}

Now if $A=\zeta$ is a root of unity, the category $\mathcal{C}$ of finite dimensional semi-weight $\Uq$ modules (i.e. modules on which the central elements $E^N, F^N, K^{N/2}, L^{N/2}$ acts semi-simply) is pivotal and its full subcategory $\Proj(\mathcal{C})$ generated by projective objects admits a trace $\tau: \mathrm{HH}_0(\Proj(\mathcal{C})) \to \mathbb{C}$ considered in \cite{GeerPatureau_TraceQG}  and characterized by its compatibility with the pivotal structure (see Section \ref{sec_trace}). An Azumaya simple $\mathcal{S}_{\zeta}(\mathbb{D}_n)$ module  $V_{\widehat{x}}$ has a structure of weight $\Uq$ module using $\iota_*$ and $\Psi$ and $V_{\widehat{x}}\in \Proj(\mathcal{C})$ if and only if condition $(ii)$ above is satisfied. Since the braid group acts trivially on the image of $\iota_*$, the braid operator $L_{V_{\widehat{x}}}(\beta))$ is a morphism in $\End_{\Proj(\mathcal{C})}(V_{\widehat{x}})$ so the trace $\tau(L_{V_{\widehat{x}}}(\beta))$ makes sense. 
\par The proof of Theorem \ref{theorem_intro_LinkInv} relies on the facts that $(1)$ the value of the trace $\tau(L_{V_{\widehat{x}}}(\beta))$ is invariant under framed Markov moves on $\beta$: this fact is an immediate consequence of the compatibility of $\tau$ with the pivotal structure (i.e. that $\tau$ is a "right trace" in the terminology of \cite{GeerPatureau_TraceQG}) and $(2)$ the value $\tau(L_{V_{\widehat{x}}}(\beta))$ only depends on the conjugacy class $[\rho]$  of the representation $\rho$ and the puncture invariant $h_{p_i}$: this will result from the fact that De Concini-Kac quantum coadjoint action acts transitively on the modules $V_{(\rho, h_{p_1}, \ldots, h_{p_n}, h_{\partial})}$ where $\rho$ belongs to a given conjugacy class. This latter fact will be proved by computing explicitly the Hamiltonian flows of the generators $E,F,K^{\pm 1/2}, L^{\pm 1/2}$ on the Poisson variety $X(\mathbb{D}_n)$ equipped with its Fock-Rosly Poisson structure. 
\par The relation with quantum hyperbolic geometry will be established using Bonahon-Wong's quantum trace $\Tr_{\zeta}^{\Delta} : \overline{\mathcal{S}}_{\zeta}(\mathbb{D}_n) \hookrightarrow \mathcal{Z}_{\zeta}(\mathbb{D}_n, \Delta_n)$ from \cite{BonahonWongqTrace},  where $\mathcal{Z}_{\zeta}(\mathbb{D}_n, \Delta_n)$ is the balanced Chekhov-Fock algebra of $\mathbb{D}_n$ with its canonical triangulation $\Delta_n$. When $n=1$, the embedding 
$$\Uq \cong \overline{\mathcal{S}}_{\zeta}(\mathbb{D}_1) \hookrightarrow \mathcal{Z}_{\zeta}(\mathbb{D}_1, \Delta_1)$$
embeds the quantum group $\Uq$ into a quantum torus. This is the Kashaev-Volkov embedding \cite{KashaevVolkov}, further studied in \cite{FadeevModularDouble} and generalized to the $\SL_n$ case in \cite{SchraderShapiro}. Our framework permits to see this embedding as a particular case of the quantum trace. 
\par Quantum Teichm\"uller theory permits to compute the intertwiners $L_{V_{\widehat{x}}}(\beta)$ explicitly; 
we briefly sketch the idea here and refer to Section \ref{sec_QT} for details.  A braid $\beta\in B_n$ sends the triangulation $\Delta_n$ to another one $\beta(\Delta_n)$ and induces an isomorphism $\beta^* : \mathcal{Z}_{\zeta}(\mathbb{D}_n, \beta(\Delta_n)) \to \mathcal{Z}_{\zeta}(\mathbb{D}_n, \Delta_n)$. Consider the \textbf{change of coordinates} isomorphism $\Phi_{\zeta}^{\beta(\Delta_n), \Delta_n}: \widehat{\mathcal{Z}}_{\zeta}(\mathbb{D}_n, \Delta_n) \to \widehat{\mathcal{Z}}_{\zeta}(\mathbb{D}_n, \beta(\Delta_n))$ from quantum Teichm\"uller theory. Suppose that $V_{\widehat{x}}$ is  a simple $\overline{\mathcal{S}}_{\zeta}(\mathbb{D}_n)$ Azumaya module which extends to an irreducible representation  $r : \mathcal{Z}_{\zeta}(\mathbb{D}_n, \Delta_n)\to \End(V_{\widehat{x}})$ module through $\Tr_{\zeta}^{\Delta_n}$ (we will prove that this is always possible up to gauge equivalence). Then $L_{V_{\widehat{x}}}(\beta)$ can be alternatively characterized as being the  intertwiner, unique up to multiplication by a scalar, such that
$$ r( \beta^* \circ \Phi_{\zeta}^{\beta(\Delta_n), \Delta_n} (Y)) = L_{V_{\widehat{x}}}(\beta) r(Y) L_{V_{\widehat{x}}}(\beta)^{-1}, \quad \mbox{ for all } Y \in \mathcal{Z}_{\zeta}(\mathbb{D}_n, \Delta_n).$$
Such an intertwiner for $\beta^* \circ \Phi_{\zeta}^{\beta(\Delta_n), \Delta_n} $ can be explicitly computed: if $\Delta_n$ and $\beta(\Delta_n)$ are related by a sequence of flips then the intertwiner $L_{V_{\widehat{x}}}(\beta)$ will be a composition of cyclic quantum dilogarithms (one for each flip) and a quadratic term. This strategy will provide us explicit formulas for the link invariant $ \left< \mathbb{L}\right>$ closely related to Baseilhac Benedetti QHI. 
\vspace{2mm}
\par The intertwiner $L_{V_{\widehat{x}}}(\beta)$ can be decomposed into smaller pieces named \textbf{skein braiding operators} associated to generators of $B_n$ which exchange two adjacent punctures in the clockwise direction. More precisely, consider $V_1, V_2, V_3, V_4 \in \mathcal{C}$ some simple $\Uq$ modules. Using the splitting morphism $\theta: \overline{\mathcal{S}}_{\zeta}(\mathbb{D}_2) \hookrightarrow (\Uq)^{\otimes 2}$, both $V_1\otimes V_2$ and $V_3\otimes V_4$ acquire some structure of simple $\overline{\mathcal{S}}_{\zeta}(\mathbb{D}_2)$ modules; denote by $\widehat{x}, \widehat{x}' \in \widehat{X}(\mathbb{D}_2)$ their induced central characters and by $r, r'$ their associated $\overline{\mathcal{S}}_{\zeta}(\mathbb{D}_2) $ representations. The two pairs $(V_1, V_2)$ and $(V_3,V_4)$ as said \textbf{compatible} if $\tau \cdot \widehat{x} = \widehat{x}'$, where $\tau \in B_2$ is the standard generator. If $\widehat{x}' \in \mathcal{AL}(\mathbb{D}_2)$ there exists an isomorphism $c= c_{V_1, V_2}^{V_3,V_4}: V_1\otimes V_2 \to V_3 \otimes V_4$, unique up to multiplication by a scalar, such that 
$$ r'(\beta^*X) = c r(X) c^{-1}, \quad \mbox{ for all }X\in \overline{\mathcal{S}}_{\zeta}(\mathbb{D}_2).$$
Such a skein braiding operator $c= c_{V_1, V_2}^{V_3,V_4}$ will be called \textbf{normalized} if $\det(c)=1$. Normalized braiding operators are uniquely defined up to multiplication by a root of unity of order $N^2$ and $L_{V_{\widehat{x}}}(\beta)$ is obtained by composing, tensoring and taking partial traces of such operators; this is how we reduce the projective ambiguity to a $N^2$ root of unity only. The relations with other braiding operators appearing in literature is summarized in the following.

\begin{theorem}[Braiding operators]\label{theorem_intro_braidings}
\begin{enumerate}
\item Let $V_1, V_2$ be two simple $\Uq$-modules  such that $r_1(E^N)=0$ and $r_2(F^N)=0$ and consider
$$R^D : V_1\otimes V_2 \rightarrow V_2\otimes V_1, \quad R^D= \sigma \circ (r_1\otimes r_2 )\left( q^{-\frac{H\otimes G}{2}} \exp_q^{<N}\left( (q-q^{-1})E\otimes F \right) \right)$$ the   Drinfeld braiding from quantum group theory (obtained from the canonical element of the Drinfeld double).  Then $R^D$ is a skein braiding.
\item The Kashaev-Reshetikhin braidings defined in \cite{KashaevReshetikhin04,KashaevReshetikhin05} are skein braidings.
\item  An explicit formula for general skein braiding operators $c= c_{V_1, V_2}^{V_3,V_4}$ is given by 
$$ c=\frac{1}{d} \sigma \circ Q \circ  \Phi_{w'_4}( -w_4[D^{-2}A] \otimes [A^{-1}D^2]) \Phi_{w''_3}(-w_3'[D^{-2}A] \otimes A^{-1})\Phi_{w''_2}(-w_2'A \otimes [A^{-1}D^2]) \Phi_{w'_1}( -w_1A \otimes A^{-1}) $$ 
where $\frac{1}{d}$ is a scalar which ensures that $\det(R)=1$ (see Section \ref{sec_normalization} for an explicit expression), $\sigma(x\otimes y)=y\otimes x$ is the flip, $Q\in \End(\mathbb{C}^N \otimes \mathbb{C}^N)$ and  $A, D\in \End(\mathbb{C}^N)$ are linear maps defined in canonical bases $\{v_i\}_{i}$ by 
  $$ Q\cdot v_i\otimes v_j = q^{2ij} v_i \otimes v_j, \quad D\cdot v_i = q^i v_i, \quad A\cdot v_i=v_{i-1},$$
  and $\Phi_w(X)$ is a polynomial related to the cyclic dilogarithms. 

\end{enumerate}
\end{theorem}

The last formula for skein braiding operators is a reformulation of Kashaev's R matrices in  \cite{Kashaev_6jSymbol, KashaevLinkInv}. In particular, our study reproves the fact, proved by Murakami-Murakami in \cite{MM01}, that the R-matrix of Kashaev invariant is gauge equivalent to the quantum group R-matrix of the $N$-th colored Jones polynomial at $N$-th root of unity. Moreover, it provides an explanation for this relation: the quantum group R matrix is defined by a term $\exp_q^{<N}\left( (q-q^{-1})E\otimes F \right) $ whereas Kashaev's R-matrices are contraction of $4$ cyclic quantum dilogarithms associated to quantum shape parameters $\mathbf{w}_i$. 
The idea is that for a degenerate parameter $w=1$ then $\phi_{w=1}(X)= \exp_q^{<N}(\frac{-X}{q-q^{-1}})$ so the q-exponential is a particular case of cyclic dilogarithm corresponding to a degenerate tetrahedron. Moreover, under some good conditions on $x,y$ (when $xy=q^2yx$ and $x^ny^{N-n}=0$ for all $0\leq n \leq N$), we have the equality $\exp_q^{<N}(x+y)=\exp_q^{<N}(x) \exp_q^{<N} (y)$. Using this formula, we will prove that, under the hypotheses of the first item in Theorem \ref{theorem_intro_braidings}, the product of the four quantum dilogarithms in the third item whence considered in the degenerate case, is a product of four q-exponentials which is the evaluation of a q-exponential of a sum; this sum is the image by the quantum trace of $E\otimes F$. 

\par By comparing skein braiding operators with other braiding operators, we relate our link invariant to preceding constructions.
\begin{theorem}[Comparison with previous constructions]\label{theorem_intro_relations}
\begin{enumerate}
\item If $\rho_0: \pi_1(M_L)\to \SL_2$ is the trivial representation sending every loop to the identity matrix $\mathds{1}_2\in \SL_2$, $h=(h_{L_1}, \ldots, h_{L_k})$ is such that $h_{L_i}=-2$ for all $1\leq i \leq k$ and $\mathbb{L}=(L, [\rho_0], h)$ then $\left<\mathbb{L}\right>$ is the (class in $\quotient{\mathbb{C}}{\mu_{N^2}}$ of the) Kashaev invariant (\cite{KashaevLinkInv}) of $L$.
\item If $\mathbb{L}=(L, [\rho], h) \in \mathcal{DL}$ is such that $\tr(\rho(\mu_i))\neq \pm 2$ for all $1\leq i \leq k$, then $\left<\mathbb{L}\right>$ is equal to the link invariant defined in \cite{BGPR_Biquandle}. 
%\item Relation with BB.
\end{enumerate}
\end{theorem}

The skein construction of link invariants developed in this paper provides thus a bridge between quantum groups and quantum hyperbolic geometric constructions as illustrated in Figure \ref{fig_QInvariants}. 
 
 \begin{figure}[!h] 
\centerline{\includegraphics[width=9cm]{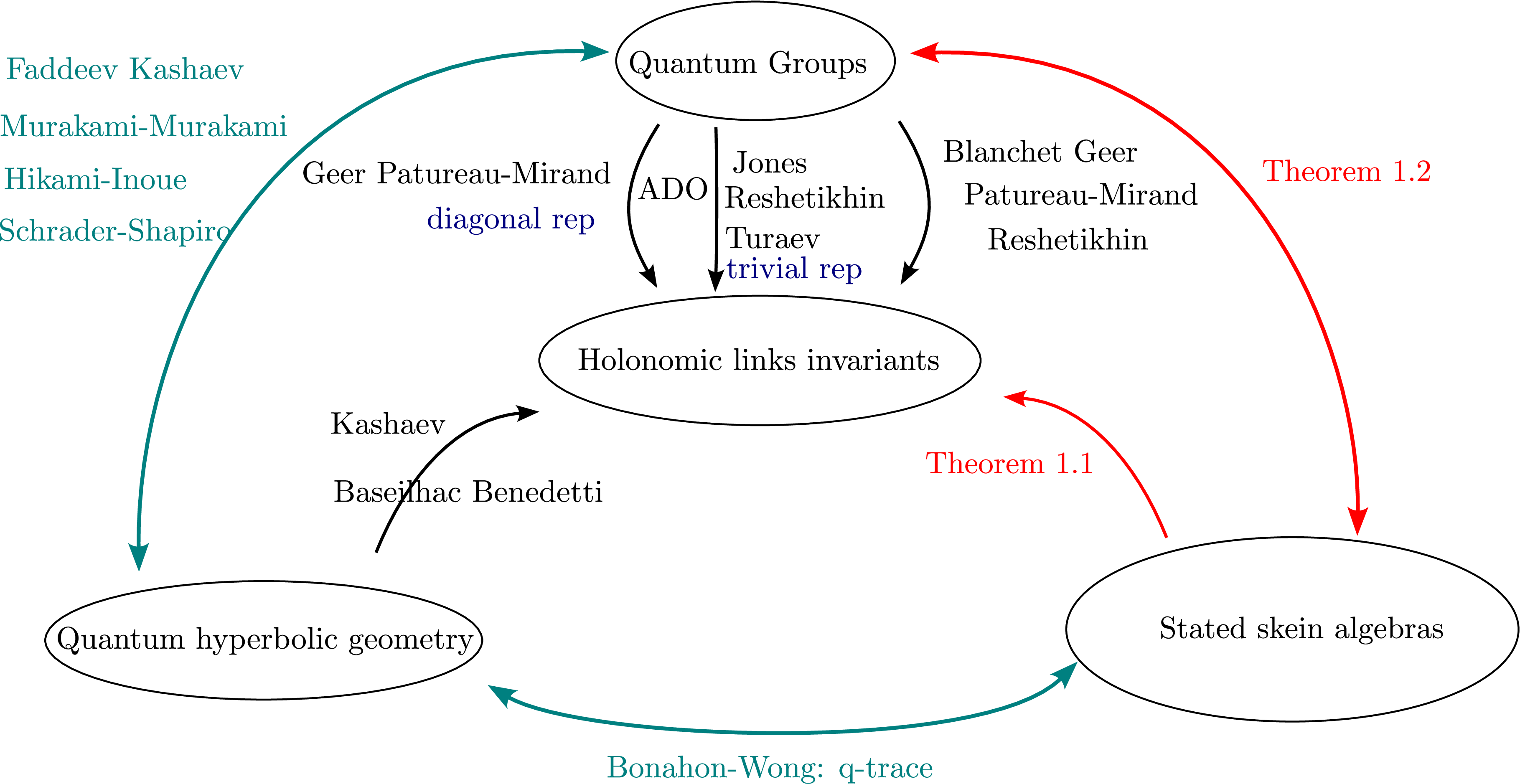} }
\caption{Three different approaches to define link invariants. The quantum trace relates skein theory to quantum hyperbolic geometry whereas Theorem \ref{theorem_intro_QG} relates quantum groups to skein theory. } 
\label{fig_QInvariants} 
\end{figure}

\vspace{2mm}
\paragraph{\textbf{Plan of the paper.}} 

After recalling the definition and main properties of reduced stated skein algebras in Section \ref{sec_sskein}, we prove Theorem \ref{theorem_intro_QG} in Section \ref{sec_QG_skein}. We next study the Poisson geometry of the moduli spaces $X(\mathbb{D}_n)$ in Section \ref{sec_moduli_space} and prove the important fact that two representations in $X(\mathbb{D}_n)$ are conjugate if and only if they belong to the same symplectic leaf. In Section \ref{sec_RepSSkein} we review the representation theory of $\overline{\mathcal{S}}_{\zeta}(\mathbb{D}_n)$ following \cite{KojuKaruo_RepRSSkein} and the renormalized trace from \cite{GeerPatureau_TraceQG}. Section \ref{sec_link_inv} is devoted to the construction of link invariants and the proof of Theorem \ref{theorem_intro_LinkInv} and of the first two items of Theorems \ref{theorem_intro_braidings} and \ref{theorem_intro_relations}. In Section \ref{sec_QT} we review the definitions and main properties of the balanced Chekhov-Fock algebra and the quantum trace and use them to obtain the explicit formula for the braiding operator appearing in the third item of Theorem \ref{theorem_intro_braidings} and explain its relation with quantum hyperbolic geometry.

\vspace{2mm}
\paragraph{\textbf{Acknowledgments.}} The author is thankful to S.Baseilhac, C.Blanchet, F.Bonahon, F.Costantino, D.Calaque, L.Funar,  A.Quesney, P.Roche,  J.Toulisse and V.Verchinine for useful discussions and to the University of South California, the Federal University of S\~ao Carlos, the University of S\~ao Paulo, the University of Montpellier and the Waseda University for their kind hospitality during the development of this work.  He acknowledges  support from the Coordena\c{c}\~ao de Aperfei\c{c}oamento de Pessoal de n\'ivel Superior (CAPES), from the GEometric structures And Representation varieties (GEAR) network, from  the Japanese Society for Promotion of Science (JSPS), from the Centre National de la Recherche Scientifique (CNRS) and from the European Research Council (ERC DerSympApp) under the European Union’s Horizon 2020 research and innovation program (Grant Agreement No. 768679).

\section{Stated skein algebras and their reduced versions}\label{sec_sskein}

\subsection{Definition and main properties of stated skein algebras}

\subsubsection{Definitions}

 \begin{definition}[Marked surfaces]\label{def_marked_surfaces}
 \begin{enumerate}
 \item A \textbf{marked surface} is a pair $\mathbf{\Sigma}=(\Sigma, \mathcal{A})$ where $\Sigma$ is a compact oriented surface and $\mathcal{A}$ is a finite set of pairwise disjoint closed intervals embedded in $\partial \Sigma$ named \textbf{boundary edges}. 
 The orientation of $\Sigma$ induces an orientation of $\partial \Sigma$ and thus an orientation of each boundary edge.
 We impose that each boundary edge $a\subset \partial \Sigma$ is equipped with a parametrization $\varphi_a: [0,1] \hookrightarrow \partial \Sigma$ such that $\varphi_a([0,1])=a$ and such that $\varphi_a$ is an oriented embedding (where $[0,1]$ is oriented from $0$ to $1$). A marked surface  $\mathbf{\Sigma}=(\Sigma, \mathcal{A})$ is called \textbf{unmarked} if $\mathcal{A}=\emptyset$ and is said \textbf{essential} if each connected component of $\Sigma$ contains at least one boundary edge.
  \item A connected component of $\partial \Sigma \setminus \mathcal{A}$ is called a \textbf{puncture} and, pictorially, we will represent them as  punctures $\bullet$. There are two kinds of punctures: the ones homeomorphic to a circle, which correspond to unmarked boundary components are called \textbf{inner punctures} and their set is denoted by $\mathring{\mathcal{P}}$; the ones homeomorphic to an open interval, which lies between two boundary edges of the same boundary component, are called \textbf{boundary punctures} and their set is denoted by $\mathcal{P}^{\partial}$. 
  \item An \textbf{embedding} of marked surfaces $f: \mathbf{\Sigma}_1 \to \mathbf{\Sigma}_2$ is an oriented embedding $f:\Sigma_1\hookrightarrow \Sigma_2$ of the underlying surfaces which induces an embedding $f: \mathcal{A}_1\hookrightarrow \mathcal{A}_2$. An embedding $f$ is said \textbf{strong} if  for every boundary edge $b\in \mathcal{A}_2$ there is at most one boundary edge $a\in \mathcal{A}_1$ such that $f(a)\subset b$. Marked surfaces with embeddings form a category $\mathcal{MS}$ with symmetric monoidal structure given by the disjoint union $\sqcup$ and marked surfaces with strong embeddings form a subcategory $\mathcal{MS}^{str}$ (so the inclusion $\mathcal{MS}^{str}\subset \mathcal{MS}$ is faithful but not full).
 \item If $\mathbf{\Sigma}_1=(\Sigma_1, \mathcal{A}_1)$ and $\mathbf{\Sigma}_2=(\Sigma_2, \mathcal{A}_2)$ are two marked surfaces and $a_1, a_2$ are boundary edges of $\mathbf{\Sigma}_1$ and $\mathbf{\Sigma}_2$ respectively, we denote by $\mathbf{\Sigma}_1\cup_{a_1\# a_2} \mathbf{\Sigma}_2:= (\Sigma_1\cup_{a_1\# a_2}\Sigma_2, (\mathcal{A}_1 \cup \mathcal{A}_2)\setminus \{a_1, a_2\})  $ the marked surface obtained by gluing $a_1$ to $a_2$ where 
 $$ \Sigma_1\cup_{a_1\# a_2}\Sigma_2:= \quotient{ \Sigma_1 \bigsqcup \Sigma_2}{ (\varphi_{a_1}(1-t)=\varphi_{a_2}(t), t\in [0,1])}.$$
 \item The \textbf{bigon} $\mathbb{B}=(D^2, \{a_L, a_R\})=\adjustbox{valign=c}{\includegraphics[width=0.5cm]{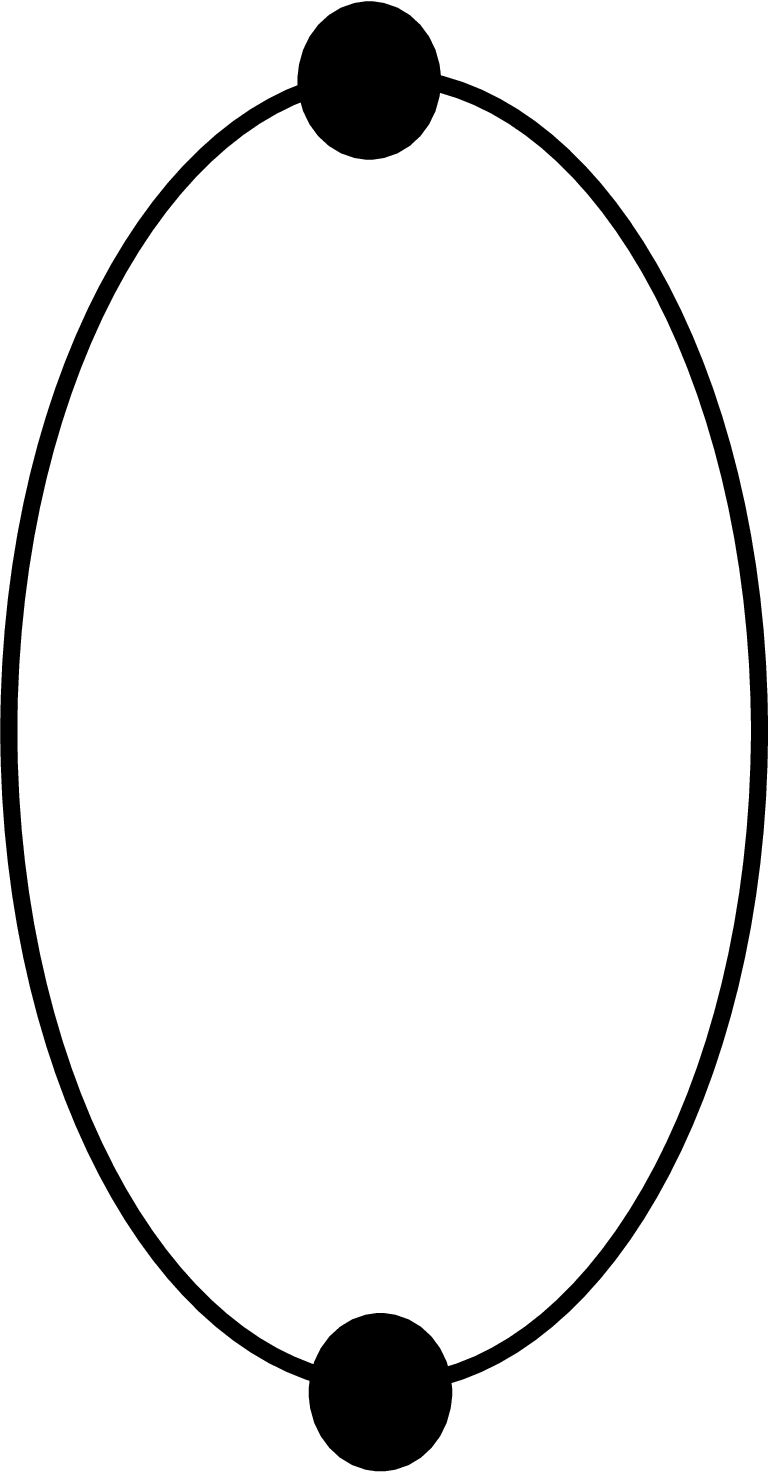}} $ is a disc with two boundary edges (thus two boundary punctures). The \textbf{triangle} $\mathbb{T}=(D^2, \{a,b,c\})=\adjustbox{valign=c}{\includegraphics[width=0.9cm]{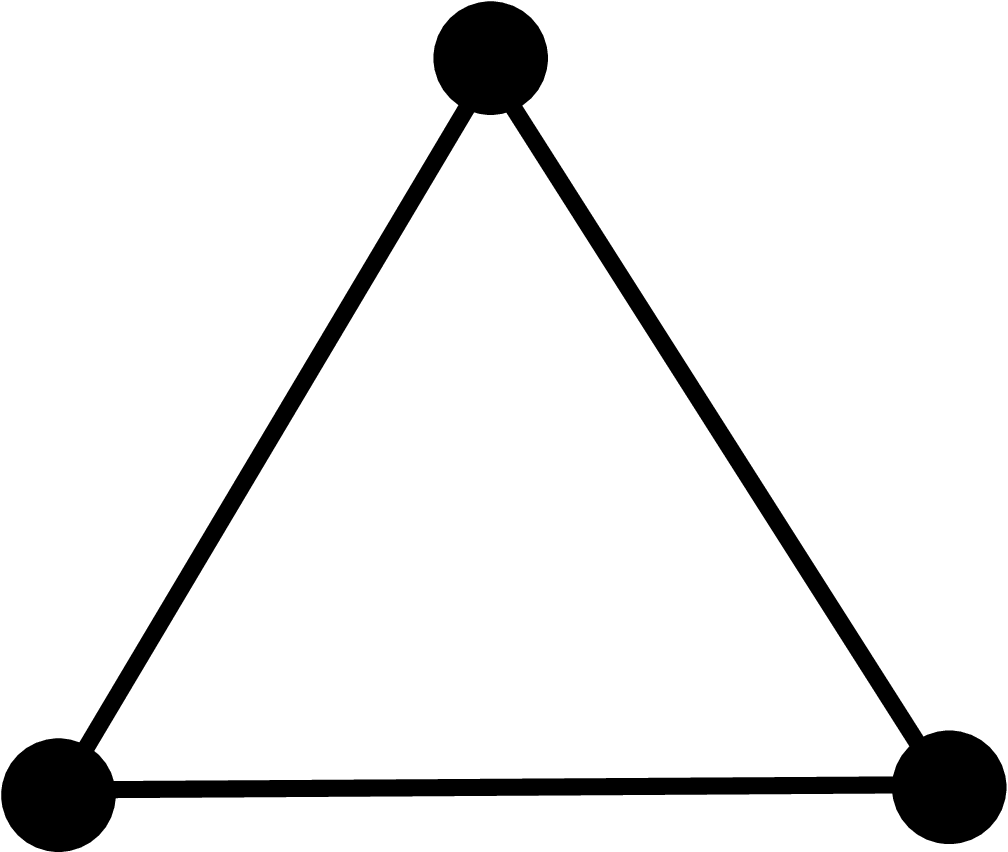}} $ is a disc with three boundary edges (thus three boundary punctures). For $n\geq 0$, the $n$-th punctured bigon $\mathbb{D}_n=(D_n, \{b_L, b_R\})=\adjustbox{valign=c}{\includegraphics[width=0.7cm]{Dn.eps}} $ is a disc with two boundary edges and $n$ open subdiscs removed from its interior (so it has two boundary punctures and $n$ inner punctures).
\end{enumerate}
 \end{definition}

 \begin{definition}[Tangles and diagrams]\label{def_tangles} Let $\mathbf{\Sigma}=(\Sigma, \mathcal{A})$ be a marked surface.
 \begin{enumerate}
 \item  A \textbf{tangle} in $ \mathbf{\Sigma} \times (0,1)$   is a  compact framed, properly embedded $1$-dimensional manifold $T\subset \Sigma\times (0,1)$ such that  $\partial T \subset \mathcal{A}\times (0,1)$ and 
 for every point of $\partial T$ the framing is parallel to the $(0,1)$ factor and points to the direction of $1$.  Here, by framing, we refer to a section of the unitary normal bundle of $T$. The \textbf{height} of $(v,h)\in \Sigma\times (0,1)$ is $h$.  If $a\in \mathcal{A}$ is a boundary edge, we impose that no two points in $\partial_aT:= \partial T \cap a\times(0,1)$  have the same heights, hence the set $\partial_aT$ is totally ordered by the heights. Two tangles are isotopic if they are isotopic through the class of tangles that preserve the boundary height orders. By convention, the empty set is a tangle only isotopic to itself.
 \item Let $\pi : \Sigma\times (0,1)\to \Sigma$ be the projection with $\pi(v,h)=v$. A tangle $T$ is in a \textbf{standard position} if for each of its points, the framing is parallel to the $(0,1)$ factor and points in the direction of $1$ and is such that $\pi_{\big| T} : T\rightarrow \Sigma$ is an immersion with at most transversal double points in the interior of $\Sigma$. Every tangle is isotopic to a tangle in a standard position. We call \textbf{diagram}  the image $D=\pi(T)$ of a tangle in a standard position, together with the over/undercrossing information at each double point. An isotopy class of diagram $D$ together with a total order of $\partial_a D:=\partial D\cap a$ for each boundary arc $a$, define uniquely an isotopy class of tangle. When choosing an orientation $\mathfrak{o}(a)$ of a boundary edge $a$ and a diagram $D$, the set $\partial_aD$ receives a natural order by setting that the points are increasing when going in the direction of $\mathfrak{o}(a)$. Pictorially, we will depict tangles by drawing a diagram and an orientation (an arrow) for each boundary edge, as in Figure \ref{fig_statedtangle}. When a boundary edge $a$ is oriented we assume that $\partial_a D$ is ordered according to the orientation.
 \item A connected open diagram without double point is called an \textbf{arc}. A closed connected diagram without double point is called a \textbf{loop}.
 \item  A \textbf{state} of a tangle is a map $s:\partial T \rightarrow \{-, +\}$. A pair $(T,s)$ is called a \textbf{stated tangle}. We define a \textbf{stated diagram} $(D,s)$ in a similar manner.
 \end{enumerate}
 \end{definition}
 
 \begin{figure}[!h] 
\centerline{\includegraphics[width=6cm]{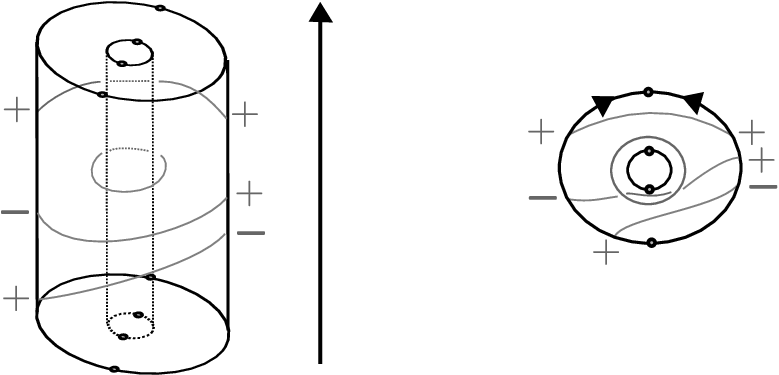} }
\caption{On the left: a stated tangle. On the right: its associated diagram. The arrows represent the height orders. } 
\label{fig_statedtangle} 
\end{figure}

 \begin{definition}[Stated skein algebras](\cite{BonahonWongqTrace, LeStatedSkein})\label{def_skein}
 Let $k$ be a commutative unital ring and $A^{1/2}\in k^{\times}$ and invertible element.
 The \textbf{stated skein algebra}  $\mathcal{S}_A(\mathbf{\Sigma})$ is the  free $k$-module generated by isotopy classes of stated tangles in $\mathbf{\Sigma}\times (0, 1)$ modulo the following relations \eqref{eq: skein 1} and \eqref{eq: skein 2}, 
  	\begin{equation}\label{eq: skein 1} 
\begin{tikzpicture}[baseline=-0.4ex,scale=0.5,>=stealth]	
\draw [fill=gray!45,gray!45] (-.6,-.6)  rectangle (.6,.6)   ;
\draw[line width=1.2,-] (-0.4,-0.52) -- (.4,.53);
\draw[line width=1.2,-] (0.4,-0.52) -- (0.1,-0.12);
\draw[line width=1.2,-] (-0.1,0.12) -- (-.4,.53);
\end{tikzpicture}
=A
\begin{tikzpicture}[baseline=-0.4ex,scale=0.5,>=stealth] 
\draw [fill=gray!45,gray!45] (-.6,-.6)  rectangle (.6,.6)   ;
\draw[line width=1.2] (-0.4,-0.52) ..controls +(.3,.5).. (-.4,.53);
\draw[line width=1.2] (0.4,-0.52) ..controls +(-.3,.5).. (.4,.53);
\end{tikzpicture}
+A^{-1}
\begin{tikzpicture}[baseline=-0.4ex,scale=0.5,rotate=90]	
\draw [fill=gray!45,gray!45] (-.6,-.6)  rectangle (.6,.6)   ;
\draw[line width=1.2] (-0.4,-0.52) ..controls +(.3,.5).. (-.4,.53);
\draw[line width=1.2] (0.4,-0.52) ..controls +(-.3,.5).. (.4,.53);
\end{tikzpicture}
\hspace{.5cm}
\text{ and }\hspace{.5cm}
\begin{tikzpicture}[baseline=-0.4ex,scale=0.5,rotate=90] 
\draw [fill=gray!45,gray!45] (-.6,-.6)  rectangle (.6,.6)   ;
\draw[line width=1.2,black] (0,0)  circle (.4)   ;
\end{tikzpicture}
= -(A^2+A^{-2}) 
\begin{tikzpicture}[baseline=-0.4ex,scale=0.5,rotate=90] 
\draw [fill=gray!45,gray!45] (-.6,-.6)  rectangle (.6,.6)   ;
\end{tikzpicture}
;
\end{equation}

\begin{equation}\label{eq: skein 2} 
\begin{tikzpicture}[baseline=-0.4ex,scale=0.5,>=stealth]
\draw [fill=gray!45,gray!45] (-.7,-.75)  rectangle (.4,.75)   ;
\draw[->] (0.4,-0.75) to (.4,.75);
\draw[line width=1.2] (0.4,-0.3) to (0,-.3);
\draw[line width=1.2] (0.4,0.3) to (0,.3);
\draw[line width=1.1] (0,0) ++(90:.3) arc (90:270:.3);
\draw (0.65,0.3) node {\scriptsize{$+$}}; 
\draw (0.65,-0.3) node {\scriptsize{$+$}}; 
\end{tikzpicture}
=
\begin{tikzpicture}[baseline=-0.4ex,scale=0.5,>=stealth]
\draw [fill=gray!45,gray!45] (-.7,-.75)  rectangle (.4,.75)   ;
\draw[->] (0.4,-0.75) to (.4,.75);
\draw[line width=1.2] (0.4,-0.3) to (0,-.3);
\draw[line width=1.2] (0.4,0.3) to (0,.3);
\draw[line width=1.1] (0,0) ++(90:.3) arc (90:270:.3);
\draw (0.65,0.3) node {\scriptsize{$-$}}; 
\draw (0.65,-0.3) node {\scriptsize{$-$}}; 
\end{tikzpicture}
=0,
\hspace{.2cm}
\begin{tikzpicture}[baseline=-0.4ex,scale=0.5,>=stealth]
\draw [fill=gray!45,gray!45] (-.7,-.75)  rectangle (.4,.75)   ;
\draw[->] (0.4,-0.75) to (.4,.75);
\draw[line width=1.2] (0.4,-0.3) to (0,-.3);
\draw[line width=1.2] (0.4,0.3) to (0,.3);
\draw[line width=1.1] (0,0) ++(90:.3) arc (90:270:.3);
\draw (0.65,0.3) node {\scriptsize{$+$}}; 
\draw (0.65,-0.3) node {\scriptsize{$-$}}; 
\end{tikzpicture}
=A^{-1/2}
\begin{tikzpicture}[baseline=-0.4ex,scale=0.5,>=stealth]
\draw [fill=gray!45,gray!45] (-.7,-.75)  rectangle (.4,.75)   ;
\draw[-] (0.4,-0.75) to (.4,.75);
\end{tikzpicture}
\hspace{.1cm} \text{ and }
\hspace{.1cm}
A^{1/2}
\heightexch{->}{-}{+}
- A^{5/2}
\heightexch{->}{+}{-}
=
\heightcurve.
\end{equation}
The product of two classes of stated tangles $[T_1,s_1]$ and $[T_2,s_2]$ is defined by  isotoping $T_1$ and $T_2$  in $\Sigma \times (1/2, 1) $ and $\Sigma \times (0, 1/2)$ respectively and then setting $[T_1,s_1]\cdot [T_2,s_2]=[T_1\cup T_2, s_1\cup s_2]$. Figure \ref{fig_product} illustrates this product.
\par An embedding $f: \mathbf{\Sigma_1} \to \mathbf{\Sigma}_2$ induces a linear map $f_*: \mathcal{S}_A(\mathbf{\Sigma}_1) \to \mathcal{S}_A(\mathbf{\Sigma}_2)$ defined by $f_*([T,s]):= [\overline{f}(T), s \circ \overline{f}^{-1}]$ where $\overline{f}: \Sigma_1\times [0,1] \to \Sigma_2\times [0,1]$ is defined by $\overline{f}(x,t):=(f(x),t)$. Moreover, if $f$ is strong, then $f_*$ is a morphism of algebras.
Stated skein algebras define  symmetric monoidal functors $\mathcal{S}_A : \mathcal{MS}^{str} \to \Alg_k$ and $\mathcal{S}_A: \mathcal{MS}\to \Mod_k$.
\end{definition}
\par For an unmarked surface, $\mathcal{S}_A(\mathbf{\Sigma})$ coincides with the classical  Kauffman-bracket skein algebra.

\begin{figure}[!h] 
\centerline{\includegraphics[width=8cm]{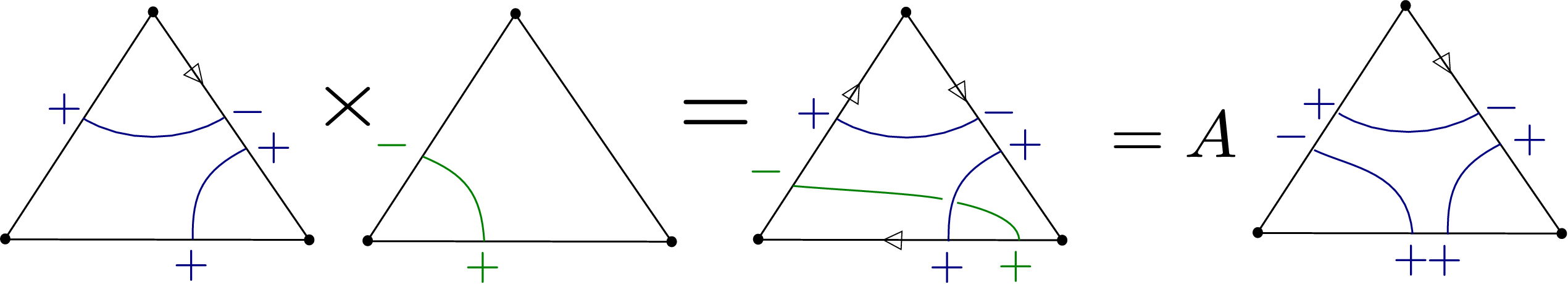} }
\caption{An illustration of the product in stated skein algebras.} 
\label{fig_product} 
\end{figure}

 \begin{definition}[Reduced stated skein algebras]
 \begin{enumerate}
 \item Let $\mathbf{\Sigma}$ a marked surface and $p\in \mathcal{P}^{\partial}$ a boundary puncture between two consecutive boundary edges $a$ and $b$ on the same boundary component $\partial\in \Gamma^{\partial}$.
The orientation of $\Sigma$ induces an orientation of $\partial$ so a cyclic ordering of the elements of $\partial \cap \mathcal{A}$ we suppose that $b$ is followed by $a$ in this ordering.  We denote by $\alpha(p)$ the arc with one endpoint $v_a\in a$ and one endpoint $v_b \in b$ such that $\alpha(p)$ can be isotoped inside $\partial$. $\alpha(p)$ is called a \textbf{corner arc}.
Let $\alpha(p)_{ij}\in \mathcal{S}_A(\mathbf{\Sigma})$ be the class of the stated arc $(\alpha(p), s)$ where $s(v_a)=i$ and $s(v_b)=j$.
 \item We call \textbf{bad arc} associated to $p$ the element $\alpha(p)_{-+}\in \mathcal{S}_A(\mathbf{\Sigma})$ (see Figure \ref{fig_bad_arc}). The \textbf{reduced stated skein algebra} $\overline{\mathcal{S}}_A(\mathbf{\Sigma})$ is the quotient of $\mathcal{S}_A(\mathbf{\Sigma})$ by the ideal generated by all bad arcs. We denote by  $\overline{\mathcal{S}}_A : \mathcal{MS}^{str} \to \Alg_k$ and $\overline{\mathcal{S}}_A: \mathcal{MS}\to \Mod_k$ the reduced stated skein functors.
 \end{enumerate}
 \end{definition}

 \begin{figure}[!h] 
\centerline{\includegraphics[width=4cm]{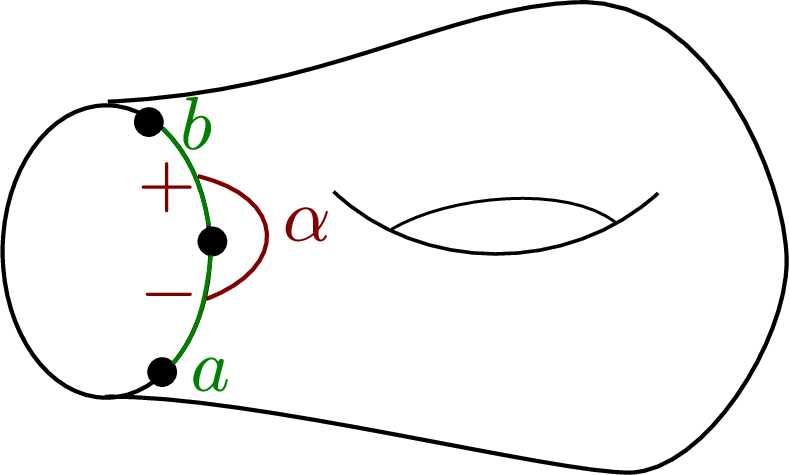} }
\caption{A bad arc.} 
\label{fig_bad_arc} 
\end{figure} 
 
 \subsubsection{Splitting morphisms}
 
 The main interest in extending skein algebras to marked surfaces is the existence of splitting morphisms which we now describe.
  Let $a$, $b$ be two distinct boundary edges of $\mathbf{\Sigma}$, denote by $\pi : \Sigma\rightarrow \Sigma_{a\#b}$ the projection and $c:=\pi(a)=\pi(b)$. Let $(T_0, s_0)$ be a stated framed tangle of $\Sigma_{a\#b}\times (0,1)$ transverse to $c\times (0,1)$ and such that the heights of the points of $T_0 \cap c\times (0,1)$ are pairwise distinct and the framing of the points of $T_0 \cap c\times (0,1)$ is vertical towards $1$. Let $T\subset \Sigma \times (0,1)$ be the framed tangle obtained by cutting $T_0$ along $c$. 
Any two states $s_a : \partial_a T \rightarrow \{-,+\}$ and $s_b : \partial_b T \rightarrow \{-,+\}$ give rise to a state $(s_a, s, s_b)$ on $T$. 
Both the sets $\partial_a T$ and $\partial_b T$ are in canonical bijection with the set $T_0\cap c$ by the map $\pi$. Hence the two sets of states $s_a$ and $s_b$ are both in canonical bijection with the set $\mathrm{St}(c):=\{ s: c\cap T_0 \rightarrow \{-,+\} \}$. 

\begin{definition}[Splitting morphism]\label{def_gluing_map}
The \textbf{splitting morphism} $\theta_{a\#b}: \mathcal{S}_{A}(\mathbf{\Sigma}_{a\#b}) \rightarrow \mathcal{S}_{A}(\mathbf{\Sigma})$ is the linear map given, for any $(T_0, s_0)$ as above, by: 
$$ \theta_{a\#b} \left( [T_0,s_0] \right) := \sum_{s \in \mathrm{St}(c)} [T, (s, s_0 , s) ].$$
\end{definition}

\begin{figure}[!h] 
\centerline{\includegraphics[width=8cm]{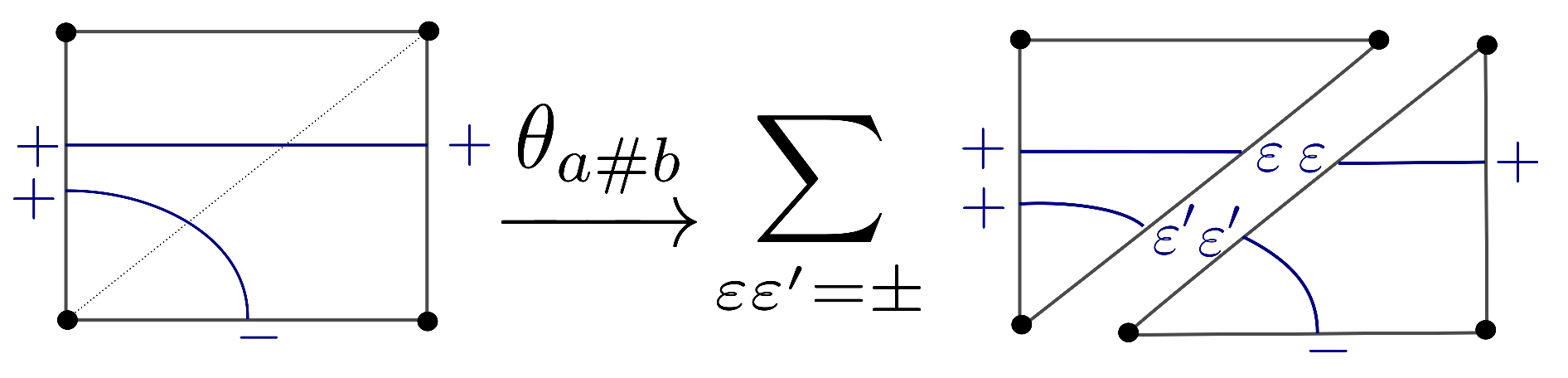} }
\caption{An illustration of the splitting morphism $\theta_{a\#b}$.} 
\label{fig_gluingmap} 
\end{figure} 

\begin{theorem}\label{theorem_gluing}(\cite[Theorem $3.1$]{LeStatedSkein}, \cite[Theorem $7.6$]{CostantinoLe19})
 The linear map $\theta_{a\#b}: \mathcal{S}_A(\mathbf{\Sigma}_{a\#b}) \hookrightarrow \mathcal{S}_A(\mathbf{\Sigma})$ is an injective morphism of algebras. It passes to the quotient to define an injective morphism (still denoted by the same letter) 
 $\theta_{a\#b}: \overline{\mathcal{S}}_A(\mathbf{\Sigma}_{a\#b}) \hookrightarrow \overline{\mathcal{S}}_A(\mathbf{\Sigma})$. 
   Moreover the gluing operation is coassociative in the sense that if $a,b,c,d$ are four distinct boundary edges, then we have $\theta_{a\#b} \circ \theta_{c\#d} = \theta_{c\#d} \circ \theta_{a\#b}$.
\end{theorem}

\subsubsection{Comodule structures}\label{sec_comodule}

Recall that the bigon $\mathbb{B}$ is a disc with two boundary edges  $a_L$ and  $a_R$. While gluing two bigons $\mathbb{B}$ and $\mathbb{B}'$ together along $a_R \# a_L'$, we get another bigon. Thus we have a  coproduct
$$ \Delta:= \theta_{a_R\# a_L'} : \mathcal{S}_A(\mathbb{B}) \to \mathcal{S}_A(\mathbb{B})^{\otimes 2}, $$
which endows $\mathcal{S}_A(\mathbb{B})$ with a structure of Hopf algebra. 

Let $\alpha_{ij}:= \adjustbox{valign=c}{\includegraphics[width=0.8cm]{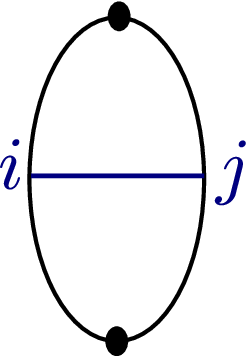}} \in \mathcal{S}_A(\mathbb{B})$ for $i,j = \pm$. Write $q:=A^2$. 

\begin{theorem}\label{theorem_bigon}(\cite{KojuQuesneyClassicalShadows}, \cite{CostantinoLe19}) The  Hopf algebra $\mathcal{S}_A(\mathbb{B})$ is isomorphic to $\mathcal{O}_q[\SL_2]$ through an isomorphism sending $\alpha_{++}$, $\alpha_{+-}$, $\alpha_{-+}$, $\alpha_{--}$ to $a,b,c,d$ respectively.
\end{theorem}

For a general marked surface $\mathbf{\Sigma}$ with a boundary edge $c$ , since $\left( \mathbb{B} \cup \mathbf{\Sigma}\right)_{a_R\# c} =\mathbf{\Sigma}$ and $\left(\mathbf{\Sigma}\cup \mathbb{B}\right)_{c\# a_L} = \mathbf{\Sigma}$, we have some left and right comodule maps 
$$\Delta_c^L := \theta_{a_R\#c} : \mathcal{S}_A(\mathbf{\Sigma}) \to \mathcal{O}_q[\SL_2] \otimes \mathcal{S}_A(\mathbf{\Sigma})$$ and 
$$\Delta_c^R:= \theta_{c\#a_L}: \mathcal{S}_A(\mathbf{\Sigma}) \to \mathcal{S}_A(\mathbf{\Sigma})\otimes \mathcal{O}_q[\SL_2] , $$
as illustrated in Figure \ref{fig_comodule_maps}. 

\begin{figure}[!h] 
\centerline{\includegraphics[width=11cm]{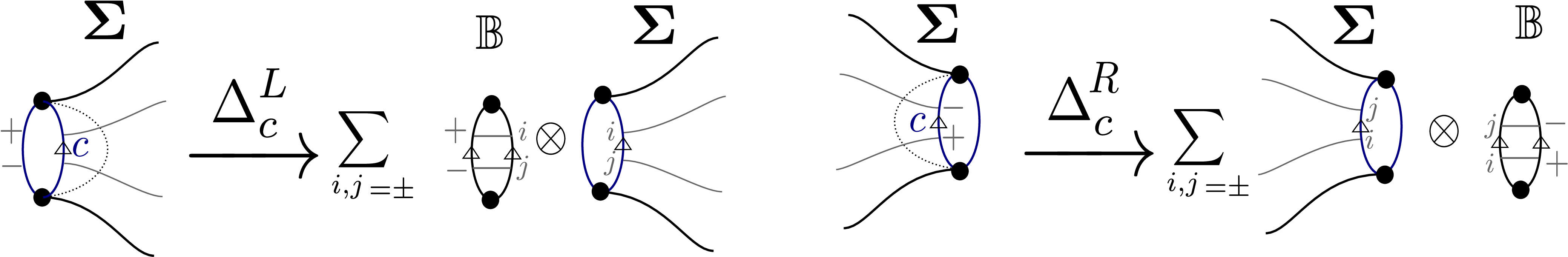} } 
\caption{Comodule maps.} 
\label{fig_comodule_maps} 
\end{figure} 

The following theorem characterizes the image of the gluing map and  was proved independently in \cite{CostantinoLe19} and \cite{KojuQuesneyClassicalShadows}.

\begin{theorem}\label{theorem_exactsequence}(\cite[Theorem $4.7$]{CostantinoLe19}, \cite[Theorem $1.1$]{KojuQuesneyClassicalShadows})
Let $\mathbf{\Sigma}$ be a punctured surface and $a,b$ two boundary edges. The following sequence is exact: 
$$ 0 \to \mathcal{S}_A(\mathbf{\Sigma}_{a\#b}) \xrightarrow{\theta_{a\#b}} \mathcal{S}_A(\mathbf{\Sigma}) \xrightarrow{\Delta^L_a - \sigma \circ \Delta^R_b}\mathcal{O}_q[\SL_2] \otimes \mathcal{S}_A(\mathbf{\Sigma}), $$
where $\sigma(x\otimes y) := y\otimes x$. 
\end{theorem}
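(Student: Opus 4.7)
The plan is to verify the two nontrivial assertions of the exact sequence, injectivity of $i_{a\#b}$ being already provided by Theorem \ref{theorem_gluing}: (i) the sequence is a complex, that is, $(\Delta^L_a - \sigma \circ \Delta^R_b) \circ i_{a\#b} = 0$; and (ii) exactness at the middle term, that is, $\ker(\Delta^L_a - \sigma \circ \Delta^R_b) \subseteq \mathrm{Image}(i_{a\#b})$.

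For (i), the argument is a direct unfolding of Definition \ref{def_gluing_map}. Each of the three maps $i_{a\#b}$, $\Delta^L_a$, and $\Delta^R_b$ is itself an instance of a gluing map $i_{\cdot \# \cdot}$ (the last two glue a bigon $\mathbb{B}$ to $\mathbf{\Sigma}$ along $a$ resp. $b$, identifying the resulting surface with $\mathbf{\Sigma}$). Since the pairs of glued arcs involved are pairwise disjoint, Theorem \ref{theorem_gluing}(1) allows one to rearrange the compositions $\Delta^L_a \circ i_{a\#b}$ and $\Delta^R_b \circ i_{a\#b}$ into a single three-arc double gluing map $\mathcal{S}_{\omega}(\mathbf{\Sigma}_{|a\#b}) \to \mathcal{S}_{\omega}(\mathbb{B}) \otimes \mathcal{S}_{\omega}(\mathbf{\Sigma})$. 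Written out as a sum of stated diagrams, this common expression takes the form $\sum_{u,v} [\alpha_{uv}] \otimes [T, (u,s_0,v)]$ with the matched states $u,v$ ranging over $\mathrm{St}(c)$; swapping the roles of $a$ and $b$ produces the same sum up to the flip $\sigma$, so $\Delta^L_a \circ i_{a\#b} = \sigma \circ \Delta^R_b \circ i_{a\#b}$.

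For (ii), I would argue using the basis $\mathcal{B}$ of $\mathcal{S}_{\omega}(\mathbf{\Sigma})$ given by Definition \ref{def_basis}. Pick orientations of $a$ and $b$ induced by $\mathfrak{o}$ so that $\pi$ intertwines the two induced orders. Given $x = \sum_i c_i [D_i, s_i] \in \mathcal{S}_{\omega}(\mathbf{\Sigma})$ in the kernel, record for each basis element the restricted states $s_i^a : \partial_a D_i \to \{\pm\}$ and $s_i^b : \partial_b D_i \to \{\pm\}$, and call $[D_i, s_i]$ \emph{matched} when $s_i^a$ corresponds to $s_i^b$ under $\pi$ on endpoints. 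Applying $\Delta^L_a$ (resp. $\Delta^R_b$) to $[D_i, s_i]$ and reducing via the skein relations produces a $\mathcal{S}_{\omega}(\mathbb{B})$-linear combination whose leading term, with respect to a filtration by the number of endpoints on $a \cup b$, is indexed by the bigon arc generators $[\alpha_{uv}]$ carrying the boundary states. Comparing coefficients of each distinct $[\alpha_{uv}]$ in the equality $(\Delta^L_a - \sigma \circ \Delta^R_b)(x) = 0$ forces the coefficients $c_i$ of unmatched basis elements to vanish and the remaining coefficients to assemble into a well-defined element $y \in \mathcal{S}_{\omega}(\mathbf{\Sigma}_{|a\#b})$ with $i_{a\#b}(y) = x$.

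The main obstacle lies in (ii): the coactions $\Delta^L_a$ and $\Delta^R_b$ do not preserve the basis $\mathcal{B}$ because gluing a bigon can create trivial arcs and produce non $\mathfrak{o}$-increasing states, forcing normalization via the skein relations \eqref{eq: skein 1} and \eqref{eq: skein 2} together with height-exchange coefficients. The coefficient comparison therefore has to be performed with respect to a carefully chosen well-ordering on $\mathcal{B}$, isolating in each graded piece the leading terms that are linearly independent. Alternatively, one may interpret $\ker(\Delta^L_a - \sigma \circ \Delta^R_b)$ as a cotensor product $\mathcal{S}_{\omega}(\mathbf{\Sigma}) \square_{\mathcal{S}_{\omega}(\mathbb{B})} \mathcal{R}$ over the Hopf algebra $\mathcal{S}_{\omega}(\mathbb{B}) \cong \mathcal{O}_q[\SL_2]$ and invoke Hopf-algebraic descent, but this requires a separate verification of coflatness of the resulting $\mathcal{S}_{\omega}(\mathbb{B})$-bicomodule structure.
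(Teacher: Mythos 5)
The paper does not actually prove this statement: it is imported verbatim from \cite[Theorem $4.7$]{CostantinoLe19} and \cite[Theorem $1.1$]{KojuQuesneyClassicalShadows}, so there is no in-paper argument to compare yours against. On its own merits, your part (i) is fine: both $\Delta^L_a\circ i_{a\#b}$ and $\sigma\circ\Delta^R_b\circ i_{a\#b}$ compute the double splitting of $\mathbf{\Sigma}_{|a\#b}$ along two parallel copies of the glued edge $c$, and coassociativity of the gluing maps (Theorem \ref{theorem_gluing}) identifies the two expressions; this is the standard argument and it is correct.

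The genuine gap is in (ii), which is the entire content of the theorem. Your coefficient-comparison argument is only a plan, and the step you defer is precisely the hard one. First, after applying $\Delta^L_a$ or $\Delta^R_b$ to a basis diagram with several endpoints on $a\cup b$, the $\mathcal{S}_{\omega}(\mathbb{B})$-components are \emph{monomials} in the generators $\alpha_{uv}$, and these are subject to the relations of $\mathcal{O}_q[\SL_2]$ (e.g.\ the quantum determinant relation), so distinct monomials are not linearly independent and one cannot ``compare coefficients of each distinct $[\alpha_{uv}]$'' naively; one must fix a PBW-type basis of $\mathcal{O}_q[\SL_2]$ and control how the coaction lands in it. Second, the diagrams produced on the $\mathcal{S}_{\omega}(\mathbf{\Sigma})$ side generally carry non-$\mathfrak{o}$-increasing states on $a$ and $b$, and re-expanding them via the height-exchange relations \eqref{eq: skein 2} mixes terms with \emph{different} boundary states and creates lower-order contributions; you must prove that your proposed filtration (by number of endpoints on $a\cup b$, refined by a well-ordering of states) is preserved and that the leading terms of the images of distinct basis elements remain linearly independent. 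Without this, the conclusion that unmatched coefficients vanish and the matched ones assemble into a preimage under $i_{a\#b}$ is asserted, not proved — and this bookkeeping is exactly the multi-page technical core of the proofs in \cite{CostantinoLe19, KojuQuesneyClassicalShadows}. Your alternative route via the cotensor product $\mathcal{S}_{\omega}(\mathbf{\Sigma})\, \square_{\mathcal{S}_{\omega}(\mathbb{B})}\, \mathcal{R}$ has the same status: you would need to establish (faithful) coflatness of the relevant comodule structure, which you explicitly leave unverified, so it does not close the gap either.
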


Note that by composing the $\Delta_a^L$ for $a\in \mathcal{A}$, we get a comodule map $\Delta^L : \mathcal{S}_A(\mathbf{\Sigma}) \to (\mathcal{O}_q[\SL_2])^{\otimes \mathcal{A}} \otimes \mathcal{S}_A(\mathbf{\Sigma})$. 
By abuse of notations, we will identify $\mathcal{S}_A(\mathbb{B})$ with $\mathcal{O}_q\SL_2$. Note that both $\alpha_{-+}=c$ and $\alpha_{+-}=b$ are bad arcs and the reduced stated skein algebra $\overline{\mathcal{S}}_A(\mathbb{B})$ is isomorphic to the Hopf algebra $\mathbb{C}[X]$ with coproduct $\Delta(X)=X\otimes X$ through the isomorphism identifying $\alpha_{++}$ and $\alpha_{--}$ with $X$ and $X^{-1}$ respectively. Therefore, the splitting morphism defines a comodule map 
$$ \Delta^L: \overline{\mathcal{S}}_A(\mathbf{\Sigma}) \to (\mathbb{C}[X^{\pm 1}])^{\otimes \mathcal{A}}\otimes \overline{\mathcal{S}}_A(\mathbf{\Sigma})$$
which we call the \textbf{toric coaction}.

\subsubsection{Bases and gradings}

 We now define bases. Let us first make a preliminary remark: if $T$ is a tangle in standard position and $D$ its planar diagram projection, we cannot recover the isotopy class $T$ from the isotopy class of  $D$ since we lost the information about the height orders of the sets $\partial_aT$, $a\in \mathcal{A}$ in the projection.
 
 \begin{convention}\label{convention_diagram} Let $(D,s)$ be a stated diagram in $\mathbf{\Sigma}$. Recall that each boundary edge $a\in \mathcal{A}$ receives an orientation from the orientation of $\Sigma$ and that for $v,w \in a$ we write $v<_{\mathfrak{o}^+} w$ if $a$ is oriented from $v$ to $w$. Let $(T,s)$ be a stated tangle such that $(1)$ $(T,s)$ is in standard position (in the sense of Definition \ref{def_tangles}) and its planar projection is $(D,s)$ and $(2)$ for every $a\in \mathcal{A}$, if $v, w \in \partial_a D$ are such that $v<_{\mathfrak{o}^+}w$ then $h(v)<h(w)$. Conditions $(1)$ and $(2)$ completely determine the isotopy class of $(T,s)$ and we will write $[D,s]:= [T,s]\in {\mathcal{S}}_A(\mathbf{\Sigma})$. 
 \end{convention}
 
 \begin{definition}[Bases]\label{def_basis}
 \begin{enumerate}
 \item A closed component of a diagram $D$ is trivial if it bounds an embedded disc in $\Sigma$. An open component of $D$ is trivial if it can be isotoped, relatively to its boundary, inside some boundary edge. A diagram is \textbf{simple} if it has neither double point nor trivial component. By convention, the empty set is a simple diagram. Let $\mathfrak{o}^+$ denote the orientation of the boundary edges of $\mathbf{\Sigma}$ induced by the orientation of $\Sigma$ as in Definition \ref{def_marked_surfaces}. For each boundary edge $a$ we write $<_{\mathfrak{o}^+}$ the induced total order on $\partial_a D$. A state $s: \partial D \rightarrow \{ - , + \}$ is $\mathfrak{o}^+-$\textbf{increasing} if for any boundary edge $a$ and any two points $x,y \in \partial_a D$, then $x<_{\mathfrak{o}^+} y$ implies $s(x)\leq s(y)$, with the convention $- < +$. 
 \item We denote by $\mathcal{B}\subset \overline{\mathcal{S}}_{A}(\mathbf{\Sigma})$  the set of classes $[D,s]$ of stated diagrams such that $(i)$ $D$ is simple and $(ii)$  $s$ is $\mathfrak{o}^+$-increasing. We also denote by $\overline{\mathcal{B}}\subset \overline{\mathcal{S}}_{A}(\mathbf{\Sigma})$ the set of classes $[D,s]$ satisfying $(i)$ and $(ii)$ and such that $(iii)$ $(D,s)$ does not contain bad arc component.
 \end{enumerate}
 \end{definition}
 
 \begin{theorem}(\cite[Theorem $2.11$ ]{LeStatedSkein}\cite[Theorem $7.1$]{CostantinoLe19}) $\mathcal{B}$ and $\overline{\mathcal{B}}$  are  bases of ${\mathcal{S}}_{A}(\mathbf{\Sigma})$ and $\overline{\mathcal{S}}_{A}(\mathbf{\Sigma})$ respectively.\end{theorem}

\begin{definition}[Homological grading] For $D$ a diagram, we denote by $[D]\in \mathrm{H}_1(\Sigma, \mathcal{A}; \mathbb{Z}/2\mathbb{Z})$ its homology class in $\Sigma$ relative to $\mathcal{A}$ with coefficients in $ \mathbb{Z}/2\mathbb{Z}$. For $c \in \mathrm{H}_1(\Sigma, \mathcal{A}; \mathbb{Z}/2\mathbb{Z})$, we write 
$$ \mathcal{S}_A(\mathbf{\Sigma}, c) := \Span \left( [D,s]\in \mathcal{B} : [D]=c\right).$$
The submodule $ \overline{\mathcal{S}}_A(\mathbf{\Sigma}, c) \subset \overline{\mathcal{S}}_A(\mathbf{\Sigma})$ is defined similarly.
\end{definition}

Note that the skein relations in Definition \ref{def_skein} preserve the homology class in $\mathrm{H}_1(\Sigma, \mathcal{A}; \mathbb{Z}/2\mathbb{Z})$ (i.e. if $\sum_i c_i [D_i,s_i]=0$ is a skein relation then all $D_i$ have the same class $[D_i]$) so for any stated diagram $(D,s)$ (not necessarily simple) the class $[D,s]$ belongs to the module $ \mathcal{S}_A(\mathbf{\Sigma}, c)$ with $c=[D]$. We deduce that $\mathcal{S}_A(\mathbf{\Sigma})= \oplus_{c \in \mathrm{H}_1(\Sigma, \mathcal{A}; \mathbb{Z}/2\mathbb{Z})} \mathcal{S}_A(\mathbf{\Sigma}, c)$ is an algebra graduation in the sense that for $x_1 \in  \mathcal{S}_A(\mathbf{\Sigma}, c_1) $ and  $x_2 \in  \mathcal{S}_A(\mathbf{\Sigma}, c_2) $ then $x_1x_2 \in   \mathcal{S}_A(\mathbf{\Sigma}, c_1+c_2) $. The algebra $\overline{\mathcal{S}}_A(\mathbf{\Sigma})$ is $\mathrm{H}_1(\Sigma, \mathcal{A}; \mathbb{Z}/2\mathbb{Z})$ graded in the same manner.

\subsubsection{Finite presentations of stated skein algebras}

\begin{definition}[Fundamental groupoids and presenting graphs]
 Let $\mathbf{\Sigma}=(\Sigma, \mathcal{A})$ be an essential  marked surface.
 \begin{enumerate}
 \item The \textbf{fundamental groupoid} $\Pi_1(\Sigma)$ is the groupoid whose objects are points in $\Sigma$ and whose morphisms $\beta: v_1 \to v_2$ are homotopy classes of path $c_{\beta}: [0,1] \to \Sigma$ such that $c_{\beta}(0)=v_1$ and $c_{\beta}(1)=v_2$. We write $v_1=s(\beta)$ (the source) and $v_2= t(\beta)$ (the target). The composition is the concatenation of paths and the unit at $v\in \Sigma$ is the class $1_v$ of the constant path. For $\beta: v_1 \to v_2$, we denote by $\beta^{-1}: v_2 \to v_1$ the class of the path $c_{\beta^{-1}}(t) = c_{\beta}(1-t)$ (so $\beta \beta^{-1}=1_{t(\beta)}$). For $a\in \mathcal{A}$ we denote by $v_a \in a$ the middle point $v_a:= \varphi_a(1/2)$ and denote by $\mathbb{V}=\{ v_a, a\in \mathcal{A}\}$ the set of such points. $\Pi_1(\Sigma, \mathbb{V})$ is the full subcategory of $\Pi_1(\Sigma)$ generated by $\mathbb{V}$. By abuse of notation, we also denote by $\Pi_1(\Sigma, \mathbb{V})$ the set of morphisms of the underlying category.
\item A \textbf{presenting graph} $\Gamma$ for $\mathbf{\Sigma}$ is an embedded oriented graph $\Gamma \subset \Sigma$ whose set of vertices is $\mathbb{V}$ and such that $\Sigma$ retracts on $\Gamma$. We denote by $\mathcal{E}(\Gamma)$ the set of its oriented edges whose elements are seen as paths in $\Pi_1(\Sigma, \mathbb{V})$. 
 \item An oriented arc $\alpha$  in $\mathbf{\Sigma}$ naturally defines a path in $\Pi_1(\Sigma, \mathbb{V})$ which we abusively also denote by $\alpha$. For $i,j \in \{-, +\}$, we denote by $\alpha_{ij}\in \mathcal{S}_A(\mathbf{\Sigma})$ the class of the arc $\alpha$ with state $i$ at its source point $s(\alpha)$ at state $j$ at its target point $t(\alpha)$. 
 \end{enumerate}
 \end{definition}
 
 For instance, consider the arcs in $\mathbb{D}_n$ drawn in Figure \ref{fig_arcs_Dn}. Then $\mathbb{D}_1$ admits a presenting graph $\Gamma$ whose edges are the two arcs $\mathcal{E}(\Gamma)=\{\alpha, \beta\}$. Alternatively, another presenting graph is given by $\mathcal{E}(\Gamma')=\{\alpha, \alpha_p\}$. Also $\mathbb{D}_n$ admits the presenting graph $\Gamma$ defined by $\mathcal{E}(\Gamma)=\{ \alpha^{(n)}, \alpha_{p_1}, \ldots, \alpha_{p_n}\}$. 
 
 \begin{figure}[!h] 
\centerline{\includegraphics[width=7cm]{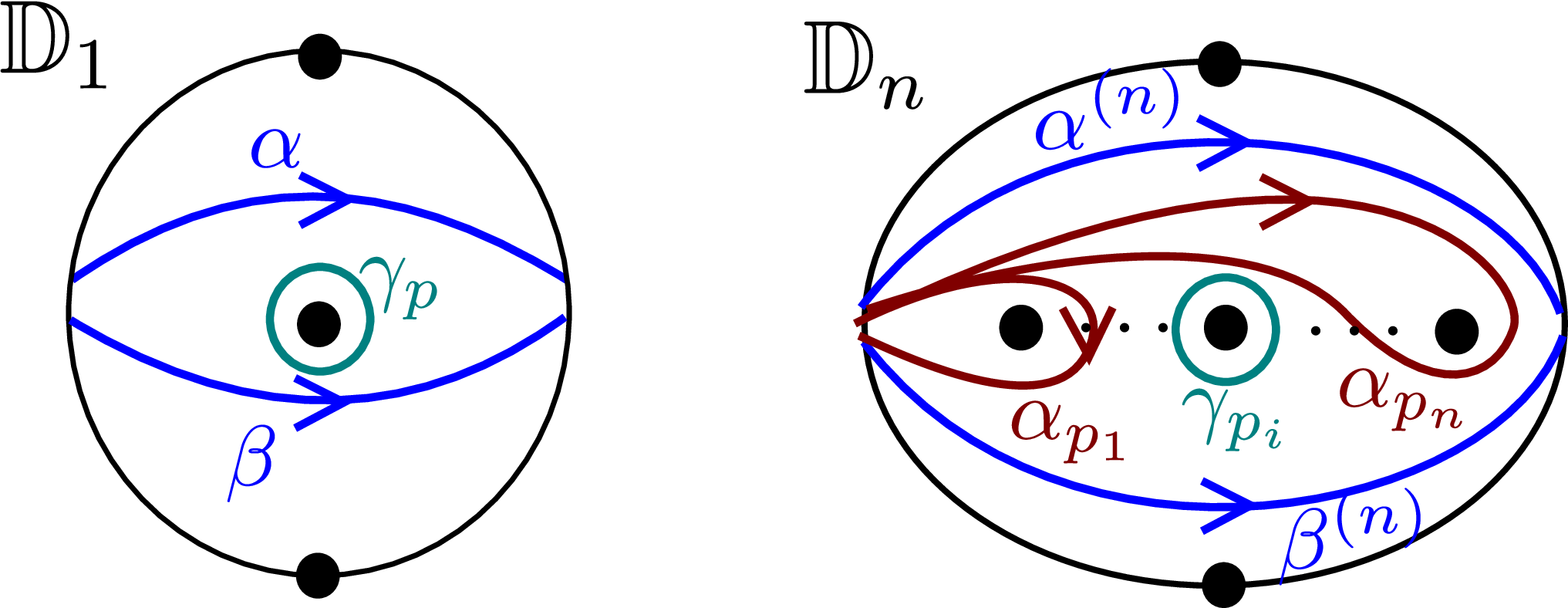} }
\caption{Some arcs and curves in $\mathbb{D}_n$.} 
\label{fig_arcs_Dn} 
\end{figure}  

\begin{theorem}(\cite[Proposition $3.4$]{KojuAzumayaSkein}, \cite[Theorem $1.1$]{KojuPresentationSSkein})\label{theorem_presentation_sskein} Let $\mathbf{\Sigma}$ be an essential marked surface with presenting graph $\Gamma$.  Then $\mathcal{S}_A(\mathbf{\Sigma})$ admits a finite presentation whose set of generators is $\mathbb{G}=\{ \alpha_{ij}: \alpha \in \mathcal{E}(\Gamma), i,j=\pm\}$ with relations described in \cite{KojuPresentationSSkein}.
\end{theorem}

We will mainly be interested in the presentation of $\mathcal{S}_A(\mathbb{D}_1)$ derived from the presenting graph $\Gamma$ with $\mathcal{E}(\Gamma)=\{\alpha, \beta\}$ so let us describe this case explicitly. Set 
$$ M(\alpha)= \begin{pmatrix} \alpha_{++} & \alpha_{+-} \\ \alpha_{-+} & \alpha_{--} \end{pmatrix}, \quad M(\beta)=\begin{pmatrix} \beta_{++} & \beta_{+-} \\ \beta_{-+} & \beta_{--} \end{pmatrix}, \quad \mathscr{R} =  
 \begin{pmatrix} A & 0 & 0 & 0 \\ 0 & 0 &A^{-1} & 0 \\ 0 & A^{-1} & A-A^{-3} & 0 \\ 0 & 0 & 0 & A \end{pmatrix}.$$
 Define the \textit{quantum determinant} $\det_q$ and the \textit{Kronecker product} $\odot$ by the formulas
 $$ {\det}_q \begin{pmatrix} a & b \\ c & d \end{pmatrix} := ad-q^{-1}bc, \quad M(\alpha) \odot M(\beta) = 
\begin{pmatrix}
 \alpha_{++} \beta_{++} & \alpha_{++} \beta_{+-} & \alpha_{+-} \beta_{++} & \alpha_{+-} \beta_{+-} \\
 \alpha_{++} \beta_{-+} &\alpha_{++} \beta_{--} &\alpha_{+-} \beta_{-+} &\alpha_{+-} \beta_{--}  \\
 \alpha_{-+} \beta_{++} &\alpha_{-+} \beta_{+-} &\alpha_{--} \beta_{++} &\alpha_{--} \beta_{+-} \\
 \alpha_{-+} \beta_{-+} &\alpha_{-+} \beta_{--} &\alpha_{--} \beta_{-+} &\alpha_{--} \beta_{--} 
 \end{pmatrix}.$$
 
 Specializing to $\mathbb{D}_1$ with presenting graph $\Gamma$, Theorem \ref{theorem_presentation_sskein} states that $\mathcal{S}_A(\mathbb{D}_1)$ is generated by the elements $\alpha_{ij}, \beta_{ij}$ for $i,j\in \{-,+\}$ with relations 
 \begin{equation}\label{eq_pres_D1}
 \mathscr{R} \left(M(y)\odot M(x)\right) = \left(M(x)\odot M(y)\right)\mathscr{R}\mbox{, for }(x,y) \in \{ (\alpha, \alpha), (\beta, \beta), (\alpha, \beta)\}  \mbox{ and } {\det}_q(M(\alpha))={\det}_q(M(\beta))=1.
 \end{equation}
 
 \begin{corollary}\label{coro_pres_D1}
$\mathcal{S}_A(\mathbb{D}_1)$ is generated by the elements $\alpha_{ij}, \beta_{ij}$ for $i,j\in \{-,+\}$ with relations 

\begin{align*}
\left.
\begin{array}{llll}
x_{++}x_{+-} &= q^{-1}x_{+-}x_{++} & x_{++}x_{-+}&=q^{-1}x_{-+}x_{++} 
\\ x_{--}x_{+-} &= q x_{+-}x_{--} & x_{--}x_{-+}&=q x_{-+}x_{--} 
\\ x_{++}x_{--}&=1+q^{-1}x_{+-}x_{-+} &  x_{--}x_{++}&=1 + q x_{+-}x_{-+} 
\\ x_{-+}x_{+-}&=x_{+-}x_{-+} & &
\end{array}
\right\} \mbox{, for }x \in \{\alpha, \beta\};  \\
\alpha_{\varepsilon \varepsilon'} \beta_{\varepsilon \varepsilon'}=\beta_{\varepsilon \varepsilon'} \alpha_{\varepsilon \varepsilon'}; \quad 
\beta_{+-}\alpha_{-+}=\alpha_{-+}\beta_{+-};  \\
\beta_{\varepsilon -}\alpha_{\varepsilon +}=q\alpha_{\varepsilon +}\beta_{\varepsilon -}; \quad
\alpha_{- \varepsilon}\beta_{+\varepsilon}= q \beta_{+\varepsilon} \alpha_{- \varepsilon};  \\
q^{-1}\beta_{\varepsilon +}\alpha_{\varepsilon -} - \alpha_{\varepsilon -}\beta_{\varepsilon +}= - (q-q^{-1})\alpha_{\varepsilon +}\beta_{\varepsilon -}; \\
q^{-1}\alpha_{+ \varepsilon}\beta_{-\varepsilon} - \beta_{-\varepsilon}\alpha_{+ \varepsilon}= -(q-q^{-1})\beta_{+ \varepsilon}\alpha_{-\varepsilon}; \\
\beta_{--}\alpha_{++}-\alpha_{++}\beta_{--} = (q-q^{-1})\beta_{+-} \alpha_{-+}; \\
\alpha_{--}\beta_{++} - \beta_{++}\alpha_{--} = (q-q^{-1})\alpha_{-+}\beta_{+-}; \\
\beta_{-+}\alpha_{+-}-\alpha_{+-}\beta_{-+}= -(q-q^{-1})(\alpha_{++}\beta_{--} - \beta_{++}\alpha_{--});
\end{align*}
for $\varepsilon, \varepsilon' \in \{-, +\}$. 
\end{corollary}

\begin{proof} The relations of Corollary \ref{coro_pres_D1} are obtained by developing the relations of Equation \eqref{eq_pres_D1}.

\end{proof}

\begin{remark} 
The Hopf algebra $\mathcal{S}_A(\mathbb{D}_1)$ is very similar to the Hopf algebra defined by Bigelow in \cite{BigelowQGroups} where our parameter $q$ is replaced by $-q$ in \cite{BigelowQGroups} and our generators $\alpha_{++}$, $\alpha_{+-}$, $\alpha_{-+}$, $\alpha_{--}$, $\beta_{++}$, $\beta_{+-}$, $\beta_{-+}$, $\beta_{--}$ are replaced by $k', e, e_0, k, l, f_0, f, l'$ respectively in \cite{BigelowQGroups}. Using this correspondence, the reader will find in \cite{BigelowQGroups} a nice topological interpretation of both the antipode and  the equality $\mu \circ (S\otimes \id)\circ \Delta = \eta\circ \epsilon$.
\end{remark}

\subsection{Skein interpretation of the  Drinfeld double of the quantum Borel algebra}\label{sec_QG_skein}

 Inspired by Bigelow's construction in \cite{BigelowQGroups}, we now endow the algebra $\mathcal{S}_A(\mathbb{D}_1)$ with a Hopf algebra structure.
Recall that  the $n$-th punctured bigon $\mathbb{D}_n=(D_n, \{b_L, b_R\})=\adjustbox{valign=c}{\includegraphics[width=0.7cm]{Dn.eps}} $ is a disc with two boundary edges and $n$ open subdiscs removed from its interior. So $\mathbb{D}_2$ is obtained from $\mathbb{D}_1$ by removing an inner puncture. Let $i: \mathbb{D}_1 \hookrightarrow \mathbb{D}_2$ be inclusion and $i_*: \mathcal{S}_A(\mathbb{D}_1) \hookrightarrow \mathcal{S}_A(\mathbb{D}_2)$ the induced morphism (so $i_*([D,s]):=[\iota(D),s]$). Alternatively, we can think of $i_*$ as the morphism which "\textit{doubles the inner puncture}". Like with the bigon, by gluing two punctured bigons  $\mathbb{D}_1$ and $\mathbb{D}_1'$ while identifying $b_R$ with $b'_L$ we obtain $\mathbb{D}_2$, so we have a splitting morphism $\theta_{b_R \#b'_L}: \mathcal{S}_A(\mathbb{D}_2) \to \mathcal{S}_A(\mathbb{D}_1)^{\otimes 2}$. Let us define a coproduct on $\mathcal{S}_A(\mathbb{D}_1)$ as the composition
$$ \Delta: \mathcal{S}_A(\mathbb{D}_1) \xrightarrow{ \iota_*} \mathcal{S}_A(\mathbb{D}_1) \xrightarrow{ \theta_{b_R \# b'_L}} \mathcal{S}_A(\mathbb{D}_1)^{\otimes 2}.$$

 \begin{figure}[!h] 
\centerline{\includegraphics[width=9cm]{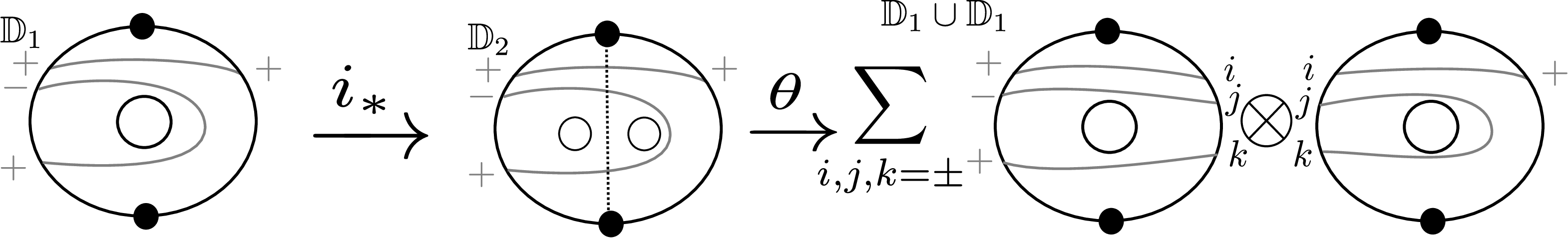} }
\caption{An illustration of the coproduct in $\mathcal{S}_A(\mathbb{D}_1)$.} 
\label{fig_skein_D1} 
\end{figure} 

It follows from the coassociativity of the gluing maps (Theorem \ref{theorem_gluing}) that $\Delta$ is coassociative. The algebra $\mathcal{S}_A(\mathbb{D}_1)$ is generated by the stated arcs $\alpha_{\varepsilon \varepsilon'}$ and $\beta_{\varepsilon \varepsilon'}$ drawn in Figure \ref{fig_skein_D1} and, for $\delta\in \{\alpha, \beta\}$, one has:

$$
 \begin{pmatrix} \Delta (\delta_{++}) & \Delta (\delta_{+-}) \\ \Delta(\delta_{-+}) & \Delta(\delta_{--}) \end{pmatrix} 
 = 
 \begin{pmatrix} \delta_{++} & \delta_{+-} \\ \delta_{-+} & \delta_{--} \end{pmatrix} 
 \otimes 
 \begin{pmatrix} \delta_{++} & \delta_{+-} \\ \delta_{-+} & \delta_{--} \end{pmatrix}. 
$$
 Further define the morphisms $\epsilon : \mathcal{S}_A(\mathbb{D}_1) \rightarrow \mathbb{C}$ and $S: \mathcal{S}_A(\mathbb{D}_1) \rightarrow \mathcal{S}_A(\mathbb{D}_1)$ by the formulas for $\delta\in \{\alpha, \beta\}$:
 
$$
 \begin{pmatrix} \epsilon(\delta_{++}) & \epsilon(\delta_{+-}) \\ \epsilon(\delta_{-+}) & \epsilon(\delta_{--}) \end{pmatrix} =
\begin{pmatrix} 1 &0 \\ 0& 1 \end{pmatrix}  
\text{ and }
\begin{pmatrix} S(\delta_{++}) & S(\delta_{+-}) \\ 	S(\delta_{-+}) & S(\delta_{--}) \end{pmatrix} 
	= 
	\begin{pmatrix} \delta_{--} & -q\delta_{+-} \\ -q^{-1}\delta_{-+} & \delta_{++} \end{pmatrix} .
$$

\begin{lemma} The coproduct $\Delta$, the counit $\epsilon$ and the antipode $S$ endow $\mathcal{S}_A(\mathbb{D}_1)$ with an Hopf algebra structure. 
\end{lemma}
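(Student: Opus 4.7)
The strategy is standard for Hopf-algebra verifications: first show that $\Delta$ is a coassociative algebra morphism (essentially free from the construction), then check that $\epsilon$ and $S$ extend to algebra morphisms (resp. antimorphism) on all of $\mathcal{S}_{\omega}(\mathbb{D}_1)$, and finally verify the counit and antipode axioms on the generators.

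That $\Delta$ is a coassociative algebra morphism should come for free. The doubling map $\mathrm{db}:\mathcal{S}_{\omega}(\mathbb{D}_1)\to \mathcal{S}_{\omega}(\mathbb{D}_2)$ is tautologically an algebra morphism (it just re-embeds a tangle that avoids the bubble $B$ into a surface with one extra puncture), and $i_{b_R\#b_L'}$ is an algebra morphism by Theorem~\ref{theorem_gluing}. The coassociativity $(\Delta\otimes \mathrm{id})\circ \Delta = (\mathrm{id}\otimes \Delta)\circ \Delta$ then reduces, after unwinding both sides, to the commutativity of iterated doublings (which is geometric) and the coassociativity statement for iterated gluing maps from Theorem~\ref{theorem_gluing}.

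For $\epsilon$ and $S$ the key observation is that, for fixed $\delta\in\{\alpha,\beta\}$, the sub-$\mathcal{R}$-algebra of $\mathcal{S}_{\omega}(\mathbb{D}_1)$ generated by $\{\delta_{\varepsilon \varepsilon'}\}_{\varepsilon,\varepsilon'}$ is the image, under the algebra morphism induced by the embedding of a bigon $\mathbb{B}$ on one side of the inner puncture $p_1$, of $\mathcal{S}_{\omega}(\mathbb{B})\cong \mathcal{O}_q[\SL_2]$. Under this identification the defining formulas for $\epsilon$ and $S$ restrict to the standard Hopf counit and antipode of $\mathcal{O}_q[\SL_2]$, so the defining skein relations \emph{internal} to each family $\{\alpha_{\varepsilon \varepsilon'}\}$ or $\{\beta_{\varepsilon \varepsilon'}\}$ are automatically respected. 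What still has to be checked by hand are the \emph{cross-relations} between the two families---skein relations that involve both an $\alpha$-arc and a $\beta$-arc and resolve crossings around $p_1$. This is a finite list of verifications, using the defining skein relations \eqref{eq: skein 1} and \eqref{eq: skein 2}, and constitutes the main technical obstacle of the proof.

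Granted well-definedness of $\epsilon$ and $S$, the counit axioms $(\epsilon\otimes \mathrm{id})\circ \Delta = \mathrm{id} = (\mathrm{id}\otimes \epsilon)\circ \Delta$ drop out immediately on generators: the matrix coproduct $\Delta(\delta_{\varepsilon \varepsilon'}) = \sum_{\eta}\delta_{\varepsilon \eta}\otimes \delta_{\eta \varepsilon'}$ combined with $\epsilon(\delta_{\varepsilon \eta})=1$ if $\varepsilon=\eta$ and $0$ otherwise collapses to $\delta_{\varepsilon \varepsilon'}$. For the antipode axiom $m\circ(S\otimes \mathrm{id})\circ \Delta = \eta\circ \epsilon$, the key identity on the generator $\delta_{++}$ becomes, after substituting the matrix formula for $S$, the quantum determinant relation $\delta_{--}\delta_{++} - q\delta_{+-}\delta_{-+} = 1$ inside each $\mathcal{O}_q[\SL_2]$-subalgebra, which is a known skein identity in the bigon (via the isomorphism $\mathcal{S}_{\omega}(\mathbb{B})\cong \mathcal{O}_q[\SL_2]$ of \cite{CostantinoLe19, KojuQuesneyClassicalShadows}); the three other entries, and the opposite antipode axiom $m\circ (\mathrm{id}\otimes S)\circ \Delta = \eta\circ \epsilon$, are handled symmetrically. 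Thus, once the cross-relation check is in hand, all Hopf axioms follow formally.
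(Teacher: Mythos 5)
Your plan is correct and amounts to the same direct verification the paper dismisses as ``straightforward computations'': coassociativity from the coassociativity of the gluing maps (Theorem~\ref{theorem_gluing}), well-definedness of $\epsilon$ and $S$ checked against the relations of Proposition~\ref{prop_pres_skein} (the internal relations of each family $\{\alpha_{\varepsilon\varepsilon'}\}$, $\{\beta_{\varepsilon\varepsilon'}\}$ being exactly the $\mathcal{O}_q[\SL_2]$ relations, so only the cross-relations \eqref{eq6}--\eqref{eq12} need a hands-on check), and the counit/antipode axioms verified on generators, which suffices since $\Delta,\epsilon$ are morphisms and $S$ an antimorphism. Note only that your use of the presentation is legitimate and non-circular, as its proof in Appendix~A does not invoke the Hopf structure.
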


\begin{proof} This follows from straightforward computations. \end{proof}

\begin{definition}[Drinfeld double quantum Borel algebra] 
The \textbf{Drinfeld double quantum Borel algebra} $\Uq$ is the Hopf algebra generated by elements $\overline{E},\overline{F}, K^{\pm 1/2}, L^{\pm 1/2}$ and relations 
\begin{align*}
&\overline{E}K^{1/2}= q^{-1}K^{1/2}\overline{E}; \quad \overline{E}L^{1/2}= qL^{1/2} \overline{E}; \quad  \overline{F}K^{1/2}=qK^{1/2}F; \quad \overline{F}L^{1/2}= q^{-1}L^{1/2}\overline{F}; \\
&xy=yx \mbox{, for all }x,y\in \{K^{\pm 1/2}, L^{\pm 1/2} \}; \quad K^{1/2}K^{-1/2}=L^{1/2}L^{-1/2}= 1; \\
&\overline{E}\overline{F}-\overline{F}\overline{E}=(q-q^{-1}) (K-L). 
\end{align*}
Here $K:= (K^{1/2})^2$ and $L:=(L^{1/2})^2$. 
The Hopf algebra structure is described by the following formulas:

\begin{align*}
& \Delta(\overline{E})= 1\otimes \overline{E} + \overline{E}\otimes K; \quad \Delta(\overline{F})= \overline{F}\otimes 1 +L\otimes \overline{F}; \\
& \Delta(x)=x\otimes x \mbox{, for all } x\in \{K^{\pm 1/2}, L^{\pm 1/2} \}; \\
& \epsilon(\overline{E})=\epsilon(F)=0; \quad \epsilon(K^{\pm 1/2})= \epsilon(L^{\pm 1/2})= 1; \\
& S(\overline{E})=-\overline{E}K^{-1}; \quad S(\overline{F})= -L^{-1}\overline{F}; \quad S(x)=x^{-1}\mbox{, for all } x\in \{K^{\pm 1/2}, L^{\pm 1/2} \}.
\end{align*}
\end{definition}

\begin{notations}\label{notations_QG}
\begin{enumerate}
\item 
If $q-q^{-1}$ is invertible in the ground ring $k$ on which we are working, we will write $E:= \frac{\overline{E}}{q-q^{-1}}$ and $F:= \frac{\overline{F}}{q-q^{-1}}$ and rather work with the generators $E,F, K^{\pm 1}, L^{\pm 1/2}$. For instance, we have the relation $EF-FE=\frac{K-L}{q-q^{-1}}$.
\item The element $H_{\partial}:= K^{-1/2}L^{-1/2}$ is clearly an invertible central element. We easily see that another central element is given (here we suppose that $q-q^{-1}$ is invertible) by the \textbf{Casimir element} defined by 
$$ C:= EF+\frac{qL +q^{-1}K}{(q-q^{-1})^2} = FE + \frac{qK+q^{-1}L}{(q-q^{-1})^2}.$$
\end{enumerate}
\end{notations}

 \begin{theorem}\label{theorem_skein_QG}
 There is an isomorphism of Hopf algebras $\Psi:  \Uq \xrightarrow{\cong}  \overline{\mathcal{S}}_{A}(\mathbb{D}_1)$ defined by 
 \begin{align*}
 {} & \Psi(K^{1/2})= \alpha_{--}, \quad \Psi(K^{-1/2})= \alpha_{++}, \quad \Psi(L^{1/2})= \beta_{--}, \quad \Psi(L^{-1/2})=\beta_{++} \\
 {} & \Psi(\overline{E})=-A \alpha_{+-}\alpha_{--}, \quad \Psi(\overline{F})= A^{-1}\beta_{--} \beta_{-+}
 \end{align*}
 In particular, supposing that $q-q^{-1}$ is invertible, $\Psi$ relates central elements $H_{\partial}^{\pm 1}$ and $C$ of $\Uq$ to the central elements $\alpha_{++}\beta_{++}$ and $\gamma_p$ of $\overline{\mathcal{S}}_A(\mathbb{D}_1)$ via
 $$ \Psi(H_{\partial})= \alpha_{++}\beta_{++}, \quad \Psi(C)= -\frac{\alpha_{--}\beta_{--}}{(q-q^{-1})^2}\gamma_p.$$
 \end{theorem}

\begin{proof}
The proof of the fact that $\Psi$ is an isomorphism of Hopf algebras follows from a straightforward computation using the fact that $\overline{\mathcal{S}}_A(\mathbb{D}_1)$ is presented by the generators $\alpha_{ij}, \beta_{ij}$ and relations obtained by adding to the relations of Corollary \ref{coro_pres_D1} the two additional relations $\alpha_{-+}=\beta_{+-}=0$. The equality relating $\Psi(C)$ to $\gamma_p$ follows from the following equality in $\overline{\mathcal{S}}_A(\mathbb{D}_1)$ obtained using a simple skein relation: 
$$ \gamma_p = -q \alpha_{++}\beta_{--}-q^{-1}\alpha_{--}\beta_{++}+\alpha_{+-}\beta_{-+}+\alpha_{-+}\beta_{+-}=  -q \alpha_{++}\beta_{--}-q^{-1}\alpha_{--}\beta_{++}+\alpha_{+-}\beta_{-+}.$$
\end{proof}

\begin{convention} For now on,  by abuse of notations, we identify the two Hopf algebras $\Uq$ and $\overline{\mathcal{S}}_A(\mathbb{D}_1)$ using $\Psi$. For instance, we write $K^{1/2}=\alpha_{--}$ instead of $\Psi(K^{1/2})=\alpha_{--}$.
\end{convention}

\begin{definition}[Cartan involution]\label{def_Cartan_involution}
Let $\theta_1 \in \mathrm{Aut}(\mathcal{S}_{\omega}(\mathbb{D}_1))$ be the involutive algebra automorphism sending a stated diagram to its image through the central symmetry of the disc (rotation of angle $\pi$). One has $\theta_1(\alpha_{\varepsilon \varepsilon'})=\beta_{\varepsilon' \varepsilon}$ and $\theta_1(\beta_{\varepsilon \varepsilon'})=\alpha_{\varepsilon' \varepsilon}$. So $\theta_1$ exchanges the two bad arcs ($\theta_1(\alpha_{-+})=\beta_{+-}$), hence it induces an involutive algebra automorphism $\Theta_1 \in \mathrm{Aut}(\Uq)$  which we call the \textbf{Cartan involution}.
 \end{definition}

Note that $\Theta_1(K^{\pm 1/2})= L^{\pm 1/2}$ and that $\Theta_1(E)=F$.

\section{Poisson geometry of representation varieties}\label{sec_moduli_space}

\subsection{Representation varieties}
The algebras $\mathcal{S}_{+1}(\mathbf{\Sigma})$ and $\overline{\mathcal{S}}_{+1}(\mathbf{\Sigma})$, where $k=\mathbb{C}$ and $A^{1/2}=+1$,  are commutative (\cite{LeStatedSkein}) and their maximal spectra admit some geometric interpretations as moduli spaces which we now recall. 

\begin{definition}[Representation spaces]
Let $\mathbf{\Sigma}$ be an essential marked surface.
\begin{enumerate}
 \item The \textbf{relative representation variety} $\mathcal{R}_{\SL_2}(\mathbf{\Sigma})$ is the set of functors $\rho: \Pi_1(\Sigma, \mathbb{V}) \to \SL_2$, where $\SL_2$ is seen as a category with only one object $*$ whose set of  endomorphisms is $\SL_2(\mathbb{C})$. It admits a structure of a complex affine variety whose algebra of regular functions is 
 $$ \mathcal{O}[\mathcal{R}_{\SL_2}(\mathbf{\Sigma})]:= \quotient{ \mathbb{C}[X_{ij}^{\beta}, i,j\in \{-,+\}, \beta \in \Pi_1(\Sigma, \mathbb{V})]}{\left( M_{\beta_1}M_{\beta_2}=M_{\beta_1 \beta_2}, \det(M_{\beta})=1 \right)}.$$
 Here, for $\beta \in \Pi_1(\Sigma, \mathbb{V})$, $M_{\beta}$ represents the $2\times 2$ matrix with coefficients in the polynomial ring $ \mathbb{C}[X_{ij}^{\beta}, i,j\in \{-,+\}, \beta \in \Pi_1(\Sigma, \mathbb{V})]$ defined by 
 $M_{\beta}=\begin{pmatrix} X_{++}^{\beta} & X_{+-}^{\beta} \\ X_{-+}^{\beta} & X_{--}^{\beta} \end{pmatrix}$ and we quotient by the relations $\det(M_{\beta}):=X_{++}^{\beta}X_{--}^{\beta}- X_{+-}^{\beta}X_{-+}^{\beta} =1$ for all $\beta \in  \Pi_1(\Sigma, \mathbb{V})$ and by the four matrix coefficients of $ M_{\beta_1}M_{\beta_2}-M_{\beta_1 \beta_2}$ for every pair of composable paths (i.e. such that $t(\beta_2)=s(\beta_1)$).
 Clearly the set of closed points $\mathcal{R}_{\SL_2}(\mathbf{\Sigma}):= \Specm(\mathcal{O}[\mathcal{R}_{\SL_2}(\mathbf{\Sigma})])$ is in canonical bijection with the set of functors  $\rho: \Pi_1(\Sigma, \mathbb{V}) \to \SL_2$.
 \item The \textbf{small Bruhat cell} is the subset of $\SL_2(\mathbb{C})$ defined by
 $$ \SL_2^{1}:= \{ M = \begin{pmatrix} a & b \\ c & d \end{pmatrix} \in \SL_2(\mathbb{C}) \mbox{ such that }a=0 \}.$$
 The \textbf{reduced relative representation variety} is the subvariety: 
 $$ \overline{\mathcal{R}}_{\SL_2}(\mathbf{\Sigma}) = \{ \rho : \Pi_1(\Sigma, \mathbb{V}) \to \SL_2 \mbox{ such that } \rho(\alpha(p))\in \SL_2^{1} \mbox{ for all }p\in {\mathcal{P}}^{\partial} \}.$$
 Said differently, its algebra of regular functions is 
 $$ \mathcal{O}[\overline{\mathcal{R}}_{\SL_2}(\mathbf{\Sigma})] := \quotient{ \mathcal{O}[\mathcal{R}_{\SL_2}(\mathbf{\Sigma})]}{ ( X_{++}^{\alpha(p)}, p\in {\mathcal{P}}^{\partial})}.$$
 \end{enumerate}
\end{definition}

Let $\Gamma$ be a presenting graph for $\mathbf{\Sigma}$. Since $\Sigma$ retracts to $\Gamma$, we obtain an isomorphism
 $$ \varphi_{\Gamma}: \mathcal{R}_{\SL_2}(\mathbf{\Sigma}) \xrightarrow{\cong} (\SL_2(\mathbb{C}))^{\mathcal{E}(\Gamma)}, \quad \varphi_{\Gamma}: \rho \mapsto (\rho(\beta))_{\beta \in \mathcal{E}(\Gamma)}.$$
 For instance, a functor $\rho: \mathcal{R}_{\SL_2}(\mathbb{D}_n)\to \SL_2$ is completely determined by the data of $\rho(\alpha^{(n)})\in \SL_2$ and of its restriction $\rho: \pi_1(\mathbb{D}_n, \{v_a\})\to \SL_2$, i.e. $\mathcal{R}_{\SL_2}(\mathbb{D}_n)\cong \SL_2 \times \Hom(\pi_1(\mathbb{D}_n, \{v_a\}), \SL_2)$. 
 \par 
 Let $\Arc(\mathbf{\Sigma})$ be the set of oriented arcs in $\mathbf{\Sigma}$.
 
 \begin{theorem}\label{theorem_classical_limit}(\cite[Theorem $3.18$]{KojuQuesneyClassicalShadows}, \cite[Corollary $3.3$]{KojuKaruo_RepRSSkein}, see also \cite[Theorem $4.7$]{KojuPresentationSSkein}) Let $\mathbf{\Sigma}$ be an essential marked surface. There exists a map $w: \Arc(\mathbf{\Sigma})\to \mathbb{Z}/2\mathbb{Z}$ and an isomorphism $\Psi_w: \mathcal{S}_{+1}(\mathbf{\Sigma}) \xrightarrow{\cong} \mathcal{O}[\mathcal{R}_{\SL_2}(\mathbf{\Sigma})]$ characterized by the formula:
$$ \Psi_w \begin{pmatrix} \alpha_{++} & \alpha_{+-} \\ \alpha_{-+} & \alpha_{--} \end{pmatrix} = (-1)^{w(\alpha)} \begin{pmatrix} 0 & -1 \\ 1 & 0\end{pmatrix} \begin{pmatrix} X_{++}^{\alpha} & X_{+-}^{\alpha} \\ X_{-+}^{\alpha} & X_{--}^{\alpha} \end{pmatrix} = (-1)^{w(\alpha)} \begin{pmatrix} -X_{-+}^{\alpha} & -X_{--}^{\alpha} \\ X_{++}^{\alpha} & X_{+-}^{\alpha} \end{pmatrix}, \quad \mbox{ for all }\alpha \in \Arc(\mathbf{\Sigma}).$$
$\Psi_w$ induces an  isomorphism $\overline{\Psi}_w: \overline{\mathcal{S}}_{+1}(\mathbf{\Sigma}) \xrightarrow{\cong} \mathcal{O}[\overline{\mathcal{R}}_{\SL_2}(\mathbf{\Sigma})]$ by passing to the quotient.
 \end{theorem}

The function $w: \Arc(\mathbf{\Sigma})\to \mathbb{Z}/2\mathbb{Z}$ is called a \textbf{spin function} and is described as follows. Let $\Gamma$ be a presenting graph of $\mathbf{\Sigma}$ and set the values $w(\beta)\in \mathbb{Z}/2\mathbb{Z}$ for $\beta\in \mathcal{E}(\Gamma)$ arbitrarily. Then, for $\beta\in \mathcal{E}(\Gamma)$, define $w(\beta^{-1})$ by imposing $w(\beta)+w(\beta^{-1})\equiv 1 \pmod{2}$. Every arc $\alpha \in \Arc(\mathbf{\Sigma})$ can be decomposed (inside $\Pi_1(\mathbf{\Sigma}, \mathbb{V})$ ) uniquely as $\alpha= \beta_{i_1}^{\varepsilon_1} \ldots \beta_{i_k}^{\varepsilon_k}$ with $\beta_{i_j}\in \mathcal{E}(\Gamma)$ and $\varepsilon_k =\pm 1$. Then we set $w(\alpha)\equiv  \sum_{k=1}^k w(\beta_{i_j}^{\varepsilon_j})$. This way, $w$ is completely determined by its values on $ \mathcal{E}(\Gamma)$ so the set of spin functions is in bijection with $(\mathbb{Z}/2\mathbb{Z})^{\mathcal{E}(\Gamma)}$. 

\begin{convention}\label{convention_w} For $\mathbb{D}_n$ with presenting graph $\Gamma$ where $\mathcal{E}(\Gamma)=\{\alpha^{(n)}, \alpha_{p_1}, \ldots, \alpha_{p_n}\}$, we consider the spin function $w: \Arcs(\mathbb{D}_n) \to \mathbb{Z}/2\mathbb{Z}$ defined by $w(\alpha^{(n)})\equiv 0 \pmod{2}$ and $w(\alpha_{p_i})\equiv 1\pmod{2}$ for $i=1, \ldots, n$. We will often identify $\overline{\mathcal{S}}_{+1}(\mathbb{D}_n)$ with $\mathcal{O}[\mathcal{R}_{\SL_2}(\mathbf{\Sigma})]$ using $\Psi_w$.
\end{convention}

Note that since $\beta^{(n)}=\alpha^{(n)}(\alpha_{p_n})^{-1} \ldots (\alpha_{p_1})^{-1}$, we have $w(\beta^{(n)})\equiv  w(\alpha^{(n)}) + \sum_{i=1}^n (1+w(\alpha_{p_i})) \equiv 0 \pmod{2}$. 

\subsection{Poisson structure on moduli spaces}

The commutative algebra $\mathcal{S}_{+1}(\mathbf{\Sigma})$ has a natural Poisson structure defined as follows. Let $\mathcal{S}_{\hbar}(\mathbf{\Sigma})$ be the skein algebra associated to the ring $k=\mathbb{C}[[\hbar]]$ of formal power series in $\hbar$ with parameter $A_{\hbar}^{1/2}:= \exp(\hbar/2)$. Since the set $\mathcal{B}$ of Definition \ref{def_basis} is a basis for both the algebras $\mathcal{S}_{\hbar}(\Sigma)$ and $\mathcal{S}_{+1}(\Sigma)$, we can define a $\mathbb{C}[[\hbar]]$-linear isomorphism $ \mathcal{S}_{+1}(\mathbf{\Sigma})\otimes_{\mathbb{C}}\mathbb{C}[[\hbar]] \xrightarrow{\cong} \mathcal{S}_{\hbar}(\mathbf{\Sigma})$ characterized by the fact that it sends a basis elements $b\in \mathcal{B}$ to itself. Let $\star$ be the pull-back in $\mathcal{S}_{+1}(\mathbf{\Sigma})\otimes_{\mathbb{C}}\mathbb{C}[[\hbar]]$ of the product in $\mathcal{S}_{\hbar}(\mathbf{\Sigma})$. 

\begin{definition}[Poisson structure on moduli spaces]\label{def_DQSkein} The Poisson bracket $\{\cdot, \cdot\}$ on $\mathcal{S}_{+1}(\mathbf{\Sigma})$ is defined by the formula
$$ x \star y - y \star x = \hbar \{x, y\} \pmod{\hbar^2}.$$
\end{definition}

In particular, the associativity of $\star$ implies the Jacobi identity for $\{\cdot, \cdot\}$. 

\begin{remark}
When $\mathbf{\Sigma}$ is essential, by fixing a spin function $w$ and using the isomorphism of Theorem \ref{theorem_classical_limit}, we obtain a Poisson structure on the representation variety $\mathcal{R}_{\SL_2}(\mathbf{\Sigma})$. By \cite[Theorem $1.3$]{KojuQuesneyClassicalShadows} and \cite[Appendix B]{KojuTriangularCharVar}, the Poisson variety $\mathcal{R}_{\SL_2}(\mathbf{\Sigma})$ is isomorphic to the moduli space defined by Fock and Rosly in \cite{FockRosly} associated to a presenting graph. When $\mathbf{\Sigma}$ is a connected marked surface with a single boundary edge, by \cite[Theorem $1.3$]{KojuQuesneyClassicalShadows} and \cite[Appendix C]{KojuTriangularCharVar},  the Poisson structure on  $\mathcal{R}_{\SL_2}(\mathbf{\Sigma})=\Hom(\pi_1(\Sigma), \SL_2)$ coincides with the Poisson structure defined by Alekseev-Kosmann-Meinrenken in  \cite{AlekseevMalkin_PoissonCharVar, AlekseevMalkin_PoissonLie, AlekseevKosmannMeinrenken}. 
\end{remark}

\begin{lemma} The ideal $\mathcal{I}^{bad} \subset \mathcal{S}_A(\mathbf{\Sigma})$ generated by bad arcs is a Poisson ideal, i.e. $\{\mathcal{I}^{bad}, \mathcal{S}_A(\mathbf{\Sigma})\}\subset \mathcal{I}^{bad}$.
\end{lemma}

\begin{proof} By \cite[Lemma $4.4(a)$]{LeYu_SSkeinQTraces}, for every bad arc $\alpha^{bad}$ and for every basis element $[D,s]\in \mathcal{B}$, there exists $n\in \mathbb{Z}$ such that $[D,s]\alpha^{bad}= A^n \alpha^{bad}[D,s]$ in $\mathcal{S}_A(\mathbf{\Sigma})$. Therefore $\{[D,s], \alpha^{bad}\}=\frac{n}{2} \alpha^{bad}[D,s]\in \mathcal{I}^{bad}$ in $\mathcal{S}_{+1}(\mathbf{\Sigma})$.
\end{proof}

So $X(\mathbf{\Sigma})=\Specm(\overline{\mathcal{S}}_{+1}(\mathbf{\Sigma}))$ is a Poisson subvariety of $\underline{X}(\mathbf{\Sigma})=\Specm(\mathcal{S}_{+1}(\mathbf{\Sigma}))$. 

\begin{lemma}\label{lemma_poisson_morphisms}
\begin{enumerate}
\item If $f: \mathbf{\Sigma}_1 \to \mathbf{\Sigma}_2$ is a strong embedding, then $f_*: \mathcal{S}_{+1}(\mathbf{\Sigma}_1) \to \mathcal{S}_{+1}(\mathbf{\Sigma}_2)$ is Poisson.
\item If $a,b$ are two boundary edges of $\mathbf{\Sigma}$, then $\theta_{a\#b}:  \mathcal{S}_{+1}(\mathbf{\Sigma}_{a\#b })\to \mathcal{S}_{+1}(\mathbf{\Sigma})$ is Poisson.
\end{enumerate}
The same assertions hold with the stated skein algebras replaced by their reduced versions.
\end{lemma}

\begin{proof} This follows from the facts that both $f_*: \mathcal{S}_{\hbar}(\mathbf{\Sigma}_1) \to \mathcal{S}_{\hbar}(\mathbf{\Sigma}_2)$ and $\theta_{a\#b}: \mathcal{S}_{\hbar}(\mathbf{\Sigma}_{a\#b })\to \mathcal{S}_{\hbar}(\mathbf{\Sigma})$ are morphisms of algebras.
\end{proof}

\subsection{Orbits for the gauge groups action}\label{sec_gauge_orbit}

Let us now define two group actions on $X(\mathbb{D}_n)\cong \overline{\mathcal{R}}_{\SL_2}(\mathbb{D}_n)$. Recall that $\mathbb{D}_n$ has two boundary edges $b_L, b_R$ with based points $v_L\in b_L$ and $v_R\in b_R$ and we have set $\mathbb{V}=\{v_L, v_R\}$. The \textbf{outer gauge group} is the group $\mathcal{G}^{out}= \Map(\mathbb{V}, \SL_2)$ of maps $g: \mathbb{V} \to \SL_2$. It acts on the representation variety $\mathcal{R}_{\SL_2}(\mathbb{D}_n)$ by the formula
$$ (g\cdot \rho) (\alpha):= g(s(\alpha)) \rho(\alpha) g(t(\alpha))^{-1}, \quad \mbox{ for }\rho \in \mathcal{R}_{\SL_2}(\mathbb{D}_n), g \in \mathcal{G}^{out}, \alpha \in \Pi_1(\mathbb{D}_n, \mathbb{V}).$$
Note that this group action is dual to the comodule map $\Delta^L : \mathcal{S}_{+1}(\mathbb{D}_n) \to (\mathcal{O}[\SL_2])^{\otimes \mathbb{V}} \otimes \mathcal{S}_{+1}(\mathbb{D}_n)$ of Section \ref{sec_comodule} (see \cite{KojuPresentationSSkein} for details). Recall from Section \ref{sec_moduli_space} the isomorphism 

$$\varphi_{\Gamma}: \mathcal{R}_{\SL_2}(\mathbb{D}_n) \xrightarrow{\cong} \SL_2 \times \Hom(\pi_1(\mathbb{D}_n, \{b_L\}), \SL_2)$$ 

which  sends $\rho$ to the pair $(\rho(\alpha^{(n)}), \restriction{\rho}{\pi_1(\mathbb{D}_n)})$ where $\pi_1(\mathbb{D}_n)$ is the fundamental group with base point $v_{L}$. 
\begin{lemma}\label{lemma_orbit1} We have a bijection 
$$ \quotient{\mathcal{R}_{\SL_2}(\mathbb{D}_n)}{\mathcal{G}^{out}} \cong \quotient{\Hom(\pi_1(\mathbb{D}_n), \SL_2)}{\SL_2}, \quad [\rho] \mapsto [\restriction{\rho}{\pi_1(\mathbb{D}_n)}].$$
Moreover, each class in the quotient $ \quotient{\mathcal{R}_{\SL_2}(\mathbb{D}_n)}{\mathcal{G}^{out}}$ is the class of an element in $\overline{\mathcal{R}}_{\SL_2}(\mathbb{D}_n)$.
\end{lemma}

\begin{proof}
The first assertion follows from the fact that for $g\in \mathcal{G}^{out}$ we have $$\varphi_{\Gamma}( g\cdot \rho) = (g(v_L) \rho(\alpha^{(n)}) g(v_R)^{-1}, g(v_L)\restriction{\rho}{\pi_1(\mathbb{D}_n)}g(v_L)^{-1}).$$ The second assertion is immediate.
\end{proof}

By \cite[Theorem $3.5$]{KojuKaruo_RepRSSkein}, $\overline{\mathcal{R}}_{\SL_2}(\mathbb{D}_n)$ is a smooth affine variety, so we can consider it as an analytic variety as well. For $f \in \mathcal{O}[\overline{\mathcal{R}}_{\SL_2}(\mathbb{D}_n)]$ a regular function and $t\in \mathbb{C}$, we can thus consider the Hamiltonian flow $\phi^t_f$, seen as an analytic automorphism of $\overline{\mathcal{R}}_{\SL_2}(\mathbb{D}_n)$. 
The \textbf{inner gauge group} is the group of analytic automorphisms $\mathcal{G}_{\mathbb{D}_1}$  generated by the automorphisms $\phi^t_{\alpha^{(n)}_{ij}}$ and $\phi^t_{\beta^{(n)}_{ij}}$ for $t\in \mathbb{C}$ and $i,j = \pm$ (recall that $\alpha^{(n)}_{ij}, \beta^{(n)}_{ij}$ are the stated arcs of Figure \ref{fig_arcs_Dn} which we consider as regular functions using the isomorphism of Theorem \ref{theorem_classical_limit}). 
Let $\alpha_{\partial} := (\alpha^{(n)})_{++} (\beta^{(n)})_{++} \in \overline{\mathcal{S}}_{+1}(\mathbb{D}_n) \cong \mathcal{O}[\overline{\mathcal{R}}_{\SL_2}(\mathbb{D}_n)]$. As we shall see in the next section, $\alpha_{\partial}$ is central in $\overline{\mathcal{S}}_{\hbar}(\mathbb{D}_n)$ so it defines a Casimir element in $\overline{\mathcal{S}}_{+1}(\mathbb{D}_n)$ and the level sets $\alpha_{\partial}^{-1}\{ h \}$ are thus preserved by any Hamiltonian flow.
For $\rho \in \overline{\mathcal{R}}_{\SL_2}(\mathbb{D}_n)$, we write
$$ \mathcal{G}^{out}_{\rho} := \{ g \in \mathcal{G}^{out} : g\cdot \rho \in \overline{\mathcal{R}}_{\SL_2}(\mathbb{D}_n) \mbox{ and } \alpha_{\partial}(g\cdot \rho) = \rho \}.$$

\begin{proposition}\label{prop_orbit} Two representations $\rho_1, \rho_2$  of $\overline{\mathcal{R}}_{\SL_2}(\mathbb{D}_n)$ are in the same $\mathcal{G}_{\mathbb{D}_1}$ orbit if and only if there exists $g\in \mathcal{G}^{out}_{\rho_1}$ such that $g\cdot \rho_1 = \rho_2$. Therefore we have a bijection 
$$\quotient{\overline{\mathcal{R}}_{\SL_2}(\mathbb{D}_n)}{\mathcal{G}_{\mathbb{D}_1}} \xrightarrow{\cong} \left(\quotient{\Hom(\pi_1(\mathbb{D}_n), \SL_2)}{\SL_2}\right)\times \mathbb{C}^*, \quad  [\rho] \mapsto ([\restriction{\rho}{\pi_1(\mathbb{D}_n)}], \alpha_{\partial}(\rho)).$$
\end{proposition}

\begin{lemma}\label{lemma_orbit2}
For $1\leq i \leq n$, the Poisson bracket of $\overline{\mathcal{S}}_{+1}(\mathbb{D}_n)$ satisfies

\begin{align*}
{}& \{\alpha_{+-}^{(n)}, \delta_{++}^{(i)}\} = - \alpha_{+-}^{(n)} \delta_{++}^{(i)} & \{\alpha_{+-}^{(n)}, \alpha^{(n)}_{++}\} &= \alpha^{(n)}_{+-}\alpha^{(n)}_{++} & \{\alpha^{(n)}_{++}, \delta^{(i)}_{++}\} &= - \alpha^{(n)}_{++}\delta^{(i)}_{++} \\
{}& \{\alpha_{+-}^{(n)}, \delta_{--}^{(i)}\} = \alpha^{(n)}_{+-}\delta^{(i)}_{--}-\alpha_{--}^{(n)}\delta_{+-}^{(i)}-3 \alpha^{(n)}_{--} \delta_{-+}^{(i)} & \{\alpha_{+-}^{(n)}, \alpha^{(n)}_{--}\} &= -\alpha^{(n)}_{+-}\alpha^{(n)}_{--} & \{\alpha^{(n)}_{++}, \delta^{(i)}_{--}\} &=  \alpha^{(n)}_{++}\delta^{(i)}_{--} \\
{}& \{\alpha_{+-}^{(n)}, \delta_{+-}^{(i)}\} = -2 \alpha_{--}^{(n)} \delta_{++}^{(i)} & \{\alpha_{++}^{(n)}, \alpha^{(n)}_{--}\} &= 0 & \{\alpha^{(n)}_{++}, \delta^{(i)}_{+-}\} &= 0 \\  
{}& \{\alpha_{+-}^{(n)}, \delta_{-+}^{(i)}\} = -2 \alpha_{+-}^{(n)} \delta_{-+}^{(i)} & {} &{}& \{\alpha^{(n)}_{++}, \delta^{(i)}_{-+}\} &= 0 
\end{align*}

\end{lemma}

\begin{proof}
The formulas for the Poisson bracket in $\overline{\mathcal{S}}_{+1}(\mathbb{D}_n)$ follow by developing at the order $1$ in $\hbar$  the following formulas in $\overline{\mathcal{S}}_{A}(\mathbb{D}_n)$ where we set $A^{1/2}=\exp(\hbar/4)$ so $q=\exp(\hbar)$: 

\begin{align*}
{}& \alpha_{+-}^{(n)} \delta_{++}^{(i)} = q^{-1} \delta_{++}^{(i)} \alpha_{+-}^{(n)}  & \alpha_{+-}^{(n)}\alpha^{(n)}_{++}\} &= q \alpha^{(n)}_{++}\alpha^{(n)}_{+-} & \alpha^{(n)}_{++} \delta^{(i)}_{++} &= q^{-1}\delta^{(i)}_{++} \alpha^{(n)}_{++} \\
{}& \alpha_{+-}^{(n)} \delta_{-+}^{(i)} = q^{-2} \delta_{-+}^{(i)} \alpha_{+-}^{(n)}  & \alpha_{+-}^{(n)} \alpha^{(n)}_{--}& = q^{-1}\alpha^{(n)}_{--}\alpha^{(n)}_{+-} & \alpha^{(n)}_{++}\delta^{(i)}_{--} &= q\delta^{(i)}_{--} \alpha^{(n)}_{++} \\
{}& [\alpha_{+-}^{(n)},  \delta_{+-}^{(i)}] = (1-q^2)\delta_{++}^{(i)}\alpha_{--}^{(n)}  & [\alpha_{++}^{(n)}, \alpha^{(n)}_{--}] &= 0 & [\alpha^{(n)}_{++}, \delta^{(i)}_{+-}] &=  [\alpha^{(n)}_{++}, \delta^{(i)}_{-+}]=0  
 \end{align*}
and the equation 
\begin{equation}\label{eq_exemple}
 \alpha_{+-}^{(n)} \delta_{--}^{(i)} = q\delta^{(i)}_{--}\alpha^{(n)}_{+-}+ (q-q^2)\delta_{+-}^{(i)}\alpha_{--}^{(n)}+(q^{-1}-q^{2})\delta_{-+}^{(i)} \alpha^{(n)}_{--}.
\end{equation}
For instance using the developments $q=1+\hbar +o(\hbar^2)$, $q-q^2= - \hbar + o(\hbar^2)$ and $q^{-1}-q^{2}=-3\hbar + o(\hbar^2)$, Equation \eqref{eq_exemple} implies
$$ [\alpha_{+-}^{(n)},  \delta_{--}^{(i)} ]= (q-1)\delta^{(i)}_{--}\alpha^{(n)}_{+-}+ (q-q^2)\delta_{+-}^{(i)}\alpha_{--}^{(n)}+(q^{-1}-q^{2})\delta_{-+}^{(i)} \alpha^{(n)}_{--} \equiv \hbar( \alpha^{(n)}_{+-}\delta^{(i)}_{--}-\alpha_{--}^{(n)}\delta_{+-}^{(i)}-3 \alpha^{(n)}_{--} \delta_{-+}^{(i)}) \pmod{\hbar^2}$$ 
from which we derive the equality $\{\alpha_{+-}^{(n)}, \delta_{--}^{(i)}\} = \alpha^{(n)}_{+-}\delta^{(i)}_{--}-\alpha_{--}^{(n)}\delta_{+-}^{(i)}-3 \alpha^{(n)}_{--} \delta_{-+}^{(i)} $. Now Equation \eqref{eq_exemple} is obtained using skein manipulations as follows: 
\begin{align*}
\alpha_{+-}^{(n)}\delta^{(i)}_{--}&=   \adjustbox{valign=c}{\includegraphics[width=1.5cm]{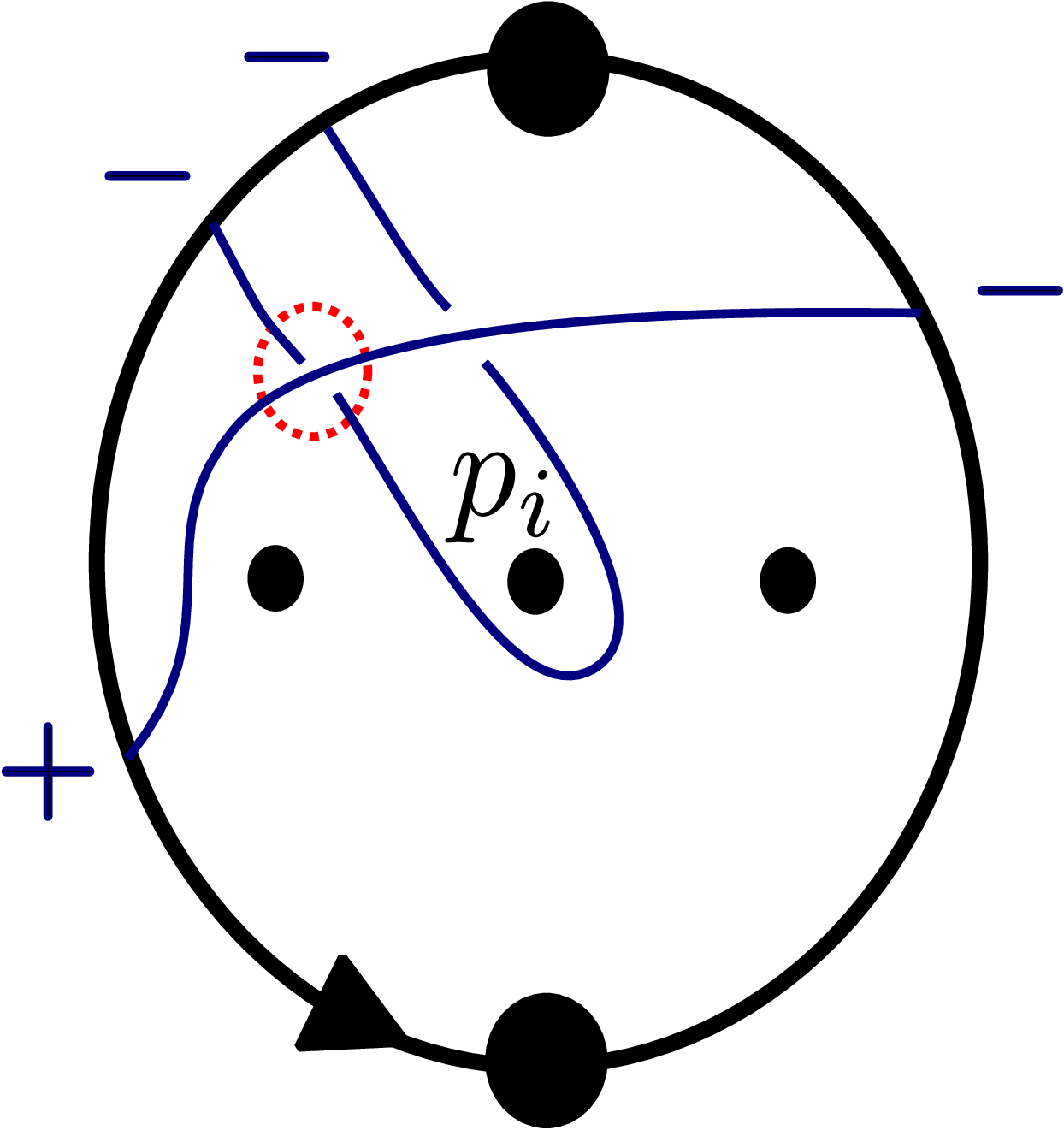}} =A \adjustbox{valign=c}{\includegraphics[width=1.5cm]{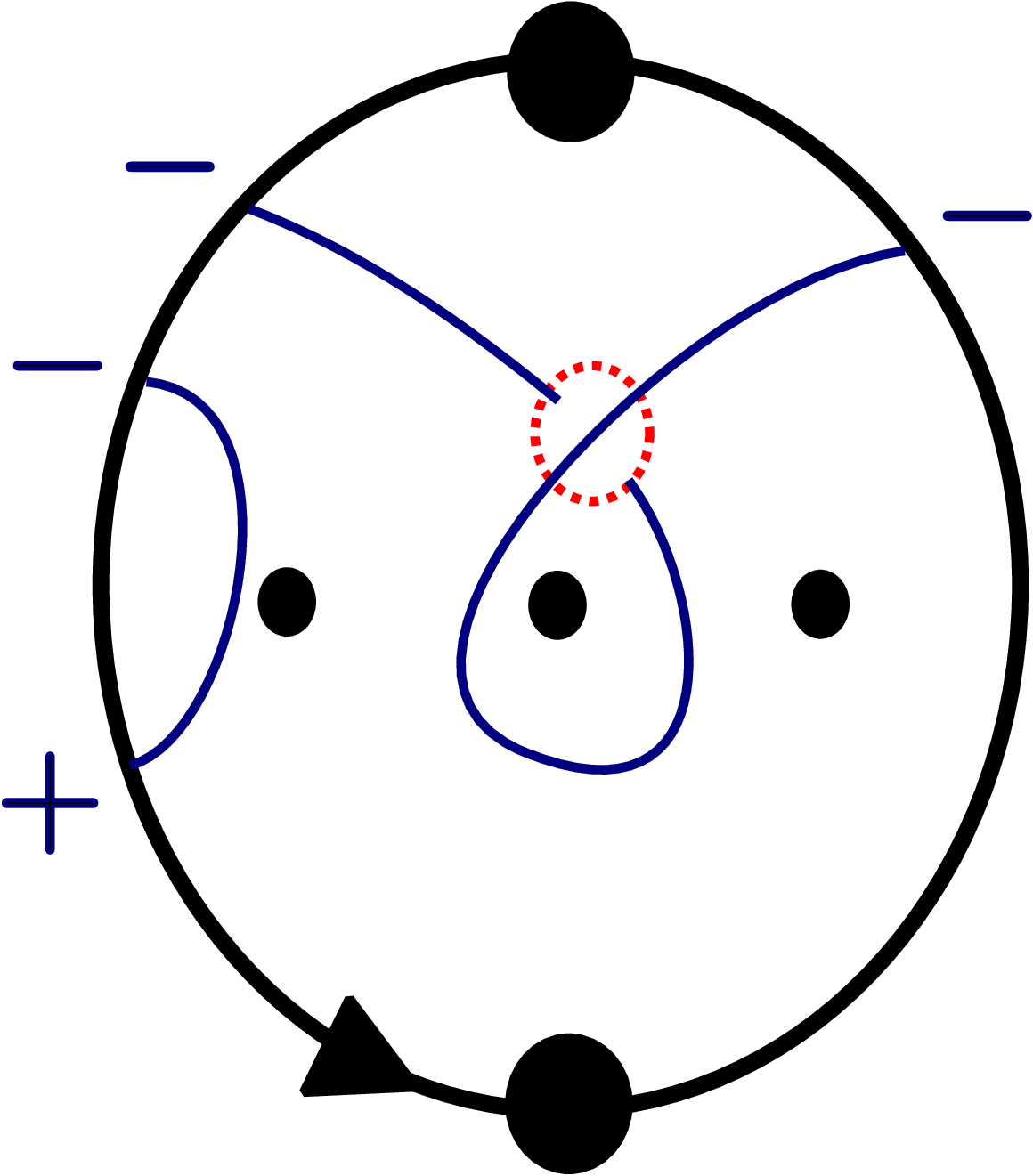}}+A^{-1} \adjustbox{valign=c}{\includegraphics[width=1.5cm]{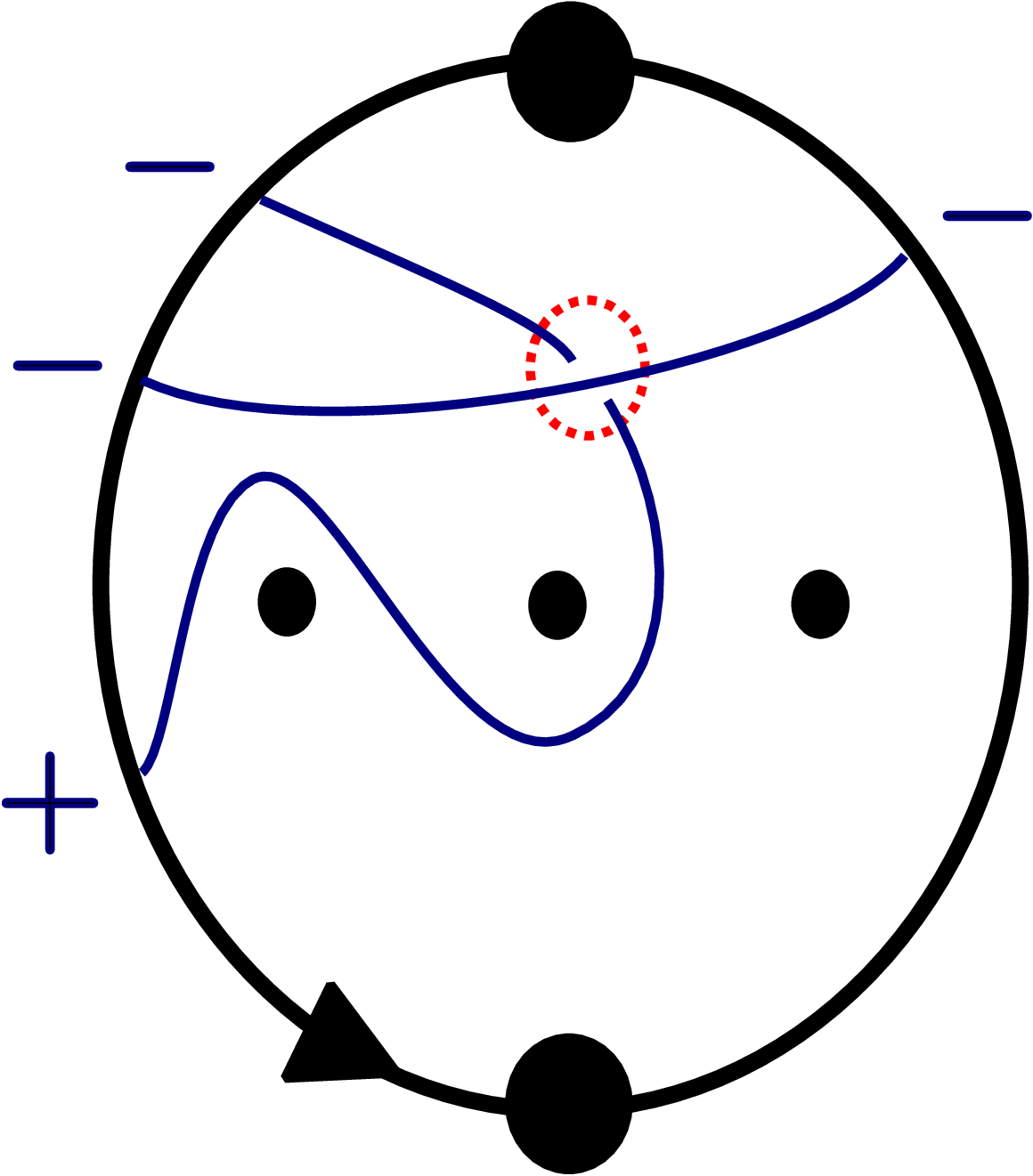}} 
{} & =A^{3/2}  \adjustbox{valign=c}{\includegraphics[width=1.5cm]{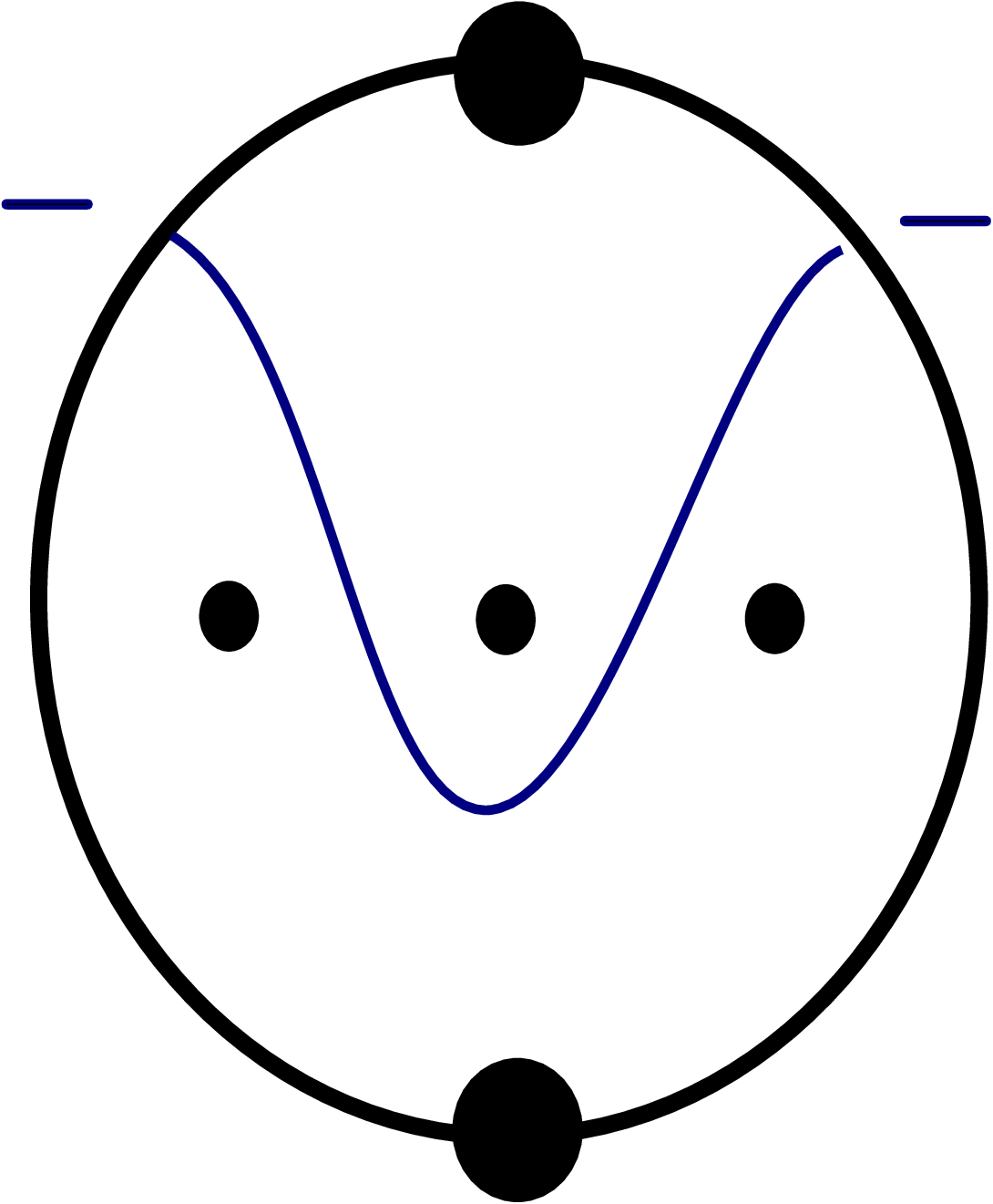}} + A^{-1/2} \adjustbox{valign=c}{\includegraphics[width=1.5cm]{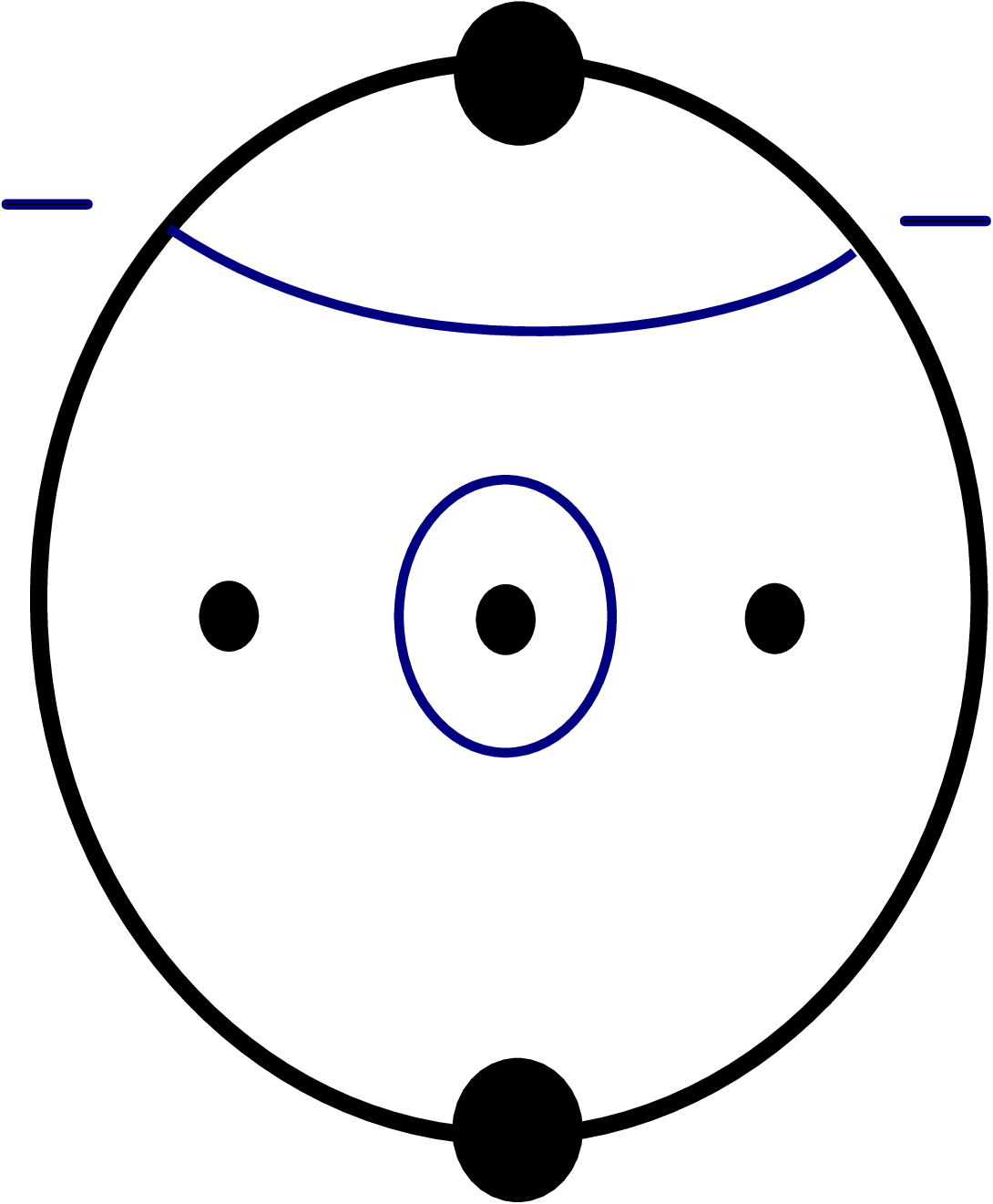}}+q^{-1} \adjustbox{valign=c}{\includegraphics[width=1.5cm]{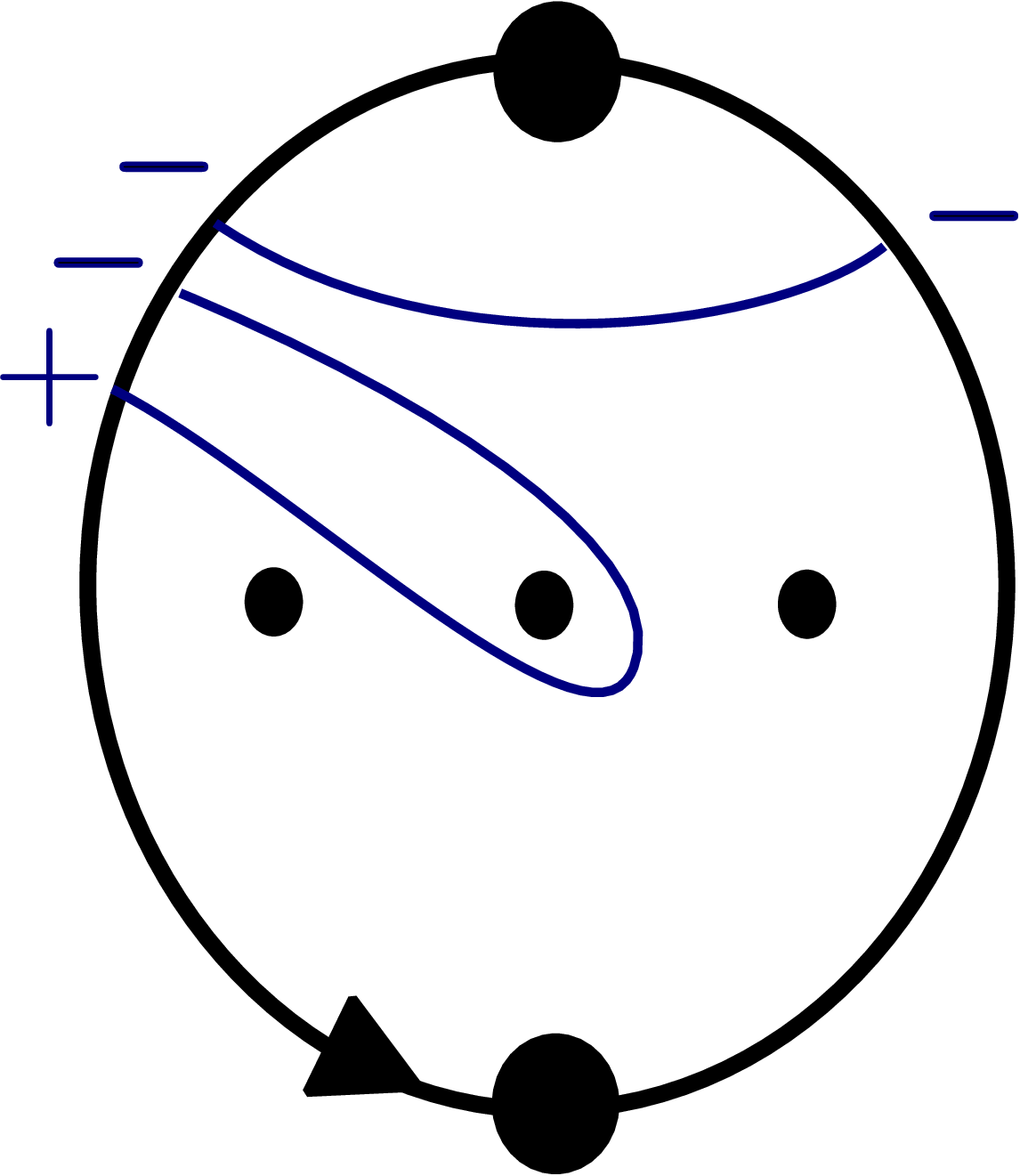}}.
\end{align*}

\begin{align*}
\delta^{(i)}_{--}\alpha_{+-}^{(n)}&=   \adjustbox{valign=c}{\includegraphics[width=1.5cm]{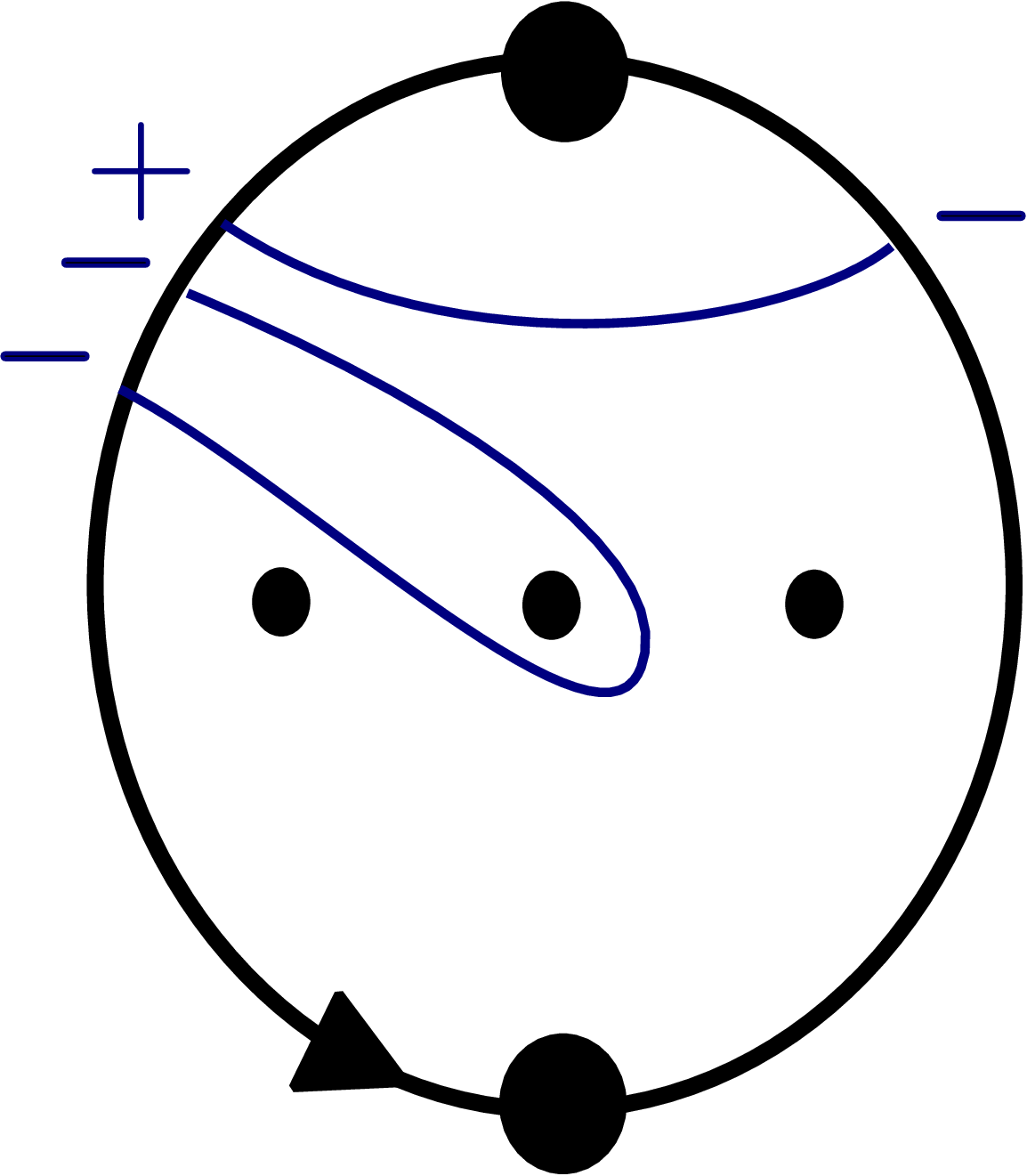}}= q   \adjustbox{valign=c}{\includegraphics[width=1.5cm]{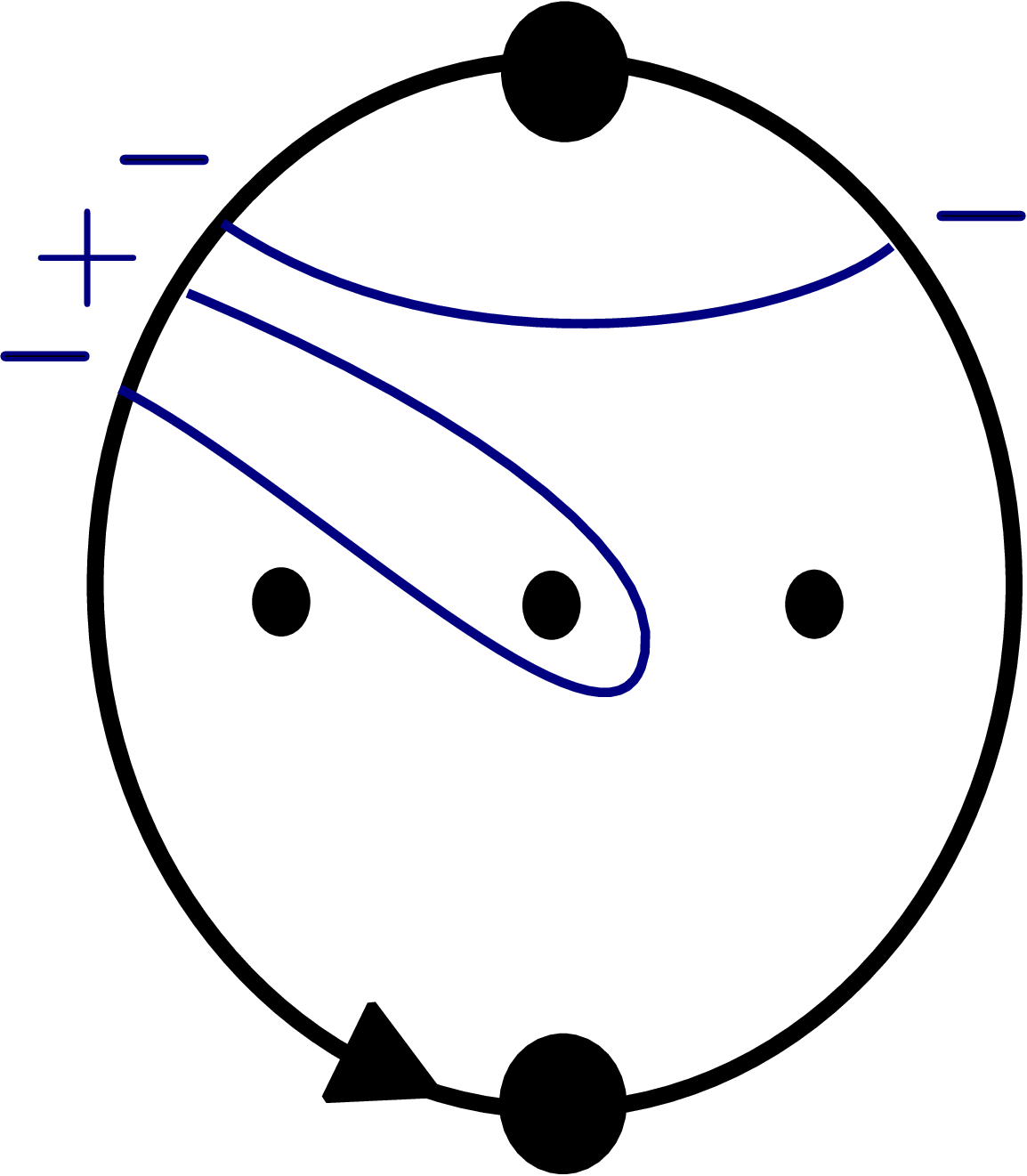}}+A^{1/2}  \adjustbox{valign=c}{\includegraphics[width=1.5cm]{Poisson4.eps}} 
{} & =q^2   \adjustbox{valign=c}{\includegraphics[width=1.5cm]{Poisson6.eps}} +A^{-1/2}  \adjustbox{valign=c}{\includegraphics[width=1.5cm]{Poisson5.eps}} + A^{-1/2}  \adjustbox{valign=c}{\includegraphics[width=1.5cm]{Poisson4.eps}}.
\end{align*}
From which we deduce:
\begin{align*}
[ \alpha_{+-}^{(n)}, \delta^{(i)}_{--}]&= (A^{3/2}-A^{-1/2}) \adjustbox{valign=c}{\includegraphics[width=1.5cm]{Poisson4.eps}} +(q^{-1}-q^2) \adjustbox{valign=c}{\includegraphics[width=1.5cm]{Poisson6.eps}}\\
 {} & =(A^{3/2}-A^{-1/2})\left( A^{1/2} \adjustbox{valign=c}{\includegraphics[width=1.5cm]{Poisson7.eps}}-A^{5/2} \adjustbox{valign=c}{\includegraphics[width=1.5cm]{Poisson8.eps}}\right) +(q^{-1}-q^2) \adjustbox{valign=c}{\includegraphics[width=1.5cm]{Poisson6.eps}} \\
 {} & =  (q-1) \delta^{(i)}_{--}\alpha^{(n)}_{+-}+ (q-q^2)\delta_{+-}^{(i)}\alpha_{--}^{(n)}+(q^{-1}-q^{2})\delta_{-+}^{(i)} \alpha^{(n)}_{--}.
\end{align*}

The $10$ other equalities are obtained using similar (and easier) skein computations which are left to the reader.
\end{proof}

Let us identify an element $g: \mathbb{V}\to \SL_2$ in $\mathcal{G}^{out}$ with the pair $(g(v_L), g(v_R))$. 

\begin{lemma}\label{lemma_orbit3}
Let $\rho \in \overline{\mathcal{R}}_{\SL_2}(\mathbb{D}_n)$ and write 
$$\rho(\alpha^{(n)})= \begin{pmatrix} 0 & -1 \\ 1 & 0 \end{pmatrix} \begin{pmatrix} \lambda & x \\ 0 & \lambda^{-1}\end{pmatrix}= \begin{pmatrix} 0 & -\lambda^{-1} \\ \lambda & x \end{pmatrix} \mbox{ and }\rho(\beta^{(n)})= \begin{pmatrix} 0 & -1 \\ 1 & 0 \end{pmatrix} \begin{pmatrix} \mu & 0 \\ y & \mu^{-1}\end{pmatrix}= \begin{pmatrix} -y & -\mu^{-1} \\ \mu & 0 \end{pmatrix}.$$
Consider the matrices
$$d_t:=  \begin{pmatrix} e^{\frac{\lambda t}{2}} &0 \\ 0 & e^{-\frac{\lambda t}{2}}\end{pmatrix}, \quad
g_t:= \begin{pmatrix} e^{\frac{xt}{2}} & \frac{1}{x\lambda}(e^{\frac{xt}{2}}-e^{-\frac{3xt}{2}}) \\ 0 & e^{-\frac{xt}{2}}\end{pmatrix}, \quad h_t:= \begin{pmatrix} e^{\frac{xt}{2}} & 0 \\ \frac{\lambda}{x} (e^{-\frac{xt}{2}}-e^{\frac{3xt}{2}}) & e^{-\frac{xt}{2}}\end{pmatrix}.
$$
When $x=0$, we set instead $g_t=h_t=\mathds{1}_2$.
Then 
$$ \phi^t_{\alpha^{(n)}_{++}} \rho= \phi^{-t}_{\alpha^{(n)}_{--}}\rho = (d_t, d_{-t}) \cdot \rho, \quad \mbox{ and }\phi^t_{\alpha^{(n)}_{+-}} \rho = (g_t,h_t) \cdot \rho. $$
Similarly, consider the matrices: 
$$\widetilde{d_t}:=  \begin{pmatrix} e^{-\frac{\mu t}{2}} &0 \\ 0 & e^{\frac{\mu t}{2}}\end{pmatrix}, \quad
\widetilde{g_t}:= \begin{pmatrix} e^{\frac{yt}{2}} & 0 \\ \frac{\mu}{y} (e^{-\frac{yt}{2}}-e^{\frac{3yt}{2}}) & e^{-\frac{yt}{2}}\end{pmatrix},
 \quad \widetilde{h_t}:= \begin{pmatrix} e^{\frac{yt}{2}} & \frac{1}{y\mu}(e^{\frac{yt}{2}}-e^{-\frac{3yt}{2}}) \\ 0 & e^{-\frac{yt}{2}}\end{pmatrix}.
$$
When $y=0$, we  set instead $\widetilde{g_t}=\widetilde{h_t}=\mathds{1}_2$.
Then 
$$ \phi^t_{\beta^{(n)}_{++}} \rho= \phi^{-t}_{\beta^{(n)}_{--}}\rho = (\widetilde{d_t}, \widetilde{d_{-t}}) \cdot \rho, \quad \mbox{ and }\phi^t_{\beta^{(n)}_{-+}} \rho =  (\widetilde{g_t},\widetilde{h_t}) \cdot \rho. $$

\end{lemma}

\begin{proof}
%For $\alpha$ an oriented path in $\Pi_1(\mathbb{D}_n)$ we write $\alpha_{ij} \in \overline{\mathcal{R}}_{\SL_2}(\mathbb{D}_n) \cong \mathcal{O}[\overline{\mathcal{R}}_{\SL_2}(\mathbb{D}_n)]$ the class of the arc $\alpha$
%with state sending $s(\alpha)$ to $i$ and $t(\alpha)$ to $j$. We also consider the $2\times 2$ matrix $N(\alpha):= \begin{pmatrix} 0 & -1 \\ 1 & 0 \end{pmatrix} \begin{pmatrix} \alpha_{++} & \alpha_{+-} \\ \alpha_{-+} & \alpha_{--}\end{pmatrix}$ with coefficients in $ \mathcal{O}[\overline{\mathcal{R}}_{\SL_2}(\mathbb{D}_n)]$. 
Let us first compute the image of $\rho$ by the Hamiltonian flow $\phi^t_{\alpha_{+-}^{(n)}}$. For $f \in \mathcal{O}[\overline{\mathcal{R}}_{\SL_2}(\mathbb{D}_n)]$ a regular function, we write $f^t:=f(\phi^t_{\alpha_{+-}^{(n)}} \rho)$ so $\frac{d}{dt} f^t = \{\alpha_{+-}^{(n)}, f \}^t$ and $f^0=f(\rho)$. 
 Using the formulas of Lemma \ref{lemma_orbit2}, we are reduced to solve the system of differential equations 
 
 \begin{align*}
  \frac{d}{dt} (\delta^{(i)}_{++})^t& = - (\delta^{(i)}_{++})^t (\alpha^{(n)}_{+-})^t   & \frac{d}{dt} (\delta^{(i)}_{+-})^t  &= -2 (\delta^{(i)}_{++})^t (\alpha^{(n)}_{--})^t \\
   \frac{d}{dt} (\delta^{(i)}_{-+})^t &=-2 (\delta^{(i)}_{-+})^t (\alpha^{(n)}_{+-})^t &
   \frac{d}{dt} (\delta^{(i)}_{--})^t & =( (\alpha^{(n)}_{+-})^t (\delta^{(i)}_{--})^t-(\alpha_{--}^{(n)})^t(\delta_{+-}^{(i)})^t-3 (\alpha^{(n)}_{--})^t (\delta_{-+}^{(i)})^t) \\
    \frac{d}{dt} (\alpha_{++}^{(n)})^t &= (\alpha_{++}^{(n)})^t (\alpha_{+-}^{(n)})^t &  \frac{d}{dt} (\alpha_{+-}^{(n)})^t &=0.
\end{align*}
Now $(\alpha_{+-}^{(n)})^t$ is the constant function equal to $x$. In particular, if $x=0$, each of the above derivative vanishes so $\rho$ is fixed by $\phi^t_{\alpha_{+-}^{(n)}}$. If $x\neq 0$, write $\rho(\delta^{(i)})= \begin{pmatrix} a & b \\ c & d\end{pmatrix}$. 
Then the system 
$$ \left\{ \begin{array}{ll}
\frac{d}{dt} (\delta^{(i)}_{++})^t& = - x(\delta^{(i)}_{++})^t \\
\frac{d}{dt} (\delta^{(i)}_{-+})^t& = - 2x(\delta^{(i)}_{-+})^t \\
\frac{d}{dt} (\alpha^{(n)}_{++})^t& =  x(\alpha^{(n)}_{++})^t 
\end{array} \right. 
\mbox{ has solution }
\left\{ \begin{array}{ll} 
(\delta_{++}^{(i)})^t & =c e^{-xt} \\
(\delta_{-+}^{(i)})^t & =-a e^{-2xt} \\
(\alpha^{(n)}_{++})^t &= \left((\alpha^{(n)}_{--})^t\right)^{-1} = \lambda e^{xt}
\end{array} \right.$$
The differential equation 
$$  \frac{d}{dt} (\delta^{(i)}_{+-})^t  = -2 (\delta^{(i)}_{++})^t (\alpha^{(n)}_{--})^t =-2 \frac{c}{\lambda} e^{-2xt} \mbox{ integrates to } (\delta^{(i)}_{+-})^t =  \frac{c}{x\lambda} e^{-2xt} +(d-\frac{c}{x\lambda}).$$
Recall that the peripheral curve $\gamma_{p_i}$ is central in $\overline{\mathcal{S}}_{A_{\hbar}}(\mathbb{D}_n)$ so it defines a Casimir element in $\overline{\mathcal{S}}_{+1}(\mathbb{D}_n)$ and $\gamma_{p_i}^t$ is contant. Thus
$$ 0= \frac{d}{dt} (\gamma_{p_i})^t = \frac{d}{dt} \left( (\delta_{+-}^{(i)})^t - (\delta_{-+}^{(i)})^t \right) \mbox{ so } -a = \frac{c}{x\lambda}.$$
This implies in particular that $c\neq 0$ and that we have $-\frac{a}{c}= \frac{1}{x\lambda}$ so $(\delta^{(i)}_{+-})^t = -a e^{-2xt} +a +d$. Eventually
$$  \frac{d}{dt} (\delta^{(i)}_{--})^t  =( (\alpha^{(n)}_{+-})^t (\delta^{(i)}_{--})^t-(\alpha_{--}^{(n)})^t(\delta_{+-}^{(i)})^t-3 (\alpha^{(n)}_{--})^t (\delta_{-+}^{(i)})^t) = x  (\delta^{(i)}_{--})^t + \frac{4a}{\lambda}e^{-3xt} - \frac{a+d}{\lambda}e^{-xt}$$
integrates to 
$$ (\delta^{(i)}_{--})^t  = \frac{1}{c} e^{xt} - \frac{a}{c}(a+d) e^{-xt} + \frac{a^2}{c} e^{-3xt}.$$

Therefore 
\begin{multline*}
 (\phi^t_{\alpha_{+-}^{(n)}} \rho) (\delta^{(i)}) = \begin{pmatrix} a e^{-2xt} & -\frac{1}{c} e^{xt} + \frac{a}{c}(a+d) e^{-xt} - \frac{a^2}{c} e^{-3xt}  \\ ce^{-xt} & -ae^{-2xt} +a+d \end{pmatrix} \\
 =  \begin{pmatrix} e^{\frac{xt}{2}} &- \frac{a}{c}(e^{\frac{xt}{2}}-e^{-\frac{3xt}{2}}) \\ 0 & e^{-\frac{xt}{2}}\end{pmatrix}
  \begin{pmatrix} a & b \\ c & d\end{pmatrix}
   \begin{pmatrix} e^{-\frac{xt}{2}} & \frac{a}{c}(e^{\frac{xt}{2}}-e^{-\frac{3xt}{2}}) \\ 0 & e^{\frac{xt}{2}}\end{pmatrix}
  = g_t \rho(\delta^{(i)}) g_t^{-1}
  \end{multline*} 
 and 
 $$  (\phi^t_{\alpha_{+-}^{(n)}} \rho) (\alpha^{(n)}) = \begin{pmatrix} 0 & - \lambda^{-1}e^{-xt} \\ \lambda e^{xt} & x \end{pmatrix} = g_t \rho(\alpha^{(n)}_{++})  h_t^{-1}.$$

This implies that $\phi^t_{\alpha_{+-}^{(n)}} \rho= (g_t, h_t)\cdot \rho$ as required.
\par Let us now compute the image of $\rho$ by the Hamiltonian flow $\phi_{\alpha_{++}^{(n)}}^t$. Writing $f^t:= f ( \phi_{\alpha_{++}^{(n)}}^t \rho)$ and using the formulas in Lemma \ref{lemma_orbit2},  we have the system of differential equations
$$ \left\{ \begin{array}{l}
\frac{d}{dt} (\alpha^{(n)}_{++})^t =\frac{d}{dt} (\alpha^{(n)}_{--})^t=\frac{d}{dt} (\delta^{(i)}_{+-})^t=\frac{d}{dt} (\delta^{(i)}_{-+})^t=0 \\
\frac{d}{dt} (\delta^{(i)}_{++})^t=- \lambda (\delta^{(i)}_{++})^t, \frac{d}{dt} (\delta^{(i)}_{--})^t= \lambda (\delta^{(i)}_{--})^t, \frac{d}{dt} (\alpha^{(n)}_{+-})^t =-\lambda (\alpha^{(n)}_{+-})^t
\end{array}
\right. $$
 which integrates to 
 $$
 \left\{ \begin{array}{l}
(\alpha^{(n)}_{++})^t = \lambda, (\alpha^{(n)}_{--})^t =\lambda^{-1},  (\delta^{(i)}_{+-})^t= d,  (\delta^{(i)}_{-+})^t=-a \\
 (\delta^{(i)}_{++})^t=c e^{-\lambda t},  (\delta^{(i)}_{--})^t=-be^{-\lambda t}, (\alpha^{(n)}_{+-})^t = x e^{-\lambda t}
 \end{array} \right.
 $$
Therefore
$$ (\phi^t_{\alpha_{++}^{(n)}} \rho) (\delta^{(i)}) = \begin{pmatrix} a & b e^{\lambda t} \\ c e^{-\lambda t} & d \end{pmatrix} = d_t \rho(\delta^{(i)}) d_{-t}, \quad (\phi^t_{\alpha_{++}^{(n)}} \rho) (\alpha^{(n)}) = 
\begin{pmatrix} 0 & - \lambda^{-1} \\ \lambda & x e^{-\lambda t} \end{pmatrix}= d_t \rho(\alpha^{(n)}) d_t.$$
So $\phi^t_{\alpha_{++}^{(n)}} \rho= (d_t, d_{-t})\cdot \rho$ as required. The formulas for the Hamiltonian flows of the function $\beta_{ij}^{(n)}$ are obtained from the ones for $\alpha^{(n)}_{ji}$ using the following symmetry. Consider $\mathbb{D}_n$ as a disc $\mathbb{D}$ (whose boundary contains the boundary edges) with $n$ subdiscs $d_1, \ldots, d_n$ removed and consider the involutive homeomorphism $\theta_n$ of $\mathbb{D}_n$ which is the central symmetry (rotation of angle $\pi$) of $\mathbb{D}$ which sends $d_i$ to $d_{n-i}$ and exchanges $b_L$ and $b_R$. Let $\Theta_n \in \Aut( \overline{\mathcal{S}}_A(\mathbb{D}_n))$ be the involutive algebra isomorphism induced by $\theta_n$. So when $n=1$, $\Theta_1$ is the Cartan involution of Definition \ref{def_Cartan_involution}. Note that $\Theta_n$  exchanges $\alpha^{(n)}_{ij}$ and $\beta^{(n)}_{ji}$. 
$\Theta_n$ defines an involutive Poisson isomorphism on $\overline{\mathcal{R}}_{\SL_2}(\mathbb{D}_n)$ which sends a representation $\rho$ to, say, $\rho^{\theta}$. Note that if  
$$\rho(\alpha^{(n)})= \begin{pmatrix} 0 & -\lambda^{-1} \\ \lambda & x \end{pmatrix}, \rho(\beta^{(n)})=\begin{pmatrix} -y & -\mu^{-1} \\ \mu & 0 \end{pmatrix} \mbox{ then }
 \rho^{\theta}(\alpha^{(n)})= \begin{pmatrix} 0 & -\mu^{-1} \\ \mu & y \end{pmatrix}, \rho(\beta^{(n)})=\begin{pmatrix} -x & -\lambda^{-1} \\ \lambda & 0 \end{pmatrix}.$$
Therefore by the preceding analysis
$$ \phi_{\alpha_{++}^{(n)}}^t \rho^{\theta} = ( \widetilde{d_{-t}}, \widetilde{d_t})\cdot \rho^{\theta} \mbox{ and }  \phi_{\alpha_{+-}^{(n)}}^t \rho^{\theta} = ( \widetilde{h_t}, \widetilde{g_t})\cdot \rho^{\theta}.$$
Since $\theta_n$ exchanges $b_L$ and $b_R$, we have that $\left( (g,h)\cdot \rho \right)^{\theta}= (h,g)\cdot \rho^{\theta}$. Since $\Theta_n$ is Poisson, then 
$$ \phi_{\beta_{++}^{(n)}}^t \rho = \left( \phi_{\alpha_{++}^{(n)}}^t \rho^{\theta}\right)^{\theta} = \left( ( \widetilde{d_{-t}}, \widetilde{d_t})\cdot \rho^{\theta}\right)^{\theta}=(\widetilde{d_t}, \widetilde{d_{-t}}) \cdot \rho \mbox{ and }
 \phi_{\beta_{-+}^{(n)}}^t \rho = \left( \phi_{\alpha_{+-}^{(n)}}^t \rho^{\theta}\right)^{\theta} = \left( ( \widetilde{h_t}, \widetilde{g_t})\cdot \rho^{\theta}\right)^{\theta}=(\widetilde{g_t}, \widetilde{h_t}) \cdot \rho.$$
 This concludes the proof.

\end{proof}

The Lie group action $\mathcal{G}^{out}  \curvearrowright \mathcal{R}_{\SL_2}(\mathbb{D}_n)$ induces, at each point $\rho$,  an infinitesimal action 
$$ a_{\rho} : Lie(\mathcal{G}^{out}) \to T_{\rho} \mathcal{R}_{\SL_2}(\mathbb{D}_n)$$
which is the derivative of the map $\mathcal{G}^{out} \to  \mathcal{R}_{\SL_2}(\mathbb{D}_n), g \mapsto g\cdot \rho$.
The infinitesimal action of the elements in $\mathcal{G}^{out}_{\rho} \subset \mathcal{G}^{out}$ is given by the subspace: 
$$ L_{\rho}:= \{ X \in Lie(\mathcal{G}^{out}): a_{ \rho} (X) \in T_{\rho} \overline{\mathcal{R}}_{\SL_2}(\mathbb{D}_n) \subset T_{\rho} \mathcal{R}_{\SL_2}(\mathbb{D}_n) \mbox{ and }D_{\rho}\alpha_{\partial} X =0 \} .$$
The identification $\mathcal{G}^{out}\cong \SL_2 \times \SL_2, g \mapsto (g(v_L), g(v_R))$ induces an identification $Lie(\mathcal{G}^{out}) \cong \mathfrak{sl}_2 \oplus \mathfrak{sl}_2$. 

\begin{lemma}\label{lemma_orbit4} 
Let $\rho \in \overline{\mathcal{R}}_{\SL_2}(\mathbb{D}_n)$ and write $\rho(\alpha^{(n)})= \begin{pmatrix} 0 & -\lambda^{-1}\\ \lambda & x \end{pmatrix}$ and $\rho(\beta^{(n)})= \begin{pmatrix} -y & -\mu^{-1} \\ \mu & 0 \end{pmatrix}$ as before. Consider the three elements 
$$X_1:= \left( \begin{pmatrix} 1 & 0 \\ 0 & -1 \end{pmatrix}, \begin{pmatrix} -1 & 0 \\ 0 & 1 \end{pmatrix} \right), 
 X_2:= \left( \begin{pmatrix} 0 & 0 \\ 1 & 0 \end{pmatrix}, \begin{pmatrix} -\frac{1}{2} & -1 \\ 0 & \frac{1}{2} \end{pmatrix} \right), 
 X_3 := \left( \begin{pmatrix} 0 & 1 \\ 0 & 0 \end{pmatrix} , \begin{pmatrix} \frac{x \lambda}{2} & 0 \\ - \lambda^2 & - \frac{x\lambda}{2} \end{pmatrix} \right).$$
 Then $(X_1, X_2, X_3)$ is a basis of $L_{\rho}$.
 \end{lemma}
 
 \begin{proof}
 Let $X= \left( \begin{pmatrix} a & c \\ b & -a \end{pmatrix}, \begin{pmatrix} a' & c' \\ b' & -a' \end{pmatrix} \right) \in Lie(\mathcal{G}^{out}) \cong \mathfrak{sl}_2 \oplus \mathfrak{sl}_2$. Then $X\in L_{\rho}$ if and only if $D_{\rho} H_{\partial}\cdot  X=  D_{\rho} \alpha^{(n)}_{-+}\cdot  X =D_{\rho} \beta^{(n)}_{+-}\cdot  X=0$. A straightforward computation shows the equivalences: 
 $$ \left\{ \begin{array}{l} 
 D_{\rho} \alpha^{(n)}_{-+}\cdot  X = 0 \\
 D_{\rho} \beta^{(n)}_{+-}\cdot  X=0 \\
 D_{\rho} \alpha_{\partial}\cdot  X= 0 
 \end{array} \right. 
 \Leftrightarrow 
 \left\{ \begin{array}{l} 
 c\lambda + \lambda^{-1} b'=0 \\
 -\mu^{-1}b - \mu c'=0 \\
 2(a+a') \lambda \mu + \mu x b' +\lambda y b = 0
 \end{array}\right.
  \Leftrightarrow 
 \left\{ \begin{array}{l} 
 b'= -c\lambda^2 \\
 c'= -\mu^{-2}b \\
 a'=-a + \frac{xc\lambda}{2} - \frac{b}{2}
 \end{array}
 \right.
 \Leftrightarrow
 X= a X_1+bX_2+cX_3.
 $$
 This concludes the proof.

 \end{proof}

\begin{proof}[Proof of Proposition \ref{prop_orbit}]
By Lemma \ref{lemma_orbit3}, for each function $f\in \{ \alpha^{(n)}_{ij}, \beta^{(n)}_{ij}, i,j =\pm \}$ and each $t\in \mathbb{C}$,  there exists $g_f^t \in \mathcal{G}^{out}_{\rho}$ such that $\phi^t_f \rho = g_f^t \cdot \rho$. So the $\mathcal{G}_{\mathbb{D}_1}$ orbit of $\rho$ is contained in the set $\{ g \cdot \rho, g\in \mathcal{G}^{out}_{\rho}\}$. Conversely, in order to prove that the whole $\{ g \cdot \rho, g\in \mathcal{G}^{out}_{\rho}\}$ is included in the $\mathcal{G}_{\mathbb{D}_1}$ orbit, we prove that the subspace of $Lie(\mathcal{G}^{out})$ spanned by the vectors $X_f$ such that $g_f^t = (\mathds{1}_2, \mathds{1}_2) + t X_f  + o(t^2)$ is equal to $L_{\rho}$. First 
\begin{align*}
{}&  (g_t, h_t)=  (\mathds{1}_2, \mathds{1}_2) + t (\frac{x}{2} X_1 + \frac{2}{\lambda} X_3)  + o(t^2), \quad \mbox{ so }\quad X_{\alpha_{+-}}=\frac{x}{2} X_1 + \frac{2}{\lambda} X_3; \\
{}& (\widetilde{g_t} , \widetilde{h_t})= (\mathds{1}_2, \mathds{1}_2) + t (\frac{y}{2} X_1 -2 \mu X_2) +o(t^2), \quad \mbox{ so }\quad X_{\beta_{-+}}=\frac{x}{2} X_1 + \frac{2}{\lambda} X_3; \\
 {}& (d_t, d_{-t}) = (\mathds{1}_2, \mathds{1}_2) + t ( \frac{x}{2} X_1 ) +o(t^2) , \quad \mbox{ so }\quad X_{\alpha_{++}}=-X_{\alpha_{--}}= \frac{x}{2}X_1.
 \end{align*}
Similarly, $X_{\beta_{++}}=-X_{\beta_{--}}= \frac{y}{2}X_1$. Using Lemma \ref{lemma_orbit4}, we see that $L_{\rho}=\Span( X_f, f \in \{ \alpha^{(n)}_{ij}, \beta^{(n)}_{ij}, i,j =\pm \})$. This concludes the proof.

\end{proof}

\subsection{Poisson Lie groups}\label{sec_poisson_lie_group}

The coproduct $\Delta$ on $\overline{\mathcal{S}}_{+1}(\mathbb{D}_1)$ is Poisson by Lemma \ref{lemma_poisson_morphisms} so it induces a structure of Poisson Lie group on $X(\mathbb{D}_1)$. As a group, $X(\mathbb{D}_1)$ has a simple description as follows. 
Let $B^+\subset \SL_2$ be the subgroup of upper triangular matrices and $B^- \subset \SL_2$ the subgroup of lower triangular matrices and define $G_0:= B^+ \times B^{-}$ with composition law defined for $g=(g_+,g_-)$ and $h=(h_+,h_-)$ by $gh:=(g_+h_+, g_-h_-)$. Define  an isomorphism of algebraic groups $f: X(\mathbb{D}_1) \xrightarrow{\cong} G_0$ by sending a point $x\in X(\mathbb{D}_1)$, described by a character $\chi_x: \overline{\mathcal{S}}_{+1}(\mathbb{D}_1)\to \mathbb{C}$ to
$$ f(x)= (g_+, g_-) := \left( \begin{pmatrix} \chi_x(\alpha_{++}) & \chi_x(\alpha_{+-}) \\ \chi_x(\alpha_{-+}) & \chi_x(\alpha_{--})\end{pmatrix},  \begin{pmatrix} \chi_x(\beta_{++}) & \chi_x(\beta_{+-}) \\ \chi_x(\beta_{-+}) & \chi_x(\beta_{--})\end{pmatrix}\right).$$
 Note that the isomorphism $\overline{\mathcal{R}}_{\SL_2}(\mathbb{D}_1)\cong G_0$ obtained by composing the isomorphism of Convention \ref{convention_w} with $f$, sends a reduced representation $\rho: \Pi_1(\mathbb{D}_1) \to \SL_2$ to the elements $g=(g_+,g_-)$ where $g_+= C \rho(\alpha)$, $g_-=C\rho(\beta)$ and $C=\begin{pmatrix} 0 & -1 \\ 1 & 0\end{pmatrix}$. 

\vspace{2mm}
\par  Consider the strong embedding $\mu: \mathfrak{m}_1 \hookrightarrow \mathbb{D}_1$ illustrated in Figure \ref{fig_mu}. 
By functoriality, it induces an algebra morphism $\mu_*: B_q\SL_2 \to \Uq$ and a Poisson morphism $\mu^* : X(\mathbb{D}_1)\to \SL_2^{STS}$, where the superscript $STS$ stands to recall that $\SL_2$ is equipped with the Poisson structure derived from the identification $X(\mathfrak{m}_1)\cong \SL_2$ (the so-called Semenov-Tian-Shansky Poisson structure). The composition $\varphi: G^0 \xrightarrow{f^{-1}} X(\mathbb{D}_1) \xrightarrow{\mu^*} \SL_2^{STS}$ is then described by $\varphi(g_+,g_-)= -\tr(g_+ g_-^{-1})$. Said differently, if $g=(g_+,g_-)$ corresponds to a representation $\rho: \Pi_1(\mathbb{D}_1)\to \SL_2$, then $\varphi(g)=-\tr(\rho(\gamma_p))$. 
As a particular case of Proposition \ref{prop_orbit} we obtain the

 \begin{figure}[!h] 
\centerline{\includegraphics[width=5cm]{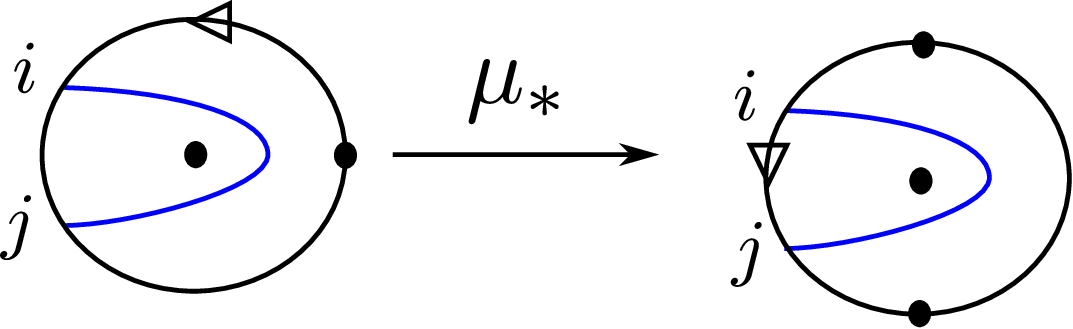} }
\caption{An illustration of the morphism $\mu_*: \mathcal{S}_A(\mathfrak{m}_1)\hookrightarrow \overline{\mathcal{S}}_A(\mathbb{D}_1)$.
} 
\label{fig_mu} 
\end{figure} 

\begin{corollary} The symplectic leaves of $G^0$ are the intersections $\varphi^{-1}(C)\cap (\alpha_{\partial})^{-1}(\{h_{\partial}\})$ where $C\subset \SL_2$ is a conjugacy class and $h_{\partial}\in \mathbb{C}^*$.
\end{corollary}

\subsection{Gluing operation for representation varieties}

Let $\mathbf{\Sigma}_1=(\Sigma_1,\mathcal{A}_1)$ and $\mathbf{\Sigma}_2=(\Sigma_2, \mathcal{A}_2)$ be two marked surfaces, $a_1\in \mathcal{A}_1$ and $a_2\in \mathcal{A}_2$ some boundary edges and let $\mathbf{\Sigma}= (\mathbf{\Sigma}_1 \sqcup \mathbf{\Sigma}_2)_{a_2\# a_1}$ be the marked surface obtained by gluing $\mathbf{\Sigma}_1$ with $\mathbf{\Sigma}_2$ while identifying $a_1$ and $a_2$. Writing $\underline{X}(\mathbf{\Sigma}):= \Specm(\mathcal{S}_{+1}(\mathbf{\Sigma}))$, the coactions $\Delta^L_{a_2}: \mathcal{S}_{+1}(\mathbf{\Sigma}_2) \to \mathcal{O}[\SL_2] \otimes \mathcal{S}_{+1}(\mathbf{\Sigma}_2)$ 
and $\Delta^R_{a_1}: \mathcal{S}_{+1}(\mathbf{\Sigma}_1) \to  \mathcal{S}_{+1}(\mathbf{\Sigma}_1)\otimes \mathcal{O}[\SL_2] $, induce some group actions $\rhd_{a_2}: \SL_2\times \underline{X}(\mathbf{\Sigma}_2) \to \underline{X}(\mathbf{\Sigma}_1)$ and $\lhd_{a_1}: \underline{X}(\mathbf{\Sigma}_1) \times \SL_2 \to \underline{X}(\mathbf{\Sigma}_1)$. Define a (left) group action $\rhd : \SL_2\times \underline{X}(\mathbf{\Sigma}_1)\times \underline{X}(\mathbf{\Sigma}_2) \to \underline{X}(\mathbf{\Sigma}_1)\times \underline{X}(\mathbf{\Sigma}_2)$ by 
$$g \rhd (x_1,x_2) := (x_1 \lhd_{a_1}g^{-1}, g\rhd_{a_2} x_2) \quad \mbox{, for } g\in \SL_2, x_1\in \underline{X}(\mathbf{\Sigma}_1), x_2 \in \underline{X}(\mathbf{\Sigma}_2).$$
We denote by $\Delta^L_{a_1,a_2}$ the associated comodule map.
Denote also by $\underline{\pi}: \underline{X}(\mathbf{\Sigma}_1)\times \underline{X}(\mathbf{\Sigma}_2) \to \underline{X}(\mathbf{\Sigma})$ the morphism defined by $\theta_{a_1\# a_2}: \mathcal{S}_{+1}(\mathbf{\Sigma}) \to \mathcal{S}_{+1}(\mathbf{\Sigma}_1)\otimes  \mathcal{S}_{+1}(\mathbf{\Sigma}_2)$. 

\begin{proposition} Suppose that the connected component of $\mathbf{\Sigma}_1$ which contains $a_1$ contains at least another boundary edge. Then $\underline{\pi}: \underline{X}(\mathbf{\Sigma}_1)\times \underline{X}(\mathbf{\Sigma}_2) \to \underline{X}(\mathbf{\Sigma})$ is a principal $\SL_2$ bundle with $\SL_2$ action defined by $\rhd$.
\end{proposition}

\begin{proof}

By Theorem \ref{theorem_exactsequence} we have a left exact sequence
$$ 0 \to \mathcal{S}_{+1}(\mathbf{\Sigma})\xrightarrow{\theta_{a_2\# a_1}} \mathcal{S}_{+1}(\mathbf{\Sigma}_1) \otimes \mathcal{S}_{+1}(\mathbf{\Sigma}_2) \xrightarrow{\Delta^R_{a_1}\otimes \id - \id \otimes  \Delta^L_{a_2}} \mathcal{S}_{+1}(\mathbf{\Sigma}_1) \otimes \mathcal{O}[\SL_2]\otimes  \mathcal{S}_{+1}(\mathbf{\Sigma}_2).$$
Consider the  isomorphism 
$$\psi:= (\sigma \otimes \id)\left(\id \otimes  \left( (\mu \otimes \id)(S\otimes \Delta_{a_2}^L)\right)\right): \mathcal{S}_{+1}(\mathbf{\Sigma}_1) \otimes \mathcal{O}[\SL_2]\otimes  \mathcal{S}_{+1}(\mathbf{\Sigma}_2)\to  \mathcal{O}[\SL_2]\otimes\mathcal{S}_{+1}(\mathbf{\Sigma}_1) \otimes   \mathcal{S}_{+1}(\mathbf{\Sigma}_2)$$
with inverse $\psi^{-1}:= \left(\id \otimes  \left( (\mu \otimes \id)(S\otimes \Delta_{a_2}^L)\right)\right)(\sigma \otimes \id)$. 
A straightforward computation shows that: 
$$ \psi \circ \left(\Delta^R_{a_1}\otimes \id \right)  = \Delta^L_{a_1, a_2} \quad \mbox{ and }\psi \circ \left( \id \otimes  \Delta^L_{a_2}\right) = \eta \otimes \id\otimes \id.$$
Therefore the following sequence is exact: 
$$ 0 \to \mathcal{S}_{+1}(\mathbf{\Sigma})\xrightarrow{\theta_{a_2\# a_1}} \mathcal{S}_{+1}(\mathbf{\Sigma}_1) \otimes \mathcal{S}_{+1}(\mathbf{\Sigma}_2) \xrightarrow{ \Delta^L_{a_1, a_2} -\eta \otimes \id \otimes \id} \mathcal{O}[\SL_2]\otimes \mathcal{S}_{+1}(\mathbf{\Sigma}_1) \otimes   \mathcal{S}_{+1}(\mathbf{\Sigma}_2).$$
So $\mathcal{S}_{+1}(\mathbf{\Sigma})$ is identified with the subset of $\mathcal{S}_{+1}(\mathbf{\Sigma}_1) \otimes \mathcal{S}_{+1}(\mathbf{\Sigma}_2)$ of coinvariant vectors for the $\mathcal{O}[\SL_2]$ coaction. Dualizing this discussion, we obtain that $\underline{X}(\mathbf{\Sigma})$ gets identified with the GIT quotient $(\underline{X}(\mathbf{\Sigma}_1)\times \underline{X}(\mathbf{\Sigma}_2))\sslash \SL_2$ with quotient map $\underline{\pi}$. In particular $\underline{\pi}$ is surjective and $\SL_2$ acts transitively on the fibers. It remains to prove that the $\SL_2$ action on $\underline{X}(\mathbf{\Sigma}_1)\times \underline{X}(\mathbf{\Sigma}_2)$ is free. Let $g\in \SL_2$ and $(x_1,x_2) \in \underline{X}(\mathbf{\Sigma}_1)\times \underline{X}(\mathbf{\Sigma}_2)$ such that $g\rhd(x_1,x_2) = (x_1,x_2)$ and let us prove that $g= \mathds{1}_2$. Fix an arbitrary spin function $w$ on $\mathbf{\Sigma}_1\sqcup \mathbf{\Sigma}_2$ and let $(\rho_1, \rho_2)$ be the image of $(x_1,x_2)$ under the identification $\underline{X}(\mathbf{\Sigma}_1)\times \underline{X}(\mathbf{\Sigma}_2)\cong \mathcal{R}_{\SL_2}(\mathbf{\Sigma}_1) \times \mathcal{R}_{\SL_2}(\mathbf{\Sigma}_2)$ of Theorem \ref{theorem_classical_limit}. 
By hypothesis, $\mathbf{\Sigma}_1$ contains another boundary edge $b_1 \neq a_1$ in the same connected component than $a_1$. Consider an arbitrary path $\alpha= v_{a_1}\to v_{b_1}$ in $\Pi_1(\Sigma_1, \mathbb{V}_1)$. 
Then $\left(g\rhd (\rho_1,\rho_2)\right) (\alpha)=g  \rho_1(\alpha)$. So if $g\rhd(x_1,x_2) = (x_1,x_2)$ then $g \rho_1(\alpha)= \rho_1(\alpha)$. This implies that $g=\mathds{1}_2$ so the $\SL_2$ action is free and the $\SL_2$ action is principal.
\end{proof}

We would like an analogous result for $X(\mathbf{\Sigma}):= \Specm(\overline{\mathcal{S}}_{+1}(\mathbf{\Sigma}))$. Dualizing the following diagram
$$ \begin{tikzcd}
0 \ar[r] & 
\mathcal{S}_{+1}(\mathbf{\Sigma}) 
\ar[r, "\theta_{a_2\#a_1}"] \ar[d] &
\mathcal{S}_{+1}(\mathbf{\Sigma}_1)\otimes \mathcal{S}_{+1}(\mathbf{\Sigma}_2) 
\ar[r, shift left,  "\Delta_{a_1,a_2}^L"] \ar[r, shift right,  "\eta\otimes \id"'] \ar[d] & 
\mathcal{O}[\SL_2] \otimes \mathcal{S}_{+1}(\mathbf{\Sigma}_1)\otimes \mathcal{S}_{+1}(\mathbf{\Sigma}_2) 
\ar[d] \\
0 \ar[r] & 
\overline{\mathcal{S}}_{+1}(\mathbf{\Sigma}) 
\ar[r, "\theta_{a_2\#a_1}"]  &
\overline{\mathcal{S}}_{+1}(\mathbf{\Sigma}_1)\otimes \overline{\mathcal{S}}_{+1}(\mathbf{\Sigma}_2) 
\ar[r, shift left,  "\Delta_{a_1,a_2}^L"] 
\ar[r, shift right,  "\eta\otimes \id"']  & 
\mathbb{C}[X^{\pm1}]  \otimes\overline{\mathcal{S}}_{+1}(\mathbf{\Sigma}_1)\otimes \overline{\mathcal{S}}_{+1}(\mathbf{\Sigma}_2) 
\end{tikzcd}$$
where the vertical arrows are the quotient maps and the first line is a coequalizer (but not the second), we obtain the following diagram
$$ \begin{tikzcd}
\mathbb{C}^* \times X(\mathbf{\Sigma}_1) \times X(\mathbf{\Sigma}_2) 
\ar[r, shift left, "\rhd"] \ar[r, shift right, "inv"']  \ar[d, hook] & 
 X(\mathbf{\Sigma}_1) \times X(\mathbf{\Sigma}_2) 
 \ar[r, "\pi"] \ar[d, hook] & 
 X(\mathbf{\Sigma})
 \ar[d,hook] \\
\SL_2 \times \underline{X}(\mathbf{\Sigma}_1) \times \underline{X}(\mathbf{\Sigma}_2) 
\ar[r, shift left, "\rhd"] \ar[r, shift right, "inv"'] & 
 \underline{X}(\mathbf{\Sigma}_1) \times \underline{X}(\mathbf{\Sigma}_2) 
 \ar[r, "\underline{\pi}"]  & 
 \underline{X}(\mathbf{\Sigma})
 \end{tikzcd}
 $$ 
where the vertical are the inclusion maps and the embedding $\iota : \mathbb{C}^* \hookrightarrow \SL_2$, $\iota(z)=\begin{pmatrix} z & 0 \\ 0 & z\end{pmatrix}$ and $inv(g, x_1,x_2):=(x_1,x_2)$. 

\begin{proposition}\label{prop_gluing_reduced} Suppose that for $i=1,2$, the boundary component of $\mathbf{\Sigma}_i$ which contains $a_i$ contains at least another boundary edge. Then ${\pi}: {X}(\mathbf{\Sigma}_1)\times {X}(\mathbf{\Sigma}_2) \to {X}(\mathbf{\Sigma})$ is a principal $\mathbb{C}^*$ bundle with $\mathbb{C}^*$ action defined by $\rhd$.
\end{proposition}

Recall that $B^+ \subset \SL_2$ denotes the subset of upper triangular matrices.
\begin{lemma}\label{lemma_gluing_reduced} Let $g\in \SL_2$ and $M,N \in B^+$ such that $Mg \in B^+$ and $g^{-1}N \in B^+$. Then $g$ is diagonal.
\end{lemma}

\begin{proof} The proof is a straightforward verification left to the reader. \end{proof}

\begin{proof}[Proof of Proposition \ref{prop_gluing_reduced}]
We need to prove that $(1)$ $\pi$ is surjective and $(2)$ if $(x_1,x_2) \in X(\mathbf{\Sigma}_1)\times X(\mathbf{\Sigma}_2)$ and $g\in \SL_2$ are such that $g \rhd(x_1,x_2) \in  X(\mathbf{\Sigma}_1)\times X(\mathbf{\Sigma}_2)$ then $g\in \iota( \mathbb{C}^*) \subset \SL_2$. 
This will imply that for $x \in X(\mathbf{\Sigma})$, the fiber $\pi^{-1}(x) \subset \underline{\pi}^{-1}$ gets identified with the subgroup $\mathbb{C}^* \subset \SL_2$ through an identification $\underline{\pi}^{-1}(x)\cong \SL_2$. So it will prove that $\pi$ is a $\mathbb{C}^*$ principal bundle. 
\par $(1)$ The surjectivity of $\pi$ is proved in \cite[Theorem $3.10$]{KojuKaruo_RepRSSkein}. $(2)$ Let $(x_1,x_2) \in X(\mathbf{\Sigma}_1)\times X(\mathbf{\Sigma}_2)$ and $g\in \SL_2$  such that $g \rhd(x_1,x_2)=(x_1\lhd_{a_1} g^{-1}, g\rhd_{a_2} x_2)  \in  X(\mathbf{\Sigma}_1)\times X(\mathbf{\Sigma}_2)$ and let us prove that $g$ is diagonal. For $i=1,2$, write $\chi_{x_i}: \mathcal{S}_{+1}(\mathbf{\Sigma}_i)\to \mathbb{C}$ for the character defined by $x_i$ and let $p_i$ be one of the endpoint of $a_i$ such that $p_1$ gets glued to $p_2$ in $\mathbf{\Sigma}$. Recall that we denote by $\alpha(p_i) \subset \Sigma_i$ the corner arcs.
 Consider the identities 
\begin{align*} {}& \Delta_{a_1}^R \begin{pmatrix} \alpha(p_1)_{++} & \alpha(p_1)_{+-} \\ \alpha(p_1)_{-+} & \alpha(p_1)_{--} \end{pmatrix} =\begin{pmatrix} \alpha(p_1)_{++} & \alpha(p_1)_{+-} \\ \alpha(p_1)_{-+} & \alpha(p_1)_{--} \end{pmatrix}  \otimes \begin{pmatrix} a & b \\ c & d \end{pmatrix} \quad \mbox{ and } \\
 {}& \Delta_{a_2}^L \begin{pmatrix} \alpha(p_2)_{++} & \alpha(p_2)_{+-} \\ \alpha(p_2)_{-+} & \alpha(p_2)_{--} \end{pmatrix} = \begin{pmatrix} a & b \\ c & d \end{pmatrix} 
 \otimes \begin{pmatrix} \alpha(p_2)_{++} & \alpha(p_2)_{+-} \\ \alpha(p_2)_{-+} & \alpha(p_2)_{--} \end{pmatrix}  \end{align*}
and write $M:= \chi_{x_1} \begin{pmatrix} \alpha(p_1)_{++} & \alpha(p_1)_{+-} \\ \alpha(p_1)_{-+} & \alpha(p_1)_{--} \end{pmatrix} $ and $N:= \chi_{x_2}  \begin{pmatrix} \alpha(p_2)_{++} & \alpha(p_2)_{+-} \\ \alpha(p_2)_{-+} & \alpha(p_2)_{--} \end{pmatrix} $. The fact
that $(x_1,x_2) \in X(\mathbf{\Sigma}_1)\times X(\mathbf{\Sigma}_2)$ implies that $M,N \in B^+$. The fact that $(x_1\lhd_{a_1}g^{-1}, g\rhd_{a_2} x_2) \in X(\mathbf{\Sigma}_1)\times X(\mathbf{\Sigma}_2)$ implies that $Mg^{-1} \in B^+$ and $Ng \in B^+$. So Lemma \ref{lemma_gluing_reduced} implies that $g$ is diagonal. This concludes the proof.
\end{proof}

Let $n\geq 2$. Then $\mathbb{D}_n$ is obtained by gluing $n$ copies of $\mathbb{D}_1$ together, so we have a splitting morphism $\theta: \overline{\mathcal{S}}_{+1}(\mathbb{D}_n) \hookrightarrow \overline{\mathcal{S}}_{+1}(\mathbb{D}_1)^{\otimes n}$ which induces a map $\pi: (G_0)^n \cong X(\mathbb{D}_1)^n \to X(\mathbb{D}_n)$. The group  $\mathcal{G}^{inner}:= (\mathbb{C}^*)^{n-1}$ acts on $(G_0)^n$ by the formula: 
\begin{multline}\label{eq_action}(z_1, \ldots, z_{n-1}) \rhd \left( g_1, \ldots , g_n \right) := (g_1 \cdot \iota(z_1)^{-1}, \ldots, \iota(z_{i-1})\cdot g_i \cdot \iota(z_i)^{-1}, \ldots, \iota(z_{n-1})\cdot g_n), \\ \mbox{, for }(z_1, \ldots, z_{n-1})\in \mathcal{G}^{inner}, (g_1, \ldots ,g_n)\in (G_0)^n, 
\end{multline}
where for $g=(g_-, g_+)$ we set $\iota(z)\cdot g= (\iota(z)g_-, \iota(z)g_+)$ and $g\cdot \iota(z)=(g_- \iota(z), g_+ \iota(z))$. Applying Proposition \ref{prop_gluing_reduced} $n-1$ times, we obtain the following.

\begin{corollary}\label{coro_gluing}
The morphism  $\pi: (G_0)^n \to X(\mathbb{D}_n)$ is a principal $\mathcal{G}^{inner}$ bundle. 
\end{corollary}

%Let $Y\subset G_0$ be the subset of elements $g=(g_-, g_+)$ such that either $\varphi(g)$ is not triangular, or $\varphi(g)$ is triangular not diagonal with diagonal entries not equal to $\pm 1$, or $\varphi(g)=\pm \mathds{1}_2$. The following lemma will be crucial in Section \ref{sec_QTRep}.

%\begin{lemma} Let $n\geq 2$. Then $\pi(Y^n)= X(\mathbb{D}_n)$. 
%\end{lemma}

%\begin{proof} By Corollary \ref{coro_gluing}, it suffices to prove that for every $\mathbf{g}=(g_1, \ldots, g_n) \in (G_0) ^n $, there exists $\mathbf{z}=(z_1, \ldots, z_{n-1}) \in \mathcal{G}^{inner}$ such that $\mathbf{z}\rhd \mathbf{g} \in Y^n$.
%\end{proof}

\section{Representations of reduced stated skein algebras at roots of unity}\label{sec_RepSSkein}

\begin{notations}\label{notations_RootsUnity}  Let  $\zeta^{1/2}\in \mathbb{C}$  a root of unity of odd order $N\geq 3$ and denote by $\mathcal{S}_{\zeta}(\mathbf{\Sigma})$ the corresponding stated skein algebra at $A^{1/2}=\zeta^{1/2}$. In all the paper, when a property is stated for $\mathcal{S}_A(\mathbf{\Sigma})$, it means that it holds for any ground ring $k$ with any $A^{1/2}\in k^{\times}$ whereas when a property is stated for $\mathcal{S}_{\zeta}(\mathbf{\Sigma})$ it means it only holds when $\zeta^{1/2}$ is a root of unity of odd order.
\end{notations}

\subsection{Centers of reduced stated skein algebras and shadows}

 Consider a stated arc $\alpha_{ij}$ in some marked surface $\mathbf{\Sigma}$ and denote by $\alpha_{ij}^{(N)}$ the stated tangle obtained by taking $N$ parallel copies of $\alpha_{ij}$ pushed along the framing direction. In the case where both endpoints of $\alpha$ lye in two distinct boundary edges, one has the equality
$ \alpha_{ij}^{(N)} = (\alpha_{ij})^N$
in $\overline{\mathcal{S}}_A(\mathbf{\Sigma})$, but when both endpoints lye in the same boundary edge, they are distinct. More precisely, suppose the two endpoints, say $v$ and $w$, of $\alpha$ lie in the same boundary edge  with $h(v)>h(w)$. Then  $\alpha_{ij}^{(N)}$ is defined by a stated tangle $(\alpha^{(N)}, s)$ where $\alpha^{(N)}=\alpha_1 \cup \ldots \cup \alpha_N$ represents $N$ copies of $\alpha$ and the endpoints $v_i, w_i$ of the copy $\alpha_i$ are chosen such that $h(v_N) > \ldots > h(v_1) > h(w_N)>\ldots >h(w_1)$.

\begin{definition}[Chebyshev polynomials]  The $N$-th \textbf{Chebyshev polynomial} of the first kind is the polynomial  $T_N(X) \in \mathbb{Z}[X]$ defined by the recursive formulas $T_0(X)=2$, $T_1(X)=X$ and $T_{n+2}(X)=XT_{n+1}(X) -T_n(X)$ for $n\geq 0$.
\end{definition}

\begin{theorem}\label{theorem_Frobenius}(\cite{BonahonWongqTrace} for unmarked surfaces,  \cite{KojuQuesneyClassicalShadows} for marked surfaces;  see also \cite{BloomquistLe, LePaprocki2018})
There is an embedding 
$$ Fr_{\mathbf{\Sigma}}: {\mathcal{S}}_{+1}(\mathbf{\Sigma}) \hookrightarrow \mathcal{Z}({\mathcal{S}}_{\zeta}(\mathbf{\Sigma}))$$
sending the (commutative) algebra at $+1$ into the center of the stated skein algebra at $\zeta^{1/2}$. Moreover, $Fr_{\mathbf{\Sigma}}$ is characterized by the facts that if $\gamma$ is a loop, then $Fr_{\mathbf{\Sigma}}(\gamma) = T_N(\gamma)$ and if $\alpha_{ij}$ is a stated arc, then $Fr_{\mathbf{\Sigma}}(\alpha_{ij})= \alpha_{ij}^{(N)}$. It passes to the quotient to define an embedding (still denoted by the same letter): 
$$ Fr_{\mathbf{\Sigma}}: \overline{\mathcal{S}}_{+1}(\mathbf{\Sigma}) \hookrightarrow \mathcal{Z}(\overline{\mathcal{S}}_{\zeta}(\mathbf{\Sigma})).$$
\end{theorem}
$Fr_{\mathbf{\Sigma}}$ is called the \textbf{Frobenius morphism}.
It is proved in \cite{KojuQuesneyClassicalShadows} that  ${\mathcal{S}}_A(\mathbf{\Sigma})$ is generated by the classes of loops and stated arcs so $Fr_{\mathbf{\Sigma}}$ is indeed characterized by specifying its image on these generators.
A generalization of Theorem \ref{theorem_Frobenius} for marked $3$-manifolds was done in \cite{LeKauffmanBracket} in the unmarked case and \cite{BloomquistLe} for marked $3$-manifolds (except for marked $3$-manifolds, the Frobenius morphism is no longer injective in general, see \cite{CostantinoLe_SSkeinTQFT}). 

\begin{notations} We denote by $Z_{\mathbf{\Sigma}}$ the center of $\overline{\mathcal{S}}_{\zeta}(\mathbf{\Sigma})$ and by $Z^0_{\mathbf{\Sigma}}\subset Z_{\mathbf{\Sigma}}$ the image of the Frobenius morphism. \end{notations}

 \begin{definition}[Punctures and boundary central elements]
 \begin{enumerate}
 \item   For $p\in \mathring{\mathcal{P}}$ an inner puncture, we denote by $\gamma_p \in \overline{\mathcal{S}}_A(\mathbf{\Sigma})$ the class of a peripheral curve encircling $p$ once.
 \item  For $\partial \in \Gamma^{\partial}$ a boundary component which intersects $\mathcal{A}$ non trivially, denote by $p_1, \ldots, p_n$ the boundary punctures in $\partial$ cyclically ordered by $\mathfrak{o}^+$  and define the elements in $\overline{\mathcal{S}}_A(\mathbf{\Sigma})$:
  $$ \alpha_{\partial} := \alpha(p_1)_{++} \ldots \alpha(p_n)_{++}, \quad \mbox{ and } \quad \alpha_{\partial}^{-1}:= \alpha(p_1)_{--} \ldots \alpha(p_n)_{--}.$$
 \end{enumerate}
 \end{definition}
 
 The elements $\alpha_{\partial}$ and $\alpha_{\partial}^{-1}$ are inverse to each other, hence the notation. 

\begin{theorem}[\cite{FrohmanKaniaLe_UnicityRep} for unmarked surfaces, \cite{KojuAzumayaSkein} for marked surfaces]\label{theorem_center} 
The elements $\gamma_p$ and $\alpha_{\partial}^{\pm 1}$ are central and $Z_{\mathbf{\Sigma}}$ is generated by the image of the Frobenius morphism together with these elements. More precisely, $Z_{\mathbf{\Sigma}}$ is isomorphic to the quotient of $$ \overline{\mathcal{S}}_{+1}(\mathbf{\Sigma})[ \gamma_p, \alpha_{\partial}^{\pm 1}; p \in \mathring{P}, \partial \in \Gamma^{\partial} ]$$ by the relations $T_N(\gamma_p)=Fr_{\mathbf{\Sigma}}(\gamma_p)$ and $\alpha_{\partial}^N = Fr_{\mathbf{\Sigma}}(\alpha_{\partial})$.
 \end{theorem}

 \begin{definition}[Classical schemes]
 \begin{enumerate}
 \item We write $X(\mathbf{\Sigma}):= \Specm( \overline{\mathcal{S}}_{+1}(\mathbf{\Sigma}) )$ and $\widehat{X}(\mathbf{\Sigma}):= \Specm(Z_{\mathbf{\Sigma}})$ and denote by $\pi: \widehat{X}(\mathbf{\Sigma}) \to {X}(\mathbf{\Sigma})$ the morphism induced by $Fr_{\mathbf{\Sigma}}$. We extend the Poisson bracket $\{\cdot, \cdot \}$ of $\overline{\mathcal{S}}_{+1}(\mathbf{\Sigma})$ to $Z_{\mathbf{\Sigma}}$ by declaring that the additional generators $\gamma_p$ and $\alpha_{\partial}$ are Casimir elements, that is that for any $x\in Z_{\mathbf{\Sigma}}$ then $\{\gamma_p, x\}=\{\alpha_{\partial}, x\}=0$. So $\pi$ is a Poisson morphism. 
 \item For $\mathbf{\Sigma}_1, \mathbf{\Sigma}_2$ with boundary edges $a_1$ and $a_2$, we denote by $ p_{a_1 \# a_2} : {X}(\mathbf{\Sigma}_1)\times {X}(\mathbf{\Sigma}_2) \to {X}(\mathbf{\Sigma}_1\cup_{a_1 \# a_2} \mathbf{\Sigma}_2)$ and $\widehat{p}_{a_1 \# a_2} : \widehat{X}(\mathbf{\Sigma}_1)\times \widehat{X}(\mathbf{\Sigma}_2) \to \widehat{X}(\mathbf{\Sigma}_1\cup_{a_1 \# a_2} \mathbf{\Sigma}_2)$ the dominant maps induced by $\theta_{a_1 \# a_2}$.
 \end{enumerate}
 \end{definition}
 Note that the Frobenius morphisms are compatible with the splitting morphisms in the sense that $Fr_{\mathbf{\Sigma}} \circ \theta_{a\# b} = \theta_{a\# b}\circ Fr_{\mathbf{\Sigma}}$.
 By Theorem \ref{theorem_classical_limit}, $X(\mathbf{\Sigma})$ is isomorphic to $\overline{\mathcal{R}}_{\SL_2}(\mathbf{\Sigma})$ and Theorem \ref{theorem_center} implies that,  as a set,   $\widehat{X}(\mathbb{D}_n)$ is described as 
 \begin{multline*}  \widehat{X}(\mathbb{D}_n) = \{ \widehat{x}= (\rho, h_{p_1}, \ldots, h_{p_n}, h_{\partial}): \rho\in \overline{\mathcal{R}}_{\SL_2}(\mathbf{\Sigma}),
 h_{p_i}\in \mathbb{C} \mbox{ is such that  } T_N(h_{p_i})=- \tr(\rho(\gamma_{p_i})) \mbox{ for }i=1, \ldots, n, 
\\ h_{\partial} \in \mathbb{C}^* \mbox{ is such that  } h_{\partial}^N = (\rho(\alpha)_{-+})(\rho(\beta)_{-+}) \}.\end{multline*}

 \begin{definition}[Semi-weight representations] A representation $r: \overline{\mathcal{S}}_A(\mathbf{\Sigma}) \to \End(V)$ is called a \textbf{semi-weight representation} (resp. \textbf{weight representation}) if it is semi-simple over $Z^0_{\mathbf{\Sigma}}$ (resp. over $Z_{\mathbf{\Sigma}}$). 
 \end{definition}
 
  \begin{definition}[Shadows]
  Let $r:\overline{\mathcal{S}}_A(\mathbf{\Sigma})$ be an indecomposable semi-weight representation.
  \begin{enumerate}
  \item The \textbf{classical shadow} of $r$ is the maximal ideal $x_r:= \ker(r) \cap Z^0_{\mathbf{\Sigma}} \in X(\mathbf{\Sigma})$.
  \item The \textbf{full shadow} of $r$ is  the (unique) maximal ideal $\widehat{x}_r \in \widehat{X}(\mathbf{\Sigma})$ containing $\mathcal{I}_r$. 
  \end{enumerate}
  \end{definition}
We refer to \cite{KojuKaruo_RepRSSkein} for proofs that the full shadow is indeed  unique.

\subsection{Azumaya loci}

  \begin{definition}[Almost Azumaya algebras] A $\mathbb{C}$ associative, unital algebra $\mathscr{A}$ is said \textbf{almost Azumaya} if $(i)$ $\mathscr{A}$ is affine (finitely generated as an algebra), $(ii)$ $\mathscr{A}$ is prime and $(iii)$ $\mathscr{A}$ is finitely generated as a module over its center $Z$. 
  \end{definition}
  Let $Q(Z)$ denote the fraction field of $Z$ obtained by localizing by every non zero element. By a theorem of Posner-Formaneck (see \cite[Theorem $I.13.3$]{BrownGoodearl}), $\mathscr{A}\otimes_Z Q(Z)$ is a central simple algebra and there exists a finite extension $F$ of $Q(Z)$ such that $\mathscr{A} \otimes_Z F\cong \Mat_D(F)$ is a matrix algebra of size $D$. The integer $D$ is called the \textbf{PI-degree} of $\mathscr{A}$ and is characterized by the formula $\dim_{F}(\mathscr{A}\otimes_ZF) = D^2$. Let $X:= \Specm(Z)$ and for $x\in X$ corresponding to a maximal ideal $\mathfrak{m}_x$, consider the finite dimensional algebra $\mathscr{A}_x:= \quotient{\mathscr{A}}{\mathfrak{m}_x \mathscr{A}}$. 
  Like before, a representation $\rho: \mathscr{A}\to \End(V)$ is a \textbf{weight representation} if $V$ is semi-simple as a $Z$-module. If $\rho$ is weight indecomposable, it sends central elements to scalars so it induces a point $x\in X$ such that $\rho$  induces by quotient a representation $\rho : \mathscr{A}_x \to \End(V)$. The point $x\in X$ will be referred to as the shadow of $\rho$. 
  
  \begin{definition}[Azumaya locus] The \textbf{Azumaya locus} of $\mathscr{A}$ is the subset
  $$ \mathcal{AL}:= \{x \in X : \mathscr{A}_x \cong \Mat_D(\mathbb{C}) \}.$$
  \end{definition}
  
  Note that $\Mat_D(\mathbb{C})$ is simple and every non trivial simple module is isomorphic to the standard module $\mathbb{C}^D$.
So if $x\in \mathcal{AL}$, there exists a unique indecomposable representation of $\mathscr{A}$ with shadow $x$ (up to isomorphism); this representation is irreducible and has dimension $D$ and is called \textbf{the Azumaya representation} at $x$.
The following theorem is essentially due to De Concini-Kac in their original work on quantum enveloping algebras \cite{DeConciniKacRepQGroups} and was further generalized by several authors including \cite{DeConciniLyubashenko_OqG, Brown_AL_discriminant, BrownGoodearl, FrohmanKaniaLe_UnicityRep}.
\begin{theorem}\label{theorem_AL} Let $\mathscr{A}$ be an almost Azumaya algebra with PI-degree $D$.
\begin{enumerate}
\item (\cite[Theorem III.I.7]{BrownGoodearl}) The Azumaya locus is a Zariski open dense subset.
\item  (\cite[Theorem III.I.6]{BrownGoodearl}) If $\rho: \mathscr{A}\to \End(V)$ is an irreducible representation whose  shadow is not in the Azumaya locus, then $\dim(V)<D$. So the Azumaya locus admits the following alternative description:
$$\mathcal{AL}=\{ x \in \mathcal{X} | x \mbox{ is the shadow of an irreducible representation of maximal dimension }D\}.$$
  Therefore, if $\rho:\mathscr{A} \to \End(V)$ is a $D$-dimensional central representation inducing $x\in \mathcal{X}$, then $\rho$ is irreducible if and only if $x\in \mathcal{AL}$.
\end{enumerate}
\end{theorem}

 \begin{theorem}[\cite{FrohmanKaniaLe_UnicityRep} for unmarked surfaces, \cite{KojuAzumayaSkein} for marked surfaces]\label{theorem_almost_Azumaya} 
$\overline{\mathcal{S}}_{\zeta}(\mathbf{\Sigma})$ is almost Azumaya. If $\mathbf{\Sigma}=(\Sigma_{g,n}, \mathcal{A})$ is a genus $g$ surface with $n$ boundary components with negative Euler characteristic, then $\overline{\mathcal{S}}_{\zeta}(\mathbf{\Sigma})$ has PI-degree $D_{\mathbf{\Sigma}}= N^{3g-3+n+|\mathcal{A}|}$. 
 \end{theorem}

The Azumaya locus of $\overline{\mathcal{S}}_{\zeta}(\mathbf{\Sigma})$ was computed explicitly in \cite{KojuKaruo_RepRSSkein} when $\mathbf{\Sigma}$ is essential with at least two boundary edges per connected component (see also \cite{KojuKaruo_Azumaya, FKL_GeometricSkein} for the unmarked cases). In the particular case where $\mathbf{\Sigma}=\mathbb{D}_n$ we obtain the following theorem.

\begin{theorem}(Particular case of \cite[Theorem $1.2$]{KojuKaruo_RepRSSkein}) \label{theorem_AzumayaLocus}
The Azumaya locus of $\overline{\mathcal{S}}_{\zeta}(\mathbb{D}_n)$ is the set of elements $\widehat{x}=(\rho, h_{p_1}, \ldots, h_{p_n}, h_{\partial})\in \widehat{X}(\mathbb{D}_n)$ such that for all $1\leq i \leq n$, then either $\rho(\gamma_{p_i})\neq \pm \mathds{1}_2$ or $h_{p_i}=\pm 2$. 
\end{theorem}
In this case, the Azumaya locus coincides with the regular locus of $\widehat{X}(\mathbb{D}_n)$.

\subsection{Poisson orders and gauge equivalence}\label{sec_PO_GaugeEquivalence}

The theory of Poisson orders was created by DeConcini-Kac in their original work on quantum enveloping algebras \cite{DeConciniKacRepQGroups} and generalized by Brown-Gordon in \cite{BrownGordon_PO}.

\begin{definition}[Poisson orders]
A \textbf{Poisson order} is a $4$-tuple $(\mathscr{A}, {X}, \phi, D)$ where: 
\begin{enumerate}
\item $\mathscr{A}$ is an almost Azumaya algebra with center ${Z}$; 
\item ${X}$ is a Poisson affine $\mathbb{C}$ -variety; 
\item $\phi: \mathcal{O}[{X}] \hookrightarrow {Z}$ a finite injective morphism of algebras; 
\item $D: \mathcal{O}[{X}] \to Der (\mathscr{A}) : z\mapsto D_z$ a linear map such that for all $f,g \in  \mathcal{O}[{X}]$, we have
$$ D_f(\phi(g))= \phi(\{f,g\}).$$
\end{enumerate}
\end{definition}

Let us suppose that $X$ is smooth and denote by $Z^0\subset Z$ the image of $\phi$.
Using $\phi$, $\mathscr{A}$ has a natural structure of module over $\mathcal{O}[X]$ so it defines a coherent sheaf $\mathcal{M}$ over $X$ such that $D$ plays the role of a connection over $\mathcal{M}$ compatible with the Poisson structure and  $\mathscr{A}_x=\quotient{\mathscr{A}}{\mathfrak{m}_x \mathscr{A}}$ is the space of sections of the fiber $x$.   A brilliant idea of De Concini-Kac is that if $h\in \mathcal{O}[X]$ and $\Phi^t_h$ is the Hamiltonian flow of $h$ sending a point $x\in X$ to a point $y\in X$, then we can use the connection $D$ to "lift" the flow to the sections of the sheaf $\mathcal{M}$ to an automorphism $\Psi^t_h:= \exp(t D_h)$ inducing an isomorphism between $\mathscr{A}_{x}$ and $\mathscr{A}_y$. One issue arises from the fact that we work with the ring $\mathcal{O}[X]$ of polynomial functions on $X$ and that the Hamiltonian flow is analytic in essence, that it is if $h,f\in \mathcal{O}[X]$ then the solution $f_t=\phi_t^h (f)$ of the equation $\frac{d}{dt}f_t=\{f_t, h\}$ with $f_0=f$  might not live in $\mathcal{O}[X]$ but in the larger ring $\mathcal{O}^{an}[X]$ of complex analytic functions on $X$. Similarly,  for $a \in \mathscr{A}$ and $t\in \mathbb{C}$, the power series  $\exp(t D_ha)$ might not converge in $\mathscr{A}$. Following De Concini-Kac in \cite{DeConciniKacRepQGroups}, we introduce the completions
$$ \widehat{Z}:= Z \otimes_{Z_0} \mathcal{O}^{an}[X] \mbox{ and }\widehat{\mathscr{A}}:= \mathscr{A}\otimes_{Z_0}\mathcal{O}^{an}[X].$$
For $h\in \mathcal{O}[X]$ and $z\in \mathbb{C}$ then  $\Psi^t_h := \exp(tD_h)$ is a well-defined automorphism of $\widehat{\mathscr{A}}$ and if $\phi_h^t x =y$ then $\Psi^t_h$ induces an isomorphism $\mathscr{A}_x\cong \mathscr{A}_y$. The \textbf{gauge group} is the subgroup $\mathcal{G}\subset \Aut(\widehat{\mathscr{A}})$ generated by the automorphisms $\Psi^t_h$. The main theorem of the theory is the following. 

\begin{theorem}(\cite[Theorem $4.2$]{BrownGordon_PO}) If $x,y\in X$ belong to the same symplectic leaf, there exists an element of $\mathcal{G}$ which induces an isomorphism $\mathscr{A}_x \cong \mathscr{A}_y$.
\end{theorem}

This theorem can be improved as follows.

\begin{definition}[Equivariant Poisson orders] Let $G$ be an affine Lie group. A Poisson order $(\mathscr{A}, {X}, \phi, D)$ is said $G$-\textbf{equivariant} if $G$ acts  on $\mathscr{A}$ by automorphism such that its action preserves $\phi(\mathcal{O}[{X}])\subset \mathscr{A}$ and such that it is $D$ equivariant in the sense that for every $g\in G$, $z\in \mathcal{O}[{X}]$ and $a\in \mathscr{A}$, one has
$$ D_{g\cdot z}(a) = g D_z (g^{-1}a).$$
\end{definition}

In this case the action of $g\in G$ induces an isomorphism $\mathscr{A}_x\cong \mathscr{A}_{gx}$. Let us call \textit{equivariant  symplectic leaves} the $G$-orbits of the symplectic  symplectic leaves. The \textbf{extended gauge group} is the subgroup $\widehat{\mathcal{G}}\subset \Aut(\widehat{\mathscr{A}})$ generated by $\mathcal{G}$ and $G$. 

\begin{theorem}\label{theorem_PO_equivariant}(\cite[Proposition $4.3$]{BrownGordon_PO}) If $x,y\in {X}$ belong to the same equivariant symplectic  leaf then there exists an element of $\widehat{\mathcal{G}}$ which induces an isomorphism $\mathscr{A}_x\cong \mathscr{A}_y$.
\end{theorem}

\par  Here is our main source of examples of Poisson orders; we closely follow \cite{BrownGordon_PO} to which we refer for further details. Let $\mathcal{A}_v$ a free affine $\mathbb{C}[v^{\pm 1}]$-algebra which is a domain. Let   $N\geq 1$ and, writing $t:= v^N-1$, consider  the $\quotient{\mathbb{C}[v^{\pm 1}]}{(v^N-1)}$, the algebra $\mathcal{A}_N:= \quotient{\mathcal{A}_v}{t}$ and denote by $\pi: \mathcal{A}_v \to \mathcal{A}_N$ the quotient map. By fixing a basis $\mathcal{B}$ of $\mathcal{A}_v$ we can define a linear
 embedding $\hat{\cdot}: \mathcal{A}_{N} \to \mathcal{A}_v$ sending a basis element $b\in \mathcal{B}$ seen as element in $\mathcal{A}_{N}$ to the same element $\hat{b}$ seen as an element in $\mathcal{A}_v$. Note that $\hat{\cdot}$ is a left inverse for $\pi$. Suppose that the  algebra $\mathcal{A}_{+1}=\mathcal{A}_q\otimes_{q=1}\mathbb{C}$ is commutative and suppose there exists a central embedding  $\phi_N: \mathcal{A}_{+1} \hookrightarrow \mathcal{Z}(\mathcal{A}_{N})$ into the center of $\mathcal{A}_{N}$ such that $\mathcal{A}_N$ is finitely generated as an $\mathcal{A}_{+1}$ module. The pair $(\mathcal{A}_v, \phi_N)$ will be referred to a \textbf{quantum algebra}.
 Write $\mathcal{X}:=\mathrm{Specm}(\mathcal{A}_{+1})$ and define $D: \mathcal{A}_{+1} \to Der(\mathcal{A}_N)$ by the formula
 $$ D_xy:= \pi \left( \frac{ [\widehat{\phi_N(x)}, \hat{y}]}{N(q^N-1)} \right) .$$
Clearly $D_x$ is a derivation, is independent on the choice of the basis $\mathcal{B}$ and preserves $\phi_N(\mathcal{A}_{+1})$, so it defines a Poisson bracket $\{\cdot, \cdot \}_N$ on $\mathcal{A}_{+1}$ by 
$$
 D_x \phi_N(y)= \phi_N (\{x,y\}_N).
 $$
So  $\mathcal{X}=\mathrm{Specm}(\mathcal{A}_{+1})$ is a Poisson variety. Let $\zeta$ be a root of unity of order $N$, write $\mathcal{A}_{\zeta}=\mathcal{A}_v \otimes_{v=\zeta} \mathbb{C}$ and consider $D: \mathcal{A}_{+1} \to Der(\mathcal{A}_{\zeta})$ and $\phi_{\zeta}$ obtained by tensoring by $\mathbb{C}$. 
Then $(\mathcal{A}_{\zeta}, \mathcal{X}, \phi_{\zeta}, D)$ is a Poisson order. Now let us apply this construction to the algebra $\mathcal{A}_v:= \overline{\mathcal{S}}_v(\mathbf{\Sigma})$, where $A^{1/2}$ is replaced by the formal parameter $v$, with linear embedding $Fr_N: \overline{\mathcal{S}}_{+1}(\mathbf{\Sigma})\to \overline{\mathcal{S}}_v(\mathbf{\Sigma})$ defined by the same formula than $Fr_{\mathbf{\Sigma}}$. We thus get a linear map $D: \overline{\mathcal{S}}_{+1}(\mathbf{\Sigma})\to Der(\overline{\mathcal{S}}_{\zeta}(\mathbf{\Sigma}))$ such that $(\overline{\mathcal{S}}_{\zeta}(\mathbf{\Sigma}), X(\mathbf{\Sigma}), Fr_{\mathbf{\Sigma}}, D)$ is a Poisson order.
\par Recall from Section \ref{sec_comodule} that the comodule map $\Delta^L: \mathcal{S}_{\zeta}(\mathbf{\Sigma})\to (\mathcal{O}_q[\SL_2])^{\mathcal{A}}\otimes \mathcal{S}_{\zeta}(\mathbf{\Sigma})$ induces the toric coaction $\Delta^L: \overline{\mathcal{S}}_{\zeta}(\mathbf{\Sigma})\to (\mathbb{C}[X^{\pm 1}])^{\mathcal{A}}\otimes \overline{\mathcal{S}}_{\zeta}(\mathbf{\Sigma})$ which thus defines an algebraic action of the affine Lie group $(\mathbb{C}^*)^{\mathcal{A}}=\Specm((\mathbb{C}[X^{\pm 1}])^{\mathcal{A}})$ on 
$\mathcal{S}_{\zeta}(\mathbf{\Sigma})$. This action preserves $Z_{\mathbf{\Sigma}}$ and $Z^0_{\mathbf{\Sigma}}$ and $D$ is equivariant for this action so $(\overline{\mathcal{S}}_{\zeta}(\mathbf{\Sigma}), \widehat{X}(\mathbf{\Sigma}), Fr_{\mathbf{\Sigma}}, D)$ is a $(\mathbb{C}^*)^{\mathcal{A}}$ Poisson order.

\begin{notations} We denote by $\mathcal{G}_{\mathbf{\Sigma}}$ and $\widehat{\mathcal{G}}_{\mathbf{\Sigma}}$ the gauge group and extended gauge group of $\overline{\mathcal{S}}_{\zeta}(\mathbf{\Sigma})$.
\end{notations}

\begin{definition}[Gauge equivalence of representations] 
\begin{enumerate}
\item Let $r: \Uq \to \End(V)$ be a semi-weight representation which sends the elements of  $Z^0_{\mathbb{D}_1}$ to scalar operators, so $r$ admits a well-defined classical shadow $g\in G_0$ and passes to the quotient $r: (\Uq)_g \to \End(V)$. Let $\phi \in \widehat{\mathcal{G}}_{\mathbb{D}_1}$ and consider the vectors space $V^{\phi}:= V$ with module structure $r^{\phi}: (\Uq)_{\phi^* g} \xrightarrow{\phi} (\Uq)_g \xrightarrow{r} \End(V)$. We say that the two modules $V$ and $V^{\phi}$ are \textbf{gauge equivalent}.
\item Let $f: V \to V$ be a $\Uq$ equivariant morphism and define $f^{\phi}: V^{\phi} \to V^{\phi}$ such that $f^{\phi}=f$ as a linear map. Then $f^{\phi}$ is clearly $\Uq$ equivariant as well and $f$ and $f^{\phi}$ are said \textbf{gauge equivalent}.
\end{enumerate}
\end{definition}

Let $i : \mathbb{D}_1\hookrightarrow \mathbb{D}_n$ be the strong embedding sending $\alpha, \beta$ to $\alpha^{(n)}, \beta^{(n)}$ (when $n=2$ we already used it to define the coproduct of $\overline{\mathcal{S}}_A(\mathbb{D}_1)$). Recall that an Azumaya representation $r: \overline{\mathcal{S}}_A(\mathbf{\Sigma}) \to \End(V)$ is an irreducible representation whose full shadow $\widehat{x}= (\rho, h_{p_1}, \ldots, h_{p_n}, h_{\partial}) \in \widehat{X}(\mathbb{D}_n)$ belongs to the Azumaya locus of $\overline{\mathcal{S}}_A(\mathbf{\Sigma})$, that is is such that for $1\leq i\leq n$ then either $\rho(\alpha_{p_i})\neq \pm \mathds{1}_2$ or $h_{p_i}=\pm 2$. By definition of the Azumaya locus, such a representation $r$ is determined, up to isomorphism, by $\widehat{x}$. Define a representation of $\Uq$ by the composition: 
$$ \overline{r} : \Uq \xrightarrow[\cong]{\Psi}\overline{\mathcal{S}}_A(\mathbb{D}_1) \xrightarrow{i_*} \overline{\mathcal{S}}_A(\mathbb{D}_n) \xrightarrow{r} \End(V).$$

\begin{definition}[Standard Azumaya representations]\label{def_standard_representation}
We call $\overline{r}$ the \textbf{standard Azumaya representation of }$\Uq$ with shadow $\widehat{x}$. 
\end{definition}

%%%%%%%%%%%%%%%%%%%%%%%%%%%%%%%%%%%%%%%%%%%%%%%%%%%%%%%%%%%%%%%%%%%%%%%%%%%%%%%%%%%%%%%%%%%%%%%

Recall from Section \ref{sec_moduli_space} that we have fixed identifications 
$$X(\mathbb{D}_n) \cong \overline{\mathcal{R}}_{\SL_2}(\mathbb{D}_n) \subset \mathcal{R}_{\SL_2}(\mathbb{D}_n) \xrightarrow{\cong} \SL_2 \times \Hom(\pi_1(\mathbb{D}_n, \{b_L\}), \SL_2).$$ 
The second isomorphism  sends $\rho$ to the pair $(\rho(\alpha^{(n)}), \restriction{\rho}{\pi_1(\mathbb{D}_n)})$ where $\pi_1(\mathbb{D}_n)$ is the fundamental group with base point $v_{b_L}$. 

\begin{definition}[Gauge equivalence on shadows] Two elements $x_1$ and $x_2$ of $X(\mathbb{D}_n)$, corresponding to some representations $\rho_1$ and $\rho_2$ in $\overline{\mathcal{R}}_{\SL_2}(\mathbb{D}_n)$ respectively, are \textbf{gauge equivalent} if $\restriction{\rho_1}{\pi_1(\mathbb{D}_n)}$ and $\restriction{\rho_2}{\pi_1(\mathbb{D}_n)}$ are conjugate. 
\par Two elements $\widehat{x_1}=(x_1,h_{p_1}^{(1)}, \ldots, h_{p_n}^{(1)}, h_{\partial}^{(1)})$ and $\widehat{x_2}=(x_2,h_{p_1}^{(2)}, \ldots, h_{p_n}^{(2)}, h_{\partial}^{(2)})$ of $\widehat{X}(\mathbb{D}_n)$ are \textbf{gauge equivalent} if $(i)$ $x_1$ and $x_2$ are gauge equivalent and $(ii)$ $h_{p_i}^{(1)}=h_{p_i}^{(2)}$ for all $1\leq i \leq n$.
\end{definition}

\begin{theorem}\label{theorem_GaugeRep} Two standard Azumaya representations $r_1$ and $r_2$ with shadows $\widehat{x}_1, \widehat{x}_2 \in \widehat{X}(\mathbb{D}_n)$ are gauge equivalent if and only if their shadows are gauge equivalent.
\end{theorem}

\begin{remark}
In particular, the set of gauge equivalence classes of standard Azumaya representations which factorize through $\overline{\mathcal{S}}_A(\mathbb{D}_n)$ is in $1:1$ correspondence with the set of elements $([\rho], h_{p_1}, \ldots, h_{p_n})$ where $[\rho]\in \quotient{\Hom(\pi_1(\mathbb{D}_n), \SL_2)}{\SL_2}$,  $h_{p_i}\in \mathbb{C}$ is such that $T_N(h_{p_i})=-\tr(\rho(\alpha_{p_i}))$ and such that for all $1\leq i \leq N$ either $\rho(\alpha_{p_i})\neq \pm \mathds{1}_2$ or $h_{p_i}=\pm 2$. 
\end{remark}

\begin{proof}
We need to compute the orbit of a given point $\widehat{x}=(\rho, h_{p_1}, \ldots , h_{p_n}, h_{\partial}) \in \widehat{X}(\mathbb{D}_n)$ though the full gauge group.
First, note that since any element $f \in \{ \alpha_{\partial}, \gamma_{p_i}, 1\leq i \leq n\}$ is central in $\overline{\mathcal{S}}_A(\mathbb{D}_n)$, then $D_hf=0$ for any regular function $h$ so $\Psi^t_h f = f$. So by Proposition \ref{prop_orbit}, the orbit of $\widehat{x}$ by $\mathcal{G}_{\mathbb{D}_1}$ is the set of elements $(\rho', h_{p_1}, \ldots , h_{p_n}, h_{\partial})$ such that $\restriction{\rho}{\pi_1(\mathbb{D}_n)}$ and $\restriction{\rho'}{\pi_1(\mathbb{D}_n)}$ are conjugate. Let us compute the action of the torus $(\mathbb{C}^*)^{\mathcal{A}}$. Let $\iota: \mathbb{C}^* \hookrightarrow \SL_2$ the embedding defined by $\iota(z):= \begin{pmatrix} z & 0 \\ 0 & z^{-1} \end{pmatrix}$. Let $(z_L, z_R)\in (\mathbb{C}^*)^{\mathcal{A}}$. Unfolding the definitions, we find that the image of $\widehat{x}$ by $(z_L, z_R)$ is $\widehat{x}^{(z_L,z_R)}= \left( (\iota(z_L^{-N}), \iota(z_R^{-N}))\cdot \rho, h_{p_1}, \ldots, h_{p_n}, z_L^2 z_R^2 h_{\partial} \right)$, where $(\iota(z_L^{-N}), \iota(z_R^{-N}))\cdot \rho$ denotes the outer gauge group action of Section \ref{sec_gauge_orbit}. So the torus action preserves the conjugacy class of  $\restriction{\rho}{\pi_1(\mathbb{D}_n)}$ and the values of $h_{p_i}$  and acts transitively on the values $h_{\partial}$. This concludes the proof.

\end{proof}

\subsection{The category of representations of $\Uq$}

Let $\mathcal{C}$ be the category of semi-weight finite dimensional $\Uq$ modules where $q=\zeta^2$.
\par \underline{\textbf{Grading:}} Consider the subalgebra $Z^1_{\mathbb{D}_1}\subset \overline{\mathcal{S}}_{\zeta}(\mathbb{D}_1)$ generated by $Z^0_{\mathbb{D}_1}$ and the boundary elements $\alpha_{\partial}^{\pm 1}$; so we have the inclusions $Z^0_{\mathbb{D}_1} \subset Z^1_{\mathbb{D}_1} \subset Z_{\mathbb{D}_1}$. Since $\alpha_{\partial}^{\pm 1}$ are group-like, $Z^1_{\mathbb{D}_1}$ is a sub-Hopf algebra of $\overline{\mathcal{S}}_{\zeta}(\mathbb{D}_1)$ so $\Specm(Z^1_{\mathbb{D}_1})$ is a Poisson Lie group. Recall the identification $X(\mathbb{D}_1)\cong G^0$ from Section \ref{sec_poisson_lie_group}. Under this identification, $\Specm(Z^1_{\mathbb{D}_1})$ identifies with the group 
$$G:= \{ (g,h_{\partial}) \in G_0\times \mathbb{C}^*: h_{\partial}^N = (g_- g_+)_{++}\}, $$
with composition law $(g,h_{\partial}) \cdot (g', h_{\partial}') = (gg', h_{\partial}h_{\partial}')$. The inclusion $Z^0_{\mathbb{D}_1}\subset Z^1_{\mathbb{D}_1}$ induces the projection $G\to G^0$ sending $(g,h_{\partial})$ to $g$. 
Let $r: \Uq \to \End(V)$ be a representation in $\mathcal{C}$. Since $r$ is semi-weight, then $r(\alpha_{\partial})^N$ is diagonalizable and invertible, so $r(\alpha_{\partial})$ is diagonalizable as well and $Z^1_{\mathbb{D}_1}$ acts semi-simply on $V$. It follows that $\mathcal{C}$ admits a $G$ grading $\mathcal{C}=\sqcup_{g\in G} \mathcal{C}_g$ where $\mathcal{C}_g \subset \mathcal{C}$ is the full subcategory of $\Uq$ modules on which the elements of  $Z^1_{\mathbb{D}_1}$ acts by the scalars given character $\chi_g: Z^1_{\mathbb{D}_1} \to \mathbb{C}$ corresponding to $g$. In particular we have $V_1 \in \mathcal{C}_{g_1}$ , $V_2 \in \mathcal{C}_{g_2}$ implies $V_1\otimes V_2 \in \mathcal{C}_{g_1g_2}$ and $g_1\neq g_2$ implies $\Hom_{\mathcal{C}}(V_1,V_2)=0$. 

\vspace{2mm}
\par \underline{\textbf{Pivotal structure:}} For $V\in \mathcal{C}$, we denote by $V^*:=\Hom(V, \mathbb{C})$ its dual with $\Uq$ action such that for $x\in \Uq$ and $f\in V^*$, $v\in V$, one has $x\cdot f (v):=f(S(x)\cdot v)$. The left evaluation and coevaluation maps are defined by 
$$ \overrightarrow{ev}_V: V^*\otimes V \to \mathds{1}, \quad \overrightarrow{ev}_V(f\otimes v):= f(v), \quad \mbox{ and }\quad  \overrightarrow{coev}_V: \mathds{1}\to V \otimes V^*, \quad \overrightarrow{coev}_V(1):=\sum_i v_i \otimes v_i^*.$$
Here $(v_i)_i$ is an arbitrary basis of $V$ and $(v_i^*)_i$ is the dual basis of $V^*$. 
The right evaluation and coevaluation maps are defined by 
\begin{multline*} \overleftarrow{ev}_V: V\otimes V^* \to \mathds{1}, \quad  \overleftarrow{ev}_V(v\otimes f):= f((K^{1/2}L^{-1/2})^{1-N}\cdot v), \mbox{ and } \\
  \overleftarrow{coev}_V: \mathds{1}\to V^* \otimes V, \quad \overleftarrow{coev}_V(1):=\sum_i v_i^* \otimes (K^{1/2}L^{-1/2})^{N-1}\cdot v_i.
  \end{multline*}
Note that if $V\in \mathcal{C}_g$ then $V^* \in \mathcal{C}_{g^{-1}}$. 

\vspace{2mm}
\par \underline{\textbf{Classification of semi-weight representations of $\Uq$:}}

The classification of all indecomposable modules in $\mathcal{C}$ is fully described in \cite{KojuKaruo_RepRSSkein}; let us summarize part of this classification. 
\begin{definition} An element $\underline{g}=(g,h_{\partial}) \in G$ is said: 
\begin{enumerate}
\item of type $1$ if $\tr(\varphi(g))\neq \pm 2$; 
\item of type $2$ if $\tr(\varphi(g))= \pm 2$ and $\varphi(g)\neq \pm \mathds{1}_2$; 
\item of type $3$ if $\varphi(g)=\pm \mathds{1}_2$.
\end{enumerate}
\end{definition}

Identify $\widehat{X}(\mathbb{D}_1)$ with the set $\widehat{G}=\{ \widehat{g}=(g, h_p, h_{\partial})\in G^0\times \mathbb{C}\times \mathbb{C}^*: T_N(h_p)=-\tr(\varphi(g)), h_{\partial}^N=(g_- g_+)_{++}\}$ and denote by $\pi: \widehat{G} \to G$ the projection map $\pi((g,h_{\partial},h_p))=(g,h_{\partial})$ corresponding to the inclusion $Z^1_{\mathbb{D}_1} \subset Z_{\mathbb{D}_1}$. So $\pi$ is an $N$-fold ramified covering, ramified precisely at elements which are not of type $1$. 
By Theorem \ref{theorem_AzumayaLocus}, the Azumaya locus of $\Uq$ corresponds to the subset of elements $\widehat{g}=(g,h_{\partial},h_p)$ such that either $g$ is of type $1$ or $2$ or $h_p=\pm 2$. For each such $\widehat{g}$, we thus have a unique (up to isomorphism) irreducible representation $V_{\widehat{g}}$ with full shadow $\widehat{g}$ which has dimension $N$. 
Let us say that $P\in \mathcal{C}$ is \textbf{projective} if it is a projective object in $\mathcal{C}$. Note that this does not implies that $V$ is a projective $\Uq$ modules (i.e. $P$ might not be a projective object in the whole category $\Uq-\Mod$).

\begin{theorem}(\cite[Corollary $4.8$]{KojuKaruo_RepRSSkein})\label{theorem_RepQG}
Let $g\in G$. 
\begin{enumerate}
\item If $g$ is of type $1$, then $\mathcal{C}_g$ is semi-simple. Its simple modules are the $N$ Azumaya representations $V_{\widehat{g}}$ for $\widehat{g}\in \pi^{-1}(g)$. In particular, they are all projective. 
\item If $g$ is of type $2$, then $V_{\widehat{g}}$ with $\widehat{g}=(g,h_p) \in \pi^{-1}(g)$ is projective if and only if $h_p=\pm 2$ (so there is only one such module, say $V_{g,\pm 2}$). When $h_p\neq \pm 2$, then $V_{\widehat{g}}$ admits a $2N$-dimensional projective cover $P_{\widehat{g}}$ which fits into an exact sequence: 
$$ 0 \to V_{\widehat{g}} \to P_{\widehat{g}} \to V_{\widehat{g}} \to 0.$$
 Every simple module in $\mathcal{C}_g$ is isomorphic to one of the Azumaya module $V_{\widehat{g}}$. Every projective indecomposable module in $\mathcal{C}_g$ is isomorphic to either $V_{g, \pm 2}$ or a $P_{\widehat{g}}$. Every indecomposable module in $\mathcal{C}_g$ is isomorphic to either a $V_{\widehat{g}}$ or a $P_{\widehat{g}}$ (so there are $N$ classes of indecomposable modules). 
 \item If $g$ is of type $3$, $\mathcal{C}_g$ contains an infinite number of classes of  indecomposable modules described in \cite{KojuKaruo_RepRSSkein}.
\end{enumerate}
\end{theorem}

Since we will need to work with the Azumaya modules $V_{\widehat{g}}$, let us describe them explicitly.

 \begin{definition}[Irreducible representations of $\Uq$]\label{def_QGRep}
 We define three families of indecomposable weight $\Uq$-modules as follows. For $n\in \mathbb{Z}$, we write $[n]:=\frac{q^n-q^{-n}}{q-q^{-1}}$. Consider the vector space $V:= \mathbb{C}^N$ with its canonical basis $(v_0, \ldots, v_{N-1})$. 
\begin{enumerate}
\item For $\lambda, \mu\in \mathbb{C}^*$ and $a,b\in \mathbb{C}$, the module $V(\lambda, \mu, a,b)$ is the space $V$ with module structure given by:
\begin{align*}
& K^{1/2}v_i = \lambda q^{-i}v_i; \quad L^{1/2}v_i = \mu q^i v_i \mbox{, for }i\in \{0, \ldots, N-1\}; \\
& Fv_{N-1}=bv_0; \quad Fv_i=v_{i+1} \mbox{, for }i \in \{0, \ldots, N-2\}; \\
& Ev_0=a v_{N-1}; \quad Ev_{i}= \left( \frac{q^{-i+1}\lambda^2 - q^{i-1}\mu^2}{q-q^{-1}} [i] +ab \right) v_{i-1}\mbox{, for }i\in \{1, \ldots, N-1\}.
\end{align*}

\item For $\lambda, \mu \in \mathbb{C}^*$, $c\in \mathbb{C}$, the module $\widetilde{V}(\lambda, \mu, c)$ is the space $V$ with module structure given by: 
\begin{align*}
& K^{1/2}v_i= \lambda q^{i}v_i; \quad L^{1/2}v_i= \mu q^{-i}v_i \mbox{, for }i \in \{0, \ldots, N-1\}; \\
& Ev_{N-1}= cv_0; \quad Ev_i= v_{i+1} \mbox{, for }i\in \{0, \ldots, N-2\}; \\
& F v_0= 0; \quad Fv_{i} = \left( \frac{ \mu^2q^{-i+1} - \lambda^2 q^{i-1} }{q-q^{-1}}[i] \right) v_i \mbox{, for }i\in \{1, \ldots, N-1\}.
\end{align*}

\item For $\mu \in \mathbb{C}^*$, $\varepsilon \in \{-1,+1\}$, $0\leq n \leq N-1$, the module $S_{\mu, \varepsilon, n}$ has canonical basis $(e_0, \ldots, e_n)$ and module structure given by: 
\begin{align*}
&K^{1/2}e_i = \varepsilon \mu {\zeta}^{n-2i} e_i; \quad L^{1/2}e_i= \mu {\zeta}^{2i-n}e_i \mbox{, for }i\in \{0, \ldots, n\}; \\
&Fe_n=0; \quad Fe_i = e_{i+1} \mbox{, for }i\in \{0,\ldots, n-1\}; \\
&Ee_0=0; \quad Ee_i= \mu^2 [i][n-i+1]e_{i-1}\mbox{, for }i\in \{1,\ldots, n\}.
\end{align*}

\end{enumerate}
 \end{definition}

Any Azumaya module $V_{\widehat{g}}$ is isomorphic to a module either of the form $V(\lambda, \mu ,a b)$ or $\widetilde{V}(\lambda, \mu , c)$ so the above definition gives explicit formulas for these modules. Note that the underlying vector space of $V_{\widehat{g}}$ is $V$ so is independent on $\widehat{g}$.  Moreover \cite[Corollary $4.8$]{KojuKaruo_RepRSSkein} implies that any simple $\Uq$ module is isomorphic to a module in Definition \ref{def_QGRep}.

\par Let us introduce another module. The module $P_{\mathds{1}}$ has 
has canonical basis $(x_0, \ldots, x_{N-1}, y_0, \ldots, y_{N-1})$ and module structure given by: 
\begin{align*}
& K^{1/2} x_i =  {\zeta}^{-2-2i}x_i, K^{1/2}y_i= \varepsilon \mu {\zeta}^{-2i}y_i, \quad L^{1/2} x_i=  {\zeta}^{2i+2} x_i, L^{1/2}y_i=\mu {\zeta}^{2i}y_i \mbox{, for }i\in \{0, \ldots, N-1\}; \\
& F x_i = x_{i+1}, \quad Fy_i= y_{i+1} \quad \mbox{( where }y_N=x_N=0)\mbox{, for }i\in \{0, \ldots, N-1\}; \\
& Ex_0=0, \quad Ex_i= -\mu^2[i][i+1] x_{i-1} \mbox{, for }i\in \{1, \ldots, N-1\}; \\
& Ey_0= 0, \quad E y_i = \mu^2[i] [i+1]y_{i-1}   \mbox{, for }i\in \{1, \ldots, N-1\}. 
\end{align*}
Recall that $\mathds{1}=S_{1,1,0}$ is the trivial module. The following proves that $\mathcal{C}$ is unimodular. 

\begin{lemma}\label{lemma_unimodular} $P_{\mathds{1}}$ is the projective cover of $\mathds{1}$. Moreover $\mathds{1}$ is the unique simple submodule of $P_{\mathds{1}}$. \end{lemma}
\begin{proof}
$P_{\mathds{1}}$ is indecomposable and projective by \cite[Theorem $4.6$]{KojuKaruo_RepRSSkein}. 
By \cite[Lemmas $4.10$]{KojuKaruo_RepRSSkein} we have the following non split exact sequences: 
\begin{align*}
{}&  0 \to V(q^{-1}, q, 0, 0) \to P_{\mathds{1}} \to V(1,1,0,0) \to 0; \\
{}& 0 \to \mathds{1} \to V(q^{-1}, q, 0, 0) \to S_{1,1,N-2} \to 0; \\
{}& 0 \to S_{1,1,N-2} \to V(1,1,0,0) \to \mathds{1} \to 0.
\end{align*}
So $P_{\mathds{1}}$ decomposes along the following "diamond shape" decomposition: 
$$ P_{\mathds{1}}= 
\begin{tikzcd} 
{} & \mathds{1} \ar[ld] \ar[rd] & {} \\
S_{1,1,N-2} \ar[rd]& {} & S_{1,1,N-2}\ar[ld] \\
{} & \mathds{1} &{}
\end{tikzcd}
$$
from which we read that $\mathds{1}$ is both a submodule and a quotient of $P_{\mathds{1}}$ and that the quotient map $P_{\mathds{1}}\to \mathds{1}$ is a projective cover.
\end{proof}

\subsection{Factorizable representations of $\overline{\mathcal{S}}_{\zeta}(\mathbb{D}_n)$}

Note that $\mathbb{D}_n$ is obtained by gluing $n$ copies of $\mathbb{D}_1$ together, so one has a splitting morphism
$$\theta: \overline{\mathcal{S}}_{\zeta}(\mathbb{D}_n) \to \overline{\mathcal{S}}_{\zeta}(\mathbb{D}_1)^{\otimes n} \cong (\Uq)^{\otimes n}.$$
In particular, if $V_1, \ldots, V_n \in \mathcal{C}$, then $V_1\otimes \ldots \otimes V_n$ acquires a structure of semi-weight $\overline{\mathcal{S}}_{\zeta}(\mathbb{D}_n)$ module through $\theta$. We call \textbf{factorizable} a $\overline{\mathcal{S}}_{\zeta}(\mathbb{D}_n)$-module isomorphic to a module of the form $V_1\otimes \ldots \otimes V_n$.

\begin{theorem}(\cite{KojuKaruo_RepRSSkein})\label{theorem_RepRSSkein}
\begin{enumerate}
\item A factorizable module $V_1\otimes \ldots \otimes V_n$ is semi-weight indecomposable.
\item Any simple $\overline{\mathcal{S}}_{\zeta}(\mathbb{D}_n)$ module is factorizable.
\item The class of factorizable modules is stable by the mapping class group action of $\Mod(\mathbb{D}_n)$.
\end{enumerate}
\end{theorem}

In particular, any standard Azumaya representation $r_{\widehat{x}}: \overline{\mathcal{S}}_{\zeta}(\mathbb{D}_n) \to \End(W)$ with full shadow $\widehat{x}\in \widehat{X}(\mathbb{D}_n)$ in the sense of Definition \ref{def_standard_representation} can be obtained as $W=V_{\widehat{g}_1}\otimes \ldots \otimes V_{\widehat{g}_n}$ with $V_{\widehat{g}_i}$ given by Definition \ref{def_QGRep}. Such a decomposition is not unique since different $n$-tuples $(\widehat{g}_1, \ldots, \widehat{g}_n)$ can induce isomorphic $\overline{\mathcal{S}}_{\zeta}(\mathbb{D}_n)$ modules. Let us study this ambiguity.
Since $\theta: \overline{\mathcal{S}}_{\zeta}(\mathbb{D}_n) \to \overline{\mathcal{S}}_{\zeta}(\mathbb{D}_1)^{\otimes n} \cong (\Uq)^{\otimes n}$ satisfies $\theta(Z^0_{\mathbb{D}_n}) \subset( Z^0_{\mathbb{D}_1})^{\otimes n}$ and $\theta(Z_{\mathbb{D}_n}) \subset Z_{\mathbb{D}_1}^{\otimes n}$, it defines a projection map $\pi : (G_0)^n \to X(\mathbb{D}_n)$, described in Corollary \ref{coro_gluing}, and a lift $\widehat{\pi}: (\widehat{G})^n \to \widehat{X}(\mathbb{D}_n)$. Now $W=V_{\widehat{g}_1}\otimes \ldots \otimes V_{\widehat{g}_n}$ has classical shadow $\widehat{x}\in \widehat{X}(\mathbb{D}_n)$ if and only if $\widehat{\pi}(\widehat{g}_1, \ldots , \widehat{g}_n)=\widehat{x}$. If $\widehat{g}_i=(g_i, h_{p_i}, h_{\partial}^{(i)})$, then $\widehat{\pi}(\widehat{g}_1, \ldots, \widehat{g}_n) = (\pi(g_1, \ldots, g_n), h_{p_n}, \ldots, h_{p_n}, h_{\partial})$ where $h_{\partial}=\prod_{i=1}^n h_{\partial}^{(i)}$.

\subsection{The renormalized trace}\label{sec_trace}

For $\mathcal{D}$ a $\mathbb{C}$ linear category, consider the $0$-th Hochschild homology group 
$$\mathrm{HH}_0(\mathcal{D}):= \int^{V \in \mathcal{D}} \End_{\mathcal{D}}(V) \cong \quotient{\oplus_{V \in \mathcal{D}} \End_{\mathcal{D}}(V)}{(uv-vu, u:V\to W, v:W\to V)}.$$
 A \textbf{trace} on $\mathcal{D}$ is a linear map $t : \mathrm{HH}_0(\mathcal{D})\to \mathbb{C}$. This is equivalent to the data of linear maps $t_V: \End_{\mathcal{D}}(V)\to \mathbb{C}$ for every $V\in \mathcal{D}$ such that for any morphisms $u: V\to W$ and $v: W\to V$, one has $t_V(vu)=t_W(uv)$. 
The \textbf{dimension} of $V\in \mathcal{D}$ associated to $t$ is $d(V):= t_V(\id_V)$. 
When $\mathcal{D}$ is monoidal and has a pivotal structure, it admits two natural traces $\qtr^L$ and $\qtr^R$ defined for $f\in \End(V)$ by $\qtr^L(f):=\overleftarrow{ev}_V (\id_{V^*}\otimes f)\overrightarrow{coev}_V$ and $\qtr^R(f):= \overrightarrow{ev}_V (f \otimes \id_{V^*})\overleftarrow{coev}_V$. When both traces coincide, $\mathcal{D}$ is said \textbf{spherical}. The pivotal category $\mathcal{C}$ is spherical. However, for the q-trace, all typical modules $V_{\widehat{x}}$ have dimension $0$, therefore using the q-trace leads to little interesting link invariants outside the colored Jones polynomials. To remedy this problem, the authors of \cite{GeerPatureauTuraev_Trace, GeerPatureauVirelizier_Trace,GeerPatureau_TraceQG} introduce a new trace which we briefly review. It is characterized by the following condition. For $f \in \End_{\mathcal{D}}(V\otimes W)$ define the left and right partial traces: 
\begin{align*}
 {}& \ptr^L(f):= (\overrightarrow{ev}_V \otimes \id_W)( \id_{V^*} \otimes f)(\overleftarrow{coev}_V \otimes \id_W) \in \End_{\mathcal{D}}(W), \quad \mbox{ and }
 \\ {}&  \ptr^R(f):= (\id_V \otimes \overleftarrow{ev}_W)(f\otimes \id_{W^*})(\id_V \otimes \overrightarrow{coev}_W) \in \End_{\mathcal{D}}(V).
 \end{align*}
 A trace $t$ on a pivotal category $\mathcal{D}$ is \textbf{compatible with the pivotal structure} if for every $f\in \End_{\mathcal{D}}(V\otimes W)$ then $t_V( \ptr^R(f))= t_{V\otimes W}(f) = t_W(\ptr^L(f))$. 
 Let $\Proj\subset \mathcal{C}$ denote the full subcategory of projective objects and let $V_0 \in \Proj$ be the module $V_0:= V({\zeta}^{-1/2},{\zeta}^{1/2}, 0, 0)$ as defined in Definition \ref{def_QGRep}.
 
 \begin{theorem}(Analogue of \cite[Theorem $22$]{GeerPatureau_TraceQG})\label{theorem_trace}
 \begin{enumerate}
 \item 
 There exists a unique trace $\tau : \mathrm{HH}_0(\Proj) \to \mathbb{C}$ compatible with the pivotal structure and normalized such that $d(V_0)=1$ where $d$ is the associated dimension. 
 \item For any typical module $V_{\widehat{x}}$ with $\widehat{x}=(g,h_p, h_{\partial})$, one has $d(V_{\widehat{x}})=- \frac{N}{S_N(h_p)}$ where $S_N(X)$ is the Chebyshev polynomial of second type.
 \item Let $g\in G$ be of type $1$, $V\in \mathcal{C}_g$ and $f \in \End_{\mathcal{C}}(V)$. Then for any $h \in \widehat{\mathcal{G}}_{\mathbb{D}_1}$, we have  $\tau_V(f)=\tau_{V^h}(f^h)$, i.e. $\tau$ is gauge invariant.
 \end{enumerate}
 \end{theorem}

Since Theorem \ref{theorem_trace} is formulated for the simply connected quantum group $\widetilde{U}_q\mathfrak{sl}_2\cong \quotient{\Uq}{(H_{\partial}- 1)}$ instead of $\Uq$  and is formulated slightly differently, we briefly review the arguments of \cite{GeerPatureau_TraceQG} leading to this theorem. The only originality here is the verification that $d(V_{(g,h_p,h_{\partial})})$ only depends on $h_p$ and not on $h_{\partial}$. This will imply, thanks to the torus action, that we do not lose any information by working over $U_q\mathfrak{sl}_2$ instead of $\Uq$, that is by considering standard Azumaya representations for which $h_{\partial}=1$. 

\begin{lemma}\label{lemma_trace1} There exists a unique trace  $\tau : \mathrm{HH}_0(\Proj) \to \mathbb{C}$ such that $d(V_0)=1$ and which is "right" compatible with the pivotal structure in the sense that for every $f\in \End_{\mathcal{D}}(V\otimes W)$ then $t_V( \ptr^R(f))= t_{V\otimes W}(f) $. 
\end{lemma}

$\tau$ is called a "right trace" in \cite{GeerPatureau_TraceQG}.

\begin{proof} By \cite[Corollary $3.2.1$]{GeerKujawaPatureau_AmbidextrousObjects}, if $\mathcal{D}$ is a pivotal $\mathbb{C}$ category which is abelian, which admits enough projective, in which every indecomposable are absolutely indecomposable and which is unimodular, then $\Proj(\mathcal{D})$ admits a unique right trace up to scalar normalization. Clearly $\mathcal{C}$ satisfies all these assumptions, the only non trivial being the unimodularity which is proved in Lemma \ref{lemma_unimodular}.
\end{proof}

\begin{definition}(Drinfeld braidings)\label{def_Drinfeld_braiding}
\begin{enumerate}
\item 
 A system of \textbf{unrolled parameters} for the typical module $V=V(\lambda, \mu, a, b)$ is a pair $(h,g)$ of complex numbers such that $\lambda=q^{h/2}$ and $\mu=q^{g/2}$. We can then define the operators $H,G \in \End_{\mathcal{C}}(V)$ by $Hv_i= (h-2i)v_i$ and $Gv_i=(g+2i)v_i$ so $K^{1/2}=q^{H/2}$ and $L^{1/2}=q^{G/2}$. 
\item For $V_1, V_2$ two typical diagonal modules with unrolled parameters $(h_1,g_1)$ and $(h_2,g_2)$ and such that $E^N\otimes F^N$ acts as $0$ on $V_1\otimes V_2$, the \textbf{Drinfeld braiding} is the morphism $c_{V_1,V_2}: V_1\otimes V_2 \to V_2 \otimes V_1$ defined by 
$$ c_{V_1,V_2} = \sigma \circ q^{-H\otimes G /2} \exp_q^{<N}\left( (q-q^{-1})E\otimes F\right), $$ 
where $\sigma (x\otimes y)=y\otimes x$ ,  $q^{-H\otimes G/2}v_i\otimes v_j = q^{-(h-2i)(g+2j)/2}v_i \otimes v_j$ and $\exp_q^{<N}(X)=\sum_{n=0}^{N-1} \frac{q^{n(n-1)/2}}{[n]!}x^n$. 
\end{enumerate}
\end{definition}

\begin{lemma}\label{lemma_trace2}
If $V=V_{g, h_p, h_{\partial}}$ is a typical module, then $d(V)=- \frac{N}{S_N(h_p)}$ where $S_N(X)$ is the Chebyshev polynomial of second type and $d$ the dimension associated to the right trace $\tau$. In particular $d(V)=d(V^*)$ and $d(V^g)=d(V)$ for any $g\in \widehat{\mathcal{G}}_{\mathbb{D}_1}$.
\end{lemma}

\begin{proof} 
Let $V_0:= V({\zeta}^{-1/2},{\zeta}^{1/2},0,0)=V_{(\mathds{1}_2,\mathds{1}_2), -2, 1}$ with unrolled parameters $(h_0,g_0)= (N-1,1-N)$ and note that $V_0^*\cong V_0$ and both $E^N$ and $F^N$ act as $0$ on $V_0$.  Then an easy adaptation of the arguments in \cite[Section $4.5$]{GeerPatureau_TraceQG}, based on the fact that $V_0 \cong V_0^*$,  shows that for any endomorphism $f_V \in \End_{\mathcal{C}}(V_0\otimes V)$ then 
$$d(V)=\frac{\left<\ptr^R(f_V)\right>}{\left< \ptr^L(f_V)\right>}.$$
Write  $V=V(\lambda, \mu, a,b)$ and let $(h,g)$ be a system of unrolled parameters for $V$. Consider the morphism $f_V \in \End_{\mathcal{C}}(V_0\otimes V)$, $f_V:= c_{V, V_0}c_{V_0,V}$.
 Using the formula 
$$ (K^{1/2}L^{-1/2})^{1-N} v_i= \left\{ \begin{array}{ll} q^{(\frac{\alpha-\beta}{2}-2i)(1-N)} v_i& \mbox{, for }v_i \in V; \\ q^{-(1+2i)}v_i & \mbox{, for }v_i\in V_0 \end{array}\right.$$
and 
$$
f_V v_i \otimes v_j  = q^{-\frac{(H\otimes G + G\otimes H)}{2}} v_i\otimes v_j + w = q^{(2i+1-N)(\frac{\beta -\alpha}{2} +2j)} v_i\otimes v_j + w, $$
where $w\in \Span( v_a \otimes v_b, (a,b)\neq (i,j))$, we find that 

\begin{multline*}\ptr^R(f_V) v_0= (\id \otimes \overleftarrow{ev}_V) (f_V \otimes \id) (\id \otimes \overrightarrow{coev}_V) v_0 = 
\sum_i (\id \otimes \overrightarrow{ev}_V) (q^{\frac{(H\otimes G +G\otimes H)}{2}}\otimes \id)( v_0 \otimes (K^{1/2}L^{-1/2})^{1-N}v_i \otimes v_i^*) \\
= \left(\sum_i q^{(\frac{\alpha-\beta}{2} -2i)(1-N)} q^{(1-N)(\frac{\beta-\alpha}{2}+2i)}\right) v_0  =(\sum_i 1)v_0 =Nv_0.
\end{multline*}
So $\left<\ptr^R(f_V)\right>=N$.
Similarly,  setting $z:=\frac{-2+\beta- \alpha}{2}$ such that $V$ has puncture invariant $h_p= -\lambda\mu^{-1}q - \lambda^{-1}\mu q^{-1} = -q^z -q^{-z}$, we find that 
\begin{multline*}
 \ptr^L(f_V) v_0 =  (\overrightarrow{ev}_{V_0} \otimes \id)(\id \otimes f_V)(\overleftarrow{coev}_{V_0}\otimes \id) v_0 = \sum_i  (\overrightarrow{ev}_{V_0} \otimes \id)(\id \otimes q^{\frac{H\otimes G+H\otimes H}{2}})(v_i^* \otimes (K^{1/2}L^{-1/2})^{N-1}v_i \otimes v_0) \\ 
 = \left( \sum_i q^{-(2i+1-N)(\frac{\beta-\alpha}{2})} q^{-(1+2i)}\right)v_0
=\left( \frac{q^{Nz}-q^{-Nz}}{q^z -q^{-z}}\right)v_0= S_N(q^z+q^{-z})v_0= -S_N(h_p)v_0.
\end{multline*}
Therefore $ \left< \ptr^L(f_V)\right>= -S_N(h_p)$ and we find
$$ d(V) =\frac{\left<\ptr^R(f_V)\right>}{\left< \ptr^L(f_V)\right>} = - \frac{N}{S_N(h_p)}.$$
%Now suppose that $V$ is an arbitrary typical module of type $1$. Then there exists a diagonal typical module $V_D$ and an element of the gauge group $g\in \mathcal{G}_{\mathbb{D}_1}$ such that $(V_0\otimes V_D)^g = V_0 \otimes V$. Let us define $f_V \in \End_{\mathcal{C}}(V_0\otimes V)$ by $f_V:= (f_{V_D})^g$ where $f_{V_D}=c_{V_D, V_0}c_{V_0,V_D}$ as before. First, the same computation we made before still holds to prove that $\left<\ptr^R(f_V)\right>=N$. Next, note that since $\gamma_p \in \overline{\mathcal{S}}_{\hbar}(\mathbb{D}_1)$ is central, then $\gamma_p \in \overline{\mathcal{S}}_{+1}(\mathbb{D}_1)$ is a Casimir element so $D_h \gamma_p= 0$ for all $h\in \overline{\mathcal{S}}_{+1}(\mathbb{D}_1)$ and $\Psi^t_h \gamma_p= \gamma_p$. This implies that for any element $g \in \mathcal{G}_{\mathbb{D}_1}$, the isomorphism $g^* : (D_qB)_{\widehat{x}}\to (D_qB)_{g \cdot \widehat{x}}$ preserves $\gamma_p$ so for any polynomial $P(X)\in \mathbb{C}[X]$, the gauge isomorphism $\End_{\mathcal{C}}(V_D) \cong \End_{\mathcal{C}}(V_D^g)$ defined by $g$, sends $P(\gamma_p)$ to $P(\gamma_p)$. Since we have proved that $\ptr^L(f_{V_D})=-S_N(\gamma_p)$, then $\ptr^L(f_V)= -S_N(\gamma_p)$ as well, so $ \left< \ptr^L(f_V)\right>= -S_N(h_p)$. Therefore $ d(V) =\frac{\left<\ptr^R(f_V)\right>}{\left< \ptr^L(f_V)\right>} = - \frac{N}{S_N(h_p)}$. 

\end{proof}

\begin{lemma}\label{lemma_trace3} The right trace $\tau$ is a trace compatible with the pivotal structure.\end{lemma}

\begin{proof}
This follows from \cite[Theorem $14$]{GeerPatureau_TraceQG} using the fact that $d(V)=d(V^*)$ for all typical module $V$.
\end{proof}

\begin{lemma}\label{lemma_trace4} 
Let $g\in G$ be of type $1$, $V\in \mathcal{C}_g$ and $f \in \End_{\mathcal{C}}(V)$. Then for any $h \in \widehat{\mathcal{G}}_{\mathbb{D}_1}$, we have  $\tau_V(f)=\tau_{V^h}(f^h)$, i.e. $\tau$ is gauge invariant.
\end{lemma}

\begin{proof}
Since $g$ is of type $1$, then $\mathcal{C}_g$ is semi simple with simple objects the $V_{g,h_p}$ so $V\cong \oplus_{h_p/ T_N(h_p)=-\tr(\varphi(g))} n_{h_p} V_{g, h_p}$ for some integers $n_{h_p}\in \mathbb{N}$. Under this isomorphism we have $f = \oplus_{h_p} n_{h_p} c_{h_p} \id_{V_{g, h_p}}$ for some complex numbers $c_{h_p} \in \mathbb{C}$. Thus $\tau_V(f)=\sum_{h_p} n_{h_p}c_{h_p} d(V_{g, h_p})$.  For $h\in \widehat{\mathcal{G}}_{\mathbb{D}_1}$ then $g^h$ is of type $1$ as well so $V^h= \oplus_{h_p} n_p V_{g^h, h_p}$. Since the elements of the full gauge group leave $\gamma_p$ invariant, we have  $(V_{g,h_p})^h =V_{g^h, h_p}$. We deduce that  $f^h = \oplus_{h_p} n_{h_p} c_{h_p} \id_{V_{g^h, h_p}}$ so the fact that $d(V_{g,h_p})= d(V_{g^h, h_p})$ implies that 
$$ \tau_{V^h}(f^h) = \sum_{h_p}n_{h_p} c_{h_p} d(V_{g^h, h_p})= \sum_{h_p} n_{h_p}c_{h_p} d(V_{g, h_p})= \tau_V(f).$$
This concludes the proof.
\end{proof}

\begin{proof}[Proof of Theorem \ref{theorem_trace}] The theorem follows from Lemmas \ref{lemma_trace1}, \ref{lemma_trace2}, \ref{lemma_trace3} and \ref{lemma_trace4}.
\end{proof}

\section{Link invariants derived from stated skein algebras}\label{sec_link_inv}

\subsection{Braids intertwiners}

The \textbf{braid group} $B_n$ is presented by the  generators $\sigma_i^{\pm 1}$ for $1\leq i \leq n-1$ and relations $\sigma_i \sigma_j \sigma_i = \sigma_j \sigma_i \sigma_j$ if $\lvert i-j \rvert =1$ and $\sigma_i \sigma_j= \sigma_j \sigma_i$ if $\lvert i-j \rvert \geq 2$. 
There is a surjective group morphism $B_n \to \mathbb{S}_n, \beta \mapsto \pi_{\beta}$ to the permutation group of $\{1, \ldots, n\}$ sending $\sigma_i$ to the transposition exchanging $i$ and $i+1$. 
 The \textbf{framed braid group} $fB_n$ is presented by the same generators $\sigma_1, \ldots, \sigma_{n-1}$ and additional generators $t_1, \ldots, t_n$ with the same relations than $B_n$ and additional relations $\sigma_j t_i  =  t_{\pi_{\sigma_j}(i)} \sigma_j$ and $t_it_j=t_jt_i$. So the framed braid group is a semi-direct product  $fB_n = B_n \rtimes \mathbb{Z}^n$ and we consider $B_n$ as a subgroup of $fB_n$. 
Recall that $\mathbb{D}_n$ is a disc $\mathbb{D}$ with the interior of $n$ subdiscs $D_1,\ldots, D_n$ removed. The framed braid group can be embedded into the group $\Mod(\mathbb{D}_n)$ of mapping classes of $\mathbb{D}_n$ whose restriction to the boundary $\partial \mathbb{D}_n$ is the identity. The generator $\sigma_i$ is the class of the half-twist exchanging $D_i$ and $D_{i+1}$ in the clockwise order and $t_i$ is the class of the Dehn twist around a curve parallel to $\partial D_i$. Thus we have the inclusions $B_n \subset fB_n \subset \Mod(\mathbb{D}_n)$. 
\par Define a right action of $\Mod(\mathbb{D}_n)$ on $\overline{\mathcal{S}}_{\zeta}(\mathbb{D}_n)$ by 
$$ \phi^* [D,s] = [\phi^{-1}(D), s\circ \phi], \mbox{ for } [D,s] \in \overline{\mathcal{S}}_{\zeta}(\mathbb{D}_n), \phi \in \Mod(\mathbb{D}_n).$$
This action preserves both the center $Z_{\mathbb{D}_n}$ and the small center $Z^0_{\mathbb{D}_n}$ thus induces  left actions on $X(\mathbb{D}_n)$ and $\widehat{X}(\mathbb{D}_n)$. Under the identification $X(\mathbb{D}_n) \cong \overline{\mathcal{R}}_{\SL_2}(\mathbb{D}_n)$,  the left action sends a mapping class $\phi \in \Mod(\mathbb{D}_n)$ and a representation $\rho: \Pi_1(\mathbb{D}_n, \mathbb{V}) \to \SL_2$ to $\phi\cdot \rho$ defined by $\phi \cdot \rho(\alpha):= \rho(\phi^{-1}(\alpha))$.

\begin{definition}[Representations of braid groupoids]\label{def_braid_rep}
\begin{enumerate}
\item The  \textbf{braid groupoid} $B_n^{\SL_2}$ is the groupoid whose objects are the points $\widehat{x} \in \widehat{X}(\mathbb{D}_n)$ which are in the Azumaya locus of $\overline{\mathcal{S}}_A(\mathbb{D}_n)$ and whose morphisms between $\widehat{x}$ and $\widehat{x}'$ is the set of braids $\beta \in B_n$ such that $\beta\cdot \widehat{x}= \widehat{x}'$. Composition is just the braid group composition.  The \textbf{framed braid groupoid} $fB_n^{\SL_2}$ is defined similarly with $B_n$ replaced by $fB_n$. 
\item Define a functor $L: B_n^{\SL_2} \to \PVect$ as follows. For each $\widehat{x}$ in the Azumaya locus, fix a standard Azumaya representation $r_{\widehat{x}}: \overline{\mathcal{S}}_A(\mathbb{D}_n) \to \End(V_{\widehat{x}})$ with full shadow $\widehat{x}$ (it is unique up to isomorphism) and set $L(\widehat{x}):=V_{\widehat{x}}$. By Theorem \ref{theorem_RepRSSkein} we can (and do) suppose that $V_{\widehat{x}}= V_1\otimes \ldots \otimes V_n$ with $V_i$ a typical $\Uq$ module of Definition \ref{def_QGRep}. In particular, as a vector space,  
$V_{\widehat{x}}= V^{\otimes n}$ does not depend on $\widehat{x}$.  Suppose that $\beta \cdot \widehat{x}= \widehat{x}'$ for $\beta \in B_n$ and consider the representation $\beta \cdot r_{\widehat{x}} : [D,s]\mapsto r_{\widehat{x}}( [\beta^{-1}(D), s\circ \beta])$. Since $\beta \cdot r_{\widehat{x}}$ has dimension the PI-degree of $\overline{\mathcal{S}}_{\zeta}(\mathbb{D}_n)$ and full shadow $\widehat{x}'$, then  $\beta \cdot r_{\widehat{x}}$ and $r_{\widehat{x}'}$ are isomorphic. So there exists an isomorphism $L_{V_{\widehat{x}}}(\beta) : V_{\widehat{x}} \xrightarrow{\cong} V_{\widehat{x}'}$, unique up to multiplication by a non zero scalar, such that 
$$ r_{\widehat{x}'}(X) = L_{V_{\widehat{x}}}(\beta) r_{\widehat{x}}(\beta^* X) L_{V_{\widehat{x}}}(\beta)^{-1}, \mbox{ for all }X\in \overline{\mathcal{S}}_{\zeta}(\mathbb{D}_n).$$
The assignation $\beta \mapsto L_{V_{\widehat{x}}}(\beta)$ is clearly functorial. 
\item We refine the projective ambiguity as follows. Let $\tau \in B_2$ be the standard generator. The isomorphism $L_{V_{\widehat{x}}}(\tau): V_{\widehat{x}} \to V_{\sigma \cdot \widehat{x}}$ which intertwines two representations of $\overline{\mathcal{S}}_{\zeta}(\mathbb{D}_2)$ will be called a \textbf{braiding operator} or \textbf{skein braiding}. A braiding operator is \textbf{normalized} if it has determinant $1$. Since standard Azumaya representations of $\overline{\mathcal{S}}_{\zeta}(\mathbb{D}_2)$ have dimension $N^2$, normalized braiding operators are uniquely defined up to multiplication by an $N^2$-th root of unity. Thanks to Theorem \ref{theorem_RepRSSkein}, a standard Azumaya representation of $\overline{\mathcal{S}}_{\zeta}(\mathbb{D}_n)$ factorizes as $V_{\widehat{x}}=V_1\otimes \ldots \otimes V_n$ and the image of the generator $\sigma_i$ has the form $L_{V_{\widehat{x}}}(\sigma_i)= \id^{\otimes i-1} \otimes L_{V_i\otimes V_{i+1}}(\sigma) \otimes \id^{\otimes n-i-1}$ with $L_{V_i \otimes V_{i+1}}(\sigma)$ a braiding operator. $L_{V_{\widehat{x}}}(\sigma_i)$ is said normalized if $L_{V_i \otimes V_{i+1}}(\sigma)$ is normalized. 
Since the $\sigma_i^{\pm 1}$ generate $B_n^{\SL_2}$, every morphism in $B_n^{\SL_2}$ is a composition $L_{V_{\widehat{x}}}(\beta)= L_{V_1}(\sigma_{i_1})^{\pm 1} \ldots L_{V_k}(\sigma_{i_k})^{\pm 1}$. Such a morphism is said \textbf{normalized} if each of the $L_{V_j}(\sigma_j)$ is normalized. Normalized intertwiners are defined up to multiplication by a $N^2$-th root of unity. Recall that using the embedding $j: \Uq\xrightarrow{\Psi} \overline{\mathcal{S}}_{\zeta}(\mathbb{D}_1) \xrightarrow{i^*} \overline{\mathcal{S}}_{\zeta}(\mathbb{D}_n)$, each standard Azumaya module $V_{\widehat{x}}$ is a semi-weight $\Uq$ module so an element of $\mathcal{C}$. Note that for every $\beta \in B_n$ and for every $X \in D_qB$ then $\beta^* j(X)=j(X)$ (the braid group acts trivially on the stated arcs $\alpha_{ij}^{(n)}$ and $\beta_{ij}^{(n)}$) so the morphism $L_{V_{\widehat{x}}}(\beta) : V_{\widehat{x}} \to V_{\beta \cdot \widehat{x}}$ is an intertwiner of $D_qB$ modules (a morphism in $\mathcal{C}$). 
Let $\mathcal{C}_{(N)}$ be the category with the same objects as $\mathcal{C}$  but whose  morphisms are only considered up to multiplication by a $N^2$-th root of unity. Then by imposing that each intertwiner $L_{V_{\widehat{x}}}(\beta)$ is normalized, we obtain a refinement (still denoted by the same letter): 
$$ L : B_n^{\SL_2} \to \mathcal{C}_{(N)}.$$
\end{enumerate}
\end{definition}

\subsection{Braiding operators and comparison with Drinfeld and  Kashaev-Reshetikhin braidings}

Recall that $\tau \in \Mod(\mathbb{D}_2)$ is the mapping class exchanging the two inner punctures in the clockwise order and denote by $\mathscr{R}:= (\tau^{-1})_* \in \Aut(\overline{\mathcal{S}}_{\zeta}(\mathbb{D}_2))$ the automorphism induced by $\tau^{-1}$ (seen as a strong embedding). By definition, a braiding operator is an intertwiner for $\mathscr{R}$ so we devote this subsection to the study of this automorphism. 
Recall the splitting morphism $\theta: \overline{\mathcal{S}}_{\zeta}(\mathbb{D}_2) \hookrightarrow \overline{\mathcal{S}}_{\zeta}(\mathbb{D}_1)^{\otimes 2} \cong (\Uq)^{\otimes 2}$. By abuse of notations in this subsection we will consider $ \overline{\mathcal{S}}_{\zeta}(\mathbb{D}_2)$ as a subalgebra of $(\Uq)^{\otimes 2}$ using $\theta$. It is natural to try to extend $\mathscr{R}$ to an automorphism of the larger algebra $(\Uq)^{\otimes 2}$. In order to make sense to such an extension, we need to localize $(\Uq)^{\otimes 2}$ by the element $1-q(q-q^{-1})^2 FL^{-1}\otimes K^{-1}E $ (we easily verify that it satisfies the Ore condition); let $\widehat{(\Uq)^{\otimes 2}}$ denote this localization. 
The following proposition will prove that $\mathscr{R}$ extends uniquely to an automorphism of $\widehat{(\Uq)^{\otimes 2}}$. In the following proposition, $\mathscr{R}(a\otimes b)$ is seen as an element living in $\widehat{(\Uq)^{\otimes 2}}$.

\begin{proposition}\label{prop_KR}
The automorphism $\mathscr{R}$ satisfies the following equalities:
\begin{eqnarray}
\mathscr{R}(1\otimes K) &=&(K\otimes 1) \left( 1-q(q-q^{-1})^2 FL^{-1}\otimes K^{-1}E \right)^{-1}; \label{eq_KR1} \\
\mathscr{R}(1\otimes L^{-1}) &=& (L^{-1} \otimes 1)  \left( 1-q(q-q^{-1})^2 FL^{-1}\otimes K^{-1}E \right)^{-1}; \label{eq_KR2} \\
\mathscr{R}(E\otimes 1) &=& L^{-1} \otimes E; \label{eq_KR3} \\
\mathscr{R}(1 \otimes F) &=& F \otimes K^{-1}; \label{eq_KR4} \\
\mathscr{R}(\Delta(x)) &=& \Delta(x) \quad \mbox{, for all }x \in D_qB  \label{eq_KR5}
\end{eqnarray}
\end{proposition}

\begin{proof}
First, remember that the coproduct of $\overline{\mathcal{S}}_{\zeta}(\mathbb{D}_1)$ is defined by doubling the inner puncture, thus one has $\mathscr{R} \cdot \Delta(X) = \Delta(X)$ for all $X\in \Uq$ and  Equation \eqref{eq_KR5} is trivially satisfied.

Recall that $\Theta_2 \in \Aut(\overline{\mathcal{S}}_{\zeta}(\mathbb{D}_2))$ is the automorphism associated to the mapping class $\theta_2$ which is a central symmetry exchanging the two inner punctures. Since $\tau$ and $\theta_2$ commute, then 
\begin{equation}\label{eq_Cartan}
\mathscr{R} \circ \Theta_2 (X) = \Theta_2 \circ \mathscr{R} (X) \quad \mbox{, for all }X \in \overline{\mathcal{S}}_{\zeta}(\mathbb{D}_2).
\end{equation}

 Let $\delta_{\varepsilon \varepsilon'} \in \overline{\mathcal{S}}_{\zeta}(\mathbb{D}_1)$ be the class of the stated arc drawn in Figure \ref{fig_twist_skein}. Using the defining relation \eqref{eq: skein 2}, one finds that 
\begin{equation}\label{eq_Delta}
\delta_{\varepsilon \varepsilon'} = {\zeta}^{-1/2}\beta_{\varepsilon' +} \alpha_{\varepsilon - }  - {\zeta}^{-5/2}\beta_{\varepsilon' -} \alpha_{\varepsilon +}
\Rightarrow \left\{ \begin{array}{l}
\delta_{++} = -(q-q^{-1}) {\zeta}^{-1/2} E H_{\partial};  \\
\delta_{-+} = {\zeta}^{1/2} K H_{\partial}; \\
\delta_{--} = {\zeta}^{3/2}(q-q^{-1})K H_{\partial} F.
\end{array} \right.
.
\end{equation}

 \begin{figure}[!h] 
\centerline{\includegraphics[width=9cm]{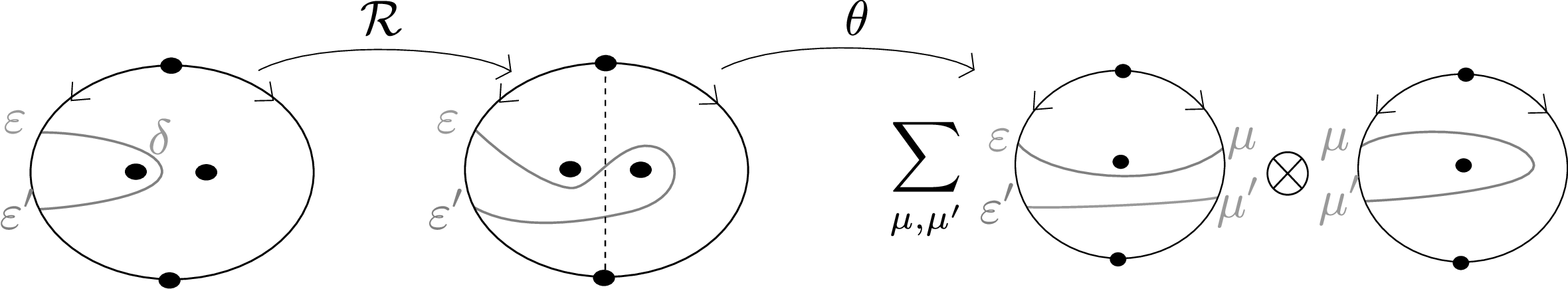} }
\caption{The figure illustrates how the Kashaev-Reshetikhin automorphism can be derived from elementary skein manipulations.} 
\label{fig_twist_skein} 
\end{figure}

By a simple skein computation drawn in Figure \ref{fig_twist_skein}, one finds:
\begin{equation}\label{eq_twist_truc}
 \mathscr{R}  (\delta_{\varepsilon \varepsilon'}\otimes 1) = \sum_{\mu, \mu' = \pm} \beta_{\varepsilon' \mu'}\beta_{\varepsilon \mu}  \otimes \delta_{\mu \mu'}.
 \end{equation}
 When  $(\varepsilon, \varepsilon')=(+,+)$  Equation \eqref{eq_twist_truc} reads
$$  \mathscr{R} (\delta_{++}\otimes 1)  = (\beta_{++})^2\otimes \delta_{++} \stackrel{\eqref{eq_Delta}}{\Rightarrow} \mathscr{R}(EH_{\partial}\otimes 1) = L^{-1}\otimes EH_{\partial}.$$
Since $\Delta(H_{\partial}^{-1})=H_{\partial}^{-1}\otimes H_{\partial}^{-1}$, Equation \eqref{eq_KR5} gives $\mathscr{R}(H_{\partial}^{-1}\otimes H_{\partial}^{-1})=H_{\partial}^{-1}\otimes H_{\partial}^{-1}$. Multiplying the previous equality with this equality gives 
 Equation \eqref{eq_KR3}. By composing Equation \eqref{eq_KR3} with the automorphism $\Theta_2$ and using Equation \eqref{eq_Cartan}, one finds Equation \eqref{eq_KR4}.
  Using Equation \eqref{eq_twist_truc} with  $(\varepsilon, \varepsilon')= (-,+)$  one gets:
\begin{equation}\label{eq_RRR}
  \mathscr{R}(\delta_{-+}\otimes 1)  = \beta_{++}\beta_{-+}\otimes \delta_{++} + \beta_{++}\beta_{--}\otimes \delta_{-+}  \stackrel{\eqref{eq_Delta}}{\Rightarrow} \mathscr{R} (KH_{\partial} \otimes 1) = 1\otimes KH_{\partial} - q^{-1}(q-q^{-1})^2 FL^{-1}\otimes E H_{\partial}.
\end{equation}

 Equation \eqref{eq_twist_truc} with $(\varepsilon, \varepsilon')= (-,-)$ and using Equation \eqref{eq_Delta},  gives:
\begin{multline*}
 \mathscr{R} (\delta_{--}\otimes 1)  ={\zeta}^{3/2}(q-q^{-1}) \left( -(q-q^{-1})^2 q^{-3} F^2L^{-1} \otimes EH_{\partial} \right.
\\ \left.+ L \otimes KFH_{\partial} -F\otimes (K-L-q^{-1}(q-q^{-1})EF)H_{\partial}\right).
\end{multline*}
so, using again Equation \eqref{eq_Delta},  we obtain:

\begin{multline}\label{eq_RR}
\mathscr{R}(KFH_{\partial} \otimes 1) =  -(q-q^{-1})^2 q^{-3} F^2L^{-1} \otimes EH_{\partial} \\ + L \otimes KFH_{\partial} -F\otimes (K-L-q^{-1}(q-q^{-1})EF)H_{\partial}.
\end{multline}
Combining Equations \eqref{eq_RRR} and \eqref{eq_RR}, one finds:
$$
\mathscr{R}(F\otimes 1) = F\otimes 1 +L\otimes F - \left( 1 -q^{-1}(q-q^{-1})^2 FL^{-1} \otimes K^{-1}E \right)^{-1} (F\otimes LK^{-1}).
$$
Using the fact that $\mathscr{R}(\Delta(F))= \Delta(F)$, we derive the equality:
$$\mathscr{R}(L\otimes F) =  \left( 1 -q^{-1}(q-q^{-1})^2 FL^{-1} \otimes K^{-1}E \right)^{-1} (F\otimes LK^{-1}).$$
Using the equality $\mathscr{R}(1\otimes F) = F \otimes K^{-1}$, one finds:
\begin{eqnarray*}
 \mathscr{R}(L\otimes 1) &=&  \left( 1 -q^{-1}(q-q^{-1})^2 FL^{-1} \otimes K^{-1}E \right)^{-1} (1\otimes L) \\
  &=& (1\otimes L)  \left( 1 -q(q-q^{-1})^2 FL^{-1} \otimes K^{-1}E \right)^{-1}.
  \end{eqnarray*}
 Composing the above equation with the automorphism $\Theta_2$ and using Equation \eqref{eq_Cartan}, one finds Equation \eqref{eq_KR1}, while combining the same equation with the equality $\mathscr{R}(L^{-1}\otimes L^{-1}) = L^{-1}\otimes L^{-1}$ one finds \eqref{eq_KR2}.

\end{proof}
Proposition \ref{prop_KR} implies that the automorphism $\mathscr{R} \in \Aut(\overline{\mathcal{S}}_{\zeta}(\mathbb{D}_2))$ extends in a unique manner to an automorphism $\widehat{\mathscr{R}}\in \Aut(\widehat{(\Uq)^{\otimes 2}})$. 
\par 
In \cite{KashaevReshetikhin04,KashaevReshetikhin05}, Kashaev and Reshetikhin defined a Hopf algebra $\mathcal{U}$ generated by some elements $\mathbf{K}^{\pm 1}, \mathbf{L}^{\pm 1}, \mathbf{E}, \mathbf{F}$. By comparing the presentations of $\mathcal{U}$ and $\Uq$, we observe the existence of an injective Hopf algebra morphism $\Psi : \mathcal{U} \hookrightarrow \Uq$ defined by $\Psi(\mathbf{K}^{\pm 1})= K^{\pm 1}, \Psi(\mathbf{L}^{\pm 1})= L^{\mp 1}, \Psi(\mathbf{E})= (q-q^{-1}) E$ and $\Psi(\mathbf{F})= (q-q^{-1})F$.
Let $\widehat{\mathcal{U}^{\otimes 2} }$ be the algebra obtained from $\mathcal{U}^{\otimes 2}$ by localizing by the element $1-q\mathbf{K}^{-1}\mathbf{E} \otimes \mathbf{F}\mathbf{L}$. 
 The authors of  \cite{KashaevReshetikhin04,KashaevReshetikhin05} defined an automorphism $\mathscr{R}^{KR} \in \mathrm{Aut}\left( \widehat{\mathcal{U}^{\otimes 2} } \right)$ by the formulas: 
\begin{eqnarray*}
\mathscr{R}^{KR} (1\otimes \mathbf{K}) &=& (1\otimes \mathbf{K})\left( 1-q\mathbf{K}^{-1}\mathbf{E} \otimes \mathbf{F}\mathbf{L} \right)^{-1}; \\
\mathscr{R}^{KR} (1\otimes \mathbf{L}) &=& (1\otimes \mathbf{L})\left( 1-q\mathbf{K}^{-1}\mathbf{E} \otimes \mathbf{F}\mathbf{L} \right)^{-1}; \\
\mathscr{R}^{KR}(\mathbf{E}\otimes 1) &=& \mathbf{E}\otimes \mathbf{L}; \\
\mathscr{R}^{KR}(1\otimes \mathbf{F}) &=& \mathbf{K}^{-1} \otimes \mathbf{F}; \\
\mathscr{R}^{KR}( \Delta(x)) &=& \sigma \circ \Delta(x) \quad \mbox{, for all }x\in \mathcal{U}; 
\end{eqnarray*}
where $\sigma(a\otimes b):=b\otimes a$. 

\begin{definition}[Kashaev-Reshetikhin braidings]\label{def_KRBraidings} 
Let $V_1,V_2, V_3, V_4$ be four simple typical $\Uq$ modules. Denote by $r_{12}: \widehat{\mathcal{U}^{\otimes 2} } \to \End(V_1\otimes V_2)$ and $r_{43}: \widehat{\mathcal{U}^{\otimes 2} } \to \End(V_4\otimes V_3)$ the associated (surjective) morphisms and suppose that the representation $r'_{12}:= r_{12} \circ \mathscr{R}^{KR}$ is isomorphic to $r_{43}$. In this case, we have an isomorphism $R : V_1\otimes V_2 \xrightarrow{\cong} V_4\otimes V_3$ (unique up to multiplication by a scalar) such that 
$$ r_{12}(\mathscr{R}^{KR}(x\otimes y)) = R r_{43}(x\otimes y) R^{-1} \quad \mbox{, for all } x, y \in \Uq.$$
Let $\sigma: V_4\otimes V_3 \to V_3 \otimes V_4$, $\sigma(x\otimes y):=y\otimes x$. A \textbf{Kashaev-Reshetikhin braiding} associated to the compatible pairs $(V_1,V_2)$ and $(V_3,V_4)$ is the operator: 
$$ c_{V_1,V_2}^{V_3,V_4} := \sigma \circ R : V_1\otimes V_2 \xrightarrow{\cong} V_3\otimes V_4.$$
It is \textbf{normalized} if it has determinant $1$.
\end{definition}

\begin{remark}
The definition of the Kashaev-Reshetikhin braidings is designed such that when $V_1,V_2$ are two simple diagonal typical modules, the Drinfeld braiding 
$$c_{V_1,V_2}:= \sigma q^{-H\otimes G/2}\exp_q^{<N}( (q-q^{-1})E\otimes F): V_1\otimes V_2 \to V_2\otimes V_1$$
of Definition \ref{def_Drinfeld_braiding} is a particular case of Kashaev-Reshetikhin braiding. Indeed, the origin of the operator $\mathscr{R}^{KR}$ can be traced back as follows. When working over the ring $\mathbb{C}[[\hbar]]$ and setting $q=\exp(\hbar)$, the element $\mathcal{R}:= q^{-H\otimes G/2}\exp_q((q-q^{-1})E\otimes F)$ is well defined and is in fact the canonical element of the Drinfeld double $D_{\hbar}B$. So working in the $\hbar$-adic completion $\widehat{D_{\hbar}B^{\otimes 2}}$, the operator $\mathscr{R}^{KR}_{\hbar}:= \Ad(\mathcal{R}), x\mapsto \mathcal{R}x\mathcal{R}^{-1}$ is a well defined automorphism of $\widehat{D_{\hbar}B^{\otimes 2}}$ and it is proved in \cite{KashaevReshetikhin04} that it satisfies the same relations than in Definition \ref{def_KRBraidings}. So $\mathscr{R}^{KR}$ extends word-by-word to an automorphism of $D_qB^{\otimes 2}$ over the ring $\mathbb{Q}[q^{\pm 1}]$ provided that we localize by  $1-q\mathbf{K}^{-1}\mathbf{E} \otimes \mathbf{F}\mathbf{L}$. Replacing $q$ by a root of unity,  the intertwiner $\mathcal{R}^{KR}$ of Definition \ref{def_KRBraidings} will coincide with the term $q^{-H\otimes G/2}\exp_q^{<N}((q-q^{-1})E\otimes F)$ for diagonal representations (for which $(E\otimes F)^N=0$) and provided a generalization of this term for more general pairs of representations including cyclic and semi-cyclic ones.
\end{remark}

Let $\Psi_2 : \widehat{\mathcal{U}^{\otimes 2} } \hookrightarrow \widehat{(D_qB)^{\otimes 2}}$ the injective morphism induced by $\Psi$. 
We deduce from Proposition \ref{prop_KR} the following:

\begin{corollary}\label{coro_KR}
The following diagram commutes:
$$
\begin{tikzcd}
\widehat{\mathcal{U}^{\otimes 2} }  \arrow[r, "\cong"', "\sigma \circ \mathscr{R}^{KR}"] \arrow[d, hook, "\Psi_2"] &
\widehat{\mathcal{U}^{\otimes 2} }   \arrow[d, hook, "\Psi_2"] \\
 \widehat{(D_qB)^{\otimes 2}} \arrow[r, "\cong"', "\widehat{\mathscr{R}}"] &
 \widehat{(D_qB)^{\otimes 2}} \\
 \overline{\mathcal{S}}_{\zeta}(\mathbb{D}_2) \ar[u, hook, "\theta"] \ar[r, "\cong"', "\mathscr{R}"]&
 \overline{\mathcal{S}}_{\zeta}(\mathbb{D}_2) \ar[u, hook, "\theta"] 
 \end{tikzcd}
 $$
 Hence the Kashaev-Reshetikhin braidings in \cite{KashaevReshetikhin04,KashaevReshetikhin05} associated to pairs of simple typical $D_qB$ modules are braiding operators in the sense of Definition \ref{def_braid_rep}. In particular, for $V_1,V_2$ two diagonal simple typical modules, the Drinfeld braidings $c_{V_1,V_2} : V_1\otimes V_2\to V_2\otimes V_1$ of Definition \ref{def_Drinfeld_braiding} are braiding operators in the sense of Definition \ref{def_braid_rep}.
 \end{corollary}

\subsection{Twist operators}

The goal of this subsection is to extend the braid groupoid representation $L: B_n^{\SL_2} \to \mathcal{C}_{(N)}$ of Definition \ref{def_braid_rep}  to a representation $L: fB_n^{\SL_2} \to \mathcal{C}_{(N)}$ of the framed braid groupoid. So we need to define the image $L_{V_{\widehat{x}}}(t_i)$ of the Dehn twists $t_i$. If $V_{\widehat{x}}=V_1\otimes \ldots \otimes V_n$, we will set $L_{V_{\widehat{x}}}(t_i) := \id_{V_1\otimes \ldots \otimes V_{i-1}} \otimes \theta_{V_i} \otimes \id_{V_{i+1}\otimes V_n}$ where $\theta_{V_i} \in \End_{\mathcal{C}_{(N)}}(V_i)$ will be called a \textbf{twist operator}. In order to obtain framed link invariants, we will need in the next subsection that for $\beta\in fB_n$ a framed link, the trace $\tau(L_{V_{\widehat{x}}}(\beta))$ to be invariant under some framed Markov moves on $\beta$. This leaves only one possible definition for $\theta_{V}$: if $\overline{V}$ is another typical $\Uq$ module such that $\tau_* (V\otimes \overline{V}) \cong (V\otimes \overline{V})$ as $\overline{\mathcal{S}}_{\zeta}(\mathbb{D}_2)$ modules (recall that $\tau \in \Mod(\mathbb{D}_2)$ is the half twist which permutes the two inner punctures in the clockwise order), and if $c_{V, \overline{V}}^{V, \overline{V}}$ is the associated braiding operator, we will set: 
$$ \theta_V := \ptr^R \left( c_{V,\overline{V}}^{V, \overline{V}}\right) =  \adjustbox{valign=c}{\includegraphics[width=4cm]{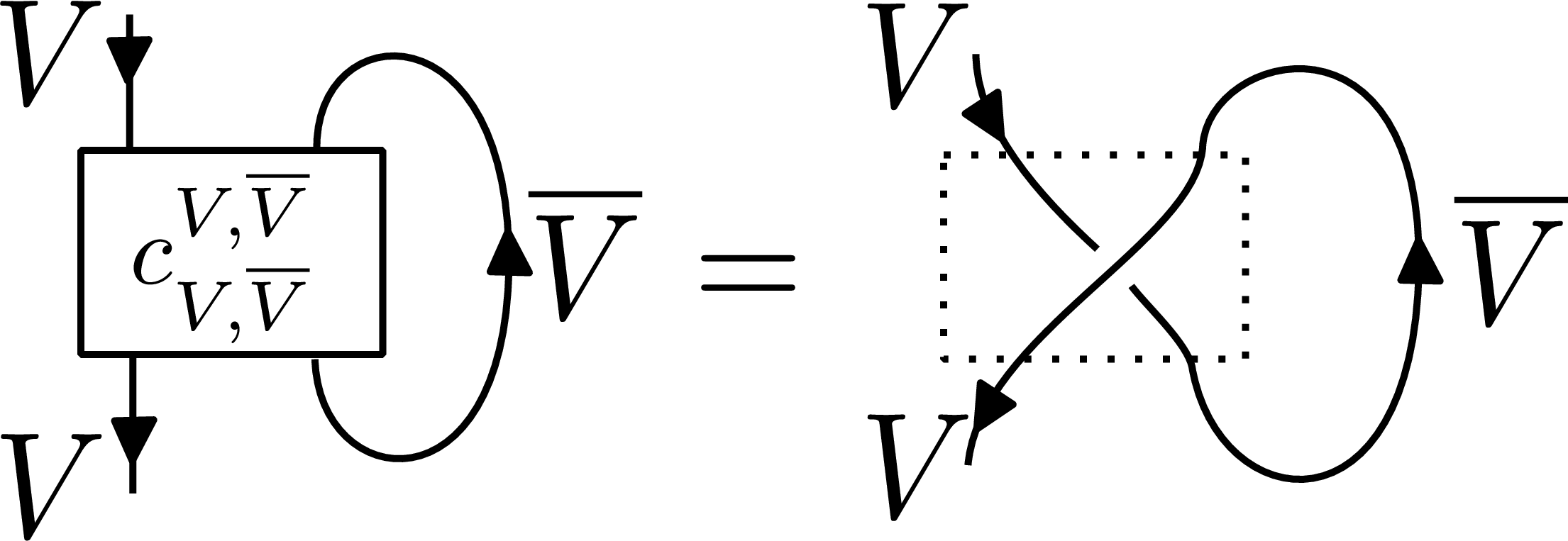}}  .$$

\begin{lemma} \label{lemma_twist1}
\begin{enumerate}
\item Let $\rho: \Pi_1(\mathbb{D}_2, \mathbb{V}) \to \SL_2$ be a representation. Then $\tau_* \rho = \rho$ if and only if we have $\rho(\delta^{(1)}) = \rho(\delta^{(2)})$. 
\item Let $V=V_{(g,h_p, h_{\partial})}$ and $\overline{V}=V_{(\overline{g}, \overline{h_p}, \overline{h_{\partial}})}$ be some typical $\Uq$ modules with $g=(g_+,g_-), \overline{g}=(\overline{g}_+, \overline{g}_-) \in G_0$ such that $V\otimes \overline{V}$ is a standard Azumaya $\overline{\mathcal{S}}_A(\mathbb{D}_2)$ module. Then $\tau_* (V\otimes \overline{V})$ is isomorphic to $V\otimes \overline{V}$ as a $\overline{\mathcal{S}}_{\zeta}(\mathbb{D}_2)$ module if and only if we have 
$$ h_p = \overline{h_p} \quad \mbox{ and } \quad \overline{g}_- g_+ = \overline{g}_+ g_-.$$
\item If $g=\left( \begin{pmatrix} \lambda & x \\ 0 & \lambda^{-1} \end{pmatrix}, \begin{pmatrix} \mu & 0 \\ y & \mu^{-1} \end{pmatrix} \right)$ and $\overline{g}=\left( \begin{pmatrix} \overline{\lambda} & \overline{x} \\ 0 & \overline{\lambda}^{-1} \end{pmatrix}, \begin{pmatrix} \overline{\mu} & 0 \\ \overline{y} & \overline{\mu}^{-1} \end{pmatrix} \right)$, then 
$$\overline{g}_- g_+ = \overline{g}_+ g_- \Leftrightarrow \left\{ \begin{array}{l} 
\overline{y}= y (\lambda \overline{\lambda})^{-1} \\ 
\overline{x} = x \mu \overline{\mu} \\
\lambda\mu^{-1} - \overline{\lambda}\overline{\mu}^{-1} = xy
\end{array} \right.
$$

\end{enumerate}
\end{lemma}

\begin{proof}
$(1)$ Let $\rho : \Pi_1(\mathbb{D}_2, \mathbb{V}) \to \SL_2$ be such that $\rho(\tau^{-1}(\alpha))= \rho(\alpha)$ for all path $\alpha$ in $\Pi_1(\mathbb{D}_2, \mathbb{V})$. Since $\tau^{-1}(\delta^{(2)})=\delta^{(1)}$, we have $\rho(\delta^{(1)})=\rho(\delta^{(2)})$. Conversely, since the paths $\delta^{(1)}, \delta^{(2)}, \alpha^{(2)}$ generate the groupoid $\Pi_1(\mathbb{D}_2, \mathbb{V}) $, to prove that a representation $\rho$ satisfies $\tau_* \rho = \rho$, it suffices to prove the three equalities $\rho(\tau^{-1}(\alpha))=\rho(\alpha)$ for $\alpha \in \{\delta^{(1)}, \delta^{(2)}, \alpha^{(2)}\}$. Since $\tau^{-1}(\alpha^{(2)})= \alpha^{(2)}$, the equality $\rho(\tau^{-1}(\alpha^{(2)}))=\rho(\alpha^{(2)})$ is trivially satisfied. Suppose that the equality $\rho(\delta^{(2)})=\rho(\delta^{(1)})$ holds. Since $\tau^{-1}(\delta^{(1)})=(\delta^{(1)})^{-1}\delta^{(2)}\delta^{(1)}$, then $\rho(\tau^{-1}(\delta^{(1)}))= \rho(\delta^{(1)})^{-1}\rho(\delta^{(2)}) \rho (\delta^{(1)})= \rho (\delta^{(1)})$ so $\tau_* \rho = \rho$ and we have proved the first item.
\par $(2)$ By assumption, $V\otimes \overline{V}$ is an Azumaya representation of $\overline{\mathcal{S}}_{\zeta}(\mathbb{D}_2)$ so $V\otimes \overline{V}$ is isomorphic to $\tau_*(V\otimes \overline{V})$ if and only if they have the same maximal shadow. Let $\widehat{x}=(\rho, h_{p}, \overline{h_{p}}, h_{\partial}\overline{h_{\partial}})$ and $\widehat{x}'= (\rho', h'_{p_1}, h'_{p_2}, h'_{\partial})$ be the maximal shadows of  $V\otimes \overline{V}$ and $\tau_*(V\otimes \overline{V})$ respectively. First, since $(\tau^{-1})_* (\alpha_{\partial})= \alpha_{\partial}$ we have $h_{\partial}'=h_{\partial}\overline{h_{\partial}}$. Next, since $(\tau^{-1})_* (\gamma_{p_1})=\gamma_{p_2}$ and $(\tau^{-1})_* (\gamma_{p_2})=\gamma_{p_1}$ we have $h_{p_1}'=\overline{h_{p}}$ and $h_{p_2}'=h_{p}$. Lastly $\rho'= \tau_* \rho$. Therefore, using assertion $(1)$, $\tau_* (V\otimes \overline{V})$ is isomorphic to $V\otimes \overline{V}$ as a $\overline{\mathcal{S}}_A(\mathbb{D}_2)$ module if and only if $\widehat{x}=\widehat{x}'$ if and only if we have $(i)$ $h_p=\overline{h_p}$ and $(ii)$ $\rho(\delta^{(2)})=\rho(\delta^{(1)})$. 
Using the definition of $\Psi_w$ in Theorem \ref{theorem_classical_limit} and Convention \ref{convention_w}  we see that 
$$ \rho(\delta^{(1)}) = (g_-)^{-1}g_+ \quad \mbox{ and }\quad \rho(\delta^{(2)})= (\overline{g}_- g_+)^{-1} \overline{g}_+ g_+.$$
Therefore
$$ \tau_* \rho = \rho \Leftrightarrow (g_-)^{-1}g_+ =  (\overline{g}_- g_+)^{-1} \overline{g}_+ g_+ \Leftrightarrow \overline{g}_- g_+= \overline{g}_+ g_-.$$
$(3)$ The last assertion is a straightforward computation left to the reader.

\end{proof}

\begin{definition}(Twist operators)
 For $V=V_{(g,h_p, h_{\partial})}$ and $\overline{V}=V_{(\overline{g}, \overline{h_p}, \overline{h_{\partial}})}$ two typical modules, we say that $\overline{V}$ is a \textbf{conjugate} of $V$ if $ h_p = \overline{h_p}$  and $ \overline{g}_- g_+ = \overline{g}_+ g_-$. In this case, if $c_{V, \overline{V}}^{V, \overline{V}}: V\otimes \overline{V} \to V\otimes \overline{V}$ is the associated normalized braiding operator, we call \textbf{twist operator} of $V$ the operator: 
 $$ \theta_V:= \ptr^R\left(c_{V, \overline{V}}^{V, \overline{V}}\right) \in \End_{\mathcal{C}_{(N)}}(V).$$
 \end{definition}
 
 \begin{remark}
 \begin{enumerate}
 \item If $\overline{V}$ is a conjugate of $V$ then $V$ is a conjugate of $\overline{V}$.
 \item If $V$ is a typical module, by Lemma \ref{lemma_twist1},  the set of isomorphism classes of its conjugates $\overline{V}$ is a $\mathbb{C}^*$ torsor: if $\overline{V}=V_{(\overline{g}, h_p, \overline{h}_{\partial})}$ is a conjugate of $V$ with $\overline{g}=\left( \begin{pmatrix} \overline{\lambda} & \overline{x} \\ 0 & \overline{\lambda}^{-1} \end{pmatrix}, \begin{pmatrix} \overline{\mu} & 0 \\ \overline{y} & \overline{\mu}^{-1} \end{pmatrix} \right)$, then the other conjugates have the form 
 $$ V_{(z\cdot \overline{g}, h_p, z^2 h_{\partial})}, \quad \mbox{ where } z\in \mathbb{C}^* \mbox{ and } z\cdot \overline{g}= \left( \begin{pmatrix} \overline{\lambda}z & \overline{x}z \\ 0 & (\overline{\lambda}z)^{-1} \end{pmatrix}, \begin{pmatrix} \overline{\mu}z & 0 \\ \overline{y}z^{-1} & (\overline{\mu}z)^{-1} \end{pmatrix} \right).$$
 \item If $V_D$ is a diagonal representation, then $V_D$ is self-conjugate and we can (and will) use the Drinfeld braiding to compute the associated twist operator.  
 \item At first sight, the definition we gave of the twist operator $\theta_V$ seems to depend on the choice of a conjugate $\overline{V}$. We will prove that this is not the case.
 \end{enumerate}
 \end{remark}

\begin{lemma}\label{lemma_twist2}
Let $V$ be a typical diagonal $\Uq$ module, so $V$ is self-conjugate. Let $(h,g)$ be the unrolled parameters of the normalized Drinfeld braiding $c_{V,V}: V\otimes V \to V\otimes V$ of Definition \ref{def_Drinfeld_braiding}. Then the associated twist operator $\theta_V= \ptr^R(c_{V, V})$ satisfies
$$ \theta_V = \left< \theta_V \right> \id_V, \quad \mbox{ where } \left< \theta_V \right> = q^{\frac{1}{2}\left( (h+(N-1))(-g+N-1) -1\right)}.$$
\end{lemma}

\begin{proof}
By definition, we have 
\begin{multline*}
\theta_V v_0= \ptr^R(c_{V,V})\cdot v_0 = \sum_i \left< \id \otimes v_i^*, c_{V,V}(v_0 \otimes (K^{1/2}L^{-1/2})^{N-1}v_i) \right> \\
 = \sum_i q^{\frac{1}{2}(h -g-2i)(N-1)}\left< \id \otimes v_i^* , \tau q^{- \frac{H\otimes G}{2}} \exp_q^{<N} ((q-q^{-1})E\otimes F) v_0 \otimes v_i \right> \\
  = \sum_i q^{\frac{1}{2}(h-g-2i)(N-1)} \sum_n \frac{q^{\frac{n(n-1)}{2}}}{[n]_q} \left< \id \otimes v_i^* , \tau q^{- \frac{H\otimes G}{2}} E^n v_0 \otimes F^n v_i \right> \\
  = q^{\frac{1}{2}(h-g)(N-1)}q^{-\frac{hg}{2}}= q^{\frac{1}{2}\left( (h+(N-1))(-g+N-1) -1\right)}.
 \end{multline*}
In the last equality we used the fact that $\left< \id \otimes v_i^* , \tau q^{- \frac{H\otimes G}{2}} E^n v_0 \otimes F^n v_i \right> =0$ if $(i,n)\neq (0,0)$.
\end{proof}

\begin{lemma}\label{lemma_twist3}
\begin{enumerate}
\item The twist operator $\theta_V= \ptr^R(c_{V, \overline{V}}^{V, \overline{V}})$ does not depend on the choice of the conjugate $\overline{V}$. 
\item If $V$ and $V'$ are gauge equivalent typical simple modules, then $\left<\theta_V\right>= \left<\theta_{V'}\right>$. 
\item If $\overline{V}$ is conjugate to $V$ then $\theta_V= \ptr^L\left(c_{\overline{V}, V}^{\overline{V}, V}\right)$. 
\item For $\beta\in B_n$ a braid with associated permutation $\pi_{\beta} \in \mathbb{S}_n$, $V_{\widehat{x}}= V_1\otimes \ldots \otimes V_n$ a standard Azumaya $\overline{\mathcal{S}}_{\zeta}(\mathbb{D}_n)$ module and $1\leq i \leq n$, setting $j:= \pi_{\beta}(i)$  we have: 
$$ L_{V_{\widehat{x}}}(\beta) (\id_{V_1\otimes \ldots \otimes V_{i-1}} \otimes \theta_{V_i} \otimes \id_{V_{i+1}\otimes \ldots \otimes V_n})=  (\id_{V_1\otimes \ldots \otimes V_{j-1}} \otimes \theta_{V_j} \otimes \id_{V_{j+1}\otimes \ldots \otimes V_n})L_{V_{\widehat{x}}}(\beta) .$$
\end{enumerate}
\end{lemma}

\begin{proof}
Let us first prove the gauge invariance of $\left<\theta_V\right>$. The other assertions will easily follow. 
Let $V=V_{(g,h_p,h_{\partial})}$ a typical module, $\overline{V}=V_{(\overline{g}, h_p, \overline{h_{\partial}})}$ a conjugate of $V$. Suppose that $V'=V_{(g',h_p,h_{\partial})}$ is gauge equivalent to $V$ in the sense that  there exists $h\in \mathcal{G}_{\mathbb{D}_1}$ such that $V'=V^h$. Let $\overline{V}'=V_{(\overline{g}', h_p, \overline{h_{\partial}})}$ be a conjugate of $V'$ with the same boundary invariant $\overline{h_{\partial}}$ than $\overline{V}$. Let $h_2 \in \mathcal{G}_{\mathbb{D}_2}$ be the image of $h$ through the embedding $\mathcal{G}_{\mathbb{D}_1} \hookrightarrow \mathcal{G}_{\mathbb{D}_2}$ defined in Section \ref{sec_PO_GaugeEquivalence}.  Let us prove the following: 
\\ \textbf{Claim}: As  $\overline{\mathcal{S}}_{\zeta}(\mathbb{D}_2)$ modules, we have $(V\otimes \overline{V})^{h_2} \cong V' \otimes \overline{V}'$.
\par 
Let $\rho, \rho' : \Pi_1(\mathbb{D}_2, \mathbb{V})\to \SL_2$ be the classical shadows of $V\otimes \overline{V}$ and $V'\otimes \overline{V}'$ respectively. Let $(h_L, h_R)\in \mathcal{G}^{out}_{\mathbb{D}_1}$

\par This proves the claim. The claim together with Lemma  \ref{lemma_trace4} (gauge invariance of the renormalized trace) imply that  we have 
$$\tau_{V\otimes \overline{V}}\left( c_{V, \overline{V}}^{V, \overline{V}} \right) =\tau_{V\otimes \overline{V}}(L(\tau))= \tau_{(V\otimes \overline{V})^{h_2}}(L(\tau)^{h_2})= \tau_{V'\otimes \overline{V'}}\left( c_{V', \overline{V'}}^{V', \overline{V}'} \right).$$
Using the compatibility of the trace with the pivotal structure we obtain:
$$ d(\overline{V}) \left< \theta_{V} \right> = \tau_{V\otimes \overline{V}}\left( c_{V, \overline{V}}^{V, \overline{V}} \right) =\tau_{V'\otimes \overline{V'}}\left( c_{V', \overline{V'}}^{V', \overline{V}'} \right)= d(\overline{V}') \left< \theta_{V} \right>.$$
By Lemma \ref{lemma_twist1}, two type 1 typical modules which are conjugate are gauge equivalent, so have the same dimension by Lemma \ref{lemma_trace2}. In particular $d(\overline{V})=d(V)=d(V')=d(\overline{V}')$. So the above equality implies $ \left< \theta_{V} \right> = \left< \theta_{V'} \right> $ and we have proved both assertion $(2)$ and $(1)$ (since for $(1)$, the above reasoning works for $V=V'$ but possibly two distinct conjugate $\overline{V}$ and $\overline{V}'$). 
\par To prove $(3)$, write $\theta_V^L:= ptr^L\left(c_{\overline{V}, V}^{\overline{V}, V}\right)$. Using the compatibility of the trace with the pivotal structure we find the equalities
$$ \tau_{V\otimes \overline{V}}\left(c_{V, \overline{V}}^{V, \overline{V}} \right)= d(V) \left< \theta_{\overline{V}} \right>= d(\overline{V}) \left<\theta_{V}^L \right>.$$
By gauge invariance, we have  $d(V)=d(\overline{V})$ and $\left< \theta_{\overline{V}} \right>= \left< \theta_V \right>$ from which we deduce that $\left<\theta_V\right>=\left<\theta_V^L \right>$ which proves $(3)$. 
\par Let us prove $(4)$. Fix $\beta\in B_n$ a braid with associated permutation $\pi_{\beta} \in \mathbb{S}_n$, $V_{\widehat{x}}= V_1\otimes \ldots \otimes V_n$ a standard Azumaya $\overline{\mathcal{S}}_{\zeta}(\mathbb{D}_n)$ module and $\beta_* V_{\widehat{x}}=V_1'\otimes \ldots \otimes V_n'$. For $1\leq i \leq n$, set $j:= \pi_{\beta}(i)$, then since $\beta_*(\gamma_{p_i})=\gamma_{p_j}$ then $V'_j$ is gauge equivalent to $V_i$ so $\left<\theta_{V_i}\right>=\left<\theta_{V'_j}\right>$. Therefore: 
\begin{multline*}
 L_{V_{\widehat{x}}}(\beta) (\id_{V_1\otimes \ldots \otimes V_{i-1}} \otimes \theta_{V_i} \otimes \id_{V_{i+1}\otimes \ldots \otimes V_n})= \\
\left<\theta_{V_i}\right> L_{V_{\widehat{x}}}(\beta)= \left<\theta_{V'_j}\right> L_{V_{\widehat{x}}}(\beta)=
 (\id_{V_1\otimes \ldots \otimes V_{j-1}} \otimes \theta_{V_j} \otimes \id_{V_{j+1}\otimes \ldots \otimes V_n})L_{V_{\widehat{x}}}(\beta) .
 \end{multline*}
This concludes the proof.
\end{proof}

\begin{remark} If $V$ is a type $1$ typical $\Uq$ module, then $V$ is gauge equivalent to a diagonal module $V_D$ so $\left<\theta_V \right>=\left<\theta_{V_D} \right>$ can be computed using Lemma \ref{lemma_twist2}. 
\end{remark}

\begin{definition}(Representation of framed braid groupoids)\label{rep_framedbraid_rep}
We extend the functor $L: B_n^{\SL_2} \to \mathcal{C}_{(N)}$ of Definition \ref{def_braid_rep} to a representation $L: fB_n^{\SL_2} \to \mathcal{C}_{(N)}$ of the framed braid groupoid $fB_n^{\SL_2}$ by sending a Dehn twist $t_i$, seen as an endomorphism of $\widehat{x}$, to the endomorphism $L_{V_{\widehat{x}}}(t_i) \in \End_{\mathcal{C}}(V_{\widehat{x}})$ where $V_{\widehat{x}}= V_1 \otimes \ldots \otimes V_n$,   defined by the braiding $L_{V_{\widehat{x}}}(t_i):= \id_{V_1\otimes \ldots \otimes V_{i-1}} \otimes \theta_{V_i} \otimes \id_{V_{i+1}\otimes \ldots \otimes V_n}$. 
\end{definition}

That the above extension is well defined follows from the last item of Lemma \ref{lemma_twist3}.

\subsection{Definition of the link invariants}

\begin{definition}[Decorated links] A \textbf{decorated link} is a triple $\mathbb{L}=(L, [\rho], h)$ where: 
\begin{enumerate}
\item $L\subset S^3$ is a framed link with connected components $L=L_1\sqcup \ldots \sqcup L_k$; 
\item $[\rho]$ is the conjugacy class of a representation $\rho: \pi_1(S^3\setminus L) \to \SL_2$; 
\item $h=(h_{L_1}, \ldots, h_{L_k})\in \mathbb{C}^k$ is a tuple such that for each $1\leq i \leq k$, if $\mu_i \in \pi_1(S^3 \setminus L)$ is the class of the meridian encircling $L_i$, then $T_N(h_{L_i})=-\tr (\rho (\mu_i))$. 
\end{enumerate}
A decorated link is \textbf{admissible} if 
\begin{enumerate}
\item[(i)] for all $1\leq i \leq k$ then either $\rho(\mu_i) \neq \pm \mathds{1}_2$ or $h_{L_i}=\pm 2$; 
\item[(ii)] there exists $1 \leq i \leq k$ such that either $\tr (\rho(\mu_i)) \neq \pm 2$ or $h_{L_i}=\pm 2$.
\end{enumerate}
We denote by $\mathcal{DL}$ the set of admissible decorated links. 
\end{definition}

Let $\mathbb{C}_{(N)}:= \quotient{\mathbb{C}}{\mu_{N^2}}$ be the set of complex numbers considered up to multiplication by a $N^2$-th root of unity (so $\mathbb{C}_{(N)}$ is isomorphic to $\mathbb{C}$ through the map $z\mapsto z^{N^2}$). The purpose of this section, and one of the main achievement of the paper, it to define a map $\left< \cdot \right>: \mathcal{DL} \to \mathbb{C}_{(N)}$ as follows. Suppose that $\beta \in fB_n$ is a framed braid whose Markov closure is $L$. Let $M_L:= S^3 \setminus \mathring{N}(L)$ be obtained from $S^3$ by removing the interior of a tubular neighborhood of $L$. Then $\beta$ defines an open book decomposition of $M_L$ in the sense that 
$$ M_L \cong \quotient{ (\mathbb{D}_n \times [0,1])}{\left( (x, 1) \sim (\beta(x), 0) \mbox{ and } (y,t)\sim (y,t') \mbox{ for }x\in \mathbb{D}_n, y\in \partial \mathbb{D}\right)}.$$
The projection $\pi: (\mathbb{D}_n\times[0,1]) \to M_L$ induces an embedding $\Hom(\pi_1(M_L), \SL_2) \to \Hom(\pi_1(\mathbb{D}_n), \SL_2)$ which identifies the set of representations $\Hom(\pi_1(M_L), \SL_2) $ with the set of these representations $\rho: \pi_1(\mathbb{D}_n) \to \SL_2$ such that $\beta^* \rho= \rho$. Let $\rho_{\mathbb{D}}\in \overline{\mathcal{R}}_{\SL_2}(\mathbb{D}_n) \cong X(\mathbb{D}_n)$ be a representation whose restriction $\restriction{\rho_{\mathbb{D}}}{\pi_1(\mathbb{D}_n)}$ has class $[\rho]$. Fix an arbitrary boundary invariant $h_{\partial}\in \mathbb{C}^*$ compatible with $\rho_{\mathbb{D}}$. Each $p_i$ corresponds to a link component $L_{j_i}$ in the sense that $\pi ( \partial D_i)= \mu_{j_i}$; we set $p_i:= p_{L_{j_i}}$.
Eventually consider the element $\widehat{x}:= (\rho_{\mathbb{D}}, h_{p_1}, \ldots, h_{p_n}, h_{\partial}) \in \widehat{X}(\mathbb{D}_n)$. By Theorem \ref{theorem_AzumayaLocus}, hypothesis $(i)$ ensures that $\widehat{x}$ is in the Azumaya locus of $\overline{\mathcal{S}}_A(\mathbb{D}_n)$. Consider  $V_{\widehat{x}}$  the associated standard Azumaya representation of $\overline{\mathcal{S}}_{A}(\mathbb{D}_n)$ and the intertwiner $L_{V_{\widehat{x}}}(\beta) \in \End_{\mathcal{C}}(V_{\widehat{x}})$. Hypothesis $(ii)$ together with Theorem \ref{theorem_RepQG}  ensure that if $V_{\widehat{x}}=V_1\otimes \ldots \otimes V_n$ then at least one of the $V_i$ is of type $1$ so is a projective $D_qB$ module.
So $V_{\widehat{x}}$ is projective as a $D_qB$ module, we can apply the renormalized trace to get $\tau(L_{V_{\widehat{x}}}(\beta)) \in \mathbb{C}_{(N)}$. 
\begin{theorem}\label{theorem_LinksInvariants} The scalar $\tau(L_{V_{\widehat{x}}}(\beta)) \in \mathbb{C}_{(N)}$ only depends on $\mathbb{L}$ and thus defines a link invariant
$$ \left< \cdot \right>: \mathcal{DL} \to \mathbb{C}_{(N)}, \quad \left< \mathbb{L}\right> = \tau(L_{V_{\widehat{x}}}(\beta)).$$
\end{theorem}

We need to prove that $\tau(L_{V_{\widehat{x}}}(\beta))$ does not depend on: 
\begin{enumerate}
\item the choice of the framed braid $\beta$ with Markov closure $L$; 
\item the choice of the representation $\rho_{\mathbb{D}}\in \overline{\mathcal{R}}_{\SL_2}(\mathbb{D}_n)$ which induces the conjugacy class $[\rho]$; 
\item the choice of the boundary invariant $h_{\partial}$.
\end{enumerate}

\begin{lemma}\label{lemma_LI1} The scalar $\tau(L_{V_{\widehat{x}}}(\beta))$ does not depend on  the choice of the representation $\rho_{\mathbb{D}}\in \overline{\mathcal{R}}_{\SL_2}(\mathbb{D}_n)$ which induces the conjugacy class $[\rho]$ nor on the choice of  $h_{\partial}$.
\end{lemma}

\begin{proof}
Let $\widehat{x}=(\rho_{\mathbb{D}}, h_{p_1}, \ldots, h_{p_n}, h_{\partial})$ and $\widehat{x}'=(\rho_{\mathbb{D}}', h_{p_1}, \ldots, h_{p_n}, h_{\partial}')$ be two elements of $\widehat{X}(\mathbb{D}_n)$ such that $\restriction{\rho_{\mathbb{D}}}{\pi_1(\mathbb{D}_n)}$ and $\restriction{\rho'_{\mathbb{D}}}{\pi_1(\mathbb{D}_n)}$ are conjugate. By Theorem \ref{theorem_GaugeRep}, $V_{\widehat{x}}$ and $V_{\widehat{x}'}$ are gauge equivalent, so there exists $h\in \widehat{\mathcal{G}}_{\mathbb{D}_1}$ such that $V_{\widehat{x}}^h =V_{\widehat{x}}$ and $L_{V_{\widehat{x}'}}(\beta)=L_{V_{\widehat{x}}}(\beta)^h$. We conclude using the fact that $\tau$ is gauge invariant (Theorem \ref{theorem_trace} $(3)$).

\end{proof}

Let $\beta \in fB_n$ and denote by $\beta\otimes 1 \in fB_{n+1}$ the framed link obtained from $\beta$ by adding a trivial strand on the right.  Consider the two framed links $\beta^+ := t_{n+1}^{-1} \sigma_n (\beta\otimes 1)$ and $\beta^- := t_{n+1} \sigma_{n+1}^{-1}(\beta \otimes 1)$ (see Figure \ref{fig_Markov}). We say that $\beta_+$ (resp. $\beta_-$) is obtained from $\beta$ by a \textbf{framed Markov move}.

 \begin{figure}[!h] 
\centerline{\includegraphics[width=6cm]{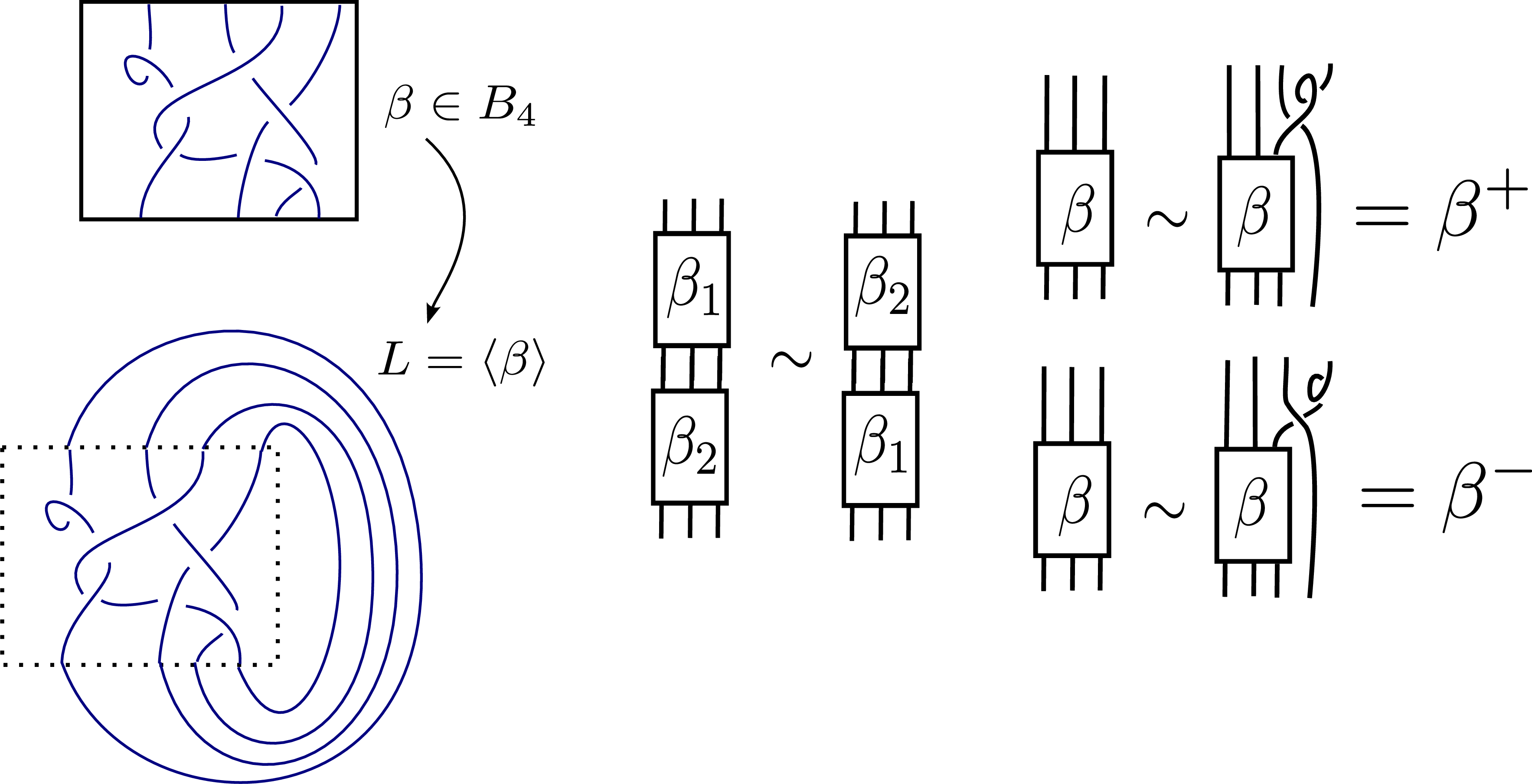} }
\caption{On the left: the Markov closure of a framed braid. On the right: two framed braids have the same Markov closure if and only if they are related by a sequence of Markov moves. } 
\label{fig_Markov} 
\end{figure}

 Clearly the Markov closures of $\beta$, $\beta^+$ and $\beta^-$ are equal to the same framed link. Conversely, the following is the framed version of a celebrated theorem of Markov:  
\begin{theorem}(Markov)\label{theorem_Markov}
Two framed links $\beta$ and $\beta'$ have the same Markov closure if and only if we can pass from $\beta$ to $\beta'$ by a finite sequence of the following elementary moves: 
\begin{enumerate}
\item replacing $\beta_1\beta_2$ by $\beta_2\beta_1$; 
\item replacing $\beta$ by $\beta^+$ or $\beta^-$ or conversely, replacing $\beta^{\pm}$ by $\beta$.
\end{enumerate}
\end{theorem}

Since $\tau$ is a trace and $L$ is functorial, we clearly have $\tau(L_{V_1}(\beta_1)L_{V_2}(\beta_2))=\tau(L_{V_2}(\beta_2)L_{V_1}(\beta_1))$. 

\begin{lemma}\label{lemma_LI2}
Let $\beta \in fB_n$ and $V_{\mathbb{D}}=V_1\otimes \ldots \otimes V_n$ a standard Azumaya $\overline{\mathcal{S}}_A(\mathbb{D}_n)$ module and set $V'_{\mathbb{D}}= V_{\mathbb{D}} \otimes \overline{V_n}$. 
Then we have 
$$ \tau(L_{V_{\mathbb{D}}}(\beta)) = \tau( L_{V'_{\mathbb{D}}}(\beta^+)) = \tau (L_{V'_{\mathbb{D}}}(\beta^-)).$$
\end{lemma}

\begin{proof}
We compute: 
\begin{align*}
\tau( L_{V'_{\mathbb{D}}}(\beta^+)) &=& \tau \left( \ptr^R(L_{V'_{\mathbb{D}}}(\beta^+)) \right)= \tau \left( (\id^{\otimes n}\otimes \theta_{\overline{V_n}}^{-1})(\id^{\otimes n-1}\otimes c_{V_n, \overline{V_n}}^{V_n, \overline{V_n}})(L_{V_{\mathbb{D}}}(\beta)\otimes \id)\right) & \mbox{, because }\tau \mbox{ is a right trace,} \\
 {} &=&  \tau \left( (\id^{\otimes n-1}\otimes c_{V_n, \overline{V_n}}^{V_n, \overline{V_n}})(\id^{\otimes n}\otimes \theta_{{V_n}}^{-1})(L_{V_{\mathbb{D}}}(\beta)\otimes \id)\right) 
 & \mbox{, by Lemma \ref{lemma_twist3}}(4), \\
 {} &=&  \tau \left(  (\id^{\otimes n-1}\otimes \theta_{V_n}) (\id^{\otimes n-1}\otimes \theta_{{V_n}}^{-1})L_{V_{\mathbb{D}}}(\beta)\right) = \tau (L_{V_{\mathbb{D}}}(\beta))& \mbox{, by definition of the twist}. 
\end{align*}

\end{proof}

\begin{proof}[Proof of Theorem \ref{theorem_LinksInvariants}] The theorem follows from Theorem \ref{theorem_Markov} and Lemmas \ref{lemma_LI1}, \ref{lemma_LI2}.
\end{proof}

\subsection{Comparison with previous constructions}

\begin{theorem}\label{theorem_comparison} Let $L=L_1\sqcup \ldots \sqcup L_k$ a framed link in $S^3$ with exterior $M_L:=S^3\setminus \mathring{N}(L)$ and meridian $\mu_i$ around $L_i$.
\begin{enumerate}
\item If $\rho_0: \pi_1(M_L)\to \SL_2$ is the trivial representation sending every loop to the identity matrix $\mathds{1}_2\in \SL_2$, $h=(h_{L_1}, \ldots, h_{L_k})$ is such that $h_{L_i}=-2$ for all $1\leq i \leq k$ and $\mathbb{L}=(L, [\rho_0], h)$ then $\left<\mathbb{L}\right>$ is the (class in $\mathbb{C}_{(N)}$ of the) Kashaev invariant (\cite{KashaevLinkInv}) of $L$.
%\item Let $\omega \in \mathrm{H}^1(M_L; \mathbb{C}^*)$ 
\item If $\mathbb{L}=(L, [\rho], h) \in \mathcal{DL}$ is such that $\tr(\rho(\mu_i))\neq \pm 2$ for all $1\leq i \leq k$, then $\left<\mathbb{L}\right>$ is equal to the link invariant defined in \cite{BGPR_Biquandle}. 
\end{enumerate}
\end{theorem}

\begin{proof}
The link invariants from \cite{KashaevLinkInv, BGPR_Biquandle} are obtained by taking the renormalized trace of some operators associated to a (decorated) framed braid. This operator is obtained by composing elementary operators which are braiding operators ($R$ matrices) and twist operators ($\theta_V$) like we did; we thus need to prove that our braidings and twist operators coincide with the ones in \cite{KashaevLinkInv, BGPR_Biquandle}. The fact that the braiding operators used in \cite{BGPR_Biquandle} and ours are equal is proved in Corollary \ref{coro_KR}. For the Kashaev invariant, the fact that the Kashaev $R$-matrix coincides with the Drinfeld braiding (in the sense of Definition \ref{def_Drinfeld_braiding}) $c_{V_0, V_0}$, where $V_0= V({\zeta}^{-1/2}, {\zeta}^{1/2}, 0, 0)$ is the so-called Kashaev module, is proved in \cite{MM01}; an alternative proof will be given in Section \ref{sec_KashaevBradingsRootsUnity}; that this Drinfeld braiding coincides with our skein braiding is again a consequence of Corollary \ref{coro_KR}. In each case, the twist operators are obtained from the braidings by the formula $\theta_V= \ptr^R (c_{V, \overline{V}}^{V, \overline{V}})$ as we defined so the twist operators in \cite{KashaevLinkInv, BGPR_Biquandle} coincide with ours. 

\end{proof}

\begin{remark} The colored Jones polynomials and the ADO invariants are not part of our invariants though they are defined in a similar manner. The main difference is that they are associated to decorated links of the form $(L, [\rho_0], h)$ where $\rho_0$ is the trivial representation and $h_{L_i}\neq -2$ so which are not admissible. Hypothesis $(i)$ is used to obtain representations $V_{\widehat{x}}$ in the Azumaya locus of $\overline{\mathcal{S}}_{\zeta}(\mathbb{D}_n)$ whereas Hypothesis $(ii)$ ensures that $V_{\widehat{x}}$ is a projective $\Uq$ module so we can apply the renormalized trace. The fact that $\widehat{x}$ belongs to the Azumaya locus ensures the existence of the intertwiner $L_{V_{\widehat{x}}}(\beta)$ whereas the fact that $V_{\widehat{x}}$ is irreducible ensure the unicity (up to scalar) of the intertwiner. Still it might happen that for indecomposable $\overline{\mathcal{S}}_{\zeta}(\mathbb{D}_n)$ modules $V$ with full shadow $\widehat{x}$ outside the Azumaya locus we still have existence and unity of the intertwiner. If $V$ is not projective as a $\Uq$ module, we can take the q-trace $\qtr$ of the intertwiner instead of the renormalized one. For instance fix $1\leq i \leq N-2$ and consider the $\Uq$ module $S_i:= S_{+1, +1, i}$ of Definition \ref{def_QGRep} (it is simple not projective). Set $V:= S_i^{\otimes n}$. As a $\overline{\mathcal{S}}_A(\mathbb{D}_n)$ module $V$ is simple though its classical shadow $([\rho_0], h_{L_j}=-q^{i+1}-q^{-(i+1)}, h_{\partial}=1)$ is not in the Azumaya locus. Still, the existence of the Drinfeld R-matrix ensures the existence of the intertwiners $L_V(\beta)$ and the $i$-th colored Jones polynomial is defined as the q-trace: $J_i(L):= \qtr(L_V(\beta))$ when $L$ is the closure of $\beta$. 
\end{remark}

\section{Quantum hyperbolic geometry}\label{sec_QT}

Quantum hyperbolic geometry provides a recipe to obtain explicit formulas for the intertwiners $L_V(\beta)$ and thus for the link invariants $\left<\mathbb{L}\right>$ closely related to classical hyperbolic geometry which we now investigate.
 
 \subsection{Balanced Chekhov-Fock algebras and the quantum trace}.
 \par 
 \underline{\textbf{Triangulations:}} A marked surface $\mathbf{\Sigma}$ is \textbf{triangulable} if it can be obtained from a disjoint union of triangles $\mathbf{\Sigma}_{\Delta}=\sqcup_{\mathbb{T} \in F(\Delta)} \mathbb{T}$  by gluing some pairs of boundary edges; we then call \textbf{triangulation} $\Delta$  the combinatoric data made of $\mathbf{\Sigma}_{\Delta}$  and the pairs of glued edges. The connected components of $\mathbf{\Sigma}_{\Delta}$ are the \textbf{faces} of $\Delta$ with set $F(\Delta)$. The image of the boundary edges by the quotient map $\mathbf{\Sigma}_{\Delta}\to \mathbf{\Sigma}$ are the \textbf{edges} of $\Delta$ with set $\mathcal{E}(\Delta)$. An edge $e\in \mathcal{E}(\Delta)$ is a  \textbf{boundary edge} it it is a boundary edge of $\mathbf{\Sigma}$, else it is an \textbf{inner edge} and we consider the partition $\mathcal{E}(\Delta)=\mathcal{E}^{\partial}(\Delta)\sqcup \mathring{\mathcal{E}}(\Delta)$. By convention, we consider that each triangulation is equipped with a bijection $I: \mathcal{E}(\Delta)\xrightarrow{\cong} \{1, \ldots, |\mathcal{E}(\Delta)|\}$ named its \textbf{indexation}. 
 \par 
  The splitting morphism defines an embedding $\theta^{\Delta} : \mathcal{S}_A(\mathbf{\Sigma}) \hookrightarrow \otimes_{\mathbb{T}\in F(\Delta)} \mathcal{S}_A(\mathbb{T})$ and Theorem \ref{theorem_exactsequence} implies that $\mathcal{S}_A(\mathbf{\Sigma}) $ is completely determined by the triangle algebra $\mathcal{S}_A(\mathbb{T})$ together with the combinatoric of the triangulation.
 
 \vspace{2mm} 
 \par \underline{\textbf{The balanced Chekhov-Fock algebras:}}
 Let $(\mathbf{\Sigma}, \Delta)$ be a triangulated punctured surface. A map $\mathbf{k} : \mathcal{E}(\Delta) \rightarrow \mathbb{Z}$ is called \textbf{balanced} if for any three edges $e_1,e_2,e_3$ bounding a face of $\Delta$, the integer $\mathbf{k}(e_1)+\mathbf{k}(e_2)+\mathbf{k}(e_3)$ is even. If $\mathbf{k}_1, \mathbf{k}_2$ are balanced, their sum $\mathbf{k}_1+\mathbf{k}_2$ is balanced. We denote by $K_{\Delta}$ the abelian group of balanced maps. For  $e$ and $e'$ two edges, denote by $a_{e,e'}$ the number of faces $\mathbb{T}\in F(\Delta)$ such that $e$ and $e'$ are edges of $\mathbb{T}$ and such that we pass from $e$ to $e'$ in the counter-clockwise direction in $\mathbb{T}$. The \textbf{Weil-Petersson}  form $\left(\cdot, \cdot \right)^{WP}: K_{\Delta} \times K_{\Delta} \rightarrow \mathbb{Z}$ is the skew-symmetric form defined  by $\left( \mathbf{k}_1, \mathbf{k}_2\right)^{WP}:= \sum_{e,e'} \mathbf{k}_1(e)\mathbf{k}_2(e')(a_{e,e'}-a_{e',e})$.

\begin{definition}(Balanced Chekhov-Fock algebras)\label{def_CF}
The \textbf{balanced Chekhov-Fock algebra} $\mathcal{Z}_A(\mathbf{\Sigma}, \Delta)$ is the quotient of the associative algebra freely generated by elements $Z^k$, $k\in K_{\Delta}$ by the ideal generated by the relations $Z^{k_1+k_2}=A^{-\frac{1}{2}(k_1, k_2)^{WP}}Z^{k_1}Z^{k_2}$.
\end{definition}

\begin{notations} 
We will write $[Z_{e_1}^{k_1} \ldots Z_{e_m}^{k_m}]$ for the monomial $Z^k$ where $k(e_i)= k_i$ for $1\leq i \leq m$ and $k(e)=0$ if $e\notin\{e_1,\ldots, e_m\}$. For $e \in \mathcal{E}(\Delta)$ we denote by $X_e$ the element $Z^{k_e}$ where $k_e(e')=2 \delta_{e, e'}$. Like for skein algebras, $\mathcal{Z}_{\zeta}(\mathbf{\Sigma}, \Delta)$ will refer to the balanced Chekhov-Fock algebra where $A^{1/2}=\zeta^{1/2}$ is a root of unity of odd order $N\geq 1$. 
\end{notations}

For $a,b$ two boundary edges of $(\mathbf{\Sigma}, \Delta)$, we denote by $\Delta_{a\#b}$  the triangulation of $\mathbf{\Sigma}_{a\#b}$ obtained from $\Delta$ by gluing $a$ and $b$; in particular $\mathcal{E}(\Delta_{a\#b})=\mathcal{E}(\Delta)\setminus \{a, b\}$. 
 Balanced Chekhov-Fock algebras admit a splitting morphism $\theta_{a\# b}: \mathcal{Z}_A(\mathbf{\Sigma}_{a\# b}, \Delta_{a\#b}) \hookrightarrow \mathcal{Z}_A(\mathbf{\Sigma}, \Delta)$ defined as follows. Let $c\in \mathcal{E}(\Delta_{a\# b})$ be the common image of $a$ and $b$. We set $\theta_{a\#b}(Z^k):= Z^{k'}$ where $k'(e)=k(e)$ if $e\notin \{a,b\}$ and $k'(a)=k'(b)=k(c)$. Like for stated skein algebras, splitting morphisms define a morphism $\theta^{\Delta} : \mathcal{Z}_A(\mathbf{\Sigma}, \Delta) \hookrightarrow \otimes_{\mathbb{T} \in F(\Delta)} \mathcal{Z}_A(\mathbb{T})$. 

\vspace{2mm}
\par \underline{\textbf{Quantum trace:}} 

We now define an injective algebra morphism $\Tr_{A}^{\Delta} : \overline{\mathcal{S}}_{A}(\mathbf{\Sigma}) \hookrightarrow \mathcal{Z}_{A}(\mathbf{\Sigma}, \Delta)$. First consider the case where $\mathbf{\Sigma}=\mathbb{T}$. Let $e_1,e_2,e_3$ be the three edges of $\mathbb{T}$ cyclically ordered in the counter-clockwise order. For $i \in \{1,2,3\}$ let $\alpha_i \subset \mathbb{T}$ be an arc with endpoints in $e_{i+1}$ and $e_{i+2}$ (indexes are modulo $3$).
The algebra  $\overline{\mathcal{S}}_{A}(\mathbb{T})$ is generated by the classes of the stated arcs $(\alpha_i)_{\varepsilon \varepsilon}$, for $i=1,2,3$ and $\varepsilon \in \{-, +\}$;  moreover one has $(\alpha_i)_{--} = ((\alpha_i)_{++})^{-1}$. Define the balanced maps $\mathbf{k}_1, \mathbf{k}_2, \mathbf{k}_3 \in K_{\mathbb{T}}$ by $\mathbf{k}_i (e_i)=0$ and  $\mathbf{k}_i (e_j)=1$
for $j\neq i$. By \cite[Theorem $7.11$]{CostantinoLe19}, the linear map $\Tr_{A}^{\mathbb{T}} : \overline{\mathcal{S}}_{A}(\mathbb{T}) \rightarrow \mathcal{Z}_{A}(\mathbb{T})$, sending $(\alpha_i)_{++}$ to $Z^{\mathbf{k}_i}$ and sending $(\alpha_i)_{--}$ to $Z^{-\mathbf{k}_i}$, extends to an isomorphism of algebras.

\begin{definition}[Quantum trace \cite{BonahonWongqTrace, LeStatedSkein}]\label{def_qtrace} Let $(\mathbf{\Sigma}, \Delta)$ be a triangulated marked surface. The \textbf{quantum trace} is the unique algebra morphism  $\Tr_{A}^{\Delta} : \overline{\mathcal{S}}_{A}(\mathbf{\Sigma}) \hookrightarrow \mathcal{Z}_{A}(\mathbf{\Sigma}, \Delta)$ making the following diagram commuting: 
$$
\begin{tikzcd}
\overline{\mathcal{S}}_{A}(\mathbf{\Sigma})
\arrow[r, hook, "\theta^{\Delta}"] \arrow[d, dotted, hook, "\Tr_{A}^{\Delta}"] &
\otimes_{\mathbb{T}\in F(\Delta)} \overline{\mathcal{S}}_{A}(\mathbb{T}) 
\arrow[d, "\cong"' , "\otimes_{\mathbb{T}} \Tr_{A}^{\mathbb{T}}"] \\
\mathcal{Z}_{A}(\mathbf{\Sigma}, \Delta)
\arrow[r, hook, "\theta^{\Delta}"] &
\otimes_{\mathbb{T}\in F(\Delta)} \mathcal{Z}_{A}(\mathbb{T})
\end{tikzcd}
$$
\end{definition}

 Recall that $\overline{\mathcal{S}}_A(\mathbf{\Sigma})$ is graded by the cohomology group $H=\mathrm{H}_1(\Sigma, \mathcal{A}; \mathbb{Z}/2\mathbb{Z})$ where the class of a stated diagram $[D,s]$ is the homology class of $[D]\in H$. Note that an edge $e\in \mathcal{E}(\Delta)$ defines a Borel-Moore homology class $[e]\in \mathrm{H}_1^c(\Sigma\setminus \mathcal{P}; \mathbb{Z}/2\mathbb{Z})$. For $k\in K_{\Delta}$ let $[k]\in H$ be the class whose algebraic intersection with $[e]$ is $k(e)$ modulo $2$ for each $e\in \mathcal{E}(\Delta)$. For $\chi \in H$, define the submodule
 $$ \mathcal{Z}^{(\chi)}_A(\mathbf{\Sigma}, \Delta):= \Span( Z^k: [k]= \chi) \subset \mathcal{Z}_A(\mathbf{\Sigma}, \Delta).$$

It follows from the definition that $\mathcal{Z}_A(\mathbf{\Sigma}, \Delta)=  \oplus_{\chi \in H}  \mathcal{Z}_A^{(\chi)}(\mathbf{\Sigma}, \Delta)$ is a graded algebra. 

\begin{lemma}\label{lemma_grading}
The quantum trace map is a $\mathrm{H}_1(\Sigma, \mathcal{A}; \mathbb{Z}/2\mathbb{Z})$ graded morphism of degree $0$.
\end{lemma}

\begin{proof}
For the triangle this is follows from the fact that $[\alpha_i]=[k_i] \in H$ for $i=1,2,3$. In general, consider the commutative diagram of Definition \ref{def_qtrace}. Let $\mathbf{\Sigma}_{\Delta}:=\sqcup_{\mathbb{T} \in F(\Delta)} \mathbb{T}=: (\Sigma_{\Delta}, \mathcal{A}_{\Delta})$. The projection map $\pi: (\Sigma_{\Delta}, \mathcal{A}_{\Delta}) \to (\Sigma, \mathcal{A})$ induces a group morphism in homology $\pi_*: \oplus_{\mathbb{T}\in F(\Delta)} \mathrm{H}_1(\mathbb{T}, \{e_1,e_2,e_3\}; \mathbb{Z}/2\mathbb{Z}) \to H $ by which we  equip both $\otimes_{\mathbb{T}}\overline{\mathcal{S}}_A(\mathbb{T})$ and $\otimes_{\mathbb{T}}\mathcal{Z}_A(\mathbb{T})$ with structure of $H$ graded algebras. By definition of the splitting morphisms, the two morphisms $\theta^{\Delta}$ (both for skein and balanced CF algebras) preserve the $H$ grading therefore all three maps in the diagram of Definition \ref{def_qtrace} preserve the $H$ grading; so does $\Tr_A^{\Delta}$. 
 \end{proof}
 
 \begin{remark} \label{remark_grading}
 Note that the $0$-graded part $ \mathcal{Z}_A^{(0)}(\mathbf{\Sigma}, \Delta)$ is generated, as an algebra, by the elements $X_e$ for $e\in \mathcal{E}(\Delta)$. This $0$-graded part is the original algebra which appeared in \cite{ChekhovFock, FockGoncharovClusterVariety}.
 The following remark will be useful. 
  Suppose that for each $\chi \in \mathrm{H}_1(\Sigma, \mathcal{A}; \mathbb{Z}/2\mathbb{Z})\setminus \{0\}$ we fix $k_{\chi}\in K_{\Delta}$ such that $[k_{\chi}]=\chi$. Then the multiplication by $Z^{k_{\chi}}$ induces a linear isomorphism between $ \mathcal{Z}_A^{(0)}(\mathbf{\Sigma}, \Delta)$ and $ \mathcal{Z}_A^{(\chi)}(\mathbf{\Sigma}, \Delta)$. It follows that $\mathcal{Z}_A(\mathbf{\Sigma}, \Delta)$ is generated, as an algebra, by the elements $X_e, e\in \mathcal{E}(\Delta)$ and $Z^{k_{\chi}}, \chi \in \mathrm{H}_1(\Sigma, \mathcal{A}; \mathbb{Z}/2\mathbb{Z})\setminus \{0\}$.
 \end{remark}

 \subsection{Quantum Teichm\"uller representations}

\begin{definition}\label{def_central_elements}(Center of balanced Chekhov-Fock algebras)
\begin{itemize}
\item  Let $p\in \mathring{\mathcal{P}}$ be an inner puncture. For  $e\in \mathcal{E}(\Delta)$, denote by $\mathbf{k}_p(e)\in \{0,1,2\}$ the number of endpoints of $e$ equal to $p$. The \textbf{central inner puncture element} is  $H_p:=Z^{\mathbf{k}_p}\in \mathcal{Z}_{A}(\mathbf{\Sigma}, \Delta)$.
\item  Let  $\partial$ a connected component of $\partial \Sigma$. For each edge $e$, denote by $\mathbf{k}_{\partial}(e)\in \{0, 1, 2\}$ the number of endpoints of $e$ lying in $\partial$. The \textbf{central boundary element} is $H_{\partial}:=Z^{\mathbf{k}_{\partial}}\in \mathcal{Z}_{A}(\mathbf{\Sigma}, \Delta)$. 
\item Suppose that $\zeta^{1/2}$ is a root of unity of order $N>1$, the \textbf{Frobenius morphism} $Fr_{(\mathbf{\Sigma}, \Delta)} : \mathcal{Z}_{+1}(\mathbf{\Sigma}, \Delta) \rightarrow \mathcal{Z}_{\zeta}(\mathbf{\Sigma}, \Delta)$ sending a balanced monomial $Z^{\mathbf{k}}$ to $Z^{N\mathbf{k}}$, is an injective morphism of algebras whose image lies in the center.
\end{itemize}
\end{definition}

An important property of the quantum trace is the fact that $\Tr_{\zeta}^{\Delta}$ commutes with the Frobenius morphisms in the sense that $\Tr_{\zeta}^{\Delta} \circ Fr_{\mathbf{\Sigma}} = Fr_{\mathbf{\Sigma}, \Delta} \circ \Tr_{+1}^{\Delta}$. This is actually how the Frobenius morphism for skein algebras were first discovered in \cite{BonahonWongqTrace}. Also we have the equalities $\Tr_{\zeta}^{\Delta}(\gamma_p)=H_p +H_p^{-1}$ and $\Tr_{\zeta}^{\Delta}(\alpha_{\partial})=H_{\partial} + H_{\partial}^{-1}$ so $\Tr_{\zeta}^{\Delta}$ sends the center (resp. the small center) into the center (resp. small center). 

The center of $\mathcal{Z}_{\zeta}(\mathbf{\Sigma}, \Delta)$ will be denoted by ${Z}(\mathbf{\Sigma}, \Delta)$ whereas the small center, that is the image of the Frobenius, will be denoted by ${Z}^0(\mathbf{\Sigma}, \Delta) \subset{Z}(\mathbf{\Sigma}, \Delta)$. We write $Y(\mathbf{\Sigma}, \Delta):= \Specm({Z}^0(\mathbf{\Sigma}, \Delta))$ and $\widehat{Y}(\mathbf{\Sigma}, \Delta):= \Specm({Z}(\mathbf{\Sigma}, \Delta))$.

\begin{theorem}\label{theorem_CF_Rep}
\begin{enumerate}
\item
(\cite{BonahonWong2} when $\mathbf{\Sigma}$ is unmarked, \cite{KojuQuesneyQNonAb} in general) If $\zeta$ is a root of unity of odd order $N>1$, the center of $\mathcal{Z}_{\zeta}(\mathbf{\Sigma}, \Delta)$  is generated by the image of the Frobenius morphism together with the central inner puncture and boundary elements. Its PI-degree is equal to the PI-degree of $\overline{\mathcal{S}}_{\zeta}(\mathbf{\Sigma})$. 
\item (\cite[Proposition $7.2$]{DeConciniProcesiBook}) Like any quantum torus at roots of unity,  $\mathcal{Z}_{\zeta}(\mathbf{\Sigma}, \Delta)$ is Azumaya: it is almost Azumaya with Azumaya locus the whole variety $\widehat{Y}(\mathbf{\Sigma}, \Delta)$. 
\end{enumerate}
\end{theorem}

So isomorphism classes of irreducible representations of $\mathcal{Z}_{\zeta}(\mathbf{\Sigma}, \Delta)$ are in $1:1$ correspondence with $\widehat{Y}(\mathbf{\Sigma}, \Delta)$ through the character map and they all have the same dimension. 

\begin{definition}(Quantum Teichm\"uller representations) Let $\overline{r}: \mathcal{Z}_{\zeta}(\mathbf{\Sigma}, \Delta) \to \End(V)$ be an irreducible representation. We call \textbf{quantum Teichm\"uller representation} (or QT representation) the composition 
$$ r: \overline{\mathcal{S}}_{\zeta}(\mathbf{\Sigma}) \xrightarrow{\Tr_{\zeta}^{\Delta}} \mathcal{Z}_{\zeta}(\mathbf{\Sigma}, \Delta) \xrightarrow{\overline{r}} \End(V).$$
\end{definition}

Let $\widehat{p}: \widehat{Y}(\mathbf{\Sigma}, \Delta) \to \widehat{X}(\mathbf{\Sigma})$ and $p: Y(\mathbf{\Sigma}, \Delta) \to X(\mathbf{\Sigma})$ be the dominant maps defined by the restrictions $\Tr_{+1}^{\Delta} : Z(\mathbf{\Sigma}) \hookrightarrow Z(\mathbf{\Sigma}, \Delta)$ and  $\Tr_{+1}^{\Delta} : Z^0(\mathbf{\Sigma}) \hookrightarrow Z^0(\mathbf{\Sigma}, \Delta)$ respectively. The interested reader might consult \cite{KojuQuesneyQNonAb} for a geometric interpretation of $Y(\mathbf{\Sigma}, \Delta)$ as a moduli space and of $p$ as an algebraic non abelianization map related to Gaiotto-Moore spectral networks. Consider the following commutative diagram:
$$\begin{tikzcd}
\widehat{Y}(\mathbf{\Sigma}, \Delta) \ar[r, "\widehat{p}"] \ar[d, "\pi"] & 
\widehat{X}(\mathbf{\Sigma}) \ar[d, "\pi"] \\
Y(\mathbf{\Sigma}, \Delta) \ar[r, "p"] &
X(\mathbf{\Sigma})
\end{tikzcd}$$

Consider a QT representation $r$ corresponding to the ideal $\widehat{y}\in \widehat{Y}(\mathbf{\Sigma}, \Delta)$.
Since the dimension of  $r$ is equal to the PI-degree of $\overline{\mathcal{S}}_{\zeta}(\mathbf{\Sigma})$, $r$ is an Azumaya representation if and only if $\widehat{p}(\widehat{y})$ is in the Azumaya locus. Conversely, an Azumaya representation of $\overline{\mathcal{S}}_{\zeta}(\mathbf{\Sigma})$ with full shadow $\widehat{x}$ lifts to an irreducible representation of $\mathcal{Z}_{\zeta}(\mathbf{\Sigma}, \Delta)$ if and only if $\widehat{x}$ is in the image of $\widehat{p}$. Describing the image of $\widehat{p}$ is a difficult problem in general; we will solve it for $\mathbb{D}_n$ in the next subsection.

\subsection{Embedding quantum groups inside quantum tori}

 \begin{figure}[!h] 
\centerline{\includegraphics[width=5cm]{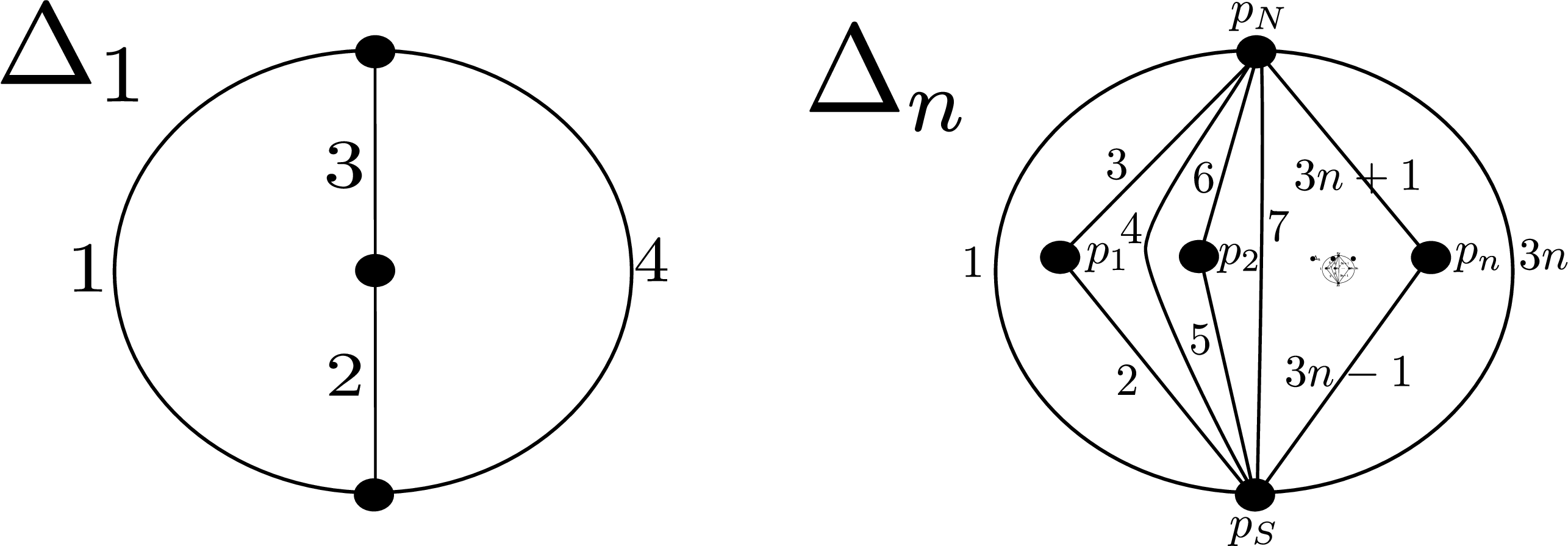} }
\caption{A triangulation of $\mathbb{D}_n$.} 
\label{fig_triangulation_Dn} 
\end{figure} 

Let $\Delta_n$ denote the triangulation of $\mathbb{D}_n$ in Figure \ref{fig_triangulation_Dn}. The algebra $\mathcal{Z}_{\zeta}(\mathbb{D}_n, \Delta_n)$ will simply be denoted by $\mathcal{Z}_{\zeta}(\mathbb{D}_n)$. Note that $\Delta_n$ is obtained by gluing $n$ copies of $\Delta_1$ together so we have a splitting morphism $\theta: \mathcal{Z}_{\zeta}(\mathbb{D}_n) \hookrightarrow \mathcal{Z}_{\zeta}(\mathbb{D}_1)^{\otimes n}$. The composition $T: \Uq \cong \overline{\mathcal{S}}_{\zeta}(\mathbb{D}_1) \xrightarrow{\Tr_{\zeta}^{\Delta_1}} \mathcal{Z}_{\zeta}(\mathbb{D}_1)$ embeds the quantum group $\Uq$ inside a quantum torus. It is characterized by the formulas: 

\begin{align*}
&\tr(K^{\pm 1/2})=  [Z_1Z_3Z_4]^{\mp 1}, \quad 
\tr(L^{\pm 1/2})= [Z_1Z_2Z_4]^{\mp 1},  \\
&\tr(E)  = \frac{-1}{q-q^{-1}} (X_4^{-1} + [X_3X_4]^{-1}), 
\quad \tr(F)= \frac{1}{q-q^{-1}} (X_1^{-1} + [X_1X_2]^{-1}).
\end{align*}

The fact that $U_q\mathfrak{sl}_2$ (and more generally $U_q\mathfrak{sl}_N$) can be embedded in some quantum tori was already found by Kashaev and Volkov in \cite{KashaevVolkov} (see also \cite{FadeevModularDouble} and \cite{SchraderShapiro}). So our skein interpretation of the quantum group permits to recast these embeddings as particular instance of the quantum trace. This embedding is the key to relate Kashaev type $R$ matrices to quantum groups $R$ matrices as we shall see. 
Using $\tr$, any simple $\mathcal{Z}_{\zeta}(\mathbb{D}_1)$ module receives a structure of $\Uq$ module of dimension $N$. We now classify them.

\begin{proposition}\label{prop_D1_lift}
Let $\mathcal{O}\subset \SL_2$ be the subset of matrices $\begin{pmatrix}a & b \\ c& d\end{pmatrix}$ such that either $(i)$ $bc\neq 0$, $(ii)$ or $b=0$, $c\neq 0$ and $a\neq \pm 1$, or $(ii)'$ $c=0$, $b\neq 0$ and $a \neq \pm 1$ or $(iii)$ $b=c=0$, $a=d=\pm 1$. 
Let $W$ be a simple $\mathcal{Z}_{\zeta}(\mathbb{D}_1)$ module. Then, as $\Uq$ module, $W$ is isomorphic to module $V(a,b,\lambda, \mu)$  of Definition \ref{def_QGRep} with classical shadow   $g\in \mathcal{O}$. Conversely any such $\Uq$ module extends to a $\mathcal{Z}_{\zeta}(\mathbb{D}_1)$ simple module.
\end{proposition}

\begin{lemma}\label{Lemma1}
The algebra $\mathcal{Z}_{\zeta}(\mathbb{D}_1)$ is presented by the generators $H_{\partial}^{\pm 1}, H_p^{\pm 1}, [Z_1Z_3Z_4]^{\pm 1}$ and  $X_1^{\pm 1}$ with relations $[Z_1Z_3Z_4]^{-1}X_1= q X_1[Z_1Z_3Z_4]^{-1}$ and $[H_{\partial}, x]=[H_p, x]=0$ for all generators $x$.
\end{lemma}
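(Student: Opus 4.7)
The plan is to unpack the definition of $\mathcal{Z}_{\omega}(\mathbb{D}_1,\Delta_1)$ as the span of balanced monomials in a quantum torus, and then check that the four generators listed (with their inverses) together with the stated relations give a complete presentation.

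First I would fix notation for the triangulation $\Delta_1$: it has four edges $e_1,e_2,e_3,e_4$ (the two boundary arcs being $e_1,e_4$ and the two interior edges $e_2,e_3$ joining the inner puncture to the boundary punctures) and two triangular faces $T_1=\{e_1,e_2,e_3\}$, $T_2=\{e_2,e_3,e_4\}$, so that $\mathbf{k}_p=(0,1,1,0)$, $\mathbf{k}_{\partial}=(2,1,1,2)$, the exponent vector of $[Z_1Z_3Z_4]$ is $(1,0,1,1)$ and that of $X_1$ is $(2,0,0,0)$. A direct calculation of the Weil-Petersson form $\langle\cdot,\cdot\rangle$ from the clockwise cyclic orders on $T_1$ and $T_2$ (chosen consistently with the orientation of $D^2$) gives the non-zero entries $\langle e_1,e_2\rangle=\langle e_3,e_1\rangle=\langle e_2,e_4\rangle=\langle e_4,e_3\rangle=1$, and $\langle e_2,e_3\rangle=0$.

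Using these values and the identity $Z^{\mathbf{a}}Z^{\mathbf{b}}=\omega^{2\langle\mathbf{a},\mathbf{b}\rangle}Z^{\mathbf{b}}Z^{\mathbf{a}}$, I would check the two relations in the statement. The centrality of $H_p$ and $H_\partial$ is a routine calculation: $\langle\mathbf{k}_p,e_i\rangle$ and $\langle\mathbf{k}_\partial,e_i\rangle$ both vanish for $i=1,\dots,4$ (equivalently, this is the content of Definition \ref{def_central_elements} combined with Theorem \ref{theorem_CFClassification}, which already asserts that the inner puncture and boundary elements are central). For the cross-relation one computes $\langle(-1,0,-1,-1),(2,0,0,0)\rangle=-2$, hence $[Z_1Z_3Z_4]^{-1}X_1=\omega^{-4}X_1[Z_1Z_3Z_4]^{-1}=qX_1[Z_1Z_3Z_4]^{-1}$.

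Next I would prove that these generators suffice. Let $\Lambda\subset\mathbb{Z}^4$ denote the balanced lattice; since the two balance conditions (one per face) are linearly independent modulo $2$, one has $[\mathbb{Z}^4:\Lambda]=4$. Computing the determinant of the $4\times 4$ matrix whose rows are $\mathbf{k}_{\partial},\mathbf{k}_p,(1,0,1,1),(2,0,0,0)$ yields $\pm 4$, so these exponent vectors form a $\mathbb{Z}$-basis of $\Lambda$. Because the Weyl-ordered monomials $\{Z^{\mathbf{k}}\}_{\mathbf{k}\in\Lambda}$ form a basis of $\mathcal{Z}_{\omega}(\mathbb{D}_1)$, every element of the algebra can indeed be written as a $\mathbb{C}$-linear combination of (Weyl-ordered) products of the four generators and their inverses.

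Finally I would show the listed relations are complete. Let $\mathcal{A}$ be the abstract algebra given by the presentation and $\Psi:\mathcal{A}\to\mathcal{Z}_{\omega}(\mathbb{D}_1)$ the evident surjection. Since $H_\partial$ and $H_p$ are central in $\mathcal{A}$ and $[Z_1Z_3Z_4]^{-1}$ $q$-commutes with $X_1$, a standard normal-form argument (reordering and applying the $q$-commutation relation repeatedly) shows that every element of $\mathcal{A}$ is a $\mathbb{C}$-linear combination of monomials $H_\partial^a H_p^b [Z_1Z_3Z_4]^c X_1^d$ with $(a,b,c,d)\in\mathbb{Z}^4$. Under $\Psi$ these map to scalar multiples of the balanced monomials $Z^{a\mathbf{k}_{\partial}+b\mathbf{k}_p+c(1,0,1,1)+d(2,0,0,0)}$, which by the lattice argument above are pairwise linearly independent. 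Hence $\Psi$ is injective, proving the presentation.

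The only slightly delicate step is the bookkeeping of the Weil-Petersson form, and specifically the sign of the cross-relation; everything else is essentially a lattice-index computation. Since the statement only claims $[Z_1Z_3Z_4]^{-1}X_1=qX_1[Z_1Z_3Z_4]^{-1}$ up to the sign conventions already fixed by Theorem \ref{theorem1} (through the definition of $K^{1/2}$), this can be cross-checked against the relation $EK^{1/2}=q^{-1}K^{1/2}E$ of Theorem \ref{theorem1} combined with $K^{1/2}=[Z_1Z_3Z_4]^{-1}$ and the fact that $E$ is a linear combination of monomials involving $X_4$ and $[X_3X_4]$, bypassing any direct computation with the $a_{e,e'}$.
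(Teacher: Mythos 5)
Your proof is correct, and it is more detailed than the paper's own argument, which is two sentences long: the paper deduces generation from Lemma \ref{lemma_grading_generator} (the algebra is generated by the $X_e$ together with one balanced monomial per $\mathrm{H}_1(\Sigma_{\mathcal{P}},\partial\Sigma_{\mathcal{P}};\mathbb{Z}/2\mathbb{Z})$-degree, and each $X_i^{\pm1}$ is a product of the four listed generators) and dismisses the relations as following from the definition, without addressing completeness of the presentation. You instead work directly with the balanced lattice $\Lambda\subset\mathbb{Z}^4$: the index computation $[\mathbb{Z}^4:\Lambda]=4$ together with the determinant $\pm4$ of the four exponent vectors shows they form a $\mathbb{Z}$-basis of $\Lambda$, which gives generation \emph{and}, via the normal-form argument in the presented algebra $\mathcal{A}$ and the linear independence of the corresponding balanced monomials, injectivity of $\Psi$ and hence completeness of the relations. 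This buys a genuinely self-contained proof of the ``presented by'' claim, at the cost of some bookkeeping (your Weil--Petersson values are internally consistent: they make $H_p$ and $H_\partial$ central and yield $\langle e_3,e_1\rangle+\langle e_4,e_1\rangle=1$, exactly what the stated $q$-commutation requires, and they agree with the commutation relations forced by Theorem \ref{theorem1}).

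One small correction to your final ``cross-check'' remark: the relation $EK^{1/2}=q^{-1}K^{1/2}E$ cannot confirm the sign of $[Z_1Z_3Z_4]^{-1}X_1=qX_1[Z_1Z_3Z_4]^{-1}$, because $E$ is built from $X_4^{-1}$ and $[X_3X_4]^{-1}$ and so only constrains the pairings of $e_3,e_4$ with $(1,0,1,1)$. The relevant check is $FK^{1/2}=qK^{1/2}F$, since $F=\frac{1}{q-q^{-1}}\bigl(X_1^{-1}+[X_1X_2]^{-1}\bigr)$ involves $X_1$; its $X_1^{-1}$ term yields precisely $\langle e_3,e_1\rangle+\langle e_4,e_1\rangle=1$, i.e.\ the desired relation. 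This does not affect the validity of your main argument, which establishes the relation by direct computation of the form.
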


\begin{proof}
This is a particular case of \cite[Proposition $2.28$]{KojuQuesneyClassicalShadows} where it is proved the existence of an algebra isomorphism $\mathcal{Z}_{\zeta}(\mathbb{D}_1)\cong \mathcal{W}_q \otimes \mathbb{C}[H_p^{\pm 1}, H_{\partial}^{\pm 1}]$, where $\mathcal{W}_q= \mathbb{C}\left< X^{\pm 1}, Y^{\pm 1} | XY=qYX\right>$ is the Weyl algebra, which identifies $[Z_1Z_3Z_4]^{-1}, X_1, H_p, H_{\partial}$ with $X,Y,H_p, H_{\partial}$ respectively.

\end{proof}

\begin{definition}
Let $(x_1, \lambda, h_p, h_{\partial})\in (\mathbb{C}^*)^4$. The $\mathcal{Z}_{\zeta}(\mathbb{D}_1)$-module $W(x_1, \lambda, h_p, h_{\partial})$ has canonical basis $(e_0, \ldots, e_{N-1})$ and module structure given by:
\begin{align*}
& H_p e_i = h_p e_i; \quad H_{\partial} e_i = h_{\partial} e_i; \quad  [Z_1Z_3Z_4]^{-1} e_i= \lambda q^{-i} e_i  \mbox{, for i} \in \{0, \ldots, N-1\}; \\
&X_1 e_0= x_1 e_{N-1}; \quad X_1 e_i = e_{i-1} \mbox{, for }i\in \{1, \ldots N-1\}.
\end{align*}
\end{definition}

By Theorem \ref{theorem_CF_Rep}, any simple $\mathcal{Z}_{\omega}(\mathbb{D}_1)$-module is isomorphic to a module $W(x_1, \lambda, h_p, h_{\partial})$. Moreover, $W(x_1, \lambda, h_p, h_{\partial})\cong W(x'_1, \lambda', h'_p, h'_{\partial})$ if and only if $h'_p=h_p$, $h'_{\partial}=h_{\partial}$, $x'_1=x_1$ and $\lambda'=\lambda q^n$ for some $n\in \mathbb{Z}/N\mathbb{Z}$.

\begin{lemma}\label{Lemma_3}
Let $(x_1, \lambda, h_p, h_{\partial})\in (\mathbb{C}^*)^4$ and set $q:=\zeta^2$ as usual.
\begin{enumerate}
\item If $(h_{\partial}h_p\lambda^2)^N \neq -1$, set $\mu := \lambda^{-1} h_{\partial}^{-1}$ and 
\begin{align*}
& a:= -(q-q^{-1})^{N-2}x_1\frac{h_{\partial}^{-1}(h_p+h_p^{-1}) +q \lambda^2 +q^{-1}\mu^2}{1+(h_{\partial}h_p\lambda^2)^{-N}}; \\
& b:= \frac{1}{(q-q^{-1})^N} x_1^{-1}(1+ (h_{\partial}h_p\lambda^2)^{-N}).
\end{align*}
One has $W(x_1, \lambda, h_p, h_{\partial})\cong V(\lambda, \mu, a, b)$ as a  $\Uq$-module.
\item If $(h_{\partial}h_p\lambda^2)^N = -1$, set $\mu := \lambda^{-1} h_{\partial}^{-1}$ and
$$ a:= \frac{1}{N}(q-q^{-1})^{N-2}x_1h_{\partial}^{-1}(h_p^{-1}-h_p).$$
One has  $W(x_1, \lambda, h_p, h_{\partial})\cong V(\lambda, \mu, a, 0)$ as a  $\Uq$-module.
\end{enumerate}
\end{lemma}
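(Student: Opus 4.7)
The plan is to compute directly how the generators $K^{\pm 1/2}, L^{\pm 1/2}, E, F$ of $\mathcal{O}_q[D(B)]$ act on the canonical basis $(e_0, \ldots, e_{N-1})$ of $W(x_1, \lambda, h_p, h_{\partial})$, and then to realize $W$ as $V(\lambda,\mu,a,b)$ (resp. $V(\lambda,\mu,a,0)$ in case $(2)$) after a rescaling $v_i := \xi_i e_i$. The actions of $K^{1/2}$ and $L^{1/2}$ are immediate: $K^{1/2}=[Z_1Z_3Z_4]^{-1}$ acts as $\lambda q^{-i}$ by definition of $W$, and since $H_{\partial}=K^{-1/2}L^{-1/2}$ is central with eigenvalue $h_{\partial}$, one obtains $L^{1/2}e_i=\mu q^i e_i$ with $\mu:=\lambda^{-1}h_{\partial}^{-1}$, matching the weight structure of $V(\lambda,\mu,a,b)$.

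For the operator $F=\frac{1}{q-q^{-1}}(X_1^{-1}+[X_1X_2]^{-1})$, I would first show that $X_2$ acts diagonally on $W$: solving the identity $L^{-1}\propto X_1X_2X_4$ (up to Weyl-ordering scalars) together with $X_4\propto X_1^{-1}H_{\partial}H_p^{-1}$ (which follows from the centrality of $X_1X_4$) expresses $X_2$ as a scalar multiple of $L^{-1}$ on $W$, so that $X_2 e_i$ is proportional to $q^{-2i}e_i$. Consequently $Fe_i$ is a scalar multiple of $e_{i+1}$ for $0\leq i\leq N-2$ (with $Fe_{N-1}$ proportional to $e_0$ through $x_1^{-1}$), and the rescaling factors $\xi_i$ are defined recursively by the condition $Fv_i=v_{i+1}$. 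To read off $b$ I use $F^N v_0 = b v_0$: the $q^2$-commutation $[X_1X_2]^{-1}X_1^{-1}=q^2 X_1^{-1}[X_1X_2]^{-1}$ combined with $q^{2N}=1$ yields the Frobenius-type identity $(q-q^{-1})^N F^N = X_1^{-N}+[X_1X_2]^{-N}$, whose summands are central and act respectively as $x_1^{-1}$ and $x_1^{-1}(h_{\partial}h_p\lambda^2)^{-N}$. This produces the stated formula for $b$ in case $(1)$, and $b=0$ precisely in case $(2)$.

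For $E$, I would exploit the Casimir element $C=FE+\frac{qK+q^{-1}L}{(q-q^{-1})^2}$ from Equation \eqref{eq_Casimir}, which acts on $W$ by the scalar $c_0:=-(h_p+h_p^{-1})h_{\partial}^{-1}(q-q^{-1})^{-2}$ (since $C=-TH_{\partial}^{-1}(q-q^{-1})^{-2}$ and $T$ acts as $h_p+h_p^{-1}$). Rearranging gives $FE v_i = (c_0-(q^{1-2i}\lambda^2+q^{2i-1}\mu^2)(q-q^{-1})^{-2})v_i$, and inverting the shift action of $F$ determines $Ev_i=\eta_i v_{i-1}$ for $i\geq 1$ in a form matching $V(\lambda,\mu,a,b)$ with $ab=c_0-(q\lambda^2+q^{-1}\mu^2)(q-q^{-1})^{-2}$. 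In case $(1)$, dividing this value of $ab$ by the explicit $b$ produces the announced formula for $a$. In case $(2)$, both $b$ and $ab$ vanish (the latter by the central character relation of Proposition \ref{prop_center} together with $(h_{\partial}h_p\lambda^2)^N=-1$), so $a$ must be computed directly by applying $E=\frac{-1}{q-q^{-1}}(X_4^{-1}+[X_3X_4]^{-1})$ to $v_0$: here $X_4$ acts via $X_4\propto X_1^{-1}H_{\partial}H_p^{-1}$ and $X_3$ via the centrality of $X_2X_3\propto H_p^2$; a careful residue analysis of the degenerate expression yields the formula for $a$, the factor $1/N$ being the first-order coefficient at the degeneracy $1+(h_{\partial}h_p\lambda^2)^{-N}=0$.

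The main obstacle will be the bookkeeping of the Weyl-ordering scalars: one must carefully track the powers of $\omega$ arising in the identifications $L^{-1}\propto X_1X_2X_4$, $X_1X_4\propto H_{\partial}H_p^{-1}$, $X_2X_3\propto H_p^2$, and $[X_1X_2]^N\propto X_1^NX_2^N$, because these constants feed directly into the rescaling $\xi_i$ and into the numerical factors of $a$ and $b$. The secondary subtlety is case $(2)$, where a naive limit of case $(1)$ gives an indeterminate $0/0$; instead, $a$ must be computed directly from $E v_0$, and one must verify independently (for instance by matching the classical shadow and boundary invariant via Proposition \ref{prop_center}) that the resulting $V(\lambda,\mu,a,0)$ is indeed isomorphic to $W$ rather than to some $\widetilde{V}(\lambda,\mu,c)$.
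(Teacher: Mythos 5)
Your treatment of case $(1)$ is essentially the paper's argument in different clothing: the weight computation for $K^{1/2},L^{1/2}$, the observation that $X_2$ acts diagonally so that $F$ shifts the basis, the evaluation of the central element $F^N=\frac{1}{(q-q^{-1})^N}X_1^{-N}(1+X_2^{-N})$ to get $b$, and the use of the Casimir relation \eqref{eq_Casimir} to read off the $E$-coefficients and hence $a=(ab)/b$ — this is exactly how the paper proceeds (the paper simply takes $v_i:=F^ie_0$ instead of rescaling the given basis, which is the same thing). So case $(1)$ is fine, modulo the Weyl-ordering bookkeeping you already flag.

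Case $(2)$, however, contains a genuine gap. When $(h_{\partial}h_p\lambda^2)^N=-1$ there is a unique index $i_0$ for which the structure constant of $F$ vanishes, i.e. $Fe_{i_0}=0$ (see the explicit formula \eqref{eqreloute} in the paper), so your recursive definition of the rescaling $\xi_i$ by $Fv_i=v_{i+1}$ breaks down: the chain cannot be anchored at $e_0$ unless $i_0=N-1$. The correct move, which your sketch omits, is to re-anchor the cyclic basis at $v_0:=e_{i_0+1}$; one then obtains an isomorphism $W\cong V(\lambda q^{-(i_0+1)},\mu q^{i_0+1},a,0)$ and must afterwards invoke the isomorphisms among the modules $V(\lambda,\mu,a,b)$ from Proposition \ref{prop_representations}(5) to return to the stated highest weight $(\lambda,\mu)$. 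Relatedly, your explanation of the factor $1/N$ as ``the first-order coefficient at the degeneracy'' of the case-$(1)$ formula is not justified and does not reflect the actual mechanism: on the degenerate locus the isomorphism type jumps (cyclic to semi-cyclic) and the case-$(1)$ basis degenerates, so a naive limit/residue of the case-$(1)$ value of $a$ is not legitimate without a separate continuity argument that you do not supply. In the direct computation the $1/N$ arises because $E v_0$ is a multiple of $e_{i_0}$, and expressing $e_{i_0}$ in terms of $v_{N-1}=F^{N-1}e_{i_0+1}$ produces the product of the $N-1$ nonvanishing structure constants, which equals $\prod_{j=1}^{N-1}(1-q^{j})=P_N(1)=N$ up to the powers of $x_1$ and $(q-q^{-1})$. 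Finally, your closing worry about $\widetilde V(\lambda,\mu,c)$ is legitimate, but the proposed fix (matching the classical shadow and boundary invariant via Proposition \ref{prop_center}) cannot decide the issue, since modules of type $V(\cdot,\cdot,a,0)$ and $\widetilde V$ can share those data and differ only in the puncture invariant $t=\chi(T)$; the paper avoids the question entirely by exhibiting the explicit $V$-type basis, which is what your argument must also do once the chain is re-anchored at $e_{i_0+1}$.
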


\begin{proof}
We denote by $\chi : \mathcal{Z}\rightarrow \mathbb{C}$ the character on the center $\mathcal{Z}$ of $\Uq$ induced by $W(x_1, \lambda, h_p, h_{\partial})$ and write $\mu := \lambda^{-1} h_{\partial}^{-1}$. First one has $\chi(K^{N/2})=\lambda^N$ and $\chi(L^{N/2})= h_{\partial}^{-1}\lambda^{-N}$. Since $X_4= H_{\partial} H_p [Z_1Z_3Z_4]^{-2}$, one has $\chi(X_2^N)=(h_{\partial} h_p \lambda^2)^N$. Using the equality $F^N= \frac{1}{q-q^{-1}} X_1^{-N}(1+X_2^{-N})$, one finds $\chi(F^N)=  \frac{1}{(q-q^{-1})^N} x_1^{-1}(1+ (h_{\partial}h_p\lambda^2)^{-N})=:b$. Next, we deduce from the equality $\chi(\gamma_p)=h_p+h_p^{-1}$ that  $\chi(C)=-\frac{(h_p+h_p^{-1})h_{\partial}}{(q-q^{-1})^2}$ (recall that the Casimir $C$ is defined in Notations \ref{notations_QG} and related to $\gamma_p$ in Theorem \ref{theorem_skein_QG}).

\vspace{2mm}
\par First suppose that $(h_{\partial}h_p\lambda^2)^N \neq -1$, \textit{i.e.} that $\chi(F^N)\neq 0$. 
For $i\in \{0, \ldots, N-1\}$, define $v_i:=F^ie_0 \in W(x_1, \lambda, h_p, h_{\partial})$. It follows from the relations $FK^{1/2}=qK^{1/2}F$ and $FL^{1/2}=q^{-1}L^{1/2}F$ that one has $K^{1/2}v_i= \lambda q^{-i}v_i$ and $L^{1/2}v_i= \mu q^i v_i$ for all  $i\in \{0, \ldots, N-1\}$. Since $\chi(F^N)=b$, one has $Fv_{N-1}=bv_0$.  Consider the scalar $a$ defined in the first statement of the lemma and remark that one has the equality $\chi(C)=ab + \frac{q\lambda^2 +q^{-1}\mu^2}{(q-q^{-1})^2}$. For any $i\in \{0, \ldots, N-2\}$ one has
$$ Ev_{i+1}= EFv_i= \left(C- \frac{qL+q^{-1}K}{(q-q^{-1})^2} \right)v_i = \left(ab + \frac{q^{-i}\lambda^2 - q^i\mu^2}{q-q^{-1}}[i+1] \right)v_i.$$
Similarly, one finds
$$ Ev_0 = b^{-1}EFv_{N-1} = b^{1}\left( C- \frac{qL+q^{-1}K}{(q-q^{-1})^2} \right)v_{N-1} = a v_{N-1}.$$
We have thus proved that  $W(x_1, \lambda, h_p, h_{\partial})\cong V(\lambda, \mu, a, b)$ as a  $\Uq$-module.

\vspace{2mm}
\par Next suppose that  $(h_{\partial}h_p\lambda^2)^N = -1$, \textit{i.e.} that $\chi(F^N)= 0$. Then there exists $i_0\in \{0, \ldots, N-1\}$ such that $h_{\partial}h_p\lambda^2=-q^{-2i_0-1}$. Using that $F=\frac{1}{q-q^{-1}}X_1^{-1}\left(1+q H_{\partial}^{-1}H_p^{-1}[Z_1Z_3Z_4]^2 \right)$, one finds that 
\begin{equation}\label{eqreloute}
 F e_i = \left\{ \begin{array}{ll}
\frac{1+q^{2i+1} h_{\partial}^{-1}h_p^{-1}\lambda^{-2} }{q-q^{-1}}e_{i+1} & \mbox{, if }i\neq N-1; \\
x_1^{-1}\frac{1+q^{-1} h_{\partial}^{-1}h_p^{-1}\lambda^{-2} }{q-q^{-1}}e_{0} & \mbox{, if }i=N-1.
\end{array} \right. 
\end{equation}
In particular $Fe_{i_0}=0$. For $i\in \{0, \ldots, N_1\}$, define $v_i:= F^i e_{i_0+1}$. Set  $\lambda':= \lambda q^{-(i_0+1)}$ and $\mu' := \lambda^{-1} h_{\partial}^{-1}q^{i_0+1}$. It follows from the relations $FK^{1/2}=qK^{1/2}F$ and $FL^{1/2}=q^{-1}L^{1/2}F$ that one has $K^{1/2}v_i= \lambda' q^{-i}v_i$ and $L^{1/2}v_i= \mu' q^i v_i$ for all  $i\in \{0, \ldots, N-1\}$. Next it follows from the equality $h_{\partial}h_p\lambda^2=-q^{-2i_0-1}$ that one has $\chi(C)= \frac{q \lambda'^2 + q^{-1}\mu'^2}{(q-q^{-1})^2}$.  For any $i\in \{0, \ldots, N-2\}$ one has
$$ Ev_{i+1}= EFv_i= \left(C- \frac{qL+q^{-1}K}{(q-q^{-1})^2} \right)v_i = \left(\frac{q^{-i}\lambda'^2 - q^i\mu'^2}{q-q^{-1}}[i+1] \right)v_i.$$
It remains to compute $Ev_0$.
Since  $X_3^{-1}=H_{\partial} H_p^{-1}[Z_1Z_3Z_4]^2$ and $X_4^{-1}=H_{\partial}^{-1}H_pX_1$, one finds  $E= \frac{-1}{q-q^{-1}}H_{\partial}^{-1}H_pX_1\left( 1+qH_{\partial} H_p^{-1}[Z_1Z_3Z_4]^2 \right)$ from which we deduce that
$$ E e_i= \left\{ \begin{array}{ll}
- \frac{h_{\partial}^{-1}h_p + \lambda^2 q^{-2i+1}}{q-q^{-1}} e_{i-1} & \mbox{, if }i\neq 0; \\
-x_1 \frac{h_{\partial}^{-1}h_p + \lambda^2 q}{q-q^{-1}} e_{N-1} & \mbox{, if }i= 0.
\end{array} \right. $$
In particular, one has $Ev_0= \left\{ \begin{array}{ll} - \frac{h_{\partial}^{-1}h_p + \lambda^2 q^{-1-2i_0}}{q-q^{-1}} e_{i_0} & \mbox{, if }i_0\neq N-1; \\
-x_1 \frac{ h_{\partial}^{-1}h_p + \lambda^2 q}{q-q^{-1}} e_{N-1} & \mbox{, if }i_0=N-1. \end{array} \right.$
To express the relation between $e_{i_0}$ and $v_{N-1}$, we use Equation \eqref{eqreloute} and the equality $h_{\partial}h_p\lambda^2=-q^{-2i_0-1}$ and deduce that

\begin{eqnarray*}
v_{N-1} &=& F^{N-1}e_{i_0+1} = \left\{ \begin{array}{ll}
\frac{x_1^{-1} }{(q-q^{-1})^{N-1}} \prod_{k=i_0+1}^{N+i_0-1} \left( 1-q^{-(2i_0+1)}q^{2k+1} \right)e_{i_0} & \mbox{, if }i_0 \neq N-1; \\
\frac{1 }{(q-q^{-1})^{N-1}} \prod_{k=0}^{N-2} \left( 1-q^{2k+1} \right)e_{N-1} & \mbox{, if }i_0 = N-1. 
\end{array} \right. \\
&=&  \left\{ \begin{array}{ll}
\frac{x_1^{-1} }{(q-q^{-1})^{N-1}} P_N(1)e_{i_0} & \mbox{, if }i_0 \neq N-1; \\
\frac{1 }{(q-q^{-1})^{N-1}}P_N(1)e_{N-1} & \mbox{, if }i_0 = N-1. 
\end{array} \right.
\end{eqnarray*}

where the polynomial $P_N(X)$ is defined by  $P_N(X):= \prod_{i=1}^{N-1} (X-q^i)$. Since $P_N(X)(X-1)= X^N -1$, one has $P_N(X)= \sum_{k=0}^{N-1} X^k$, hence $P_N(1)=N$. Eventually we finds that 
$$ E v_0= \frac{1}{N} (q-q^{-1})^{N-2} x_1 h_{\partial}^{-1}(h_p^{-1}-h_p)v_{N-1}=av_{N-1}.$$
Hence $W(x_1, \lambda, h_p, h_{\partial})\cong V(\lambda', \mu', a, 0)$ as a  $\Uq$-module. Clearly  this module is isomorphic to $V(\lambda, \mu, a ,0)$.
This concludes the proof.

\end{proof}

\begin{lemma}\label{Lemma_4}
Let $\lambda, \mu \in \mathbb{C}^*$ and $a,b\in \mathbb{C}$ and consider the following system with variable $h_p$:
\begin{equation}\label{system}
\left\{ \begin{array}{lll}
ab &=& -\frac{\lambda\mu (h_p+h_p^{-1}) + q\lambda^2 + q^{-1}\mu^2}{(q-q^{-1})^2} \\
h_p^N &\neq& - (\lambda \mu^{-1})^N
\end{array}\right.
\end{equation}
The system \eqref{system} admits a solution $h_p\in \mathbb{C}^*$ if and only if
\begin{enumerate}
\item either $S:= \prod_{i=0}^{N-1} \left( ab + [i]\frac{\lambda^2 q^{1-i} -\mu^2 q^{i-1}}{q-q^{-1}} \right)$ is non null, 
\item or $S=0$ and $(\lambda \mu^{-1})^{2N} \neq 1$.
\end{enumerate}
\end{lemma}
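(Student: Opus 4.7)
The plan is to reduce the first equation in \eqref{system} to a quadratic in $h_p$, and then to identify the vanishing locus of $S$ with the locus where at least one root of that quadratic is an $N$-th root of $-(\lambda\mu^{-1})^N$.

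First, since $\lambda\mu \neq 0$ and $(q-q^{-1})$ is invertible, the first equation becomes
\begin{equation*}
h_p + h_p^{-1} = y_0 := -\frac{(q-q^{-1})^2 ab + q\lambda^2 + q^{-1}\mu^2}{\lambda\mu},
\end{equation*}
which is equivalent to the quadratic $h_p^2 - y_0 h_p + 1 = 0$; the latter always has two roots $h_p^\pm \in \mathbb{C}^*$ with $h_p^+ h_p^- = 1$. Setting $\ell := (\lambda\mu^{-1})^N$, I thus just need to determine when at least one of the two roots $h_p^\pm$ avoids the forbidden set $\{h : h^N = -\ell\}$.

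Next, I will expand each factor of $S$. A direct computation gives $(q^i - q^{-i})(\lambda^2 q^{1-i} - \mu^2 q^{i-1}) = q\lambda^2 + q^{-1}\mu^2 - \lambda^2 q^{1-2i} - \mu^2 q^{2i-1}$, so that introducing $z_i := -\lambda\mu^{-1} q^{1-2i}$ (whence $\lambda^2 q^{1-2i} + \mu^2 q^{2i-1} = -\lambda\mu(z_i + z_i^{-1})$) and using the definition of $y_0$, the $i$-th factor of $S$ rewrites as
\begin{equation*}
(q-q^{-1})^2\Bigl( ab + [i]\tfrac{\lambda^2 q^{1-i} - \mu^2 q^{i-1}}{q-q^{-1}} \Bigr) = -\lambda\mu\bigl(y_0 - z_i - z_i^{-1}\bigr).
\end{equation*}
Hence this factor vanishes iff $z_i$ (equivalently $z_i^{-1}$) is a root of $h_p^2 - y_0 h_p + 1$. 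Since $N$ is odd, $\gcd(2,N)=1$ and $q^N=1$, so the set $\{z_i : 0 \leq i \leq N-1\}$ consists of $N$ distinct elements, each satisfying $z_i^N = (-1)^N (\lambda\mu^{-1})^N q^{N(1-2i)} = -\ell$; that is, the $z_i$ are exactly the $N$-th roots of $-\ell$. Consequently, $S = 0$ if and only if at least one of $h_p^+, h_p^-$ satisfies $(h_p^\pm)^N = -\ell$.

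The conclusion is then a short case analysis. If $S \neq 0$, neither root is an $N$-th root of $-\ell$ and both are valid solutions of the full system. If $S = 0$, say $(h_p^+)^N = -\ell$; since $h_p^- = (h_p^+)^{-1}$, one has $(h_p^-)^N = -\ell^{-1}$, and the remaining root $h_p^-$ is a valid solution iff $-\ell^{-1} \neq -\ell$, i.e.\ iff $\ell^2 = (\lambda\mu^{-1})^{2N} \neq 1$. The only non-routine step is the key algebraic identity above, which will require careful bookkeeping of powers of $q$ but is otherwise straightforward.
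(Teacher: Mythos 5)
Your proof is correct and takes essentially the same route as the paper's: both reduce the first equation to the quadratic $X^2-y_0X+1$ with root pair $\{h_p,h_p^{-1}\}$ and identify the vanishing of the $n$-th factor of $S$ with the roots being exactly $-\lambda\mu^{-1}q^{1-2n}$ and its inverse, i.e.\ with one root being an $N$-th root of $-(\lambda\mu^{-1})^N$. Your explicit listing of the $z_i$ as all $N$-th roots of $-(\lambda\mu^{-1})^N$ merely spells out what the paper encodes in its factorization $P(X)=(X+\lambda^{-1}\mu q^{2n-1})(X+\lambda\mu^{-1}q^{1-2n})$.
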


\begin{proof}
Consider the polynomial $$P(X)=X^2 +(\lambda \mu)^{-1}\left( ab(q-q^{-1})^2 + q\lambda^2 + q^{-1}\mu^2 \right) X +1.$$ Since its constant term is $+1$, there exists $h_p \in \mathbb{C}^*$ such that $P(X)=(X-h_p)(X-h_p^{-1})$. First, if $h_p^N = - (\lambda \mu^{-1})^N$, one deduce from the equality $P(h_p)=0$ that $S=0$. Hence if $S\neq 0$, the system \eqref{system} admits a solution. Next if $S=0$, there exists $n\in \{0, \ldots, N-1\}$ such that $ab+[n]\frac{\lambda^2q^{1-n}-\mu^2 q^{n-1}}{q-q^{-1}}=0$. It follows that $P(X)=(X+\lambda^{-1}\mu q^{2n-1})(X+ \lambda \mu^{-1} q^{1-2n})$, thus either $h_p$ or $h_p^{-1}$ is equal to $-\lambda^{-1}\mu q^{2n-1}$. It follows that the system \eqref{system} admits a solution such that $S=0$ if and only if  $(\lambda \mu^{-1})^{2N} \neq 1$.

\end{proof}

\begin{proof}[Proof of Proposition \ref{prop_D1_lift}]
By Lemma \ref{Lemma_3} any simple $\mathcal{Z}_{\zeta}(\mathbb{D}_1)$-module is isomorphic, as $\Uq$-module, to a module $V(\lambda, \mu, a, b)$. Conversely, consider such a module $V=V(\lambda, \mu, a, b)$, denote by $\chi: \mathcal{Z}\rightarrow \mathbb{C}$ its central character and write  $S:=  \prod_{i=0}^{N-1} \left( ab + [i]\frac{\lambda^2 q^{1-i} -\mu^2 q^{i-1}}{q-q^{-1}} \right)$. Note that $S=\chi(E^NF^N)$. 
\par First, if $V$ is cyclic then $S\neq 0$ and by Lemma \ref{Lemma_4} one can find $h_p\in \mathbb{C}^*$ solution of the system \eqref{system}. Set $x_1:= \frac{1+ \lambda^{2N}h_p^N (\lambda \mu)^N}{(q-q^{-1})^N}$. By Lemma \ref{Lemma_3}, one has $V\cong W(\lambda, x_1, h_p, (\lambda \mu)^{-1})$, hence $V$ extends to a $\mathcal{Z}_{\zeta}(\mathbb{D}_1)$ simple module.
\par  Next suppose that $\chi(F^N)\neq 0$ and that $\chi(E^N)=0$, thus $S=0$. In this case, by Lemma \ref{Lemma_3}, $V$ extends to a $\mathcal{Z}_{\zeta}(\mathbb{D}_1)$ module if and only if the system \eqref{system} admits a solution, hence if and only if $(\lambda \mu^{-1})^{2N} \neq 1$ by Lemma \ref{Lemma_4}.
\par  Now suppose that $\chi(F^N)=0$ and that $\chi(E^N)\neq 0$. The hypothesis $\chi(E^N)\neq 0$ implies that $a\neq 0$ and that for all $i\in \{1, \ldots, N-1\}$ one has $\lambda^2\mu^{-2} \neq q^{2(i-1)}$. Thus, if $(\lambda \mu^{-1})^{2N}=1$ then $\lambda \mu^{-1}= \pm q^{-1}$. Since $\chi(\gamma_p) = -q\lambda \mu^{-1} -q^{-1}\lambda^{-1}\mu$, one has $\chi(\gamma_p)= \pm 2$. Hence, if $V$ would extend to a simple $\mathcal{Z}_{\zeta}(\mathbb{D}_1)$-module, its puncture invariant would be $h_p=\pm 1$ and Lemma \ref{Lemma_3} would implies that $a=0$ which contradicts the hypothesis $\chi(E^N)\neq 0$. Conversely, if $(\lambda \mu^{-1})^{2N}\neq 1$, set $h_p:= -q\lambda \mu^{-1}$ and  $x_1:= \frac{aN}{(q-q^{-1})^{N-2}\lambda \mu (h_p - h_p^{-1})}$.  Note that $a\neq 0$ and,  since $(\lambda \mu^{-1})^{2N}\neq 1$, then $h_p-h_p^{-1}\neq 0$, hence $x_1\in \mathbb{C}^*$ is well defined. By Lemma \ref{Lemma_3} $V$ is isomorphic to $W(\lambda, x_1, h_p, (\lambda\mu)^{-1})$. 
\par Eventually, suppose that $\chi(E^N)=\chi(F^N)=0$. Since $b=0$ and $\chi(E^N)=0$, one has either $a=0$, or there exists $i \in \{1, \ldots, N-1\}$ such that $(\lambda\mu^{-1})^2=q^{2(i-1)}$. The latter case implies $(\lambda \mu^{-1})^{2N}=1$. If $a=0$ and $V$ extends to a simple $\Uq$-module, then by Lemma \ref{Lemma_3}, one has $h_p=\pm 1$, hence $\chi(T)=\pm 2=-q\lambda \mu^{-1} -q^{-1}\lambda^{-1}\mu$ and one has again  $(\lambda \mu^{-1})^{2N}=1$.  Conversely, if $(\lambda \mu^{-1})^{2N}=1$, then one can write $\lambda^2 \mu^{-2}=q^{2(i-1)}$ for some $i \in \{0, \ldots, N-1\}$. If $i\neq 0$, then $a\neq 0$. In this case we define $h_p:= -q\lambda \mu^{-1}$ (different from $\pm 1$ by hypothesis) and $x_1:= \frac{aN}{(q-q^{-1})^{N-2}\lambda \mu (h_p - h_p^{-1})}$. By Lemma \ref{Lemma_3},  $V$ is isomorphic to $W(\lambda, x_1, h_p, (\lambda\mu)^{-1})$. Finally, if $\lambda^2\mu^{-2}=q^{-2}$, then the hypothesis $\chi(E^N)=0$ implies $a=0$. We choose $x_1\in \mathbb{C}^*$ and $h_p=\pm 1$ arbitrarily  and  Lemma \ref{Lemma_3} implies that  $V$ is isomorphic to $W(\lambda, x_1, h_p, (\lambda\mu)^{-1})$.

\end{proof}

\subsection{Azumaya standard representations are gauge equivalent to QT representations}

 The following theorem will ensure that quantum hyperbolic geometry can always be used to compute an intertwiner $L_{V_{\widehat{x}}}(\beta)$.

\begin{theorem}\label{theorem_Dn_lift}
Every standard Azumaya $\overline{\mathcal{S}}_{\zeta}(\mathbb{D}_n)$ module is gauge equivalent to a QT representation.
\end{theorem}

\begin{proof}
By Proposition \ref{prop_D1_lift} and Theorem \ref{theorem_RepRSSkein}, a standard Azumaya  $\overline{\mathcal{S}}_{\zeta}(\mathbb{D}_n)$ module with classical shadow $\rho \in \overline{\mathcal{R}}_{\SL_2}(\mathbb{D}_n)$ extends to a simple $\mathcal{Z}_{\zeta}(\mathbb{D}_n)$ module if and only if $\rho(\delta^{(i)})\in \mathcal{O}$ for all $1\leq i \leq n$. For every conjugacy class $C \subset \SL_2$ the intersection $C\cap \mathcal{O}$ is dense in $C$. It follows that every representation $\rho$ is conjugate to a representation $\rho'$ such that $\rho'(\delta^{(i)})\in \mathcal{O}$. We conclude using Proposition \ref{prop_orbit}.

\end{proof}

 \subsection{Change of coordinates and quantum dilogarithms}
 \subsubsection{Definition of the change of coordinates}

\begin{notations} Since the balanced Chekhov-Fock algebra $\mathcal{Z}_{A}(\mathbf{\Sigma}, \Delta)$  is Noetherian, its multiplicative subset of non-zero elements satisfies the Ore condition and the balanced Chekhov-Fock algebra admits a division algebra that will be denoted $\widehat{\mathcal{Z}}_{A}(\mathbf{\Sigma}, \Delta)$ (see \textit{e.g.} \cite{Kassel, Cohn_SkewFields} for details).
\end{notations}

In this subsection, for any two triangulations $\Delta$ and $\Delta'$, we will define an isomorphism $\phi_{\Delta, \Delta'}^{\mathbf{\Sigma}}: \widehat{\mathcal{Z}}_{A}(\mathbf{\Sigma}, \Delta') \xrightarrow{\cong} \widehat{\mathcal{Z}}_{A}(\mathbf{\Sigma}, \Delta)$. We first do it when $\Delta$ and $\Delta'$ differ by an elementary move which consists either in a reindexing or a flip.
 
 \begin{definition}\label{def_reindexing_map}
 \begin{enumerate}
\item  The triangulation $\Delta$ is obtained from $\Delta'$ by \textbf{reindexing its edges} if $\mathcal{E}(\Delta)= \mathcal{E}(\Delta')$ and there exists  a permutation  $\pi$  of $ \{1, 2,  \ldots, |\mathcal{E}(\Delta)| \}$ such that $I_{\Delta}=\pi \circ I_{\Delta'}$. We write $\Delta'= \pi_* \Delta$. The permutation $\pi$  induces an isomorphism $\phi_{\pi_* \Delta, \Delta}^{\mathbf{\Sigma}}: \mathcal{Z}_{A}(\mathbf{\Sigma}, \Delta) \xrightarrow{\cong} \mathcal{Z}_{A}(\mathbf{\Sigma}, \pi_*\Delta)$ which sends a balanced monomial $Z^{\mathbf{k}}$ to $Z^{\mathbf{k}\circ \pi}$.
 \item Let $e$ be an inner edge of $\Delta$ bounding two distinct faces $\mathbb{T}_1$ and $\mathbb{T}_2$.  The union of these two faces forms a square $Q$ for which $e$ is one diagonal. The topological triangulation $\Delta'$ is obtained from $\Delta$ by a \textbf{flip along} $e$, if the set $\mathcal{E}(\Delta')$ is obtained from $\mathcal{E}(\Delta)$ by removing $e$ and adding the other diagonal $e'$ of $Q$. The indexing $I_{\Delta'}$ is required to satisfy $I_{\Delta'}(a)=I_{\Delta}(a)$ for $a\in \mathcal{E}(\Delta)\setminus \{e\}$ and $I_{\Delta'}(e')=I_{\Delta}(e)$. We write $\Delta'= F_e \Delta$.
 \end{enumerate}
 \end{definition}
By a theorem of Penner \cite{Penner87}, any two triangulations are related by a sequence of flips and reindexings. Moreover, any two such sequences are related by some elementary moves; the most remarkable one is the Pentagon identity (see \cite{Penner87}). So it suffices to define $\phi_{\Delta, \Delta'}^{\mathbf{\Sigma}}$ when $\Delta, \Delta'$ differ either from a reindexing (this is done in Definition \ref{def_reindexing_map}) or a flip in such a way that these Penner moves are satisfied. 
 \par  Let $Q$ denote the square, \textit{i.e.} a disc with four boundary edges,  let $\Delta, \Delta'$ be its two triangulations drawn in Figure \ref{fig_square}, let $\beta^{(1)}, \ldots, \beta^{(4)}$ be the four arcs of $Q$ drawn in Figure \ref{fig_square}. By Remark \ref{remark_grading}, the algebra $\mathcal{Z}_{A}(Q, \Delta)$ (resp. the algebra $\mathcal{Z}_{A}(Q, \Delta')$)  is generated by the elements $\Tr_{A}^{\Delta} (\beta^{(i)}_{++})$,  (resp. by the elements $\tr_{A}^{\Delta'} (\beta^{(i)}_{++})$),  for $i\in \{1, 2, 3, 4\}$ , together with the elements $X_1, \ldots, X_5$ (resp. the elements $X_1', \ldots, X_5'$).

 \begin{figure}[!h] 
\centerline{\includegraphics[width=9cm]{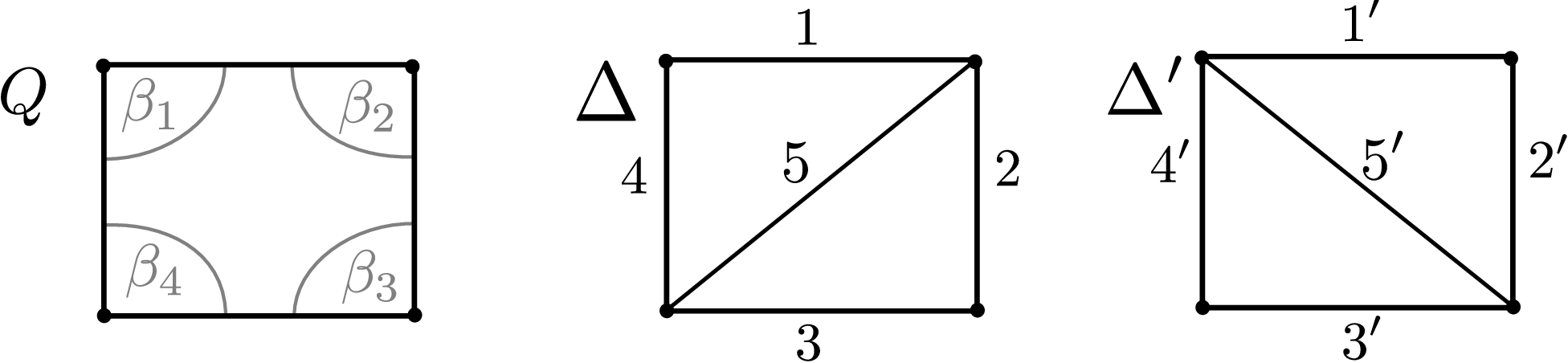} }
\caption{The square $Q$, some arcs and two  triangulations.} 
\label{fig_square} 
\end{figure}

\begin{definition}\label{def_square_changecoordinates}
The isomorphism $\Phi_{\Delta, \Delta'}^{Q} : \widehat{\mathcal{Z}}_{A}(Q, \Delta') \xrightarrow{\cong} \widehat{\mathcal{Z}}_{A}(Q, \Delta)$ is defined by $\Phi^Q_{\Delta, \Delta'}\left(\Tr_{A}^{\Delta'}(\beta^{(i)}_{++})\right)= \Tr_{A}^{\Delta}(\beta^{(i)}_{++})$, for $i\in \{1, 2, 3, 4\}$, and by:
\begin{align*}
&  \Phi^Q_{\Delta, \Delta'}(X_1')=X_1+[X_1X_5]; &\Phi^Q_{\Delta, \Delta'}(X_3')=X_3+[X_3X_5]; \\
&  \Phi^Q_{\Delta, \Delta'}(X_2')=(X_2^{-1}+[X_2X_5]^{-1})^{-1}; & \Phi^Q_{\Delta, \Delta'}(X_4')=(X_4^{-1}+[X_4X_5]^{-1})^{-1}; \\
& \Phi^Q_{\Delta, \Delta'}(X_5')=X_5^{-1}. &
\end{align*}
\end{definition}

Suppose that $\Delta'=F_e\Delta$ is obtained from $\Delta$ by flipping an inner edge $e$ inside a square $Q$ as in Definition \ref{def_reindexing_map}. The marked surface $\mathbf{\Sigma}$ 
 is obtained by gluing $Q \bigsqcup \mathbf{\Sigma}\setminus Q$ along some pair of boundary edges. The splitting morphism $\theta: \mathcal{Z}_{A}(\mathbf{\Sigma}, \Delta) \hookrightarrow \mathcal{Z}_{A}(Q, \Delta_Q) \otimes \mathcal{Z}_{A}(\mathbf{\Sigma}\setminus Q, \Delta_{\mathbf{\Sigma}\setminus Q})$  induces an injective morphism $\widehat{\theta} : \widehat{\mathcal{Z}}_{A}(\mathbf{\Sigma}, \Delta) \hookrightarrow \widehat{\mathcal{Z}}_{A}(Q, \Delta_Q) \otimes \widehat{\mathcal{Z}}_{A}(\mathbf{\Sigma}\setminus Q, \Delta_{\mathbf{\Sigma}\setminus Q})$. 
  Here $\mathbf{\Sigma}\setminus Q$ might be empty in which case we set $\mathcal{Z}_{A}(\mathbf{\Sigma}\setminus Q, \Delta_{\mathbf{\Sigma} \setminus_Q}):= k$ (think for instance of the once punctured torus). Similarly, denote by $\widehat{\theta}' : \widehat{\mathcal{Z}}_{A}(\mathbf{\Sigma}, \Delta') \hookrightarrow \widehat{\mathcal{Z}}_{A}(Q, \Delta'_Q) \otimes \widehat{\mathcal{Z}}_{A}(\mathbf{\Sigma}\setminus Q, \Delta_{\mathbf{\Sigma}\setminus Q})$ the corresponding splitting morphism.

\begin{definition}\label{def_flip_changecoordinates}
\begin{enumerate}
\item If $\Delta'=F_e \Delta$, 
the isomorphism $\Phi_{\Delta, \Delta'}^{\mathbf{\Sigma}} : \widehat{\mathcal{Z}}_{A}(\mathbf{\Sigma}, \Delta') \xrightarrow{\cong} \widehat{\mathcal{Z}}_{A}(\mathbf{\Sigma}, \Delta)$ is the unique morphism making the following diagram commuting:
$$ \begin{tikzcd}
\widehat{\mathcal{Z}}_{A}(\mathbf{\Sigma}, \Delta')  \arrow[r, hook, "\widehat{\theta}"] \arrow[d, dotted, "\Phi_{\Delta, \Delta'}^{\mathbf{\Sigma}}"] &
 \widehat{\mathcal{Z}}_{A}(Q, \Delta_Q) \otimes \widehat{\mathcal{Z}}_{A}(\mathbf{\Sigma}\setminus Q, \Delta_{\mathbf{\Sigma}\setminus Q}) \arrow[d, "\cong"', "\Phi_{\Delta, \Delta'}^{Q}\otimes \id"] \\
 \widehat{\mathcal{Z}}_{A}(\mathbf{\Sigma}, \Delta) \arrow[r, hook, "\widehat{\theta}'"] &
 \widehat{\mathcal{Z}}_{A}(Q, \Delta'_Q) \otimes \widehat{\mathcal{Z}}_{A}(\mathbf{\Sigma}\setminus Q, \Delta_{\mathbf{\Sigma}\setminus Q})
 \end{tikzcd}$$
 \item  Let $\Delta$ and $\Delta'$ be two arbitrary triangulations of $\mathbf{\Sigma}$. Let $\Delta=\Delta_0, \Delta_1, \ldots, \Delta_n=\Delta'$ be a sequence of triangulations of $\mathbf{\Sigma}$ such that $\Delta_{i+1}$ is obtained from $\Delta_i$ by either a flip or a reindexing. The \textbf{change of coordinates} isomorphism  $\Phi_{\Delta, \Delta'}^{\mathbf{\Sigma}} : \widehat{\mathcal{Z}}_{A}(\mathbf{\Sigma}, \Delta') \xrightarrow{\cong} \widehat{\mathcal{Z}}_{A}(\mathbf{\Sigma}, \Delta)$ is the composition:
 $$ \Phi_{\Delta, \Delta'}^{\mathbf{\Sigma}} := \Phi_{\Delta, \Delta_1}^{\mathbf{\Sigma}}\circ \Phi_{\Delta_1, \Delta_2}^{\mathbf{\Sigma}}\circ \ldots \circ \Phi_{\Delta_{n-1}, \Delta'}.$$
\end{enumerate}
 \end{definition}

  \begin{proposition}\label{prop_change_coordinates}\cite{LiuQTeichNC, BonahonWongqTrace, Hiatt}
 \begin{enumerate}
 \item The isomorphism  $\Phi_{\Delta, \Delta'}^{\mathbf{\Sigma}}$ is well defined in the sense that  it does not depend on the choice of sequence $\Delta=\Delta_0, \Delta_1, \ldots, \Delta_n=\Delta'$ (i.e. it preserves Penner moves).
 \item 
 One has $\Phi_{\Delta, \Delta''}^{\mathbf{\Sigma}}= \Phi_{\Delta, \Delta'}^{\mathbf{\Sigma}} \circ \Phi_{\Delta', \Delta''}^{\mathbf{\Sigma}}$ for any three  triangulations $\Delta, \Delta', \Delta''$.
 \item For $\Delta, \Delta'$ two  triangulations of $\mathbf{\Sigma}$, the following diagram commutes:
 
 $$\begin{tikzcd}
  {} &{} &  \widehat{\mathcal{Z}}_{A}(\mathbf{\Sigma}, \Delta') \arrow[dd, "\cong"', "\Phi_{\Delta, \Delta'}^{\mathbf{\Sigma}}"] \\
  \overline{\mathcal{S}}_{A}(\mathbf{\Sigma}) \arrow[urr, hook, "\Tr_{A}^{\Delta'}"] \arrow[drr, hook, "\Tr_{A}^{\Delta}"] & {} & {} \\
 {} & {} & \widehat{\mathcal{Z}}_{A}(\mathbf{\Sigma}, \Delta)
  \end{tikzcd}
 $$

\end{enumerate}
\end{proposition}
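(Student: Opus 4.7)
For Part $(1)$, the argument is essentially formal. Pick a sequence of flips and reindexings $\Delta=\Delta_0,\Delta_1,\ldots,\Delta_n=\Delta'$ and another $\Delta'=\Delta'_0,\Delta'_1,\ldots,\Delta'_m=\Delta''$. Their concatenation realizes $\Delta''$ from $\Delta$, and by definition $\Phi_{\Delta,\Delta'}^{\mathbf{\Sigma}}\circ \Phi_{\Delta',\Delta''}^{\mathbf{\Sigma}}$ is precisely the composite change of coordinates along the concatenated sequence, which by Theorem \ref{theorem_Penner} together with Lemma \ref{lemma_pentagone} equals $\Phi_{\Delta,\Delta''}^{\mathbf{\Sigma}}$ independently of the choice of path.

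For Part $(2)$, the plan is to use Part $(1)$ to reduce to the case where $\Delta'$ is obtained from $\Delta$ by a single reindexing or a single flip. For a reindexing $\Delta'=\pi_*\Delta$, the map $\Phi_{\Delta,\pi_*\Delta}^{\mathbf{\Sigma}}$ simply relabels the generators $Z_e$, while $\tr_{\omega}^{\Delta}$ and $\tr_{\omega}^{\pi_*\Delta}$ differ only by the same relabeling, so the triangle commutes tautologically. For a flip $\Delta'=F_e\Delta$ along an inner edge $e$ bounding a square $Q=\mathbb{T}_1\cup\mathbb{T}_2$, I would combine three ingredients: (i) the definition of $\Phi_{\Delta,\Delta'}^{\mathbf{\Sigma}}$ via the gluing diagram of Definition \ref{def_flip_changecoordinates}; (ii) Lemma \ref{lemma_skein_square}, which gives the compatibility of $\Phi^Q_{\Delta,\Delta'}$ with the quantum traces on $Q$; and (iii) the compatibility of the quantum trace with gluing, which is built into Definition \ref{def_trace_map} and which gives the commuting square
\[
\begin{tikzcd}
\mathcal{S}_{\omega}(\mathbf{\Sigma}) \arrow[r,hook,"i^{Q,\mathbf{\Sigma}\setminus Q}"] \arrow[d,"\tr_{\omega}^{\Delta}"] & \mathcal{S}_{\omega}(Q)\otimes \mathcal{S}_{\omega}(\mathbf{\Sigma}\setminus Q) \arrow[d,"\tr_{\omega}^{\Delta_Q}\otimes \tr_{\omega}^{\Delta_{\mathbf{\Sigma}\setminus Q}}"] \\
\mathcal{Z}_{\omega}(\mathbf{\Sigma},\Delta) \arrow[r,hook,"i"] & \mathcal{Z}_{\omega}(Q,\Delta_Q)\otimes \mathcal{Z}_{\omega}(\mathbf{\Sigma}\setminus Q,\Delta_{\mathbf{\Sigma}\setminus Q})
\end{tikzcd}
\]
and similarly for $\Delta'$. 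Chasing an arbitrary element $x\in \mathcal{S}_{\omega}(\mathbf{\Sigma})$ through the resulting three-dimensional diagram, one obtains $\Phi_{\Delta,\Delta'}^{\mathbf{\Sigma}}(\tr_{\omega}^{\Delta'}(x))=\tr_{\omega}^{\Delta}(x)$ after applying the injective gluing map on the target, and injectivity of $i$ (Theorem \ref{theorem_gluing} transferred to the Chekhov-Fock side via Lemma \ref{lemma_exactseq_CF}) lets us conclude.

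The main obstacle is not conceptual but bookkeeping: one has to confirm that the gluing morphism on balanced Chekhov-Fock algebras used to \emph{define} $\Phi_{\Delta,\Delta'}^{\mathbf{\Sigma}}$ on the flipped square is the same gluing morphism that interlocks with the quantum trace. Both arise from Definition \ref{def_gluing_CF} applied to the same pairs of glued boundary arcs, so the compatibility is automatic, but it should be pointed out explicitly since $\Phi_{\Delta,\Delta'}^{\mathbf{\Sigma}}$ is originally defined via the \emph{extended} gluing map $\widehat{i}:\widehat{\mathcal{Z}}_{\omega}(\mathbf{\Sigma},\Delta)\hookrightarrow \widehat{\mathcal{Z}}_{\omega}(Q,\Delta_Q)\otimes \widehat{\mathcal{Z}}_{\omega}(\mathbf{\Sigma}\setminus Q,\Delta_{\mathbf{\Sigma}\setminus Q})$ between division algebras rather than the algebra-level map $i$. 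One simply notes that $\widehat{i}$ restricts to $i$ on $\mathcal{Z}_{\omega}(\mathbf{\Sigma},\Delta)$, and thus the triangle for $\mathbf{\Sigma}$ follows at once from the triangle for $Q$ established in Lemma \ref{lemma_skein_square}.
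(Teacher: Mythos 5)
Your proposal is correct and follows essentially the same route as the paper: Part (1) is the formal concatenation argument made possible by the already-established path-independence, and Part (2) reduces to the reindexing and single-flip cases, where the flip case is handled by assembling the defining gluing square of $\Phi_{\Delta,\Delta'}^{\mathbf{\Sigma}}$, Lemma \ref{lemma_skein_square} on the square $Q$, the gluing-compatibility of the quantum trace from Definition \ref{def_trace_map}, and injectivity of the gluing morphisms. Your explicit remark that $\widehat{i}$ restricts to $i$ on the balanced Chekhov--Fock algebra is a point the paper leaves implicit, but the argument is the same.
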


 \subsubsection{The Fock-Goncharov decomposition and the quantum exponential}
 
 Following Fock and Goncharov in \cite[Section $1.3$]{FockGoncharovClusterVariety}, we now show that the flip isomorphisms $\Phi^{\mathbf{\Sigma}}_{F_e\Delta, \Delta}$ of Definition \ref{def_flip_changecoordinates}, can be decomposed as   $\Phi_{F_e\Delta, \Delta}^{\mathbf{\Sigma}} = \nu^{\mathbf{\Sigma}}_{F_e\Delta, \Delta} \circ \mathrm{Ad}(s_q(X_e^{-1}))$, where $ \nu^{\mathbf{\Sigma}}_{F_e\Delta, \Delta}$ is an automorphism of $\mathcal{Z}_{\omega}(\Sigma,\Delta)$ (not only of its division algebra) and the automorphism $\mathrm{Ad}(s_q(X_e^{-1}))$ is inner in a suitable extension of $\widehat{\mathcal{Z}}_{\omega}(\Sigma,\Delta)$.   Recall that we defined $[n]:= \frac{q^n - q^{-n}}{q-q^{-1}}$ and write $[n]! = [n][n-1]\ldots [1]$ with the convention $[0]!=1$. We will use the notation $\mathrm{Ad}(x)y:=xyx^{-1}$.
 
 \begin{definition}\label{def_quantum_exponential}
  The  \textbf{quantum exponential} is the power series  defined as follows.
 \begin{enumerate}
 \item If $q\in k^*$ is generic (\textit{i.e.} if $q$ is not a root of unity) and $q-q^{-1}$ is invertible, we set:
 $$ \exp_q(x) := \sum_{n=0}^{\infty} \frac{q^{n(n-1)/2}}{[n]!} x^n\in k[[x]].$$
 \item If $q\in k^*$ is a root of unity of odd order $N>1$, we set:
 $$ \exp^{<N}_q(x):= \sum_{n=0}^{N-1} \frac{q^{n(n-1)/2}}{[n]!} x^n\in k[x].$$
 \end{enumerate}
 
 We also write $s_q(x):= \exp_q\left( -\frac{x}{q-q^{-1}} \right)$ and $s_q^{<N}(x):= \exp^{<N}_q\left( -\frac{x}{q-q^{-1}} \right)$.
 \end{definition}
 
 \begin{lemma}\label{lemma_expq} Consider a $k$-algebra $\mathcal{A}$ with two elements $u,v$ such that $uv=q^2vu$.
  \begin{enumerate}
 \item If $q\in k^*$ is generic and the elements $\exp_q(u)^{\pm 1}$ and $\exp_q(v)^{\pm 1}$ are well-defined, that is if $\mathcal{A}$ is a topological algebra with topological embeddings $k[[u]]\hookrightarrow \mathcal{A}$ and $k[[v]]\hookrightarrow \mathcal{A}$, 
  then the following results hold.
 \begin{enumerate}
 \item One has $s_q(u+v)=s_q(u)s_q(v)$.
 \item One has $s_q(q^2u)s_q(u)^{-1}=(1+qu)^{-1}$ and $s_q(q^{-2}u)s_q(u)^{-1}=1+q^{-1}u$. Hence $\mathrm{Ad}(s_q(u))v=v(1+qu)^{-1}$ and $\mathrm{Ad}(s_q(v))u=u(1+q^{-1}v)$. 
 \end{enumerate}
 \item  If $q\in k^*$ is a root of unity of odd order $N>1$, then the following results hold.
 \begin{enumerate}
 \item If  $u^nv^{N-n}=0$ for all $n\in \{0, \ldots, N\}$, then $s_q^{<N}(u+v)=s_q^{<N}(u)s_q^{<N}(v)$.
 \item If $u^N=0$ then  $s_q^{<N}(q^2u)s_q^{<N}(u)^{-1}=(1+qu)^{-1}$ and $s^{<N}_q(q^{-2}u)s^{<N}_q(u)^{-1}=1+q^{-1}u$. 
  \end{enumerate}
 \end{enumerate}
 \end{lemma}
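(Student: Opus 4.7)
The plan is to reduce all four identities to coefficient manipulations in the explicit expansion
\[ s_q(u) = \sum_{n\ge 0} c_n u^n, \qquad c_n = \frac{q^{n^2}}{(q^2;q^2)_n}, \qquad (q^2;q^2)_n := \prod_{k=1}^n(1-q^{2k}), \]
obtained from the definition of $\exp_q$ via the elementary identity $[n]!\,(q-q^{-1})^n = (-1)^n q^{-n(n+1)/2}(q^2;q^2)_n$; equivalently, $s_q(u) = \prod_{k\ge 0}(1+q^{2k+1}u)$ by the $q$-binomial theorem. This closed form controls all four statements.

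For 1(b), I rewrite $s_q(q^2u)s_q(u)^{-1}=(1+qu)^{-1}$ as $(1+qu)\,s_q(q^2u)=s_q(u)$ and match coefficients of $u^n$: the resulting equation $c_n q^{2n} + c_{n-1}q^{2n-1} = c_n$, i.e.\ $c_n(1-q^{2n}) = q^{2n-1}c_{n-1}$, follows immediately from the closed form of $c_n$. The identity $s_q(q^{-2}u)s_q(u)^{-1}=1+q^{-1}u$ is symmetric. For the commutation consequences, the hypothesis $uv=q^2vu$ yields $u^n v = q^{2n} v u^n$, and hence $f(u)\,v = v\,f(q^2u)$ for every formal series $f$. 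Applied to $s_q$, this gives
\[ \mathrm{Ad}(s_q(u))\,v \;=\; s_q(u)\,v\,s_q(u)^{-1} \;=\; v\,s_q(q^2u)\,s_q(u)^{-1} \;=\; v(1+qu)^{-1}. \]
A symmetric computation based on $u\,g(v) = g(q^2v)\,u$ produces $\mathrm{Ad}(s_q(v))\,u = (1+qv)\,u$, which rewrites as $u(1+q^{-1}v)$ by using $vu = q^{-2}uv$.

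For 1(a), the pentagon identity $s_q(u+v)=s_q(u)s_q(v)$ under $uv=q^2vu$ is the classical Faddeev--Kashaev relation. My approach is to first prove by induction on $k$ the noncommutative $q$-binomial formula
\[ (u+v)^k \;=\; \sum_{j=0}^k \binom{k}{j}_{\!q^2}\, v^{k-j}u^j, \]
then substitute into $s_q(u+v)=\sum_k c_k(u+v)^k$ and compare coefficients against the expansion $s_q(u)\,s_q(v) = \sum_{j,m}c_jc_m\,u^j v^m$; after regrouping by total degree $k=j+m$ and pushing $v^m$ past $u^j$ using $uv=q^2vu$, the identity collapses to the $q$-Vandermonde relation $c_k\binom{k}{j}_{\!q^2} = c_j\, c_{k-j}$, which is immediate from the closed form of $c_n$.

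For the root-of-unity statements (2), note first that at $q$ of odd order $N$ one has $1-q^{2N}=0$, so the coefficient $c_N$ is singular and $s_q^{<N}$ is the natural truncation. Under $u^N=0$ in (2)(b), the polynomial $1+qu$ is invertible with polynomial inverse $\sum_{k=0}^{N-1}(-qu)^k$, and the coefficient-matching argument from (1)(b) remains valid up to degree $N-1$; beyond that both sides vanish by nilpotency, which gives (2)(b). The main obstacle is (2)(a): under the hypothesis $u^jv^{N-j}=0$ for $0\le j\le N$, one must verify that every normally ordered monomial $u^av^b$ with $a+b\ge N$ vanishes in $\mathcal{A}$ (using $uv=q^2vu$ to rewrite every monomial in $u,v$ as a scalar multiple of $u^av^b$ and then to factor through one of the vanishing $u^jv^{N-j}$). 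Once this is established, the noncommutative $q$-binomial expansion of $(u+v)^k$ is zero in $\mathcal{A}$ for $k\ge N$, so $s_q^{<N}(u+v)$ coincides with the formal $s_q(u+v)$ in $\mathcal{A}$ and (2)(a) reduces to the formal identity (1)(a). This tail-bookkeeping step is the only genuinely delicate portion of the proof.
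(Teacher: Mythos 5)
Your proof is correct in substance, but it takes a genuinely different route from the paper: the paper disposes of the lemma by citation (parts (1)(a) and (2)(a) are quoted from Ohtsuki's Lemmas A.3 and A.12, part (1)(b) from the Euler product factorization $s_q(x)=\prod_{n\ge 0}(1+q^{2n+1}x)$ cited to Zagier, and (2)(b) is declared a straightforward computation), whereas you give a self-contained coefficient argument based on the expansion $s_q(u)=\sum_n c_n u^n$ with $c_n=q^{n^2}/(q^2;q^2)_n$, the recursion $c_n(1-q^{2n})=q^{2n-1}c_{n-1}$, and the noncommutative $q$-binomial formula. What your route buys is precisely the point the paper leaves implicit: the truncated identities at an odd root of unity hold because the only coefficient identities needed are those in degree $<N$, where all denominators $(q^2;q^2)_n$ are nonzero, while your normal-ordering observation (every $u^av^b$ with $a+b\ge N$ vanishes under the hypothesis $u^nv^{N-n}=0$) kills the tail. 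Two small corrections are needed. First, after pushing $v^m$ past $u^j$ the collapsed relation is not $c_k\binom{k}{j}_{q^2}=c_jc_{k-j}$ but
\[ c_k\binom{k}{j}_{q^2}\;=\;q^{2j(k-j)}\,c_j\,c_{k-j}, \]
the extra power being exactly the reordering factor you mention; with the closed form of $c_n$ this identity does hold (both sides equal $q^{k^2}/\bigl((q^2;q^2)_j(q^2;q^2)_{k-j}\bigr)$), so the fix is purely notational. Second, in (2)(a) you should not say that the statement "reduces to the formal identity (1)(a)": at a root of unity the formal series $s_q$ does not exist, since $c_k$ is singular for $k\ge N$. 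What your argument actually shows, and what you should say, is that after the vanishing of all monomials of total degree $\ge N$, both $s_q^{<N}(u+v)$ and $s_q^{<N}(u)s_q^{<N}(v)$ are determined by their degree-$<N$ parts, and these agree because the coefficient identities displayed above are identities of rational functions of $q$ whose denominators do not vanish at a primitive $N$-th root of unity of odd order. With these two adjustments the proof is complete.
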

 
 \begin{proof}
 Formula $(1)(a)$ is \cite[Lemma A.$3$]{OhtsukiBook}. The formulas $(1)(b)$ are immediate consequence of the following factorization, which is proved for instance in \cite[Chapter II.D, Proposition $2$]{ZagierDilogarithm}:
 $$s_q(x)=\prod_{n=0}^{\infty}(1+q^{2n+1}x).$$
 Formula $(2)(a)$ is \cite[Lemma A.$12$]{OhtsukiBook}. The formulas $(2)(b)$ follow from straightforward computations.
 \end{proof}

  \begin{remark}The q-Pochhammer symbol  $(x;q)_{\infty}:=\prod_{i=0}^{\infty}(1-q^ix)$ is related to the quantum dilogarithm $\mathrm{Li}_2(x;q):=\sum_{i=1}^{\infty}\frac{x^n}{n(1-q^n)}$ by the relation $(x;q)_{\infty}=\exp(-\mathrm{Li}_2(x;q))$ (see \cite[Proposition $2$]{ZagierDilogarithm}). Therefore,  the quantum exponential is related to the quantum dilogarithm by
   $$\exp_q\left( -\frac{x}{q-q^{-1}} \right)=s_q(x)=(-qx;q^2)_{\infty}=\exp(-\mathrm{Li}_2(-qx;q^2)).$$
    \end{remark}

 \begin{definition}\label{def_nu}
 \begin{enumerate}
 \item \textit{(The case of the square)}
 The isomorphism \\ $\nu^Q_{\Delta, \Delta'} : \mathcal{Z}_{A}(Q, \Delta') \xrightarrow{\cong} \mathcal{Z}_{A}(Q, \Delta)$ is defined by: 
 \begin{align*}
 & \nu^Q_{\Delta, \Delta'} \left(\Tr_{A}^{\Delta'} (\beta^{(i)}_{++})\right)= \Tr_{A}^{\Delta} (\beta^{(i)}_{++}) \mbox{, for }i\in \{1,2,3,4\}; \\
 & \nu^Q_{\Delta, \Delta'} (X'_1) = X_1; \quad \nu^Q_{\Delta, \Delta'}(X'_3) = X_3;  \\
 & \nu^Q_{\Delta, \Delta'} (X'_2)= [X_2X_5]; \quad \nu^Q_{\Delta, \Delta}(X'_4)= [X_4X_5]; \\
 & \nu^Q_{\Delta, \Delta'} (X'_5)= X_5^{-1}.
 \end{align*}
 \item \textit{(The general case)} Let $\Delta$ and $\Delta'=F_{e}\Delta$ be two  triangulations of $\mathbf{\Sigma}$ which differ by a flip along an inner edge $e\in\mathcal{E}(\Delta')$ and denote by $Q\subset \mathbf{\Sigma}$ the square whose $e$ is a diagonal. The isomorphism $\nu^{\mathbf{\Sigma}}_{\Delta, \Delta'} : \mathcal{Z}_{A}(\mathbf{\Sigma}, \Delta') \xrightarrow{\cong} \mathcal{Z}_{A}(\mathbf{\Sigma}, \Delta)$ is the unique isomorphism making the following diagram commuting: 
 $$ \begin{tikzcd}
\mathcal{Z}_{A}(\mathbf{\Sigma}, \Delta')  \arrow[r, hook, "\widehat{i}"] \arrow[d, dotted, "\nu_{\Delta, \Delta'}^{\mathbf{\Sigma}}"] &
 \mathcal{Z}_{A}(Q, \Delta_Q) \otimes \mathcal{Z}_{A}(\mathbf{\Sigma}\setminus Q, \Delta_{\mathbf{\Sigma}\setminus Q}) \arrow[d, "\cong"', "\nu_{\Delta, \Delta'}^{Q}\otimes \id"] \\
 \mathcal{Z}_{A}(\mathbf{\Sigma}, \Delta) \arrow[r, hook, "\widehat{i}'"] &
 \mathcal{Z}_{A}(Q, \Delta'_Q) \otimes \mathcal{Z}_{A}(\mathbf{\Sigma}\setminus Q, \Delta_{\mathbf{\Sigma}\setminus Q})
 \end{tikzcd}$$
 
 \end{enumerate}
 
 \end{definition}
To formulate the Fock-Goncharov decomposition, one needs to consider the element $s_q(X_e^{-1})$ for $X_e\in  \mathcal{Z}_{A}(\mathbf{\Sigma}, \Delta')$ and one would like to be able to consider an automorphism $\mathrm{Ad}(s_q(X_e^{-1}))$. Since the quantum exponential is a power series, let us introduce the following:

\begin{definition}\label{def_series} Let $(\mathbf{\Sigma}, \Delta)$ be a triangulated marked surface. The \textbf{completed balanced Chekhov-Fock module} is the topological $k$-module $\overline{\mathcal{Z}}_{A}(\mathbf{\Sigma}, \Delta)$ whose elements are the power series $\sum_{\mathbf{k}} c_{\mathbf{k}} Z^{\mathbf{k}}$, associated to balanced monomials $Z^{\mathbf{k}}$. It has a product only partially defined  induced by $Z^{\mathbf{k}}Z^{\mathbf{k'}} = A^{-\frac{1}{2}\left(\mathbf{k}, \mathbf{k}'\right)^{WP}}Z^{\mathbf{k} + \mathbf{k}'}$. 
\end{definition}

 The product between  two elements of $\overline{\mathcal{Z}}_{A}(\mathbf{\Sigma}, \Delta)$ is not always defined. For instance, for $x:=\sum_{\mathbf{k}}  Z^{\mathbf{k}}$, the product $x^2$ does not make sense. However, one has a natural inclusion $\mathcal{Z}_{A}(\mathbf{\Sigma}, \Delta) \subset \overline{\mathcal{Z}}_{A}(\mathbf{\Sigma}, \Delta)$ and the product of an element of $\mathcal{Z}_{A}(\mathbf{\Sigma}, \Delta)$ by an element of $\overline{\mathcal{Z}}_{A}(\mathbf{\Sigma}, \Delta)$ is well defined. For $x\in \overline{\mathcal{Z}}_{A}(\mathbf{\Sigma}, \Delta)$, we will say that the automorphism $\mathrm{Ad}(x) \in \mathrm{Aut} (\mathcal{Z}_{A}(\mathbf{\Sigma}, \Delta))$ is well defined if for any $y \in \mathcal{Z}_{A}(\mathbf{\Sigma}, \Delta)$, there exists $z\in \mathcal{Z}_{A}(\mathbf{\Sigma}, \Delta)$ such that $xy=zx$. In this case, $\mathrm{Ad}(x)\cdot y := z$ is uniquely determined and $\mathrm{Ad}(x)$ is indeed an automorphism of $\mathcal{Z}_{A}(\mathbf{\Sigma}, \Delta)$.

The following lemma is stated in a different form in \cite[Section $1.3$]{FockGoncharovClusterVariety}, where it appears as a definition. Since the authors of \cite{FockGoncharovClusterVariety} only considered the $0$-th graded part of the balanced Chekhov-Fock algebra, we prove that their decomposition generalizes to the full algebra as-well.
 
 \begin{lemma}\label{lemma_FockGoncharov}[Fock-Goncharov decomposition]
Let $\Delta'$ and $\Delta=F_{e}\Delta'$ be two  triangulations of $\mathbf{\Sigma}$ which differ from a flip along an inner edge $e\in\mathcal{E}(\Delta')$.
 If $q\in k^*$ is generic, then the automorphism $\mathrm{Ad}(s_q(X_e^{-1})) \in \mathrm{Aut}(\mathcal{Z}_{A}(\mathbf{\Sigma}, \Delta'))$  is well defined and 
 one has the equality $\Phi_{\Delta, \Delta'}^{\mathbf{\Sigma}} = \nu^{\mathbf{\Sigma}}_{\Delta, \Delta'} \circ \mathrm{Ad}(s_q(X_e^{-1}))$.
 \end{lemma}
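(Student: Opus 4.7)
The strategy is to reduce to the case $\mathbf{\Sigma}=Q$ (the square) and then to verify the identity on the generators of $\mathcal{Z}_{\omega}(Q, \Delta')$ by a direct application of Lemma \ref{lemma_expq}. Both the well-definedness and the equality are statements about how the respective automorphisms act on elements, so they are equivalent to statements about their action on a generating family.

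\emph{Step 1: reduction to the square.} By Definitions \ref{def_flip_changecoordinates} and \ref{def_nu}, both $\Phi^{\mathbf{\Sigma}}_{\Delta,\Delta'}$ and $\nu^{\mathbf{\Sigma}}_{\Delta,\Delta'}$ are built, through the gluing morphism $\widehat{i}':\mathcal{Z}_\omega(\mathbf{\Sigma},\Delta')\hookrightarrow \mathcal{Z}_\omega(Q,\Delta'_Q)\otimes \mathcal{Z}_\omega(\mathbf{\Sigma}\setminus Q,\Delta_{\mathbf{\Sigma}\setminus Q})$, as $\Phi^{Q}_{\Delta,\Delta'}\otimes \mathrm{id}$ and $\nu^{Q}_{\Delta,\Delta'}\otimes \mathrm{id}$. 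Since $e$ is an inner edge of $Q$, the element $X_e\in \mathcal{Z}_\omega(\mathbf{\Sigma},\Delta')$ satisfies $\widehat{i}'(X_e)=X_{e}\otimes 1$ and commutes with every generator of the $\mathbf{\Sigma}\setminus Q$ factor (their Weil--Petersson pairing with $e$ vanishes). Therefore $\mathrm{Ad}(s_q(X_e^{-1}))$ is well-defined on $\mathcal{Z}_\omega(\mathbf{\Sigma},\Delta')$ iff it is on $\mathcal{Z}_\omega(Q,\Delta'_Q)$, and the identity $\Phi=\nu\circ \mathrm{Ad}(s_q(X_e^{-1}))$ in $\mathbf{\Sigma}$ follows from the one in $Q$.

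\emph{Step 2: well-definedness on $Q$.} With $e$ labelled as the inner edge $5'$ of $\Delta'$ and $1',2',3',4'$ the boundary edges in cyclic order, the Weil--Petersson form gives commutations $X_5'X_i'=q^{\pm 2}X_i'X_5'$ alternating in $i$. By Lemma \ref{lemma_expq}(1)(b), $\mathrm{Ad}(s_q(X_5'^{-1}))X_i'=X_i'(1+qX_5'^{-1})^{\pm 1}$, depending on the sign of the commutation, with the negative exponent case producing a power series that telescopes to a Laurent polynomial in $X_5'$. Thus the action on each $X_i'$ lies in $\mathcal{Z}_\omega(Q,\Delta')$, and the same holds on their inverses and on $X_5'^{\pm 1}$ (on which it acts as the identity). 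By Lemma \ref{lemma_grading_generator} the $X_i'^{\pm 1}$ and the images $\tr_\omega^{\Delta'}(\beta^{(i)}_{\varepsilon\varepsilon'})$ generate $\mathcal{Z}_\omega(Q,\Delta')$, and since the boundary arcs $\beta^{(i)}$ in Figure \ref{fig_square} are disjoint from the diagonal $e_5'$, Proposition \ref{prop_QTrace} shows that their quantum traces are Laurent monomials involving only the $Z_j'$ for $j\neq 5$; these commute with $X_5'$ and are thus fixed by $\mathrm{Ad}(s_q(X_5'^{-1}))$. Hence $\mathrm{Ad}(s_q(X_5'^{-1}))$ is a well-defined automorphism of $\mathcal{Z}_\omega(Q,\Delta')$.

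\emph{Step 3: matching of the formulas on $Q$.} It remains to check on each generator that $\Phi^{Q}_{\Delta,\Delta'}$ equals $\nu^{Q}_{\Delta,\Delta'}\circ \mathrm{Ad}(s_q(X_5'^{-1}))$. On the generators $\tr_\omega^{\Delta'}(\beta^{(i)}_{\varepsilon\varepsilon'})$, $\mathrm{Ad}(s_q(X_5'^{-1}))$ acts trivially and the equality reduces to $\Phi^{Q}(\tr_\omega^{\Delta'}(\beta^{(i)}_{\varepsilon\varepsilon'}))=\nu^{Q}(\tr_\omega^{\Delta'}(\beta^{(i)}_{\varepsilon\varepsilon'}))=\tr_\omega^{\Delta}(\beta^{(i)}_{\varepsilon\varepsilon'})$, which is exactly Lemma \ref{lemma_skein_square} together with Definition \ref{def_nu}. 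On $X_5'$, $\mathrm{Ad}(s_q(X_5'^{-1}))$ is the identity and $\nu^Q(X_5')=X_5^{-1}=\Phi^Q(X_5')$ by definition. For $i=1,3$ (the edges with $X_5'X_i'=q^{-2}X_i'X_5'$), Lemma \ref{lemma_expq}(1)(b) yields $\mathrm{Ad}(s_q(X_5'^{-1}))X_i'=X_i'(1+qX_5'^{-1})$; applying $\nu^Q$ and collecting the Weyl-ordering coefficient $\omega^{-\langle e_i,e_5\rangle}$ returns $X_i+[X_iX_5]$, which matches Definition \ref{def_square_changecoordinates}. For $i=2,4$ the opposite sign of the commutation yields $\mathrm{Ad}(s_q(X_5'^{-1}))X_i'=X_i'(1+q^{-1}X_5'^{-1})^{-1}$; applying $\nu^Q$ and inverting produces $(X_i^{-1}+[X_iX_5]^{-1})^{-1}$, again matching Definition \ref{def_square_changecoordinates}.

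\emph{Main obstacle.} The only genuinely delicate point is the precise bookkeeping of Weyl-ordering factors and of the signs of the Weil--Petersson pairings $\langle e_i,e_5\rangle$: these must be tracked carefully so that the rational expressions produced by Lemma \ref{lemma_expq}(1)(b) match, up to the Weyl reordering in $\nu^Q$, the explicit closed formulas of Definition \ref{def_square_changecoordinates}. The computation is routine once the convention is fixed, but is the only place where errors can creep in.
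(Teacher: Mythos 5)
Your overall strategy is the same as the paper's: reduce to the square via the gluing morphisms and then verify the identity on generators of $\mathcal{Z}_{\omega}(Q,\Delta')$ using Lemma \ref{lemma_expq}, and your treatment of the $X_i'$ is essentially the paper's computation. However, the way you handle the skein generators contains a genuine error. The arcs $\beta^{(i)}$ are \emph{not} disjoint from the diagonal $e_{5'}$ of $\Delta'$: two of the four corner arcs cross it, and the paper's own computation in Lemma \ref{lemma_skein_square} exhibits this, e.g. $\tr_{\omega}^{\Delta'}\bigl((\beta_1)_{+-}\bigr)=[Z_{4'}Z_{5'}Z_{1'}^{-1}]+[Z_{4'}Z_{5'}^{-1}Z_{1'}^{-1}]$, so $Z_{5'}$ does occur in these traces. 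Consequently your justification ("their quantum traces involve only the $Z_j'$ for $j\neq 5$, hence commute with $X_{5'}$") fails, and moreover the stronger claim you use in Step 3 — that $\mathrm{Ad}(s_q(X_{5'}^{-1}))$ acts trivially on $\tr_{\omega}^{\Delta'}(\beta^{(i)}_{\varepsilon\varepsilon'})$ for \emph{all} states — is actually false: for a mixed state such as $(+,-)$ the trace is a sum of two monomials, each of which $q^{\pm4}$-commutes with $X_{5'}$, and it is not fixed by the conjugation (nor should it be, since $\Phi^Q$ sends it to $[Z_4Z_1^{-1}]$ only after combining with $\nu^Q$).

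The repair is exactly what the paper does: restrict the check to the states $(+,+)$ (these, together with the $X_e'$, already generate, cf. Lemma \ref{lemma_grading_generator} and the relation $\tr_{\omega}^{\Delta'}((\beta_1)_{+-})=q\,\tr_{\omega}^{\Delta'}((\beta_1)_{++})X_{1'}^{-1}(1+q^2X_{5'}^{-1})$ used in Lemma \ref{lemma_skein_square}), and observe that $\tr_{\omega}^{\Delta'}(\beta^{(i)}_{++})$ is a single Weyl-ordered balanced monomial such as $[Z_{4'}Z_{5'}Z_{1'}]$ whose total Weil--Petersson pairing with $e_{5}$ vanishes (the contributions from the two triangles adjacent to the diagonal cancel); hence it commutes with $X_{5'}$ and is genuinely fixed by $\mathrm{Ad}(s_q(X_{5'}^{-1}))$. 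This is the "straightforward computation" in the paper, and it is not a disjointness statement. A smaller inaccuracy: in Step 2 the expression $(1+qX_{5'}^{-1})^{-1}$ does not "telescope to a Laurent polynomial" — it lives in the division algebra, which is where the identity with $\Phi^Q$ (whose values on half the $X_i'$ are likewise non-polynomial) must be read.
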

 
 \begin{proof} In view of Definitions \ref{def_flip_changecoordinates} and \ref{def_nu}, it is sufficient to make the proof in the case where $\mathbf{\Sigma}=Q$.
 
 A straightforward computation shows that $\mathrm{Ad}(s_q(X_5'^{-1})) \cdot \Tr_{\zeta}^{\Delta'} (\beta^{(i)}_{++}) = \Tr_{\zeta}^{\Delta'} (\beta^{(i)}_{++})$ for every $i\in\{1,2,3,4\}$, hence one has $ \nu^Q_{\Delta, \Delta'} \circ \mathrm{Ad}(s_q(X_5'^{-1})) \left(   \Tr_{\zeta}^{\Delta'} (\beta^{(i)}_{++})\right) = \Phi_{\Delta, \Delta'}^Q \left( \Tr_{\zeta}^{\Delta'} (\beta^{(i)}_{++})\right) = \Tr_{\zeta}^{\Delta} (\beta^{(i)}_{++})$. The equalities $\nu_{\Delta, \Delta'} \circ \mathrm{Ad}(s_q(X_5'^{-1})) (X'_i)= \Phi_{\Delta, \Delta'}^Q(X'_i)$, for $1\leq i \leq 5$, follow from Lemma \ref{lemma_expq}.
 \end{proof}

\subsubsection{Fock-Goncharov type decomposition at roots of unity and cyclic quantum dilogarithms}

\begin{notations} For $\chi : Z(\mathbf{\Sigma}, \Delta) \to \mathbb{C}$ a character over the center of $\mathcal{Z}_{\zeta}(\mathbf{\Sigma}, \Delta)$ we denote by $\mathcal{I}_{\chi}\subset \mathcal{Z}_{\zeta}(\mathbf{\Sigma}, \Delta)$ the ideal generated by $\ker{\chi}$. In particular, by Theorem \ref{theorem_CF_Rep}, the quotient $\quotient{\mathcal{Z}_{\zeta}(\mathbf{\Sigma}, \Delta)}{\mathcal{I}_{\chi}}\cong \Mat_D(\mathbb{C})$ is a matrix algebra.Two characters $\chi$ and $\chi'$ over the centers of $\mathcal{Z}_{\zeta}(\mathbf{\Sigma}, \Delta)$ and $\mathcal{Z}_{\zeta}(\mathbf{\Sigma}, \Delta')$ are said compatible if $\chi = \Phi_{\Delta \Delta'}^{\mathbf{\Sigma}} \circ \chi'$. In this case, we get an isomorphism
$$ \Phi_{(\Delta, \chi), (\Delta', \chi')}^{\mathbf{\Sigma}} : \quotient{\mathcal{Z}_{\zeta}(\mathbf{\Sigma}, \Delta')}{\mathcal{I}_{\chi'}} \xrightarrow{\cong} \quotient{\mathcal{Z}_{\zeta}(\mathbf{\Sigma}, \Delta)}{\mathcal{I}_{\chi}}.$$

\end{notations}

 \par We now state a generalization of the Fock-Goncharov decomposition for flip operators at roots of unity. For this, we need the following generalization of the q-exponential $\exp_q^{<N}$ polynomial, which appeared in the work of Fadeev-Kashaev \cite{FadeevKashaevQDilog} and Baseilhac-Benedetti \cite{BaseilhacBenedettiLocRep}:

\begin{definition}[Cyclic quantum dilogarithm]
For $w\in \mathbb{C}^*$, we set
$$ \Phi_w(x):= \sum_{n=0}^{N-1} \frac{(-wx)^nq^{n(n-1)/2}}{(q^nw-q^{-n})\ldots (qw-q^{-1})} \in \mathbb{C}[x].$$
\end{definition}

\begin{lemma}\label{lemma_special_poly}
Consider $\mathcal{A}$ a $\mathbb{C}$-algebra and $x\in \mathcal{A}$ such that $x^N\in \mathbb{C}\setminus \{-1\}$. Choose $w\in \mathbb{C}^*$ such that $w^N(1+x^N) = 1$. Then one has:
\begin{enumerate}
\item The summand $e_n(x):=\frac{(-wx)^nq^{n(n-1)/2}}{(q^nw-q^{-n})\ldots (qw-q^{-1})}$ only depends on $n$ modulo $N$, therefore
$$ \Phi_w(x):= \sum_{n\in \mathbb{Z}/N\mathbb{Z}}e_n(x) \in \mathbb{C}[x].$$
\item One has $\Phi_w(q^{-2}x)=w(1+q^{-1}x)\Phi_w (x)$ and $\Phi_w(q^2x) = w^{-1}(1+qx)^{-1}\Phi_w(x)$.
\item If $y \in \mathcal{A}$ is such that $xy=q^2yx$, then $\mathrm{Ad}(\Phi_w(x)) y = w^{-1} y(1+qx)^{-1}$. 
\\ If $y \in \mathcal{A}$ is such that $xy=q^{-2}yx$, then $\mathrm{Ad}(\Phi_w(x)) y = w y(1+q^{-1}x)$.
\item If $w=1$, $\Phi_{w=1}(x)=\exp_q^{<N}\left(\frac{-x}{q-q^{-1}}\right)$.
\end{enumerate}
\end{lemma}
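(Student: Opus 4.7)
The plan is to verify the four claims in order, the workhorse being the explicit three-term recurrence for the coefficients of $\Phi_w$.

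For (1), write $e_n(x)=c_n x^n$ with $c_n:=(-w)^n q^{n(n-1)/2}/\prod_{k=1}^n(q^k w-q^{-k})$, and compute the ratio $e_{n+N}(x)/e_n(x)$ directly in $\mathcal{A}$, remembering that $x^N\in\mathbb{C}$ is a scalar. The $q$-factor contributes $q^{(n+N)(n+N-1)/2-n(n-1)/2}=q^{Nn+N(N-1)/2}$, which equals $1$ because $q^N=1$ and, $N$ being odd, $N(N-1)/2$ is a multiple of $N$. For the denominator, the extra block $\prod_{k=n+1}^{n+N}(q^k w-q^{-k})$ factors as $\prod q^{-k}\cdot \prod(q^{2k}w-1)$; since $\gcd(2,N)=1$, the set $\{2k\bmod N\}$ is $\mathbb{Z}/N\mathbb{Z}$, and the identity $\prod_{j=0}^{N-1}(q^j w-1)=w^N-1$ collapses this to $w^N-1$. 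Combined with $(-wx)^N=-w^N x^N$ (odd $N$), the ratio is $-w^N x^N/(w^N-1)$, which equals $1$ precisely under the hypothesis $w^N(1+x^N)=1$.

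For (2), I would prove the first identity $\Phi_w(q^{-2}x)=w(1+q^{-1}x)\Phi_w(x)$ coefficient by coefficient in $\mathcal{A}$, using $x^N=x_0\in\mathbb{C}$ to wrap the degree-$N$ term back to the constant. Comparing coefficients of $x^n$ for $n\geq 1$ gives $c_n(q^{-2n}-w)=wq^{-1}c_{n-1}$, i.e.\ $c_n/c_{n-1}=-wq^{2n-1}/(q^{2n}w-1)$, which is immediate from the definition of the $c_n$. The constant coefficient yields $c_0(1-w)=wq^{-1}c_{N-1}x_0$; computing $c_{N-1}/c_0$ via the recurrence reduces this to $-w^N x_0/(w^N-1)=1$, which is exactly the hypothesis already used in (1). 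The second identity is then either checked analogously or obtained by applying the first with $x\leftrightarrow q^2 x$ and rearranging.

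Assertion (3) is a direct consequence of (2). When $xy=q^{\pm 2}yx$, the relation $x^n y=q^{\pm 2n}yx^n$ gives $\Phi_w(x)\,y=y\,\Phi_w(q^{\pm 2}x)$, so applying the appropriate case of (2) produces $\mathrm{Ad}(\Phi_w(x))\cdot y=w^{\mp 1}y(1+q^{\pm 1}x)^{\mp 1}$. Finally, (4) is an elementary rewriting: when $w=1$ the hypothesis forces $x^N=0$, and the identity $\prod_{k=1}^n(q^k-q^{-k})=(q-q^{-1})^n [n]!$ rewrites $\Phi_1(x)$ exactly in the form $\exp_q^{<N}(-x/(q-q^{-1}))$.

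The main piece of care lies in the $n=0$ coefficient of (2): it is the only equation where the cyclic wrap-around from $c_{N-1}$ back to $c_0$ genuinely intervenes, and it is where the hypothesis $w^N(1+x^N)=1$ enters the proof of (2). This ties (2) to (1) and also explains why the hypothesis is indispensable beyond the cyclicity statement.
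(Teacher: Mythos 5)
Your proof is correct and takes essentially the same elementary route as the paper: assertion (1) via the ratio of summands $N$ apart together with the cyclic product identity $\prod_{j=0}^{N-1}(q^jw-1)=w^N-1$, assertion (2) via the one-step recurrence between consecutive coefficients, the second identity of (2) by substituting $q^2x$, and (3), (4) as immediate consequences. The only cosmetic difference is in (2): the paper sums the three-term relation $e_{n+1}(q^{-2}x)=we_{n+1}(x)+wq^{-1}xe_n(x)$ over the cyclic index $n\in\mathbb{Z}/N\mathbb{Z}$, so the periodicity from (1) absorbs the wrap-around that you verify explicitly (and correctly) at the constant coefficient.
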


\begin{proof}
First, from the expression $e_n(x)= \frac{(-q^{-1}x)^n}{(1-w^{-1}q^{-2n}) \ldots (1-w^{-1}q^{-2})}$, we see that 
$$e_{n+N}(x)=\left( \frac{-x^N}{(1-w^{-1}q^{-2(n+N)})\ldots (1-w^{-1}q^{-2(n+1)})}\right) e_n(x) = \left( \frac{-x^N}{1-w^{-N}}\right) e_n(x) = e_n(x).$$
So the first assertion is proved. Then, using the equality $e_{n+1}(x)= \frac{-q^{-1}x}{1-w^{-1}q^{-2(n+1)}} e_n(x)$, which holds for any $n\in \mathbb{Z}/N\mathbb{Z}$, one finds that 
$$ e_{n+1}(q^{-2}x) = we_{n+1}(x) +w q^{-1}x e_n(x), $$
and the equality $\Phi_w(q^{-2}x)=w(1+q^{-1}x)\Phi_w (x)$ follows by summing over $n\in \mathbb{Z}/N\mathbb{Z}$. The equality $\Phi_w(q^2x) = w^{-1}(1+qx)^{-1}\Phi_w(x)$ is deduced by replacing $x$ by $q^2x$. The third assertion is an immediate consequence of the second one and the last assertion is immediate.

\end{proof}

\par Recall the two triangulations $\Delta$ and $\Delta'$ of the square $Q$ depicted in Figure \ref{fig_square}.
We  fix $\chi$ and $\chi'$ two compatible characters on the centers of $\mathcal{Z}_{\zeta}(Q,\Delta)$ and  $\mathcal{Z}_{\zeta}(Q,\Delta')$ and suppose that $\chi(X_5^N)\neq -1$. Write $x_i:= \chi(X_i^N), x_i':= \chi({X'}_i^N)$ and fix $w_0\in \mathbb{C}^*$ such that $w_0^N(1+x_5)=1$.

\begin{definition}\label{def_nu}
 The isomorphism 
  $$\nu^Q_{w_0} : \quotient{\mathcal{Z}_{\zeta}(Q, \Delta')}{I_{\chi'}} \xrightarrow{\cong} \quotient{\mathcal{Z}_{\zeta}(Q, \Delta)}{I_{\chi}}$$ is defined by: 
 \begin{align*}
 & \nu^Q_{w_0} \left(\Tr_{\zeta}^{\Delta'} (\beta^{(i)}_{++})\right)= \Tr_{\zeta}^{\Delta} (\beta^{(i)}_{++}) \mbox{, for }i\in \{1,2,3,4\}; \\
 & \nu^Q_{w_0} (X'_1) = w_0^{-1}X_1; \quad \nu^Q_{w_0}(X'_3) = w_0^{-1}X_3;  \\
 & \nu^Q_{w_0} (X'_2)= w_0 [X_2X_5]; \quad \nu^Q_{w_0}(X'_4)= w_0 [X_4X_5]; \\
 & \nu^Q_{w_0} (X'_5)= X_5^{-1}.
 \end{align*}
\end{definition}

\begin{lemma}\label{lemma_decomp_flip}
The isomorphism $\Phi_{(\Delta,\chi),  (\Delta', \chi')}^{Q} : \quotient{\widehat{\mathcal{Z}}_{\zeta}(Q, \Delta')}{I_{\chi'}} \xrightarrow{\cong} \quotient{\widehat{\mathcal{Z}}_{\zeta}(Q, \Delta)}{I_{\chi}}$ decomposes as:
$$ \Phi_{(\Delta,\chi),  (\Delta', \chi')}^{Q} = \nu_{w_0} \circ \mathrm{Ad}(\Phi_{w_0}({X'}_5^{-1})).$$
\end{lemma}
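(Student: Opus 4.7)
\textbf{Proof plan for Lemma \ref{lemma_decomp_flip}.} The plan is to imitate the proof of Lemma \ref{lemma_FockGoncharov}, with Lemma \ref{lemma_special_poly} playing the role that Lemma \ref{lemma_expq} played in the generic case, and with all identities now interpreted modulo the ideals $I_\chi$ and $I_{\chi'}$. Both sides of the claimed equality are algebra isomorphisms between the same pair of quotients, so it suffices to verify agreement on a generating set. By the remark recalled just before Definition \ref{def_square_changecoordinates}, the algebra $\mathcal{Z}_\omega(Q,\Delta')$ is generated by the elements $\tr_\omega^{\Delta'}(\beta^{(i)}_{++})$ for $i\in\{1,2,3,4\}$ together with $X'_1,\ldots,X'_5$, and it will be enough to check the five resulting cases. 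Note first that the defining condition $w_0^N(1+x_5)=1$, together with ${X'_5}^{-N}\equiv x_5\bmod I_{\chi'}$, is exactly the hypothesis needed in Lemma \ref{lemma_special_poly}(1) to make the polynomial $\Phi_{w_0}({X'_5}^{-1})$ well-defined in $\mathcal{Z}_\omega(Q,\Delta')/I_{\chi'}$.

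The case $y=X'_5$ is immediate, since $X'_5$ is central in $\Phi_{w_0}({X'_5}^{-1})$ and $\nu^Q_{w_0}(X'_5)=X_5^{-1}=\Phi^Q_{\Delta,\Delta'}(X'_5)$. For a stated arc $y=\tr_\omega^{\Delta'}(\beta^{(i)}_{++})$, its image in $\mathcal{Z}_\omega(Q,\Delta')$ is a Weyl-ordered monomial in the $Z$-variables of the two boundary edges adjacent to puncture $i$ and, in particular, does not involve the diagonal edge $e_5$; a direct check of the Weil-Petersson pairing then shows that it commutes with $X'_5$, so $\mathrm{Ad}(\Phi_{w_0}({X'_5}^{-1}))$ fixes it, and the definition of $\nu^Q_{w_0}$ transports it onto $\tr_\omega^{\Delta}(\beta^{(i)}_{++})=\Phi^Q_{\Delta,\Delta'}\bigl(\tr_\omega^{\Delta'}(\beta^{(i)}_{++})\bigr)$. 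The only genuine computation is the case $y=X'_i$ for $i\in\{1,2,3,4\}$: the Weil-Petersson form yields commutation relations of the form ${X'_5}^{-1}X'_i=q^{\pm 2}X'_i{X'_5}^{-1}$, with the sign dictated by the relative cyclic position of $e'_i$ and $e'_5$ in their common face of $\Delta'$. Applying Lemma \ref{lemma_special_poly}(3) in the appropriate branch gives
\[
\mathrm{Ad}\bigl(\Phi_{w_0}({X'_5}^{-1})\bigr)(X'_i)\;=\;w_0^{\,\epsilon_i}\,X'_i\,\bigl(1+q^{\mp 1}{X'_5}^{-1}\bigr)^{\mp 1}
\]
with $\epsilon_i=+1$ for $i\in\{2,4\}$ and $\epsilon_i=-1$ for $i\in\{1,3\}$. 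Composing with $\nu^Q_{w_0}$ and inserting the scalar prefactors from Definition \ref{def_nu} (which carry the \emph{opposite} signs $\epsilon_i$), the $w_0$-corrections cancel, and the resulting element of $\mathcal{Z}_\omega(Q,\Delta)/I_\chi$ is exactly $X_i+[X_iX_5]$ in the cases $i\in\{1,3\}$ and $(X_i^{-1}+[X_iX_5]^{-1})^{-1}$ in the cases $i\in\{2,4\}$, matching the formulas of Definition \ref{def_square_changecoordinates}.

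The heart of the argument is therefore a sign/scalar bookkeeping: one must verify that the choice of $w_0$ prescribed by $w_0^N(1+x_5)=1$ produces precisely the scalar corrections inserted in Definition \ref{def_nu} so as to cancel the $w_0^{\pm 1}$ appearing in Lemma \ref{lemma_special_poly}(3). This is the only nontrivial point, and it is essentially forced: the identities $\Phi_{w_0}(q^{\pm 2}x)=w_0^{\mp 1}(1+q^{\pm 1}x)^{\mp 1}\Phi_{w_0}(x)$ of Lemma \ref{lemma_special_poly}(2) are the exact root-of-unity analogues of the generic identities used in Lemma \ref{lemma_FockGoncharov}, but they now carry the extra scalar $w_0$ which one must absorb into the redefinition of $\nu$ (this is precisely why Definition \ref{def_nu} differs from its generic counterpart by the factors $w_0^{\pm 1}$). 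Once these scalars are seen to line up, the proof reduces, case by case, to a computation formally identical to the one carried out in the generic setting, and the lemma follows.
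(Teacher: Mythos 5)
Your overall strategy coincides with the paper's: verify the identity on the generators $\tr_{\omega}^{\Delta'}(\beta^{(i)}_{++})$ and $X'_1,\ldots,X'_5$, dispose of the boundary-arc generators because they commute with $X'_5$, and treat the $X'_i$ by the third assertion of Lemma \ref{lemma_special_poly}, in parallel with Lemma \ref{lemma_FockGoncharov}. However, the explicit bookkeeping that you yourself identify as the heart of the argument is stated with the two branches swapped, and as written the cancellation you invoke does not occur. With $x={X'_5}^{-1}$, the edges $e'_1,e'_3$ fall in the branch $xy=q^{-2}yx$ of Lemma \ref{lemma_special_poly}(3), so that
$$\mathrm{Ad}\bigl(\Phi_{w_0}({X'_5}^{-1})\bigr)(X'_i)=w_0\,X'_i\,\bigl(1+q^{-1}{X'_5}^{-1}\bigr)\quad(i\in\{1,3\}),$$
and this $w_0^{+1}$ is what cancels the $w_0^{-1}$ in $\nu^Q_{w_0}(X'_i)=w_0^{-1}X_i$, giving $X_i+[X_iX_5]$; symmetrically $e'_2,e'_4$ lie in the $q^{+2}$ branch, producing $w_0^{-1}X'_i(1+q{X'_5}^{-1})^{-1}$, which cancels the $w_0^{+1}$ in $\nu^Q_{w_0}(X'_i)=w_0[X_iX_5]$. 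Your assignment ($\epsilon_i=+1$ for $i\in\{2,4\}$ and $-1$ for $i\in\{1,3\}$) is the opposite: your scalars then carry the \emph{same} sign as the prefactors of Definition \ref{def_nu}, so they would compound to $w_0^{\pm2}$ rather than cancel, and for $i\in\{1,3\}$ your branch yields the factor $(1+q{X'_5}^{-1})^{-1}$, whose image under $\nu^Q_{w_0}$ has the wrong functional form (a resolvent instead of the sum $X_i+[X_iX_5]$ required by Definition \ref{def_square_changecoordinates}). The defect is purely a swap and is repaired by exchanging the two branches; a way to see which branch goes where without extracting the Weil--Petersson pairings from Figure \ref{fig_square} is that only the $q^{-2}$ branch can produce the additive expressions demanded for $i\in\{1,3\}$, exactly as in the generic computation underlying Lemma \ref{lemma_FockGoncharov} via Lemma \ref{lemma_expq}.

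A smaller inaccuracy: for the two corner arcs incident to the diagonal, the quantum trace $\tr_{\omega}^{\Delta'}(\beta^{(i)}_{++})$ does involve the variable of the diagonal edge (the arc crosses $e'_5$; compare the computation of $\tr_{\omega}^{\Delta'}\bigl((\beta_1)_{+-}\bigr)$ in the proof of Lemma \ref{lemma_skein_square}), so your justification that it ``does not involve the diagonal edge'' is not correct as stated. What is true, and what both you and the paper actually need, is that these Weyl monomials commute with $X'_5$, which follows from a short check of the pairing and keeps that part of your argument intact.
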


\begin{proof}
Like in the proof of Lemma \ref{lemma_FockGoncharov}, we show the equality $\Phi_{(\Delta,\chi),  (\Delta', \chi')}^{Q} (x)= \nu_{w_0} \circ \mathrm{Ad}(\Phi_{w_0}({X'}_5^{-1}))(x)$ for $x$ either a $X'_i, i=1, \ldots, 5$, or a $\Tr^{\Delta'}_{\zeta}(\beta^{(i)}_{++}), i=1, \ldots, 4$. When $x=\Tr^{\Delta'}_{\zeta}(\beta^{(i)}_{++})$, since $x$ commutes with $X_5'$, it is invariant under $\mathrm{Ad}(\Phi_{w_0}({X'}_5^{-1}))$ and one has $\Phi_{(\Delta,\chi),  (\Delta', \chi')}^{Q} (x)= \Tr_{\zeta}^{Q}(\beta^{(i)}_{++}) = \nu_{w_0}^Q (x)$ by definition. When $x=X_i$, the equality $\Phi_{(\Delta,\chi),  (\Delta', \chi')}^{Q} (x)= \nu_{w_0} \circ \mathrm{Ad}(\Phi_{w_0}({X'}_5^{-1}))(x)$ follows from the third assertion of Lemma \ref{lemma_special_poly}.

\end{proof}

\begin{remark}
Consider the classical shape parameter $\mathbf{z}=(z,z',z''):= (-x_5, (1+x_5)^{-1}, 1+x_5^{-1})$. The existence of a character $\chi$ compatible with $\chi'$ is equivalent to the assumption that $\mathbf{z}$ is indeed a shape parameter, \textit{i.e.} that $z\in \mathbb{C} \setminus \{0,1\}$. By Definition \ref{def_square_changecoordinates}, the character $\chi$ is then related  to $\chi'$ and $\mathbf{z}$ via the expressions: 
$$ x_5=-z, \quad x_1=z'x_1',\quad x_2=z''x_2',\quad x_3=z'x_3',\quad x_4=z''x_4'.$$
Figure \ref{fig_square_shape_parameters} illustrates how this formula can be interpreted by the operation of gluing a branched tetrahedron with shape parameters $\mathbf{z}$. The presence of a minus sign in $x_5=-z$ is purely conventional: the $z$, attached to the tetrahedron,  represents a bicross product whereas the shear bend coordinate $x_i$, parametrizing a pleated surface, is minus a bicross product.

 \begin{figure}[!h] 
\centerline{\includegraphics[width=8cm]{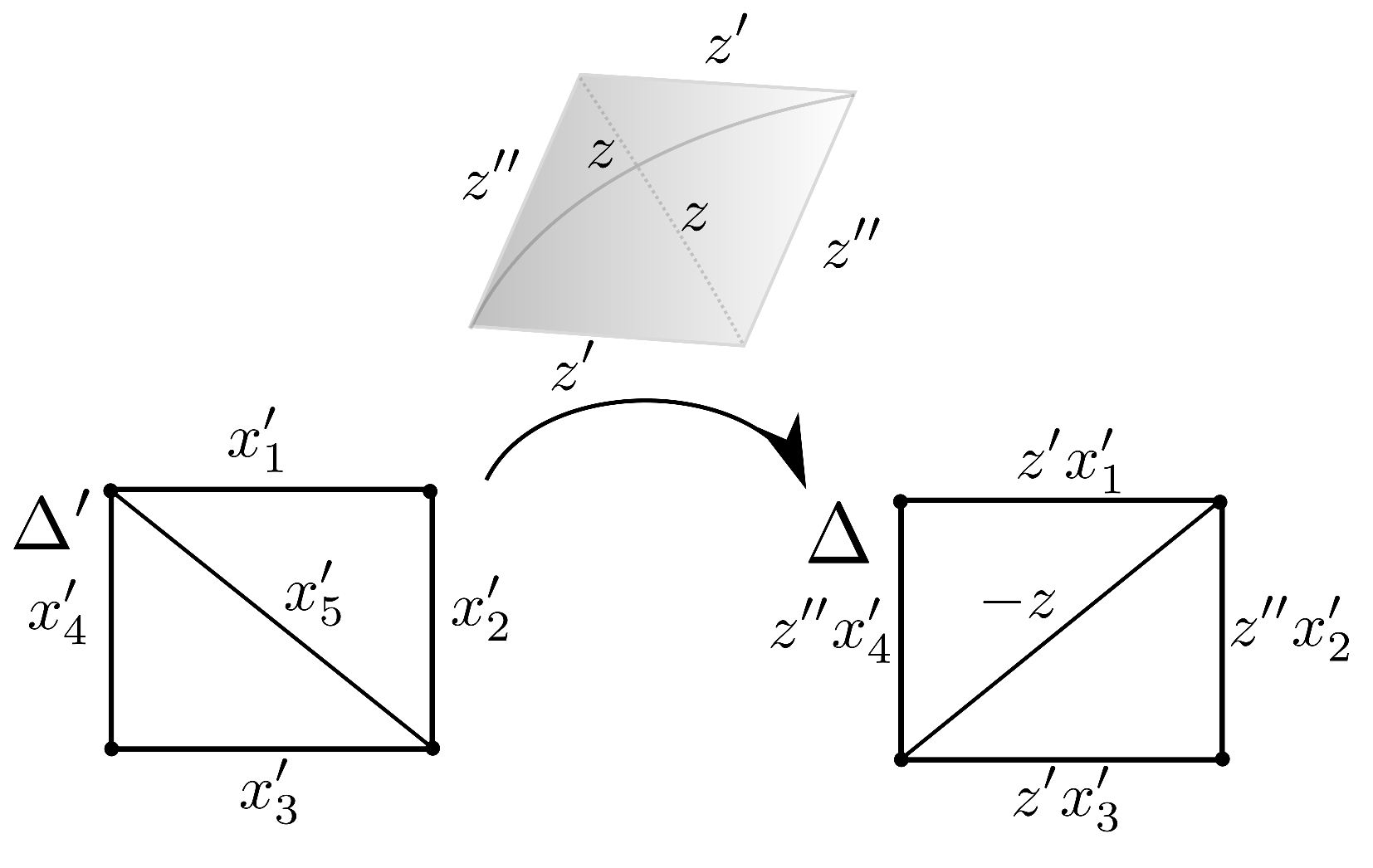} }
\caption{The figure illustrates how the flip operator $\Phi_{\Delta, \Delta'}$ at $A^{1/2}=+1$ can be interpreted as the action of gluing a tetrahedron with shape parameters.} 
\label{fig_square_shape_parameters}
\end{figure} 

\end{remark}

 \subsection{Kashaev's braidings}
 
 \subsubsection{Computing mapping class intertwiners using quantum hyperbolic geometry and Kashaev's braidings}
 
 Let $(\mathbf{\Sigma}, \Delta)$ be a triangulated marked surface, $r, r' : \overline{\mathcal{S}}_{\zeta}(\mathbf{\Sigma})\xrightarrow{\Tr_{\zeta}^{\Delta}} \mathcal{Z}_{\zeta}(\mathbf{\Sigma}, \Delta) \xrightarrow{\overline{r}, \overline{r}'} \End(V)$ two QT representations (so $\overline{r}$ and $\overline{r}'$  are irreducible) with full shadows $\widehat{x}, \widehat{x}' \in \widehat{X}(\mathbf{\Sigma})$ and $\phi \in \Mod(\Sigma)$ a mapping class such that $\phi\cdot \widehat{x} =\widehat{x}'$. If $\widehat{x}' \in \mathcal{AL}(\mathbf{\Sigma})$, there exists an intertwiner (unique up to non zero scalar) 
 $L(\phi) \in \End(V)$ such that:
 $$ r'(x) = L(\phi) r(\phi^{-1}(x)) L(\phi)^{-1}, \quad \mbox{ for all } x \in \overline{\mathcal{S}}_{\zeta}(\mathbf{\Sigma}).$$
 The study we made in the last subsection permits to compute $L(\phi)$ explicitly as follows. By Proposition \ref{prop_change_coordinates} $(3)$, it suffices to find $L(\phi) \in \End(V)$ such that: 
 $$ \overline{r}'(y) = L(\phi) \overline{r}( \phi^*\Phi^{\mathbf{\Sigma}}_{\phi(\Delta), \Delta}(y))L(\phi)^{-1}, \quad \mbox{ for all }y \in \mathcal{Z}_{\zeta}(\mathbf{\Sigma}, \Delta).$$
 So we need to find an intertwiner for the automorphism $F= \phi^* \circ \Phi^{\mathbf{\Sigma}}_{\phi(\Delta), \Delta}$.
 Let $\Delta=\Delta^{(0)}, \Delta^{(1)}, \ldots, \Delta^{(n)}=\phi(\Delta)$ be a sequence of triangulations such that $\Delta^{(i+1)}$ and $\Delta^{(i)}$ differ by a flip or a reindexing so we have a decomposition $F=\phi^*\circ \Phi^{\mathbf{\Sigma}}_{\phi(\Delta), \Delta^{(n-1)}} \circ \ldots \circ \Phi^{\mathbf{\Sigma}}_{\Delta^{(1)}, \Delta}$ by Proposition \ref{prop_change_coordinates} $(2)$. When $\Delta^{(i+1)}, \Delta^{(i)}$ differ by a flip,  Lemma \ref{lemma_decomp_flip} implies that $\Phi^{\mathbf{\Sigma}}_{\Delta^{(i+1)}, \Delta^{(i)}}$ is almost inner (up to the $\nu$ isomorphism) and it permits to compute an intertwiner for $F$.
 \par Now consider the particular case where $\mathbf{\Sigma}=\mathbb{D}_2$ and $\phi= \tau$ is the mapping class which exchanges the two inner punctures of $\mathbb{D}_2$ in the clockwise order.
 
 \begin{definition}[Kashaev's braidings]
 Let $\overline{r}, \overline{r}': \mathcal{Z}_{\zeta}(\mathbb{D}_2) \to \End(V)$ be two irreducible representations with shadows $\widehat{y}, \widehat{y}' \in \widehat{Y}(\mathbb{D}_2, \Delta_2)$ such that $\tau\cdot \widehat{y}= \widehat{y}'$. A \textbf{Kashaev braiding} is an endomorphism $R \in \End(V)$ such that 
 $$  \overline{r}'(y) = R \overline{r}( \phi^*\Phi^{\mathbf{\Sigma}}_{\tau(\Delta_2), \Delta_2}(y))R^{-1}, \quad \mbox{ for all }y \in \mathcal{Z}_{\zeta}(\mathbb{D}_2, \Delta_2).$$
 $R$ is \textbf{normalized} if $\det(R)=1$. 
 \end{definition}
 
 These braidings first appeared in \cite{KashaevLinkInv}. When the images of  $\widehat{y}, \widehat{y}'$ by $\widehat{p} : \widehat{Y}(\mathbb{D}_2,\Delta_2) \to \widehat{X}(\mathbb{D}_2)$ are in the Azumaya locus $\mathcal{AL}(\mathbb{D}_2)$, then Proposition \ref{prop_change_coordinates} $(3)$ implies that the Kashaev braiding $R$ is a skein braiding, in the sense of Definition \ref{def_braid_rep}. Conversely, a skein braiding between Azumaya representations of $\overline{\mathcal{S}}_{\zeta}(\mathbb{D}_2)$ are Kashaev braidings if and only if their full shadows lies in the image of $\widehat{p}$. So Kashaev and skein braidings are slightly different notions though they coincide generically.
 
 \subsubsection{Kashaev's braidings when $q$ is generic}
 In this subsection, we consider the case where $q=A^2\in k^*$ is not a root of unity. This study is of independent interest and serves as a warm-up for the (harder) roots of unity case in the next subsection. The study in this subsection is closely related to the work of Hikami and Inoue in \cite{Hikami_Inoue_braiding} though the formulas we will obtain are simpler and explicitly related to quantum groups $R$ matrices thanks to the quantum trace and to Theorem \ref{theorem_skein_QG}. We want to find an intertwiner for the (image through representations of the) automorphism $\mathscr{R}:= \tau^* \circ \Phi^{\mathbb{D}_2}_{\tau(\Delta), \Delta}$.
 Consider the topological triangulations $\Delta^{(2)}, \ldots, \Delta^{(5)}$ of $\mathbb{D}_2$ drawn in Figure \ref{fig_flip_triangulations} and write $\Delta^{(1)}:= \Delta_2$.

  \begin{figure}[!h] 
\centerline{\includegraphics[width=9cm]{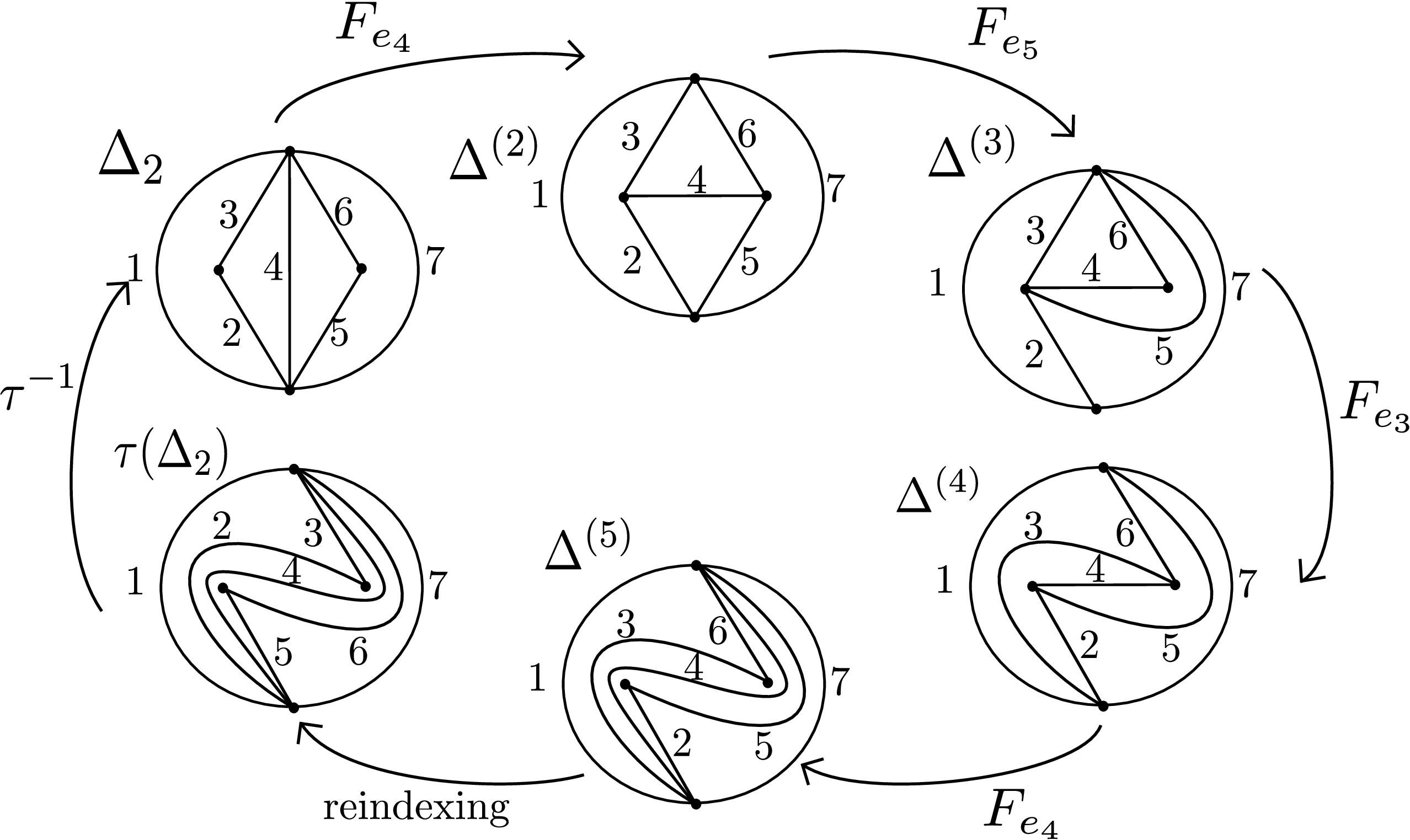} }
\caption{Six triangulations of $\mathbb{D}_2$.} 
\label{fig_flip_triangulations} 
\end{figure} 

By definition, the automorphism $\mathscr{R}$ decomposes as:
$$\mathscr{R}= \tau^* \circ \Phi^{\mathbb{D}_2}_{\tau(\Delta_2), \Delta^{(5)}} \circ  \Phi^{\mathbb{D}_2}_{\Delta^{(5)}, \Delta^{(4)}} \circ  \Phi^{\mathbb{D}_2}_{\Delta^{(4)}, \Delta^{(3)}} \circ  \Phi^{\mathbb{D}_2}_{\Delta^{(3)}, \Delta^{(2)}}  \circ  \Phi^{\mathbb{D}_2}_{\Delta^{(2)}, \Delta^{(1)}}.$$
By Lemma \ref{lemma_FockGoncharov}, for $i\in \{1, \ldots, 4\}$, the flip automorphism  $ \Phi^{\mathbb{D}_2}_{\Delta^{(i+1)}, \Delta^{(i)}}$ decomposes as  $ \Phi^{\mathbb{D}_2}_{\Delta^{(i+1)}, \Delta^{(i)}}= \nu_i \circ \mathrm{Ad}(s_q(X_{n_i}^{-1}))$, where $\nu_i:= \nu^{\mathbb{D}_2}_{\Delta^{(i+1)}, \Delta^{(i)}}$ and $(n_1,n_2,n_3, n_4)= (4, 5,3,4)$. Set
$$ \mathcal{K}:= \tau^* \circ  \Phi^{\mathbb{D}_2}_{\tau(\Delta_2), \Delta^{(5)}} \circ \nu_4 \circ \nu_3 \circ \nu_2 \circ \nu_1 \in \mathrm{Aut}(\mathcal{Z}_{A}(\mathbb{D}_2, \Delta_2)).$$
One thus has the expression:
\begin{equation*}
\mathscr{R}= \mathcal{K} \circ \mathrm{Ad}(s_q\left( (\nu_3\circ \nu_2\circ\nu_1)^{-1}(X_4^{-1})\right)) \circ  \mathrm{Ad}(s_q\left(  (\nu_2\circ\nu_1)^{-1}(X_3^{-1})\right)) 
 \circ \mathrm{Ad}(s_q\left( \nu_1^{-1}(X_5^{-1})\right)) \circ  \mathrm{Ad}(s_q\left( X_4^{-1}\right)).\end{equation*}
%\begin{multline*}
%\mathscr{R}= \mathcal{K} \circ \mathrm{Ad}(s_q\left( (\nu_3\circ \nu_2\circ\nu_1)^{-1}(X_4^{-1})\right)) \circ  \mathrm{Ad}(s_q\left(  (\nu_2\circ\nu_1)^{-1}(X_3^{-1})\right)) \\
 %\circ \mathrm{Ad}(s_q\left( \nu_1^{-1}(X_5^{-1})\right)) \circ  \mathrm{Ad}(s_q\left( X_4^{-1}\right)).\end{multline*}
By a straightforward computation using Definition \ref{def_nu} (see Table $1$ below), one has:
\begin{equation}\label{eq_Rexpression1}
\mathscr{R}= \mathcal{K} \circ \mathrm{Ad}(s_q\left( [X_3X_4X_5]^{-1} \right)) \circ \mathrm{Ad}(s_q\left( [X_3X_4]^{-1} \right))\circ \mathrm{Ad}(s_q\left( [X_4X_5]^{-1} \right))
\circ \mathrm{Ad}(s_q\left( X_4^{-1} \right)).
\end{equation}

\begin{definition}
The automorphism $\mathrm{Ad}(q^{-\frac{H\otimes G}{2}}) \in \mathrm{Aut}(\mathcal{Z}_{A}(\mathbb{D}_1)^{\otimes 2})$ is defined by the formula:
$$ \mathrm{Ad}(q^{-\frac{H\otimes G}{2}}) \left( Z^{\mathbf{k}} \otimes Z^{\mathbf{k}'} \right):= (K^{1/2})^{\mathbf{k}'(e_1) - \mathbf{k}'(e_4)}Z^{\mathbf{k}} \otimes Z^{\mathbf{k}'} (L^{1/2})^{\mathbf{k}(e_4)- \mathbf{k}(e_1)}.$$
We still denote by the same symbol $\mathrm{Ad}(q^{-\frac{H\otimes G}{2}})$ its restriction to the subalgebras $\mathcal{Z}_{A}(\mathbb{D}_2)$ and $(\Uq)^{\otimes 2}$.
\end{definition}
That $\mathrm{Ad}(q^{-\frac{H\otimes G}{2}})$ is well defined and restricts to automorphisms of $\mathcal{Z}_{A}(\mathbb{D}_2)$ and $(\Uq)^{\otimes 2}$ follows from straightforward computations. 
When $q$ is generic, the simple $\Uq$ modules $S_{\mu, \varepsilon, n}$ of Definition \ref{def_QGRep} are well defined for all $n\geq 0$.
The automorphism $\mathrm{Ad}(q^{-\frac{H\otimes G}{2}})$ is not inner, so the notation is abusive. However, it is justified by the following:

\begin{lemma}\label{lemma_qq}
Let $r_1 : \Uq \rightarrow \End(V_1)$ and $r_2 : \Uq\rightarrow \End(V_2)$ be two  representations of the form $S_{\mu, \varepsilon, n}$ and consider an operator 
$(r_1\otimes r_2) ( q^{-\frac{H\otimes G}{2}})\in \End(V_1\otimes V_2)$ as in Definition \ref{def_Drinfeld_braiding}. Then the following diagram commutes:
$$ 
\begin{tikzcd}
(\Uq)^{\otimes 2} \arrow[rrr, "\cong"', "\mathrm{Ad}(q^{-\frac{H\otimes G}{2}})"] \arrow[d, "r_1 \otimes r_2"] &{}&{}&
(\Uq)^{\otimes 2}  \arrow[d, "r_1 \otimes r_2"]  \\
\End(V_1\otimes V_2) \arrow[rrr, "\cong"', "\mathrm{Ad}\left((r_1\otimes r_2) \left( q^{-\frac{H\otimes G}{2}}\right)\right)"] &{}&{}&
\End(V_1\otimes V_2)
\end{tikzcd}
$$
\end{lemma}

\begin{proof}
The restriction of $\mathrm{Ad}(q^{-\frac{H\otimes G}{2}})$ to $(\Uq)^{\otimes 2}$ is characterized by the following formulas:
\begin{eqnarray*}
\mathrm{Ad}(q^{-\frac{H\otimes G}{2}})(x\otimes y) = x\otimes y \quad \mbox{, for all }x,y\in \{K^{\pm 1/2}, L^{\pm 1/2}\}; \\
\mathrm{Ad}(q^{-\frac{H\otimes G}{2}})(E\otimes 1) =E\otimes L^{-1}; \quad \mathrm{Ad}(q^{-\frac{H\otimes G}{2}})(F \otimes 1)= F\otimes L; \\
\mathrm{Ad}(q^{-\frac{H\otimes G}{2}})(1\otimes E)= K\otimes E; \quad \mathrm{Ad}(q^{-\frac{H\otimes G}{2}})(1\otimes F) = K^{-1}\otimes F.
\end{eqnarray*}
The proof then  follows from these formulas and the fact that the representations $S_{\mu, \varepsilon, n}$ have weight bases $(e_i)_i$ such that either $e_i=F^i e_0$.
\end{proof}

\begin{lemma}\label{lemma_kappa} The following diagram commutes:
$$
\begin{tikzcd}
\mathcal{Z}_{A}(\mathbb{D}_2, \Delta_2) \arrow[rr, "\cong"', "\mathcal{K}"] \arrow[d, hook] &{}&
\mathcal{Z}_{A}(\mathbb{D}_2, \Delta_2) \arrow[d, hook] \\
\mathcal{Z}_{A}(\mathbb{D}_1)^{\otimes 2} \arrow[rr, "\cong"', "\sigma \circ \mathrm{Ad}(q^{-\frac{H\otimes G}{2}})"]   &{}&
\mathcal{Z}_{A}(\mathbb{D}_1)^{\otimes 2}   
\end{tikzcd}
$$
\end{lemma}

\begin{proof} By Remark \ref{remark_grading}, the algebra $\mathcal{Z}_{A}(\mathbb{D}_2, \Delta_2)$ is generated by the elements $\Delta(K^{1/2})$, $\Delta(L^{1/2})$,  $H_{p_1}$, $H_{p_2}$ and the $X_i$ for $i\in \{1,\ldots, 7\}$, we must thus prove that the image of these generators by $\mathcal{K}$ and $\sigma \circ \mathrm{Ad}(q^{-\frac{H\otimes G}{2}})$ are equal. For the generators $\Delta(K^{1/2})$ and $\Delta(L^{1/2})$, this fact follows from Equation \ref{eq_Rexpression1} and Equation \ref{eq_KR5} of Proposition \ref{prop_KR}. For the other generators, this follows from a straightforward computation summarized in Table $1$.
\begin{table}[h!]
  \begin{center}
    \label{table}
    \caption{Computation of $\mathcal{K}$}
\begin{tabular}{|c|c|c|c|c|c|}
\hline
$\mathcal{Z}_{A}(\mathbb{D}_2, \Delta^{(1)})$ & $\mathcal{Z}_{A}(\mathbb{D}_2, \Delta^{(2)})$ & $\mathcal{Z}_{A}(\mathbb{D}_2, \Delta^{(3)})$ & $\mathcal{Z}_{A}(\mathbb{D}_2, \Delta^{(4)})$ & $\mathcal{Z}_{A}(\mathbb{D}_2, \Delta^{(5)})$ & $\mathcal{Z}_{A}(\mathbb{D}_2, \Delta^{(1)})$ \\
\hline
$X_1$ & $X_1$ & $X_1$ & $[X_1X_3]$ & $[X_1X_3X_4]$ & $[X_1X_2X_4]$ \\
\hline
$X_2$ & $X_2$ & $X_2$ & $X_2$ & $X_2$ & $X_5$ \\
\hline
$X_3$ & $[X_3X_4]$ & $[X_3X_4X_5]$ & $[X_4X_5]$ & $X_5$ & $X_6$ \\
\hline
$X_4$ & $X_4^{-1}$ & $[X_4X_5]^{-1}$ & $[X_3X_4X_5]^{-1}$ & $[X_3X_4X_5]^{-1}$ & $[X_2X_4X_6]^{-1}$ \\
\hline
$X_5$ & $[X_4X_5]$ & $X_4$ & $[X_3X_4]$ & $X_3$ & $X_2$ \\
\hline
$X_6$ & $X_6$ &$X_6$ &$X_6$ &$X_6$ & $X_3$ \\
\hline
$X_7$ & $X_7$ & $[X_5X_7]$ & $[X_5X_7]$ & $[X_4X_5X_7]$ & $[X_4X_6X_7]$ \\
\hline
$H_{p_1}=[Z_2Z_3]$ & $[Z_2Z_3Z_4]$ & $[Z_2Z_3Z_4Z_5]$ & $[Z_2Z_4Z_5]$ & $[Z_2Z_5]$ & $H_{p_2}$ \\
\hline
$H_{p_2}=[Z_5Z_6]$ & $[Z_4Z_5Z_6]$ & $[Z_4Z_6]$ & $[Z_3Z_4Z_6]$ & $[Z_3Z_6]$ &$H_{p_1}$ \\
\hline

\end{tabular}
  \end{center}
\end{table}
In this table, for $1\leq i \leq 4$, one passes from the $i$-th column to the $i+1$-th column by composing with $\nu_i$ and one passes from the $5$-th column to the last one by composing with $ \tau^* \circ  \Phi^{\mathbb{D}_2}_{\tau(\Delta_2), \Delta^{(5)}}$. Thus we pass from the first column to the last one by composing with $\mathcal{K}$. We conclude by comparing the formulas in the last column with the image through $\sigma \circ \mathrm{Ad}(q^{-\frac{H\otimes G}{2}})$ of the corresponding generators.

\end{proof}

\begin{proposition} Suppose that  $A^{1/2}\in k^*$ is generic (not a root of unity). The automorphism \\ $\mathrm{Ad}\left(\exp_q((q-q^{-1})E\otimes F) \right) \in \mathrm{Aut}(\mathcal{Z}_{A}(\mathbb{D}_2,\Delta_2))$ is well defined. Moreover, one has:
\begin{equation*}
\mathscr{R} = \sigma \circ \mathrm{Ad}(q^{-\frac{H\otimes G}{2}}) \circ \mathrm{Ad}\left(\exp_q((q-q^{-1})E\otimes F) \right).
\end{equation*}
\end{proposition}

\begin{proof}
In $\overline{\mathcal{Z}}_{A}(\mathbb{D}_2, \Delta_2)$, one has the equality (where all products are well defined):

\begin{align*}
& \exp_q\left( (q-q^{-1})E\otimes F\right) = s_q\left( (X_4^{-1} +[X_3X_4]^{-1})\otimes (X_1^{-1}+[X_1X_2]^{-1}) \right) \\
&= s_q\left( [X_3X_4]^{-1}\otimes (X_1^{-1} + [X_1X_2]^{-1}) \right)  s_q\left( X_4^{-1}\otimes (X_1^{-1} + [X_1X_2]^{-1}) \right) \\
&= s_q \left( [X_3X_4]^{-1}\otimes [X_1X_2]^{-1} \right) s_q\left([X_3X_4]^{-1}\otimes X_1^{-1}\right)s_q\left(X_4^{-1}\otimes [X_1X_2]^{-1}\right) s_q\left(X_4^{-1}\otimes X_1^{-1}\right) \\
&= s_q\left( [X_3X_4X_5]^{-1}\right)s_q\left([X_3X_4]^{-1}\right)s_q\left([X_4X_5]^{-1}\right)s_q\left(X_4^{-1}\right)
\end{align*}
Here we used formula $(1)(a)$ of Lemma \ref{lemma_expq} three times. We conclude using Lemma \ref{lemma_kappa} and Equation \ref{eq_Rexpression1}.

\end{proof}

\begin{proposition}\label{prop_calcul_R}
Suppose that $A^{1/2}\in k^*$ is generic. The automorphism $\mathscr{R}$ is characterized by the following formulas:
\begin{align*}
&\mathscr{R}(\Delta(K^{1/2}))= \Delta(K^{1/2}); \mathscr{R}(\Delta(L^{1/2})) = \Delta(L^{1/2}); \quad\mathscr{R}(H_{p_1})=H_{p_2};  \mathscr{R}(H_{p_2})=H_{p_1}; \\
&\mathscr{R}(X_1) = \left(X_1^{-1}+  [X_1X_2]^{-1} +  [X_1X_2X_4]^{-1}  \right)^{-1}; \\
&\mathscr{R}(X_2) = X_4+X_5+[X_2X_4X_5] + [X_4X_5X_6]+[X_2X_4X_5X_6]; \\
&\mathscr{R}(X_3) = \left(X_6^{-1} +X_4+[X_2X_4] + [X_6^{-1}X_4] + [X_2X_4X_6^{-1}]\right)^{-1}; \\
&\mathscr{R}(X_5)= \left(X_2^{-1} + X_4+  [X_4X_6] + [X_2^{-1}X_4] + [X_2^{-1}X_4X_6] \right)^{-1}; \\
&\mathscr{R}(X_6) =  X_3+X_4+[X_3X_4X_6]+[X_2X_3X_4]+[X_2X_3X_4X_6]; \\
&\mathscr{R}(X_7) = \left( X_7^{-1} + [X_6X_7]^{-1}+[X_4X_6X_7]^{-1} \right)^{-1}.
\end{align*}
\end{proposition}

\begin{proof} The formulas for $\mathscr{R}(\Delta(K^{1/2}))$ and  $\mathscr{R}(\Delta(L^{1/2}))$ follow from Proposition \ref{prop_KR}. The formulas for  $\mathscr{R}(H_{p_1})$ and   $\mathscr{R}(H_{p_2})$ follow from the fact that both $H_{p_1}$ and $H_{p_2}$ commute with $E\otimes F$ (since those are central elements) together with the computation of $\mathcal{K}(H_{p_i})$ made in Table $1$. For simplicity, write $\mathcal{AD}:= \mathrm{Ad}\left(\exp_q\left( (q-q^{-1})E\otimes F\right)\right)$. Let us first compute $\mathscr{R}(X_1)$. Using the formula $(1)(a)$ in Lemma \ref{lemma_expq}, one has $\exp_q\left( (q-q^{-1}) E\otimes F\right) = s_q\left( (q-q^{-1})[X_3X_4]^{-1}\otimes F\right) s_q\left( (q-q^{-1})X_4^{-1}\otimes F\right)$. Since $X_1=X_1\otimes 1$ commutes with $X_4^{-1}\otimes F$ and $q^2$-commutes with $[X_3X_4]^{-1}\otimes F$, one has:
\begin{eqnarray*}
\mathcal{AD}(X_1) &=& X_1 s_q\left( q^2 \left( (q-q^{-1})[X_3X_4]^{-1}\otimes F \right) \right) s_q \left( (q-q^{-1})[X_3X_4]^{-1}\otimes F \right)^{-1} \\
&=& X_1\left( 1+q(q-q^{-1})[X_3X_4]^{-1}\otimes F\right)^{-1}
\end{eqnarray*}
where we used formula $(1)(b)$ of Lemma \ref{lemma_expq} to pass from the first to the second line. We obtain the expression for $\mathscr{R}(X_1)$ by composing the above formula with the automorphism $\mathcal{K}$ using Table $1$. Next, to compute $\mathscr{R}(X_2)$, note that $X_2=X_2\otimes 1$ $q^{-2}$-commutes with $E\otimes F$, thus one has:
\begin{eqnarray*}
\mathcal{AD}(X_2) &=& X_2 s_q\left( q^{-2} \left(-(q-q^{-1})^2 E\otimes F \right) \right) s_q \left(-(q-q^{-1})^2 E\otimes F \right)^{-1} \\
&=& X_2\left( 1-q^{-1}(q-q^{-1})^2 E\otimes  F\right)
\end{eqnarray*}
where we used formula $(1)(b)$ of Lemma \ref{lemma_expq}. We obtain the expression for $\mathscr{R}(X_2)$ by composing the above formula with the automorphism $\mathcal{K}$ using Table $1$. For $X_3$, note that $X_3=X_3\otimes 1$ $q^2$-commutes with $E\otimes F$, thus one has:
\begin{eqnarray*}
\mathcal{AD}(X_3) &=& X_3 s_q\left( q^{2} \left(-(q-q^{-1})^2 E\otimes F \right) \right) s_q \left(-(q-q^{-1})^2 E\otimes F \right)^{-1} \\
&=& X_3\left( 1-q(q-q^{-1})^2 E\otimes  F\right)^{-1}
\end{eqnarray*}
Again, we conclude by composing with $\mathcal{K}$ using Table $1$. The formulas for $\mathscr{R}(X_5)$, $\mathscr{R}(X_6)$, $\mathscr{R}(X_7)$ are obtained by replacing in the above computations the elements $X_1, X_2, X_3$, $X_4, X_5, X_6, X_7$ by the elements $X_7, X_6, X_5$, $X_4, X_3, X_2, X_1$ respectively.

\end{proof}

\begin{remark}
\begin{itemize}
\item In \cite{Hikami_Inoue_braiding}, the authors computed explicitly the images $\mathscr{R}^{-1}(X_i)$ directly from the formulas of Definition \ref{def_square_changecoordinates}. The formulas they obtained are much more complicated than the formulas we obtained in Proposition \ref{prop_calcul_R} using the Fock-Goncharov decomposition.
\item The reader might ask why we did not provide a formula for $\mathscr{R}(X_4)$. First because we do not need it to characterize $\mathscr{R}$: indeed it can be recovered from the above formulas using the equality $X_4= [\Delta(K^{-1})X_1^{-1}X_3^{-1}X_6^{-1}X_7^{-1}]$. The actual reason is that we will argue in the proof of Theorem \ref{theorem_Drinfeld} that the above computations for $\mathscr{R}(X_i)$ hold when $A$ is a root of unity of odd order $N>1$ with the additional assumption that $E^N\otimes1=1\otimes F^N=0$ (through the image of appropriate representations). The author did not find a straightforward computation for $\mathscr{R}(X_4)$ that generalizes to this case.
\end{itemize}
\end{remark}

 \subsubsection{Drinfeld braidings at roots of unity are Kashaev braidings}\label{sec_DrinfeldKashaevBraidings}

\begin{theorem}\label{theorem_Drinfeld}
Let $V_1, V_2$ be two standard $\Uq$-modules  such that $r_1(E^N)=0$ and $r_2(F^N)=0$ which extends to simple $\mathcal{Z}_{\zeta}(\mathbb{D}_1)$ modules. Let 
$$R^D : V_1\otimes V_2 \rightarrow V_2\otimes V_1, \quad R^D= \sigma \circ (r_1\otimes r_2 )( q^{-\frac{H\otimes G}{2}})\circ (r_1\otimes r_2) \left(  \exp_q^{<N}\left( (q-q^{-1})E\otimes F \right) \right)$$ be a  Drinfel'd braiding as in Definition \ref{def_Drinfeld_braiding}. Then $R^D$ is a Kashaev braiding.
\end{theorem}

\begin{remark} In the particular case where $V_1=V_2=V_0:=V({\zeta}^{-1/2}, \zeta^{1/2}, 0,0)$, the associated Drinfel'd braiding $R^D$ is the braiding defining the $N$-th Jones polynomial at $N$-th root of unity. Hence Theorem \ref{theorem_Drinfeld} provides an alternative proof of Murakami and Murakami result in \cite{MM01} which relates this braiding to Kashaev's $R$-matrix of \cite{KashaevLinkInv}. The proof we propose here is more enlightening: Kashaev braidings are compositions of a quadratic part and of four cyclic quantum dilogarithms. As we saw in Lemma \ref{lemma_special_poly}, $\exp_q^{<N}(\frac{x}{q-q^{-1}})= \Phi_{w=1}(x)$ corresponds to a cyclic quantum dilogarithm with quantum shape parameter $w=1$, so corresponding to a degenerate tetrahedron. Using Lemma \ref{lemma_expq}, we will show that the product of four such cyclic quantum dilogarithms is a the cyclic quantum dilogarithm of a sum: this sum is the image by the quantum trace of $E\otimes F$. Hence the term $ \exp_q^{<N}\left( (q-q^{-1})E\otimes F \right) $ is a product of four cyclic quantum dilogarithms associated to degenerate tetrahedra.
\end{remark}

  We first prove that the formulas in Proposition \ref{prop_calcul_R} also hold when $A$ is a root of unity. Consider two commutative unital rings $\mathcal{R}_1$ and $\mathcal{R}_2$ with invertible elements $A^{1/2}_1\in \mathcal{R}_1^{\times}$, $A^{1/2}_2\in \mathcal{R}_2^{\times}$, such that both $A_1^2-A_1^{-2}$ and $A_2^2-A_2^{-2}$ are invertible and suppose that we have a surjective ring morphism $\mu : \mathcal{R}_1 \rightarrow \mathcal{R}_2$ such that $\mu(A^{1/2}_1)=A^{1/2}_2$. By extension of scalars, the morphism $\mu$ induces a structure of $\mathcal{R}_2$-algebra on $\mathcal{R}_2\otimes_{\mathcal{R}_1} \widehat{\mathcal{Z}}_{A_1}(\mathbf{\Sigma}, \Delta)$ and it follows from Definition \ref{def_CF} that this algebra is isomorphic to $\widehat{\mathcal{Z}}_{A_2}(\mathbf{\Sigma}, \Delta)$. A straightforward analysis of Definitions \ref{def_reindexing_map}, \ref{def_square_changecoordinates}, \ref{def_flip_changecoordinates}, shows that the following diagram commutes:
\begin{equation}\label{eq_diaaa}
\begin{tikzcd}
\mathcal{R}_2\otimes_{\mathcal{R}_1} \widehat{\mathcal{Z}}_{A_1}(\mathbb{D}_2, \Delta_2) 
\arrow[r, "\cong"', "\mathcal{R}_2\otimes_{\mathcal{R}_1}\mathscr{R}"] \arrow[d, "\cong"] &
\mathcal{R}_2\otimes_{\mathcal{R}_1} \widehat{\mathcal{Z}}_{A_1}(\mathbb{D}_2, \Delta_2) 
\arrow[d, "\cong"] \\
\widehat{\mathcal{Z}}_{A_2}(\mathbb{D}_2, \Delta_2)
\arrow[r, "\cong"', "\mathscr{R}"] &
\widehat{\mathcal{Z}}_{A_2}(\mathbb{D}_2, \Delta_2)
\end{tikzcd}\end{equation}

\begin{lemma}\label{lemma_rootunity}
The formulas of Proposition \ref{prop_calcul_R} hold when $A^{1/2}=\zeta^{1/2} \in \mathcal{R}^{\times}$ is a root of unity with $\zeta^2-\zeta^{-2}$ invertible.
\end{lemma}

\begin{proof}
Consider the ring $\mathcal{R}_1$ obtained by localizing the ring $\mathbb{Z}[A^{\pm 1/2}]$ with respect to $A^2-A^{-2}$ and denote by $\mu : \mathcal{R}_1 \rightarrow \mathcal{R}$ the unique surjective ring morphism sending $A^{1/2}$ to the root of unity $\zeta^{1/2}\in \mathcal{R}$. Since $A\in \mathcal{R}_1$ is generic, one can apply Proposition \ref{prop_calcul_R} to $\widehat{\mathcal{Z}}_{A}(\mathbf{\Sigma}, \Delta)$ to compute $\mathscr{R}$. We conclude using the commutativity of the diagram \eqref{eq_diaaa}.
\end{proof}

\begin{proof}[Proof of Theorem \ref{theorem_Drinfeld}]
Write $\mathscr{A}:= \mathcal{K}^{-1}\circ \mathscr{R}$. By Lemma \ref{lemma_qq}, it sufficient to show that the following diagram commutes:
$$\begin{tikzcd}
\widehat{\mathcal{Z}}_{\zeta}(\mathbb{D}_2, \Delta_2) 
\arrow[rrrr, "\cong"', "\mathscr{A}"] \arrow[d, "r_1\otimes r_2"'] &{}&{}&{}&
\widehat{\mathcal{Z}}_{\zeta}(\mathbb{D}_2, \Delta_2) 
\arrow[d, "r_1\otimes r_2"] \\
\End(V_1\otimes V_2) 
\arrow[rrrr, "\cong"', "\mathrm{Ad}\left( (r_1\otimes r_2)(\exp_q^{<N}((q-q^{-1})E\otimes F))\right)"] &{}&{}&{}&
\End(V_1\otimes V_2) 
\end{tikzcd}$$
Said differently, we need to show that for each generator $X$ of $\widehat{\mathcal{Z}}_{\zeta}(\mathbb{D}_2, \Delta_2) $, one has
\begin{equation*}
(r_1\otimes r_2)\circ \mathscr{A}(X) = \mathrm{Ad}\left( (r_1\otimes r_2)(\exp_q^{<N}((q-q^{-1})E\otimes F))\right) \circ (r_1\otimes r_2)(X).
\end{equation*}
When $X= \Delta(K^{1/2})$ and $X=\Delta(L^{1/2})$ the equality  follows from Proposition \ref{prop_KR}, Lemma \ref{lemma_rootunity} and the fact that both $\Delta(K^{1/2})$ and $\Delta(L^{1/2})$ commute with $E\otimes F$. For $X=H_{p_1}$ and $X=H_{p_2}$, this follows from Table $1$, Lemma \ref{lemma_rootunity} and the fact that $H_{p_i}$ commutes with $E\otimes F$. For $X\in \{X_1, X_2, X_3, X_5, X_6, X_7\}$, the computations made in the proof of Proposition \ref{prop_calcul_R} can be translated word-by-word by replacing $\exp_q$ and $s_q$ by $(r_1\otimes r_2)\circ \exp_q^{<N}$ and $(r_1\otimes r_2)\circ s_q^{<N}$ respectively, to prove the desired equality. Indeed, the hypothesis $r_1(E^N)=r_2(F^N)=0$ implies that we can replace each computation involving formulas $(1)(a)$ and $(1)(b)$ of Lemma \ref{lemma_expq}  by equivalent computation using formulas $(2)(a)$ and $(2)(b)$ respectively. We then conclude using Lemma \ref{lemma_rootunity}.

\end{proof}

 \subsubsection{Closed formulas for Kashaev's braidings when $q$ is a root of unity}\label{sec_KashaevBradingsRootsUnity}.
\\
 \textbf{Quantum shape parameters }
\par
 
 Recall the triangulations $\Delta_2=\Delta^{(1)}, \ldots, \Delta^{(5)}$ of $\mathbb{D}_2$ depicted in Figure \ref{fig_flip_triangulations}.
\begin{definition}
\begin{enumerate}
\item A \textbf{shape parameter} is an element of $$\mathbb{C}^{**}:= \{ \mathbf{z}=(z,z',z'') | zz'z''=-1, zz'-z=-1\}.$$
Note that each projection $\mathbf{z}\mapsto z$, $\mathbf{z}\mapsto z'$ and $\mathbf{z}\mapsto z''$ defines a bijection $\mathbb{C}^{**}\cong \mathbb{C}\setminus\{0,1\}$.
\par A \textbf{q-shape parameter} is an element of 
$$\mathbb{C}_q^{**}:= \{ \mathbf{w}=(w,w',w'') | (w^N,{w'}^N, {w''}^N)\in \mathbb{C}^{**}, ww'w''=-q^{-1}\}.$$
We say that $\mathbf{w}$ is a \textbf{q-flattenting} of $\mathbf{z}=(w^N, {w'}^N, {w''}^N)$. 
\item
We consider two pairs $(V_1,V_2)$ and $(V_3,V_4)$ of $\mathcal{Z}_{\zeta}(\mathbb{D}_1)$ simple modules which are $\tau$-compatible and let $\chi_1$ denote the character over the center of $\mathcal{Z}_{\zeta}(\mathbb{D}_2, \Delta_2)$ defined by $V_1\otimes V_2$ (so the character associated to $V_3\otimes V_4$ is $\tau\cdot \chi_1$). The pairs $(V_1,V_2)$ and $(V_3,V_4)$ (or the character $\chi_1$, or a Kashaev braiding $R:V_1\otimes V_2 \rightarrow V_3\otimes V_4$) are said $4T$-\textbf{triangulable} if there exist some characters $\chi_2, \ldots, \chi_5$ over the centers of $\mathcal{Z}_{\zeta}(\mathbb{D}_2, \Delta^{(2)}), \ldots, \mathcal{Z}_{\zeta}(\mathbb{D}_2, \Delta^{(5)})$ such that $\chi_i$ and $\chi_{i+1}$ are compatible for $i=1, \ldots, 4$. 
 \item For $i=1, \ldots, 7$, write $x_i:= \chi_1(X_i^N)$.The \textbf{classical shape parameters} associated to $\chi_1$ (or to the pair $(V_1,V_2)$ or to a Kashaev braiding $R:V_1\otimes V_2 \rightarrow V_3\otimes V_4$) are the shape parameters $\mathbf{z}_1, \ldots, \mathbf{z}_4 \in \mathbb{C}^{**}$ defined by 
 \begin{align*}
 & z_1 := -\chi_2(X_4^N) = -x_4^{-1} ; \\
 & z_2':= -\chi_3(X_5^N) = -x_5^{-1}(1+x_4)^{-1}; \\
 & z_3':= -\chi_4(X_3^N) = -x_3^{-1}(1+x_4)^{-1}; \\
 & z_4:= -\chi_5(X_4^N) = - x_4(1+x_5(1+x_4))^{-1}(1+x_3(1+x_4))^{-1}.
 \end{align*}
 
 Note that $\chi_1$ is $4T$ triangulable if and only if $z_1,z_2',z_3'$ and $z_4$  are in $\mathbb{C}\setminus \{0, 1\}$, \textit{i.e.} if and only if they indeed define classical shape parameters $\mathbf{z}_i$. In particular, being $4T$ triangulable is a generic condition. Note also that they satisfy the Thurston's equation:
 $$ z_1z_2z_3z_4=1.$$ 
 Figure \ref{fig_decomposition_octahedre} illustrates how the parameters $\tau \cdot \chi_1(X_i^N)$ can be computed using the classical shape parameters and the $x_i$ (compare with the formulas obtained from Proposition \ref{prop_calcul_R}).
 \end{enumerate}
 \end{definition}
 
 \begin{figure}[!h] 
\centerline{\includegraphics[width=\textwidth]{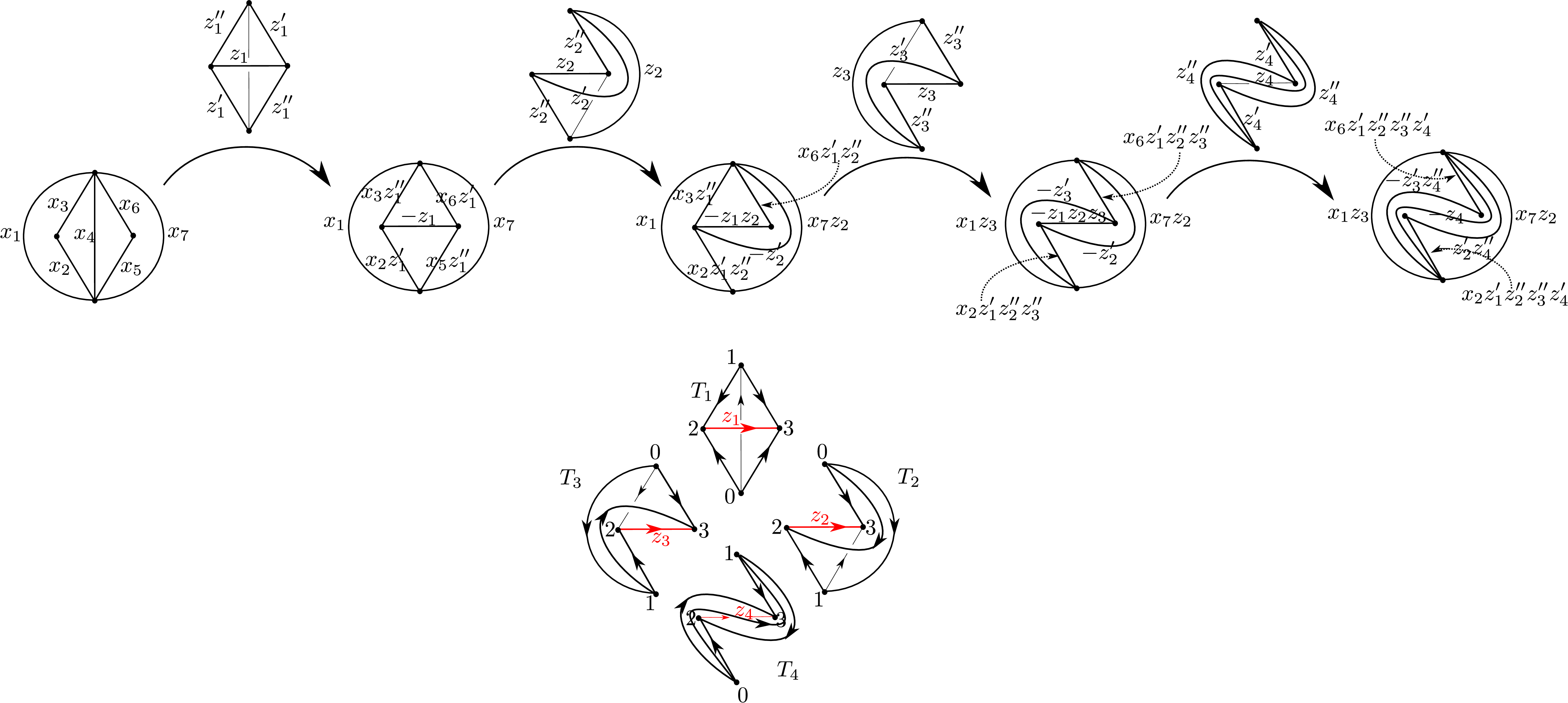} }
\caption{The top figure illustrates how the parameters $\tau \cdot \chi_1(X_i^N)$ can be computed using the classical shape parameters and the $x_i$. The bottom figure shows the four tetrahedra associated to a crossing and their branching. The edges in red are glued together inside the twisted octahedron giving the Thurston's equation $z_1z_2z_3z_4=1$.} 
\label{fig_decomposition_octahedre} 
\end{figure} 

 Fix $\chi_1$ the character over the center of $\mathcal{Z}_{\zeta}(\mathbb{D}_2)$ induced by $V_1\otimes V_2$ and consider 
 $$\mathscr{R}:= \tau^* \circ \Phi_{(\tau(\Delta_2), \chi_1\circ \tau), (\Delta_2, \chi_1)}^{\mathbb{D}_2} :  \quotient{ \widehat{\mathcal{Z}}_{\zeta}(\mathbb{D}_2)}{I_{\chi_1}} \xrightarrow{\cong}   \quotient{ \widehat{\mathcal{Z}}_{\zeta}(\mathbb{D}_2)}{I_{\tau \cdot \chi_1}}.$$
 
  By definition, a Kashaev braiding is an isomorphism $R : V_1\otimes V_2 \xrightarrow{\cong} V_3\otimes V_4$ such that $\mathscr{R}=\mathrm{Ad}(R)$.
The condition for $\chi_1$ to be $4T$ triangulable is equivalent to the possibility of decomposing the automorphism $\mathscr{R}$ as:
$$\mathscr{R}= \tau^* \circ \Phi^{\mathbb{D}_2}_{(\tau(\Delta_2), \tau\cdot \chi_1), (\Delta^{(5)}, \chi_5)} \circ  \Phi^{\mathbb{D}_2}_{(\Delta^{(5)}, \chi_5), (\Delta^{(4)}, \chi_4)} \circ  \Phi^{\mathbb{D}_2}_{(\Delta^{(4)},\chi_4), (\Delta^{(3)}, \chi_3)} \circ  \Phi^{\mathbb{D}_2}_{(\Delta^{(3)}, \chi_3),( \Delta^{(2)}, \chi_2)}  \circ  \Phi^{\mathbb{D}_2}_{(\Delta^{(2)}, \chi_2), (\Delta_2, \chi_1)}.$$

Fix $w_1', w_2'', w_3'', w_4' \in \mathbb{C}^*$ some $N$-th roots of $z_1', z_2'',z_3''$ and $z_4'$ respectively. 
By Lemma \ref{lemma_decomp_flip}, for $i\in \{1, \ldots, 4\}$, one has $ \Phi^{\mathbb{D}_2}_{(\Delta^{(i+1)}, \chi_{i+1}), (\Delta^{(i)}, \chi_i)}= \nu_{w_i^{'(')}} \circ \mathrm{Ad}(\Phi_{w_i^{'(')}}(X_{n_i}^{-1}))$.   Set
$$ \mathcal{K}:= \tau^* \circ  \Phi^{\mathbb{D}_2}_{(\tau(\Delta_2), \tau\cdot \chi_1), (\Delta^{(5)}, \chi_5)} \circ \nu_{w'_4} \circ \nu_{w''_3} \circ \nu_{w''_2} \circ \nu_{w'_1} : \quotient{ \widehat{\mathcal{Z}}_{\zeta}(\mathbb{D}_2)}{I_{\chi_1}} \xrightarrow{\cong}   \quotient{ \widehat{\mathcal{Z}}_{\zeta}(\mathbb{D}_2)}{I_{\tau \cdot \chi_1}}.$$
One has:
\begin{multline*}
\mathscr{R}= \mathcal{K} \circ \mathrm{Ad}(\Phi_{w'_4}\left( (\nu_{w''_3}\circ \nu_{w''_2}\circ\nu_{w'_1})^{-1}(X_4^{-1})\right)) \circ  \mathrm{Ad}(\Phi_{w''_3}\left(  (\nu_{w''_2}\circ\nu_{w'_1})^{-1}(X_3^{-1})\right)) \\
 \circ \mathrm{Ad}(\Phi_{w''_2}\left( \nu_{w'_1}^{-1}(X_5^{-1})\right)) \circ  \mathrm{Ad}(\Phi_{w'_1}\left( X_4^{-1}\right)).
 \end{multline*}
%\begin{multline*}
%\mathscr{R}= \mathcal{K} \circ \mathrm{Ad}(s_q\left( (\nu_3\circ \nu_2\circ\nu_1)^{-1}(X_4^{-1})\right)) \circ  \mathrm{Ad}(s_q\left(  (\nu_2\circ\nu_1)^{-1}(X_3^{-1})\right)) \\
 %\circ \mathrm{Ad}(s_q\left( \nu_1^{-1}(X_5^{-1})\right)) \circ  \mathrm{Ad}(s_q\left( X_4^{-1}\right)).\end{multline*}
By a straightforward computation using Definition \ref{def_nu} (see Table \ref{table_k} below), we find:
\begin{equation}\label{eq_Rexpression1}
\mathscr{R}= \mathcal{K} \circ \mathrm{Ad}(\Phi_{w'_4}\left( {w'_1}^{2} w''_2 w''_3 [X_3X_4X_5]^{-1} \right)) \circ \mathrm{Ad}(\Phi_{w''_3}\left( w'_1 [X_3X_4]^{-1} \right))\circ \mathrm{Ad}(\Phi_{w''_2}\left( w'_1[X_4X_5]^{-1} \right))
\circ \mathrm{Ad}(\Phi_{w'_1}\left( X_4^{-1} \right)).
\end{equation}

The isomorphism $\mathcal{K}$ is described by Table \ref{table_k}:
%\newpage
\begin{table}[h!]
   
     \centering
  \begin{adjustbox}{max width=\textwidth}
\begin{tabular}{|c|c|c|c|c|c|}
\hline
$\mathcal{Z}_{\zeta}(\mathbb{D}_2, \Delta^{(1)})$ & $\mathcal{Z}_{\zeta}(\mathbb{D}_2, \Delta^{(2)})$ & $\mathcal{Z}_{\zeta}(\mathbb{D}_2, \Delta^{(3)})$ & $\mathcal{Z}_{\zeta}(\mathbb{D}_2, \Delta^{(4)})$ & $\mathcal{Z}_{\zeta}(\mathbb{D}_2, \Delta^{(5)})$ & $\mathcal{Z}_{\zeta}(\mathbb{D}_2, \Delta^{(1)})$ \\
\hline
$X_1$ & $X_1$ & $X_1$ & $w''_3[X_1X_3]$ & $w''_3w'_4[X_1X_3X_4]$ & $w''_3w'_4[X_1X_2X_4]$ \\
\hline
$X_2$ & $(w'_1)^{-1}X_2$ & $(w'_1w''_2)^{-1}X_2$ & $(w'_1w''_2w''_3)^{-1}X_2$ & $(w'_1w''_2w''_3w'_4)^{-1}X_2$ & $(w'_1w''_2w''_3w'_4)^{-1}X_5$ \\
\hline
$X_3$ & $w'_1[X_3X_4]$ & $(w'_1w''_2)[X_3X_4X_5]$ & $(w'_1w''_2w''_3)[X_4X_5]$ & $(w'_1w''_2w''_3w'_4)X_5$ & $(w'_1w''_2w''_3w'_4)X_6$ \\
\hline
$X_4$ & $X_4^{-1}$ & $(w''_2)^{-1}[X_4X_5]^{-1}$ & $(w''_2w''_3)^{-1}[X_3X_4X_5]^{-1}$ & $(w''_2w''_3{w'_4}^2)^{-1}[X_3X_4X_5]^{-1}$ & $(w''_2w''_3{w'_4}^2)^{-1}[X_2X_4X_6]^{-1}$ \\
\hline
$X_5$ & $w'_1[X_4X_5]$ & $(w'_1w''_2)X_4$ & $(w'_1w''_2w''_3)[X_3X_4]$ & $(w'_1w''_2w''_3w'_4)X_3$ & $(w'_1w''_2w''_3w'_4)X_2$ \\
\hline
$X_6$ & $(w'_1)^{-1}X_6$ & $(w'_1w''_2)^{-1}X_6$ &$(w'_1w''_2w''_3)^{-1}X_6$ &$(w'_1w''_2w''_3w'_4)^{-1}X_6$ & $(w'_1w''_2w''_3w'_4)^{-1}X_3$ \\
\hline
$X_7$ & $X_7$ & $w''_2[X_5X_7]$ & $w''_2[X_5X_7]$ & $(w''_2w'_4)[X_4X_5X_7]$ & $(w''_2w'_4)[X_4X_6X_7]$ \\
\hline
$H_{p_1}=[Z_2Z_3]$ & $[Z_2Z_3Z_4]$ & $[Z_2Z_3Z_4Z_5]$ & $[Z_2Z_4Z_5]$ & $[Z_2Z_5]$ & $H_{p_2}$ \\
\hline
$H_{p_2}=[Z_5Z_6]$ & $[Z_4Z_5Z_6]$ & $[Z_4Z_6]$ & $[Z_3Z_4Z_6]$ & $[Z_3Z_6]$ &$H_{p_1}$ \\
\hline

\end{tabular}
\end{adjustbox}
 \label{table_k}
 \caption{Computation of $\mathcal{K}$}
\end{table}
 In this table, for $1\leq i \leq 4$, one passes from the $i$-th column to the $i+1$-th column by composing with $\nu_{w_i^{'(')}}$ and one passes from the $5$-th column to the last one by composing with $ \tau^* \circ  \Phi^{\mathbb{D}_2}_{(\tau(\lambda_2), \tau\cdot \chi_1), (\lambda^{(5)}, \chi_5)}$. Thus one passes from the first column to the last one by composing with $\mathcal{K}$.  In order to reorder the parameters used to define Kashaev's braidings, we now introduce a class of simple $\mathcal{Z}_{\zeta}(\mathbb{D}_1)$-modules named standard.
 
 \begin{definition}
 \begin{enumerate}
 \item Let $V=\mathbb{C}^N$ and $\{ v_i, i \in \mathbb{Z}/N\mathbb{Z} \}$  its canonical basis. We define the operators $D, A \in \End(V)$ by the formulas:
 $$ D v_i= q^i v_i, \quad Av_i=v_{i-1}, \quad \forall i \in \mathbb{Z}/N\mathbb{Z}.$$
 They satisfy $AD=qDA$ and $A^N=D^N=\id$. 
 \item  Let $(\widetilde{x_1}, \lambda, h_p, h_{\partial}) \in (\mathbb{C}^*)^4$ and define a simple $\mathcal{Z}_{\zeta}(\mathbb{D}_1, \Delta_1)$-module $W(\widetilde{x_1}, \lambda, h_p, h_{\partial})$ by the formulas: 
$$ \rho(H_p)= h_p \id, \quad \rho(H_{\partial})=h_{\partial} \id, \quad \rho(X_1)= \widetilde{x_1} A, \quad \rho([Z_1Z_3Z_4])= \lambda^{-1}D.$$
We call \textbf{standard} such a simple $\mathcal{Z}_{\zeta}(\mathbb{D}_1, \Delta_1)$-module. By Theorem \ref{theorem_CF_Rep}, every simple $\mathcal{Z}_{\zeta}(\mathbb{D}_1)$-module is isomorphic to a standard one and two standards are isomorphic if and only if they have the same parameters except for $\widetilde{x_1}$ which can be multiplied by a $q^i$. 
\item Let $(V_1,V_2)$ be some $4T$ triangulable standard modules of the form $V_i=W(x_1^{(i)}, \lambda_i, h_p^{(i)}, h_{\partial}^{(i)})$. Write
$$ \widetilde{x_4}:= \frac{h_{\partial}^{(1)} \widetilde{x_1}^{(2)}}{h_p^{(1)} \widetilde{x_1}^{(1)}}, \quad \widetilde{x_3}:=\frac{h_p^{(1)}}{h_{\partial}^{(1)} \lambda_1^2}, \quad \widetilde{x_5}:= \lambda_2^2 h_p^{(2)}h_{\partial}^{(2)}.$$
Note that for $i=3,4,5$, one has $\widetilde{x_i}^N=x_i$. A \textbf{system of quantum shape parameters for} $(V_1,V_2)$ is the data of quantum shape parameters $\mathbf{w}_1, \ldots, \mathbf{w}_4$ such that $w_1',w_2'',w_3'',w_4'$ are $N$-th roots of $z_1', z_2'',z_3'',z_4'$ and such that
\begin{align}
\label{E1} & w_1=-\widetilde{x_4}^{-1}; \\
\label{E2} & w_2'= -w'_1\widetilde{x_4}^{-1} \widetilde{x_5}^{-1}; \\
\label{E3} & w_3' = -w'_1 \widetilde{x_4}^{-1} \widetilde{x_3}^{-1}; \\
\label{E4} &w_4 = -{w'_1}^2 w''_2 w''_3 \widetilde{x_3}^{-1}\widetilde{x_4}^{-1}\widetilde{x_5}^{-1}.
\end{align}
Note that $w_1,w_2',w_3',w_4$ are $N$-th roots of $z_1,z_2',z_3',z_4$ so the $\mathbf{w}_i$ define some q-flattenings of the $\mathbf{z}_i$. Note also that they satisfy the following quantum Thurston equation:
 $$ w_1w_2w_3w_4= q^{-2}.$$

\end{enumerate}
\end{definition}
 
 When $(V_1,V_2)$ are $4T$ triangulable standard representations with quantum shape parameters $\mathbf{w}_i$, Equation \eqref{eq_Rexpression1} takes the form:
 
 \begin{equation}\label{eq_Rexpression2}
\mathscr{R}= \mathcal{K} \circ \mathrm{Ad} \left[ \Phi_{w'_4}( -w_4[D^{-2}A] \otimes [A^{-1}D^2]) \Phi_{w''_3}(-w_3'[D^{-2}A] \otimes A^{-1})\Phi_{w''_2}(-w_2'A \otimes [A^{-1}D^2]) \Phi_{w'_1}( -w_1A \otimes A^{-1}) \right].
\end{equation}

\vspace{2mm}
 \textbf{Intertwiners for $\mathcal{K}$ }
\vspace{2mm}
\par Fix $(V_1,V_2)$ and $(V_3, V_4)$ some $4T$ triangulable $\tau$ compatible pairs of standard modules and denote by $\pi_i$ the character induced by $V_i=W(\widetilde{x_1}^{(i)}, \lambda_i, h_p^{(i)}, h_{\partial}^{(i)})$ on the center of $\mathcal{Z}_{\zeta}(\mathbb{D}_1)$.  In view of the decomposition \eqref{eq_Rexpression2} and in order to provide an explicit formula for a Kashaev's braiding $R:V_1\otimes V_2\to V_3 \otimes V_4$, it remains to find an intertwiner for $\mathcal{K}$.  We will make the (non restrictive) assumption that
\begin{equation}\label{eq_convention}
h_{\partial}^{(1)}=h_{\partial}^{(4)} \quad \mbox{and} \quad h_{\partial}^{(2)} = h_{\partial}^{(3)}.
\end{equation}
Note that the assumption of being $\tau$-compatible implies $h_p^{(2)}=h_p^{(3)}$ and $h_p^{(1)}=h_p^{(4)}$. We choose a system of quantum shape parameters $\mathbf{w}_i$ and fix a square root $(w'_1w''_2w''_3w'_4)^{1/2}$ of $w'_1w''_2w''_3w'_4$,  such that
 \begin{align}
 \label{E5}& w''_3w'_4=  \frac{\widetilde{x_1}^{(1)}}{\widetilde{x_1}^{(4)}} (\lambda_3 h_{\partial}^{(3)})^{-2} \\
 \label{E6}& w''_2w'_4=  \frac{\widetilde{x_1}^{(3)}}{\widetilde{x_1}^{(2)}} \lambda_4^{2}  \\
 \label{E7}& \frac{\lambda_4}{\lambda_1} = \frac{\lambda_2}{\lambda_3} = (w'_1w''_2w''_3w'_4)^{1/2}, 
 \end{align}
 The data $(\mathbf{w}_1, \mathbf{w}_2, \mathbf{w}_3, \mathbf{w}_4, (w'_1w''_2w''_3w'_4)^{1/2})$ which satisfy Equations \eqref{E1}, \eqref{E2}, \eqref{E3}, \eqref{E4}, \eqref{E5}, \eqref{E6} and \eqref{E7} will be called a \textbf{system of 4T octahedron parameters}.
 \begin{remark}
By inspection of Equations \eqref{E5}, \eqref{E6}, \eqref{E7}, we see that once a $4T$-triangulable pair $(V_1,V_2)$ of standard representations is fixed, there is a $1$-to-$1$ correspondence between the set of $\tau$-compatible standard pairs $(V_3,V_4)$ satisfying \eqref{eq_convention} and a system of 4T octahedron parameters. In particular, $(V_3, V_4)$ are obtained from the formulas:
\begin{align*}
{}&  \lambda_3=(w'_1w''_2w''_3w'_4)^{1/2}\lambda_2,\quad  \lambda_4=(w'_1w''_2w''_3w'_4)^{-1/2}\lambda_1, \quad h_{\partial}^{(3)}=h_{\partial}^{(2)}, h_{\partial}^{(4)}=h_{\partial}^{(1)} \\
{}&\widetilde{x}_1^{(4)}=(h_{\partial}^{(2)})^{-2}\widetilde{x}_1^{(1)}\lambda_2^{-2}(w'_1w''_2w''_3w'_4)^{-1/2}, \quad \widetilde{x}_1^{(3)}=\widetilde{x}_1^{(2)} \lambda_1^{-2} w_1'(w_2'')^2w_3'(w_4')^2.
\end{align*}

\end{remark}

\begin{lemma}
Let $\widehat{\mathcal{K}} : \mathcal{Z}_{\zeta}(\mathbb{D}_1)^{\otimes 2} \xrightarrow{\cong} \mathcal{Z}_{\zeta}(\mathbb{D}_1)^{\otimes 2}$ be the isomorphism defined by 
$$ \widehat{K} (Z^{\mathbf{k}} \otimes Z^{\mathbf{k}'}) := w^{(\mathbf{k}, \mathbf{k}')} \left( Z^{\mathbf{k}'} [Z_1Z_2Z_4]^{k_1-k_4} \otimes [Z_1Z_3Z_4]^{k_4' - k_1'} Z^{\mathbf{k}}\right), $$
where 
$$ w^{(\mathbf{k}, \mathbf{k}')} := (w'_1w''_2w''_3w'_4)^{(k_3-k_2+k'_2-k'_3)/2}(w''_3w'_4)^{(k_1-k_4)/2} (w''_2w'_4)^{(k_4'-k_1')/2}.$$
Note that, since $\mathbf{k}, \mathbf{k}'$ are balanced, the integers $k_1-k_4$ and $k'_4-k'_1$ are even. Then $\widehat{\mathcal{K}}$ is an algebra isomorphism which passes to the quotient to an isomorphism (still denoted by the same letter) 
$$\widehat{\mathcal{K}} : \quotient{\mathcal{Z}_{\zeta}(\mathbb{D}_1)^{\otimes 2}}{(I_{\pi_1}\otimes 1 +1\otimes I_{\pi_2})} \xrightarrow{\cong} \quotient{\mathcal{Z}_{\zeta}(\mathbb{D}_1)^{\otimes 2}}{(I_{\pi_3}\otimes 1 +1\otimes I_{\pi_4})}$$
 such that the following diagram commutes: 
$$ \begin{tikzcd}
\quotient{\mathcal{Z}_{\zeta}(\mathbb{D}_2, \Delta_2)}{I_{\chi_1}} \arrow[r, "\cong"', "\mathcal{K}"] \arrow[d, "\cong"', "\theta"] &
\quotient{\mathcal{Z}_{\zeta}(\mathbb{D}_2, \Delta_2)}{I_{\chi_2}}  \arrow[d, "\cong"', "\theta"] \\
\quotient{\mathcal{Z}_{\zeta}(\mathbb{D}_1)^{\otimes 2}}{(I_{\pi_1}\otimes 1 +1\otimes I_{\pi_2})} \arrow[r, "\cong"', "\widehat{\mathcal{K}}"] &
 \quotient{\mathcal{Z}_{\zeta}(\mathbb{D}_1)^{\otimes 2}}{(I_{\pi_3}\otimes 1 +1\otimes I_{\pi_4})}
 \end{tikzcd}
 $$
 where the isomorphisms $\theta$ are induced by the splitting morphism $\mathcal{Z}_{\zeta}(\mathbb{D}_2) \hookrightarrow \mathcal{Z}_{\zeta}(\mathbb{D}_1)^{\otimes 2}$.

\end{lemma}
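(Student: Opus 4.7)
My plan is to prove the lemma in three steps: first, verify that $\widehat{\mathcal{K}}$ as written defines an algebra automorphism of $\mathcal{Z}_{\omega}(\mathbb{D}_1)^{\otimes 2}$; second, check that the displayed square commutes on the generators of $\mathcal{Z}_{\omega}(\mathbb{D}_2,\Delta_2)$ listed in the first column of Table~\ref{table_k}; third, deduce that $\widehat{\mathcal{K}}$ descends to the asserted isomorphism of quotients.

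For the first step, I would rewrite $\widehat{\mathcal{K}}(Z^{\mathbf{k}}\otimes Z^{\mathbf{k}'})$ as $w^{(\mathbf{k},\mathbf{k}')}\cdot \sigma\bigl(Z^{\mathbf{k}} L^{(k_1-k_4)/2}\otimes K^{(k'_4-k'_1)/2} Z^{\mathbf{k}'}\bigr)$ up to a computable Weyl-ordering phase, using the identifications $L^{-1/2}=[Z_1Z_2Z_4]$ and $K^{-1/2}=[Z_1Z_3Z_4]$ together with the observation that $k_1-k_4$ and $k'_4-k'_1$ are even on balanced monomials. Multiplicativity then amounts to a single identity of $\omega$-valued $2$-cocycles in the Weil--Petersson form on $\mathbb{D}_1$, since each of the three factors (the flip, the grading shifts, and the scalar twist) depends bilinearly on the weights; this identity is verified by a direct computation on pairs of balanced monomials.

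For the second step, it is enough to evaluate $\widehat{\mathcal{K}}\circ i$ on the generators $Z_j^{\pm 1}$ for $j=1,\ldots,7$ and on $H_{p_1},H_{p_2}$, and compare with the images read off from the last column of Table~\ref{table_k}. The gluing morphism $i$ identifies each $Z_j$ of $\mathbb{D}_2$ with a monomial in $\mathcal{Z}_{\omega}(\mathbb{D}_1)^{\otimes 2}$ of the form $Z_a\otimes 1$, $1\otimes Z_b$, or $[Z_a\otimes Z_b]$ along the glued edge; inserting this into the formula for $\widehat{\mathcal{K}}$ and expanding, the scalar prefactor $w^{(\mathbf{k},\mathbf{k}')}$ reduces to a product of $w'_1,w''_2,w''_3,w'_4$ which must be matched against the corresponding entry of Table~\ref{table_k}. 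The identities \eqref{E5}--\eqref{E7} relating the quantum shape parameters to the parameters of the standard representations are used repeatedly to reconcile these prefactors.

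For the third step, the gluing morphism $i$ sends $I_{\chi_1}$ into $I_{\pi_1}\otimes 1 + 1\otimes I_{\pi_2}$, since $\chi_1$ is the central character of $V_1\otimes V_2$ while $\pi_i$ is the central character of $V_i$; the analogous containment holds for $\chi_2=\tau\cdot\chi_1$ and $(\pi_3,\pi_4)$. Combined with step (ii) and with the already established fact $\mathcal{K}(I_{\chi_1})=I_{\chi_2}$, this forces $\widehat{\mathcal{K}}$ to carry $I_{\pi_1}\otimes 1 + 1\otimes I_{\pi_2}$ into $I_{\pi_3}\otimes 1 + 1\otimes I_{\pi_4}$, and running the same construction for $\tau^{-1}$ in place of $\tau$ yields a two-sided inverse on the quotients. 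The main obstacle I expect is the bookkeeping of step (ii): the nine rows of Table~\ref{table_k} each involve non-trivial products of the $w_i^{'(')}$ which must coincide with the scalars $w^{(\mathbf{k},\mathbf{k}')}$, and the row for $Z_4$ is the most delicate, as its image involves all four quantum shape parameters together with a balanced monomial that is not a pure tensor, so several layers of Weyl reordering must be controlled simultaneously.
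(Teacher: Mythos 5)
Your steps (i) and (ii) are in line with what the paper does (the paper in fact dispatches both with the single remark that $\widehat{\mathcal{K}}$ is multiplicative because $w^{(\mathbf{k},\mathbf{k}')}$ is additive in the exponents, the compatibility with $\mathcal{K}$ on the glued subalgebra being built into the formula via Table 2). The genuine problem is your step (iii). You claim that commutativity of the square on $i\bigl(\mathcal{Z}_{\omega}(\mathbb{D}_2,\Delta_2)\bigr)$, together with $\mathcal{K}(I_{\chi_1})=I_{\chi_2}$, \emph{forces} the containment $\widehat{\mathcal{K}}(I_{\pi_1}\otimes 1+1\otimes I_{\pi_2})\subset I_{\pi_3}\otimes 1+1\otimes I_{\pi_4}$. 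It does not: these ideals are generated by kernels of the central characters of the \emph{individual} tensor factors, and the relevant central generators are not all in the image of the gluing map. For instance $H_{\partial}\otimes 1$, $K^{\pm N/2}\otimes 1$ and $X_1^{\pm N}\otimes 1$ are central in $\mathcal{Z}_{\omega}(\mathbb{D}_1)^{\otimes 2}$ but only the ``diagonal'' combinations $H_{\partial}\otimes H_{\partial}$, $\Delta(K^{\pm N/2})=K^{\pm N/2}\otimes K^{\pm N/2}$, etc., lie in $i\bigl(\mathcal{Z}_{\omega}(\mathbb{D}_2,\Delta_2)\bigr)$. Hence the behaviour of $\widehat{\mathcal{K}}$ on the glued subalgebra, even modulo the ideals, says nothing about $\widehat{\mathcal{K}}(H_{\partial}\otimes 1)$ or $\widehat{\mathcal{K}}(K^{N/2}\otimes 1)$ individually, and one could have $h_{\partial}^{(1)}h_{\partial}^{(2)}=h_{\partial}^{(3)}h_{\partial}^{(4)}$ with $h_{\partial}^{(1)}\neq h_{\partial}^{(4)}$, in which case your square would still commute after quotienting but the asserted ideal containment would fail.

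What is actually needed (and is the real content of the paper's proof) is the direct verification that $\pi_1(x)=(\pi_3\otimes\pi_4)\bigl(\widehat{\mathcal{K}}(x\otimes 1)\bigr)$ and $\pi_2(x)=(\pi_3\otimes\pi_4)\bigl(\widehat{\mathcal{K}}(1\otimes x)\bigr)$ for each generator $x\in\{X_1^{\pm N},K^{\pm N/2},H_p^{\pm1},H_{\partial}^{\pm1}\}$ of the center of $\mathcal{Z}_{\omega}(\mathbb{D}_1)$. These equalities are exactly where the standing hypotheses enter: the $\tau$-compatibility gives $h_p^{(1)}=h_p^{(4)}$, $h_p^{(2)}=h_p^{(3)}$; the convention \eqref{eq_convention} handles $H_{\partial}$; Equation \eqref{E7} handles $K^{\pm N/2}$ (via $(\lambda_4/\lambda_1)^N=(\lambda_2/\lambda_3)^N=(w_1'w_2''w_3''w_4')^{N/2}$); and Equations \eqref{E5}, \eqref{E6} raised to the $N$-th power handle $X_1^{\pm N}$. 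Your use of \eqref{E5}--\eqref{E7} in step (ii) to match prefactors on the image of $i$ does not substitute for these factorwise checks, so you should replace the ``forcing'' argument of step (iii) by this explicit verification; once that is done, bijectivity on the quotients follows as you say (or simply because $\widehat{\mathcal{K}}$ is an automorphism matching the two ideals).
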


\begin{proof} 
The fact that  $\widehat{\mathcal{K}} : \mathcal{Z}_{\zeta}(\mathbb{D}_1)^{\otimes 2} \xrightarrow{\cong} \mathcal{Z}_{\zeta}(\mathbb{D}_1)^{\otimes 2}$ is an algebra isomorphism is an immediate consequence of the definition and of the fact that $w^{(\mathbf{k}+\mathbf{h}, \mathbf{k}'+\mathbf{h}')}=w^{(\mathbf{k}, \mathbf{h})}w^{(\mathbf{k}',\mathbf{h}')}$. To prove the inclusion 
$$ \widehat{\mathcal{K}}( \mathcal{I}_{\pi_1}\otimes 1 + 1\otimes \mathcal{I}_{\pi_2}) \subset (\mathcal{I}_{\pi_3} \otimes 1 +1\otimes \mathcal{I}_{\pi_4}), $$
we need to show that 
$$ \pi_1(x) = (\pi_3 \otimes \pi_4)(\widehat{\mathcal{K}} (x\otimes 1)) \quad \mbox{and} \quad \pi_2(x)=(\pi_3\otimes \pi_4)(\widehat{\mathcal{K}}(1\otimes x)), $$
for each generator $x\in \{X_1^{\pm N}, K^{\pm N/2}, H_p^{\pm 1}, H_{\partial}^{\pm 1} \}$. For $x=H_p^{\pm 1}$, this follows from the equalities $h_p^{(1)}=h_p^{(4)}$ and $h_p^{(2)}=h_p^{(3)}$. For $x=H_p^{\pm 1}$, this follows from the convention  \eqref{eq_convention} that $h_{\partial}^{(1)}=h_{\partial}^{(4)}$ and $h_{\partial}^{(2)} = h_{\partial}^{(3)}$.  For $x= K^{\pm N/2}$, this follows from the equalities $\left( \frac{\lambda_4}{\lambda_1} \right)^N = \left( \frac{\lambda_2}{\lambda_3} \right)^N = (w'_1w''_2w''_3w'_4)^{N/2}$ given by Equation \eqref{E7}. For $x=X_1^{\pm N}$, this follows from Equations \eqref{E5}, \eqref{E6} by elevating both sides of the equations at the power $N$. 

\end{proof}

\begin{lemma}\label{lemma_D}
Let $\tau: V_4\otimes V_3 \to V_3\otimes V_4$ be the flip morphism $\tau(x\otimes y)=y\otimes x$ and $Q: V_1\otimes V_2 \to V_4\otimes V_3$ be the isomorphism defined by $Q (v_i \otimes v_j)=q^{2ij}v_i \otimes v_j$. Then the operator $\tau \circ Q : V_1\otimes V_2 \rightarrow V_3\otimes V_4$ is an intertwiner for $\widehat{\mathcal{K}}$, \textit{i.e.} $\mathrm{Ad}(\tau \circ Q) = \widehat{\mathcal{K}}$.
\end{lemma}

\begin{proof} Let $\sigma : \quotient{\mathcal{Z}_{\zeta}(\mathbb{D}_1)^{\otimes 2}}{(I_{\pi_3}\otimes 1 +1\otimes I_{\pi_4})} \xrightarrow{\cong} \quotient{\mathcal{Z}_{\zeta}(\mathbb{D}_1)^{\otimes 2}}{(I_{\pi_4}\otimes 1 +1\otimes I_{\pi_3})}$ be the flip $\sigma(x\otimes y):= y\otimes x$, so that $\mathrm{Ad}(\tau)=\sigma$. We need to prove the equality 
\begin{equation}\label{eq_alacon}
 \sigma \circ \widehat{\mathcal{K}} (X) = \mathrm{Ad}(Q) (X), 
 \end{equation}
for $X$ a generator of the form $a\otimes 1$ or $1\otimes a$ and $a\in \{X_1^{\pm 1}, [Z_1Z_2Z_4]^{\pm 1}, H_p^{\pm 1}, H_{\partial}^{\pm 1}\}$. When $a=H_p$, this follows from the equalities $h_p^{(1)}=h_p^{(4)}$ and $h_p^{(2)}=h_p^{(3)}$. When $a=H_{\partial}$, this follows from the convention we made in Equation \eqref{eq_convention} that $h_{\partial}^{(1)}=h_{\partial}^{(4)}$ and $h_{\partial}^{(2)} = h_{\partial}^{(3)}$. When $a=K^{1/2}$, on the one hand, we have
$$ \sigma \circ \widehat{\mathcal{K}} (K^{1/2}\otimes 1)= (w'_1w''_2w''_3w'_4)^{-1/2} (K^{1/2}\otimes 1) \quad \mbox{and} \quad \sigma \circ \widehat{\mathcal{K}}(1\otimes K^{1/2})= (w'_1w''_2w''_3w'_4)^{1/2}(1\otimes K^{1/2}).$$
On the other hand
$$ \mathrm{Ad}(Q) (K^{1/2}\otimes 1)= \frac{\lambda_1}{\lambda_4} (K^{1/2}\otimes 1) \quad \mbox{and}\quad \mathrm{Ad}(Q)(1\otimes K^{1/2}) = \frac{\lambda_2}{\lambda_3} (1\otimes K^{1/2}),$$
so Equation \eqref{eq_alacon} follows from Equation \eqref{E7} in that case. Set $d_{i,j}:= q^{2ij}$ so $Qv_i\otimes v_j= d_{i,j}v_i\otimes v_j$. 
When $a=X_1$, on the one hand, we compute
$$ \sigma \circ \widehat{\mathcal{K}} (X_1\otimes 1) \cdot (v_i\otimes v_j) = w''_3w'_4 \widetilde{x}_1^{(4)} (h_{\partial}^{(3)}\lambda_3)^{2} q^{-2j} (v_{i-1}\otimes v_j), \quad
\sigma \circ \widehat{\mathcal{K}} (1\otimes X_1) \cdot (v_i\otimes v_j) = (w''_2w'_4)^{-1} \widetilde{x}_1^{(3)} \lambda_4^{2} q^{-2i} (v_{i}\otimes v_{j-1}).$$
On the other hand
$$\mathrm{Ad}(Q)(X_1\otimes 1) \cdot(v_i \otimes v_j) = \widetilde{x}_1^{(1)} \frac{d_{i-1, j}}{d_{i,j}} (v_{i-1}\otimes v_j), \quad
 \mathrm{Ad}(Q)(1\otimes X_1) \cdot(v_i \otimes v_j) = \widetilde{x}_1^{(2)} \frac{d_{i, j-1}}{d_{i,j}} (v_{i}\otimes v_{j-1}).$$
 So Equation \eqref{eq_alacon} follows from Equations \eqref{E5} and \eqref{E6} in that case.

\end{proof}

The following is the main achievement of this section.

\begin{theorem}\label{theorem_ClosedFormula}
Let $(V_1,V_2)$ be a pair of $4T$ triangulable standard representations and $(\mathbf{w}_1, \mathbf{w}_2, \mathbf{w}_3, \mathbf{w}_4, (w'_1w''_2w''_3w'_4)^{1/2})$ a system of $4T$ octahedron parameters with associated pair $(V_3,V_4)$. Then the isomorphism $R: V_1\otimes V_2 \to V_3\otimes V_4$ defined by
$$ R= \tau \circ Q \circ  \Phi_{w'_4}( -w_4[D^{-2}A] \otimes [A^{-1}D^2]) \Phi_{w''_3}(-w_3'[D^{-2}A] \otimes A^{-1})\Phi_{w''_2}(-w_2'A \otimes [A^{-1}D^2]) \Phi_{w'_1}( -w_1A \otimes A^{-1}) $$ 
is a Kashaev braiding.
\end{theorem}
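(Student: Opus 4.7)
The strategy is to combine the Fock--Goncharov style decomposition established in Equation \eqref{eq_Rexpression2} with the intertwining property for $\mathcal{K}$ proved in Lemma \ref{lemma_D}. Denote by $P := \Phi_{w'_4}(-w_4[D^{-2}A]\otimes[A^{-1}D^2])\,\Phi_{w''_3}(-w'_3[D^{-2}A]\otimes A^{-1})\,\Phi_{w''_2}(-w'_2 A\otimes[A^{-1}D^2])\,\Phi_{w'_1}(-w_1 A\otimes A^{-1})$ the factor of $\mathscr{R}$ appearing under $\mathrm{Ad}$. Under the standard representations $\rho_{12}:\mathcal{Z}_\omega(\mathbb{D}_2,\Delta_2)/I_{\chi_1}\to \End(V_1\otimes V_2)$, the element $P$ corresponds to an explicit operator on $V_1\otimes V_2$; this is why the formula for $P$ is stated directly in terms of $A$ and $D$.

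First, I would unpack the content of \eqref{eq_Rexpression2} at the level of representations. After applying the quantum trace and the gluing morphism, $\mathscr{R}$ induces an algebra isomorphism between the simple quotient algebras with central characters $\chi_1$ and $\tau\cdot\chi_1$, realized concretely as $\End(V_1\otimes V_2)$ and $\End(V_3\otimes V_4)$ respectively. The decomposition $\mathscr{R}=\mathcal{K}\circ \mathrm{Ad}(P)$ then reads, at the level of operators: for every $X$ in the relevant quotient algebra,
\begin{equation*}
\rho_{34}(\mathscr{R}(X)) = \rho_{34}(\mathcal{K}(PXP^{-1})) = \rho_{34}(\mathcal{K}(\cdot))\bigl(\rho_{12}(P)\cdot \rho_{12}(X)\cdot \rho_{12}(P)^{-1}\bigr).
\end{equation*}
By Lemma \ref{lemma_D}, together with the compatibility square between $\mathcal{K}$ and $\widehat{\mathcal{K}}$ under the gluing morphism, the operator $T := \tau\circ D_{\alpha,\beta}:V_1\otimes V_2 \to V_3\otimes V_4$ realizes $\mathcal{K}$ via conjugation: $\rho_{34}(\mathcal{K}(Y)) = T\cdot \rho_{12}(Y)\cdot T^{-1}$ for every $Y$.

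Combining these two displays yields
\begin{equation*}
\rho_{34}(\mathscr{R}(X)) = T\cdot \rho_{12}(P)\cdot \rho_{12}(X)\cdot \rho_{12}(P)^{-1}\cdot T^{-1} = \mathrm{Ad}\bigl(T\circ \rho_{12}(P)\bigr)\bigl(\rho_{12}(X)\bigr).
\end{equation*}
Setting $R := T\circ \rho_{12}(P) = \tau\circ D_{\alpha,\beta}\circ \Phi_{w'_4}(-w_4[D^{-2}A]\otimes[A^{-1}D^2])\,\Phi_{w''_3}(\cdots)\,\Phi_{w''_2}(\cdots)\,\Phi_{w'_1}(-w_1 A\otimes A^{-1})$, one recovers exactly the operator in the statement; by the preceding identity $\mathrm{Ad}(R)=\mathscr{R}$ in the sense of Definition \ref{def_intertwiner}, so $R$ is an intertwiner for the half-twist automorphism. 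Finally, since $\dim(V_1\otimes V_2)=N^2$, any rescaling $R\mapsto c R$ multiplies the determinant by $c^{N^2}$; choosing $d$ with $d^{N^2}=\det(R)$ produces the Kashaev braiding $d^{-1}R$ of unit determinant.

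The main subtlety I would check carefully is the bookkeeping that identifies $\rho_{12}(P)$ with the explicit expression in $A$, $D$ displayed in the theorem. This amounts to verifying that the arguments $X_4^{-1}$, $[X_4 X_5]^{-1}$, $[X_3 X_4]^{-1}$ and $[X_3 X_4 X_5]^{-1}$ of the four $\Phi$-polynomials in \eqref{eq_Rexpression1}, combined with the scalings $w'_1$, $w''_2$, $w''_3$, $w'_4$, really map under the standard representation of the glued algebra to $-w_1 A\otimes A^{-1}$, $-w'_2 A\otimes[A^{-1}D^2]$, $-w'_3[D^{-2}A]\otimes A^{-1}$ and $-w_4[D^{-2}A]\otimes[A^{-1}D^2]$ respectively. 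This is a direct computation using the gluing formula $\mathcal{Z}_\omega(\mathbb{D}_2,\Delta_2)\hookrightarrow \mathcal{Z}_\omega(\mathbb{D}_1)^{\otimes 2}$, together with the definitions of the standard modules $W(\widetilde{x}_1,\lambda,h_p,h_\partial)$ and of the quantum shape parameters in Equations \eqref{E1}--\eqref{E4}; the sign conventions are precisely what force the $w$'s to appear with a minus sign, matching the normalization of $\Phi_w$.
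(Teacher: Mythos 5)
Your proposal is correct and follows exactly the paper's route: the paper's own proof is a one-line appeal to the decomposition \eqref{eq_Rexpression2} together with Lemma \ref{lemma_D}, which is precisely the combination you spell out (including the passage through $\widehat{\mathcal{K}}$ and the gluing square). Your extra verification of the identification of the $\Phi$-arguments with the operators in $A$ and $D$, and the determinant normalization, are the same bookkeeping the paper performs when passing from \eqref{eq_Rexpression1} to \eqref{eq_Rexpression2} and in Definition \ref{def_Kashaev_braiding}.
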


\begin{proof}
This follows from the decomposition \eqref{eq_Rexpression2} and Lemma \ref{lemma_D}.
\end{proof}

 \subsubsection{Normalization of braiding operators}\label{sec_normalization}

 It remains to normalize braiding operators so that their determinants become equal to $1$. Fix a determination of the logarithm by removing the $]-\infty, 0]$ axis, so we set $x^{\frac{1}{N}}:=\exp(\frac{1}{N}\log(x))$,  and define a function $g$ defined for $x\in \mathbb{C}\setminus \{q^n, n=1, \ldots, N-1\}$ by 
 $$ g(x):= \prod_{i=1}^{N-1}(1-xq^{2i})^{\frac{i}{N}}.$$
 
 \begin{theorem}\label{theorem_NormalizedClosedFormula} Let $R: V_1\otimes V_2 \to V_3 \otimes V_4$ be the Kashaev braiding of Theorem \ref{theorem_ClosedFormula}. Set 
 $$ d:= (-w_4'w_3''w_2''w_1')^{\frac{N-1}{2}}\frac{g((w_4')^{-1})g((w_3)^{-1})g((w_2')^{-1})g((w_1')^{-1})}{g(1)^4}.$$
 Then $dR$ is a normalized Kashaev braiding.
 \end{theorem}
 That $g(1)\neq 0$ is ensured by the equality $\lvert g(1) \rvert= N^{1/2}$ (see  \cite{BaseilhacBenedetti05}).  The following lemma was first stated in \cite[Equation $2.21$]{BazhanovBaxter_StarTriangle} where the proof is left to the reader, and subsequently used by various authors including \cite{BazhanovReshetikhin_QDilog, FadeevKashaevQDilog} which refer to \cite{BazhanovBaxter_StarTriangle} for a proof. The details of the proof can be deduced from the computations in  \cite[Appendix $8.2$]{BaseilhacBenedetti05} as we now detail for the reader convenience.
 
 \begin{lemma}\label{lemma_det_psi}(\cite{BazhanovBaxter_StarTriangle})
 Let $w,w' \in \mathbb{C}$ such that $(w')^N(1-w^N)=1$. Let $x \in \End(V)$ be an operator with characteristic polynomial $(X^N-1)^k$ for some $k\geq 1$ (i.e. the spectrum of $x$ is $\{q^n, n\in \mathbb{Z}/N\mathbb{Z}\}$ and the eigenvalues $q^n$ have all the same multiplicity). Then 
 $$ \det \left( (w')^{\frac{N-1}{2}} \frac{g((w')^{-1})}{g(1)} \Phi_{w'}(-wx) \right) = 1.$$
 \end{lemma}
 
 \begin{proof}
 Let $D:= \prod_{n\in \mathbb{Z}/N\mathbb{Z}} \Phi_{w'}(-wq^{2n})$. We need to prove the equality $D= \left( (w')^{\frac{1-N}{2}}\frac{g(1)}{g((w')^{-1})} \right)^N$.
 Set $\widetilde{g}(x):= \prod_{i=1}^{N-1}(1-xq^{-2i})^{\frac{i}{N}}$ and note that $g(x)\widetilde{g}(x)=\frac{1-x^N}{1-x}$. 
  By Lemma \ref{lemma_special_poly} $(ii)$ we have 
 $\Phi_{w'}(-wq^{2n})=\Phi_{w'}(-w)\prod_{k=1}^n \frac{(w')^{-1}}{1-wq^{2k-1}}$ from which we 
 find 
 \begin{multline*}
 D =  \prod_{n\in \mathbb{Z}/N\mathbb{Z}} \Phi_{w'}(-wq^{2n}) = \frac{ \left( \Phi_{w'}(-w) (w')^{\frac{1-N}{2}}\right)^N }{ \prod_{k=1}^{N-1}(1-wq^{2k-1})^{N-1-k}} 
 = \left( \frac{ \Phi_{w'}(-w) (w')^{\frac{1-N}{2}}}{ \widetilde{g}(wq^{-1})} \right)^N \\
 =  \left(  \Phi_{w'}(-w) (w')^{\frac{1-N}{2}} g(wq^{-1}) \frac{1-wq^{-1}}{1-w^N} \right)^N=(D^{\frac{1}{N}})^N,
 \end{multline*}
 with $D^{\frac{1}{N}}:=    \Phi_{w'}(-w) (w')^{\frac{1+N}{2}} g(wq^{-1}) (1-wq^{-1})$. 
 In \cite[Lemma $8.3(iii)$]{BaseilhacBenedetti05} the following identity is proved (with notations $x=(w')^{-1}$ and $z=wq^{-1}$):
 $$ \Phi_{w'}(-w)\equiv_N (w')^{1-N}\frac{g(1)}{g(wq^{-3})g((w')^{-1})}, $$
 where the notation $\equiv_N$ means is equal up to multiplication by a power of $q$, so 
 $$ D^{\frac{1}{N}}\equiv_N\frac{ (w')^{\frac{3-N}{2}}g(1)}{g((w')^{-1})} \frac{g(wq^{-1})}{g(wq)}(1-wq^{-1}).$$
 By \cite[Lemma $8.2$]{BaseilhacBenedetti05}, we have $\frac{g(x)}{g(q^2x)}(1-x)=(1-x^N)^{\frac{1}{N}}$ so $\frac{g(wq^{-1})}{g(wq)}(1-wq^{-1})=(w')^{-1}$ and 
 $$ D^{\frac{1}{N}}\equiv_N (w')^{\frac{1-N}{2}}\frac{g(1)}{g((w')^{-1})}, \quad \mbox{ so } D= \left( (w')^{\frac{1-N}{2}}\frac{g(1)}{g((w')^{-1})} \right)^N.$$
 This concludes the proof.
 
 \end{proof}
 
 \begin{proof}[Proof of Theorem \ref{theorem_NormalizedClosedFormula}]
 The Theorem follows from Lemma \ref{lemma_det_psi} together with the facts that $\det(Q)=1$ and $\det(\tau)=(-1)^{\frac{N-1}{2}}$. 
 \end{proof}

 %\subsection{Comparison with Quantum Hyperbolic Invariants}
 
 %The link invariants we defined in this paper are closely related to the Quantum Hyperbolic Invariants (QHI) from \cite{BaseilhacBenedettiInvariant, BaseilhacBenedettiLinkInv, BaseilhacBenedetti05, BaseilhacBenedetti15, BaseilhacBenedetti_NonAmbiguousStructures} as we now detail. 

\bibliographystyle{amsalpha}
\bibliography{biblio}

\end{document}